\documentclass{amsart}

\usepackage[utf8]{inputenc}
\usepackage[T1]{fontenc}

\usepackage[ruled,titlenotnumbered,vlined]{algorithm2e}
\SetKwInput{Input}{Input}
\SetKwInput{Output}{Output}
\IncMargin{0.5em}
\SetNlSkip{0.5em}

\usepackage{tikz}
\usepackage{forest}
\usetikzlibrary{arrows.meta, positioning, quotes}

\usepackage{amssymb}
\usepackage{color}
\usepackage{enumerate}
\usepackage[normalem]{ulem}

\usepackage{fancyvrb}
\fvset{fontshape=normal}
\fvset{listparameters={\setlength{\topsep}{0pt}\setlength{\partopsep}{0pt}}} 

\usepackage{enumitem}

\usepackage{array,tabularx}
\newcommand{\cellscript}[1]{%
  \parbox[c]{\linewidth}{%
    \centering\everymath{\scriptstyle}
    $\begin{array}{@{}l@{}}
        #1
      \end{array}
    $%
  }%
}
\newcolumntype{P}[1]{>{\centering\arraybackslash}p{#1}}
\newcolumntype{Y}{>{\centering\arraybackslash}X}

\newcommand{\K}{\mathbb{K}}
\newcommand{\Ka}{\mathbb{K}^\mathbb N}

\newcommand{\Q}{\mathbb{Q}}
\newcommand{\N}{\mathbb{N}}
\newcommand{\rel}{\mathbb{Z}}
\newcommand{\nat}{\mathbb{N}}
\newcommand{\ree}{\mathbb{R}}

\newcommand{\valSn}{{\operatorname{val}_{S_n}}}

\newcommand{\Horl}{\operatorname{Hor_{left}}}
\newcommand{\Horr}{\operatorname{Hor_{right}}}
\newcommand{\HorrI}{\operatorname{Hor_{right}}}
\newcommand{\HorrN}{\operatorname{Hor_{right}}}
\newcommand{\HorrZ}{\operatorname{Hor_{right}}}
\newcommand{\Hortr}{\operatorname{Hor_{\theta,right}}}

\newcommand{\cDiffden}{{\mathcal{T}}_{\text{den}}} 
\newcommand{\Xnum}{{X^\psi_{\text{num}}}}
\newcommand{\Thnum}{{\Theta}_{\text{num}}}
\newcommand{\Xden}{{X^\psi_{\text{den}}}}
\newcommand{\Thden}{{\Theta}_{\text{den}}}
\newcommand{\Anum}{{A}_{\text{num}}} 
\newcommand{\Aden}{{A}_{\text{den}}}

\newcommand{\Dnum}{{D^\psi_{\text{num}}}} 
\newcommand{\Dden}{{D^\psi_{\text{den}}}}
\newcommand{\Ddeni}{{D^\psi_{\text{den},i}}}
\newcommand{\Ddenr}{{D^\psi_{\text{den},r}}}
\newcommand{\Ddenrmu}{{D^\psi_{\text{den},r-1}}}
\newcommand{\Ddenrt}{{D^\psi_{\text{den},r_2}}}
\newcommand{\Ddenrmi}{{D^\psi_{\text{den},r-i}}}
\newcommand{\Ddenrmipu}{{D^\psi_{\text{den},r-i+1}}}
\newcommand{\Ddenrmk}{{D^\psi_{\text{den},r-k}}}

\newcommand{\Ddenrmkj}{{D^\psi_{\text{den},r-k_j}}}

\newcommand{\Ipsi}{I_{\psi}}

\newcommand{\Jnum}{{J}_{\text{num},\varepsilon}}
\newcommand{\Jden}{{J}_{\text{den},\varepsilon}} 
\newcommand{\Hnum}{{H}_{\text{num}}} 
\newcommand{\Hden}{{H}_{\text{den}}}

\newcommand{\Hlnum}{{H}_{\ell,\text{num}}} 
\newcommand{\Hlden}{{H}_{\ell,\text{den}}}

\newcommand{\Pnum}{{P}_{\text{num}}} 
\newcommand{\Pden}{{P}_{\text{den}}}

\newcommand{\Qnumi}{{Q}_{\text{num,$i$}}}

\newcommand{\Qdeni}{{Q}_{\text{den,$i$}}}

\def\assocal{\langle A_{\text{al}}(L)\rangle_{\RecI}^{\psi_n}}
\def\assocan{\langle A_{\text{an}}(L)\rangle_{\RecI}^{\psi_n}}

\newcommand{\equivZA}{\equiv_{\mathcal Z_\N}}

\def\Rec{{{\mathbb R}\textnormal{ec}}}
\def\RecI{{{\mathbb R}\textnormal{ec}}}
\def\RecN{{{\mathbb R}\textnormal{ec}}}
\def\RecZ{{{\mathbb R}\textnormal{ec}}}
\def\Frec{{{\mathbb P}\textnormal{rec}}}
\def\FrecI{{{\mathbb P}\textnormal{rec}}}

\def\FrecZ{{{\mathbb P}\textnormal{rec}}}
\def\Sh{{{\mathbb S}\textnormal{h}}}

\def\Ann{\operatorname{Ann}}
\def\Za{{\mathcal Z_{\mathbb N}}}
\def\Zn{{\mathcal Z_{\mathbb N}}}

\newcommand{\lclm}{\operatorname{lclm}} 
\newcommand{\mclm}{\operatorname{mclm}} 
\newcommand{\lcrm}{\operatorname{lcrm}} 
\newcommand{\gcld}{\operatorname{gcld}} 
\newcommand{\gcrd}{\operatorname{gcrd}} 

\newcommand\incircrel[1]{\mathrel{%
    \ooalign{\small{$#1$}\cr$\bigcirc$}%
}}
\newcommand\circledless{\incircrel{\kern.2ex<}}
\newcommand\circledgtr {\incircrel{\kern.45ex>}}
\newcommand\circledleq {\incircrel{\kern.15ex\leq}}
\newcommand\circledgeq {\incircrel{\kern.6ex\geq}}

\newcommand{\divides}{\mathrel|}

\newcommand{\ppcm}[2]{{\left(#1\right)}^{#2}}

\renewcommand{\le}{\leqslant}
\renewcommand{\ge}{\geqslant}
\renewcommand{\leq}{\leqslant}
\renewcommand{\geq}{\geqslant}
\renewcommand{\phi}{\varphi}

\numberwithin{equation}{section}

\newtheorem{theorem}{Theorem}[section]
\newtheorem{lemma}[theorem]{Lemma}
\newtheorem{corollary}[theorem]{Corollary}
\newtheorem{proposition}[theorem]{Proposition}

\theoremstyle{definition}
\newtheorem{definition}[theorem]{Definition}
\newtheorem{example}[theorem]{Example}

\theoremstyle{remark}
\newtheorem{remark}[theorem]{Remark}

\usepackage{thmtools,thm-restate}

\usepackage{zref-clever}
\zcsetup{cap} 
\zcsetup{abbrev=true} 
\zcsetup{nameinlink=false}
\AddToHook{env/lemma/begin}{\zcsetup{countertype={theorem=lemma}}}
\AddToHook{env/corollary/begin}{\zcsetup{countertype={theorem=corollary}}}
\AddToHook{env/proposition/begin}{\zcsetup{countertype={theorem=proposition}}}
\AddToHook{env/problem/begin}{\zcsetup{countertype={theorem=problem}}}
\AddToHook{env/definition/begin}{\zcsetup{countertype={theorem=definition}}}
\AddToHook{env/example/begin}{\zcsetup{countertype={theorem=example}}}
\AddToHook{env/notation/begin}{\zcsetup{countertype={theorem=notation}}}
\AddToHook{env/remark/begin}{\zcsetup{countertype={theorem=remark}}}
\AddToHook{cmd/appendix/before}{\zcsetup{countertype={section=appendix}}}
\zcRefTypeSetup{theorem}{Name-sg-ab=Thm., Name-pl-ab=Thm.}
\zcRefTypeSetup{proposition}{Name-sg-ab=Prop., Name-pl-ab=Prop.}
\zcRefTypeSetup{definition}{Name-sg-ab=Def., Name-pl-ab=Def.}
\zcRefTypeSetup{corollary}{Name-sg-ab=Cor., Name-pl-ab=Cor.}
\zcRefTypeSetup{algocf}{Name-sg=Algorithm, Name-pl=Algorithms,
Name-sg-ab=Algo., Name-pl-ab=Algo.}
\zcRefTypeSetup{hypothesis}{Name-sg-ab=Hyp., Name-pl-ab=Hyp.,
refbounds= {, (,),}}

\usepackage{hyperref}
\hypersetup{colorlinks=true,citecolor=blue,linkcolor=blue,
    filecolor=blue,urlcolor=blue}

\begin{document}

\title[Recurrence Operators for Generalized Fourier Series]{Fractions of Recurrence Operators for Generalized Fourier Series in
Classical Orthogonal Polynomials
}

\author[A. Benoit]{Alexandre Benoit}
\address{Éducation nationale, France}
\curraddr{}
\email{alexandrebenoit@yahoo.fr}
\thanks{}

\author[N. Brisebarre]{Nicolas Brisebarre}
\address{Université de Lyon, CNRS, ENS de Lyon, Inria, Université
    Claude-Bernard Lyon~1, Laboratoire LIP (UMR 5668), Lyon, France.}
\curraddr{}
\email{Nicolas.Brisebarre@ens-lyon.fr}
\author[B. Salvy]{Bruno Salvy}
\address{Université de Lyon, CNRS, ENS de Lyon, Inria, Université
    Claude-Bernard Lyon~1, Laboratoire LIP (UMR 5668), Lyon, France.}
\curraddr{}
\email{Bruno.Salvy@ens-lyon.fr}

\begin{abstract}
  We consider series expansions in bases of classical orthogonal polynomials. 
  When such a series solves a linear differential equation with
 polynomial coefficients, its coefficients satisfy a linear recurrence
 equation. We
 interpret this equation as the numerator of a fraction of linear
 recurrence operators. This interpretation lets us give a simple and
 unified view of previous algorithms computing these recurrences,
 with a noncommutative Euclidean algorithm as the algorithmic engine. 
  Finally, we demonstrate the effectiveness of our approach on various examples.
\end{abstract}

\subjclass[2020]{Primary 33C45, 33F10, 42C05, 42C10, 65D15, 65Q05, 68W30; Secondary 33C20, 65L99}

\keywords{Orthogonal polynomials, classical orthogonal polynomials, recurrence relations, Ore fractions}

\maketitle


\section{Introduction}

Let  $( \psi_n(x))_{n}$ be a family of 
 orthogonal polynomials
 associated to a weight function~$w(x)$ over an interval $
 (a,b)\subset\mathbb R$
 and let $f(x)$ be a sufficiently regular function defined on~$(a,b)$ 
 (see \zcref{sec:orthopoly} for background and precise definitions.)
 Classically, the square norm of the
 polynomials is denoted
 \begin{equation}\label{eq:hn}
h_n=\int_a^b{\psi_n(x)^2w(x)\,dx}
 \end{equation}
 and one considers the sequence of integrals
 \begin{equation}\label{eq:integrals}
a_n=\frac1{h_n}\int_a^b{f(x)\psi_n(x)w(x)\,dx},\quad n\in \N.
\end{equation}
 When these integrals converge, they are called the \emph{generalized
 Fourier coefficients} of~$f(x)$ with respect to the family~$(\psi_n
 (x))$
 and the sum
 \[\sum_{n\ge0}{a_n\psi_n(x)}\]
 is called a \emph{generalized Fourier series}. There is a large
 literature
 on conditions under which this expansion actually converges
 to~$f(x)$ in some sense and the speed of this convergence~\cite
 [\S10.19-10.20]
 {Erdelyi1981V2},\cite
 [\S8.3]{Luke1969},\cite[Chap.~2,\S 8.2]
{NikiforovUvarov1988},\cite[Chap.~9]{Szego1975}. These expansions find
 applications in spectral methods~%
 \cite{Boyd2000,GottliebOrszag1977,Wimp1984,OlverTownsend2013,BenoitJoldesMezzarobba2017}
 and
 in the computation of connections or linearization coefficients
 between
 families of orthogonal polynomials (where convergence is not an
 issue)~\cite{Thacher1964,Askey1975,RonveauxZarzoGodoy1995,
GodoyRonveauxZarzoArea1997,Szwarc1992}.
\nocite{GovaertsHounkonnouMsezane2002}

We focus here on a special but frequent situation:
\begin{itemize}
    \item[--] $f (x)$ satisfies a linear differential equation with
    polynomial coefficients;
    \item[--] $(\psi_n(x))$ is a family of 
\emph{classical} orthogonal polynomials.
\end{itemize}
In that case, when the integrals \eqref{eq:integrals} converge,
they satisfy a linear recurrence with polynomial
coefficients, whose computation is the aim of this work. Indeed, such a recurrence can lead to a faster
computation of the series and in some cases, can also lead to a
closed form solution, in combination with computer algebra algorithms
such as Petkovšek's algorithm~\cite{Petkovsek1992}.
This
recurrence is also an entry
point for the automatic proof of numerous identities related to these
 coefficients~%
\cite{FieldsWimp1961,Luke1959,LukeColeman1961,Verma1966,WimpLuke1962}.
(See also \zcref{sec:examples} for examples.)

\subsection{Previous work} \label{sec:previous}

It seems that Clenshaw~\cite{Clenshaw1957} was the first to use
recurrences satisfied by the coefficients of Chebyshev expansions 
(expansions in the basis of Chebyshev polynomials). He did not give an
algorithm to compute recurrences but used them in an implicit way to
compute numerically the first coefficients of a Chebyshev series
solution to a linear differential equation. His method was
extended by Elliott~\cite{Elliott1959} to ultraspherical (or
Gegenbauer) expansions. It leads to the construction of a system of
linear recurrences relating the generalized Fourier coefficients of
the successive derivatives of the function. From there, several
techniques have been developed for a numerically stable use of these
recurrences~\cite{BenoitJoldesMezzarobba2017,OlverTownsend2013}.

It is also possible to obtain a single linear recurrence directly,
by first turning the differential equation into an integral equation
and then extracting the coefficients. This has been
shown by several authors, first for small orders~%
\cite[Chap.~5]{FoxParker1968}, \cite[\S8.7]{Luke1969},
then in more generality and with algorithms 
in the context of early symbolic
computation
by Geddes~\cite{Geddes1977}. 

Another approach working directly at the level of differential
equations was developed by Paszkowski in the case of Chebyshev
polynomials~\cite{Paszkowski1975}.
Lewanowicz extended this method to the case of Gegenbauer polynomials~\cite{Lewanowicz1976} and gave a new algorithm computing a
smaller order recurrence in some cases. 
Later, Lewanowicz extended his work to the Jacobi polynomial case~%
\cite{Lewanowicz1983,Lewanowicz1986,Lewanowicz1991,Lewanowicz1992} and
to the case of classical orthogonal polynomials~%
\cite{Lewanowicz1995,Lewanowicz1996}.
This algorithm has
not been discussed much in the literature (it is described as
``very complicated'' by Wimp~\cite[p. 186]{Wimp1984}.) Our approach
relies on many of Lewanowicz' ideas. We use a more algebraic
framework that lets us reformulate his
algorithm as a sort of Horner evaluation of the linear
differential equation that is hopefully more transparent (see 
\zcref{algo:RightHorner}). In later work, alone and jointly with
Woźny, for the case of semi-classical orthogonal polynomials~%
\cite{Lewanowicz2002,LewanowiczWozny2004}, Lewanowicz noted that other
techniques can be applied to reduce the order of the resulting
recurrence equation in cases where the differential equation has
singularities at~0 or~$\pm 1$
\cite{Lewanowicz1986,Lewanowicz1991,Lewanowicz1992,Lewanowicz2002,LewanowiczWozny2004}. These techniques can also be integrated in our framework, see~\zcref{sec:lewa2}. 

A method using integral operators in another way was designed by
Rebillard~\cite{Rebillard1997,Rebillard1998} for the case of Chebyshev
series expansions, see~\zcref{sec:rebillard}. His method is used later,  independently, by  Olver and Townsend~\cite{OlverTownsend2013}. Similar to Paszkowski's algorithm, it gives rise to a simpler handling of the operators.
Later, Rebillard and Zakrajšek~\cite{RebillardZakrajsek2007}
 addressed the case of expansions in a hypergeometric polynomial basis
 (see~\cite{NikiforovUvarov1988} for a definition.) Other approaches have been developed for the case of connection coefficients between classical (or semi-classical) orthogonal polynomials, by  Area, Godoy, Ronveaux and Zarzo~\cite{RonveauxZarzoGodoy1995,GodoyRonveauxZarzoArea1997}.
 These methods are  discussed in \zcref{sec:previous-algo}.

Also related is the work by Maroni and da Rocha~%
\cite{MaronidaRocha2008,MaronidaRocha2013} who present a
guess-and-prove approach to compute connection coefficients.

Finally, a completely different approach can be based on the recent
progress in
computer algebra around the method of creative telescoping initiated
by Zeilberger~\cite{Zeilberger1991a}. These methods can compute a
linear recurrence starting from the integral representation~%
\eqref{eq:integrals} and a system of operators for the family~$
(\psi_n)$ and the function~$f$. However, this general-purpose method
does not deal with convergence issues and may return an incorrect
recurrence. We report on experiments with recent algorithms
and
implementations~\cite{Koutschan2010,BostanChyzakLairezSalvy2018} in
\zcref{sub:creative_telexcoping}.

\medskip

For a recurrence found by a purely algebraic approach to be
satisfied by the Fourier coefficients, convergence conditions have to
hold in general. Elliott indicates a sufficient condition without
proof~\cite{Elliott1959}.
Lewanowicz works under the assumption that, if
$r$~denotes the order of the linear differential equation satisfied
by the function~$f$ whose coefficients are considered, then $f^{(r)}$
 has a Gegenbauer (or Jacobi) expansion that is uniformly convergent
 in~$[-1,1]$~%
\cite{Lewanowicz1976,Lewanowicz1983,Lewanowicz1986,Lewanowicz1991,Lewanowicz1992}. Rebillard~\cite{Rebillard1997,Rebillard1998} and Olver
 \& Townsend~\cite{OlverTownsend2013} who deal, respectively, with
 Chebyshev and Gegenbauer expansions, use the stronger assumption that
 the function $f$ is holomorphic in the neighborhood of $
 [-1,1]$. These assumptions allow these authors to prove that the
 recurrences they compute are satisfied by the coefficients of
 convergent generalized Fourier expansions. Our approach is slightly
 different: we do not demand convergence of the expansions. Instead,
 we prove
 that the recurrences computed by our algorithms hold provided the
 generalized Fourier coefficients of the function and its first few
 derivatives exist (the integrals~\eqref{eq:integrals} converge),
 which is a much weaker condition.

 This article is an extension and an improvement of~\cite{BenoitSalvy2009}, where we focused on the case of the Chebyshev basis.

\subsection{Contribution}\label{subsec:contribution}
The principle of Lewanowicz' method is to construct recurrences of the
form
\begin{equation}\label{eq:rec-rec}
b_0(n)v_n+\dots+b_\ell(n)v_
{n+\ell}=a_0(n)u_n+\dots+a_k(n)u_{n+k},\quad n\in\mathbb N
\end{equation}
with $a_i$ and~$b_j$ rational functions of~$n$, 
that relate the generalized Fourier coefficients~$u_n$ of a
function~$f$
to the generalized Fourier coefficients~$v_n$ of the
application $L\cdot f$ of a linear differential operator~$L$ to~$f$.
In particular, when~$f$
is a solution of~$L\cdot f=0$, then the coefficients~$v_n$ are all~0
and the right-hand side is the desired recurrence equation.

The starting point is given by relations between the
coefficients of a function and those of its derivative or its product
by the variable. The construction of this recurrence then proceeds by
induction using algorithms computing relations for the coefficients of
sums and of compositions of linear differential operators. 
Our main observation is that these operations can be expressed
as operations on fractions of linear recurrence operators.
Indeed \zcref{eq:rec-rec} can be expressed as $A \cdot u_n + B \cdot 
(-v_n) = 0$,  where~$B$ and~$A$ are the recurrence operators on its
left-hand and right-hand sides. Conceptually, we can rephrase this
relationship with abuse as $v_n=B^{-1}A\cdot u_n$, hence the fraction
term. This has to be used with caution: common left factors of~$A$
and~$B$ cannot be simplified out in general.
The algorithm reduces to sums and products of such fractions,
performed in a simple way using a non-commutative variant of the
Euclidean algorithm for the computation of gcds. These constructions
are closely related to
those used by Ore~\cite{Ore1933} to formalize
fractions of operators.

For the recurrence~\eqref{eq:rec-rec} to hold for
all~$n\in\mathbb N$, the rational function coefficients~$a_i$
and~$b_j$ must not vanish
on~$\mathbb N$. Thus the recurrence operators that we consider are
different from the shift operators that are commonly used in
computer algebra, where all rational
functions are allowed in the coefficients (\zcref{sec:shift-rec}).
This has strong algebraic consequences: the ring of
recurrence operators is not left principal and does
not have least common left multiples in general. However, we show how
to construct common left multiples of minimal order using Euclid's
algorithm. This leads to an incremental construction of
fractions of recurrence
operators $(A,B)$ for which the expression $B\cdot v_n=A\cdot u_n$
from \zcref{eq:rec-rec} holds (\zcref{sec:module-of-pairs}).

Having defined these operations of sum and product on fractions, the
construction of recurrences for Taylor or generalized Fourier
coefficients is a direct translation from the linear differential
operator of interest. To each of the differentiation operator and
the multiplication by the variable~$x$ is associated a fraction,
which depends on the basis of classical orthogonal polynomials (or
$x^n$ for Taylor expansions). Then a fraction is constructed by Horner
evaluation of the differential operator on these fractions, keeping
track of convergence requirements that were often absent in previous
works. Several
constructions of fractions are possible, depending on how one chooses
to write the differential operator (\zcref{sec:rec_coeff}).

A feature of the fractions of recurrence operators that departs
from the classical Ore
fractions is that they are not equivalence classes for a relation
expressing that the numerator of the difference is~0. Indeed, reducing
a fraction by factoring out a common left factor between numerator
and denominator would amount to simplifying both sides of
\zcref{eq:rec-rec}, which is not valid in general. Instead, numerator
and denominator are left untouched. Then it becomes important to
design algorithms that compute \emph{irreducible} fractions, i.e.,
fractions for which the greatest common left divisor between numerator
and denominator does not involve the shift operator. 
This is achieved by studying the fractions obtained from different
representations of the same linear differential operator, by right
Horner and left Horner evaluations, already present with a different
language in the works of Lewanowicz and Paszkowski. Our algebraic 
framework makes it possible to combine two fractions
obtained by these different evaluations into a new one that is always
irreducible.
This is summarized in the following, which is
made more precise in \zcref{sec:irred-fracs}.
\begin{theorem}[Main result]
Let $(\psi_n)$ be a family of {classical} orthogonal polynomials 
and let $L$
be a linear
differential operator with polynomial coefficients%
. Then \zcref [S]
{algo:main} with $L$ as input returns an irreducible
fraction  $(\Hnum,\Hden)$ of linear recurrence operators. If $f$ is a
solution of $L\cdot f=0$
such that
\begin{equation}\tag{H}
\text{the sequences }([\psi_n](f^{(j)}))_{n \in \nat}\text{ exist
for }j\le\deg_\partial L,
\end{equation}
then 
its
sequence of generalized Fourier coefficients is annihilated by the
numerator: \[\Hnum\cdot[\psi_n](f)=0,\qquad n\in \mathbb N.\]
\end{theorem}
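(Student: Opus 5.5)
The proof has two parts: correctness of the annihilation $\Hnum\cdot[\psi_n](f)=0$ under (H), and irreducibility of the returned fraction.

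\emph{Correctness.} I would argue by induction along the Horner evaluation of $L=\sum_{i\le r}p_i(x)\partial^i$, writing $r=\deg_\partial L$. The base cases are the two elementary fractions associated with $\partial$ and with multiplication by $x$. For classical orthogonal polynomials these come from the structure relations. The three-term recurrence gives a fraction for $x$ that holds whenever $[\psi_n](f)$ exists. The derivative relation of the form $\sigma\psi_n'=\dots$, or equivalently $\psi_n=\alpha_n\psi_{n+1}'+\beta_n\psi_n'+\gamma_n\psi_{n-1}'$, gives a fraction $(A_\partial,B_\partial)$ with $B_\partial\cdot[\psi_n](f')=A_\partial\cdot[\psi_n](f)$. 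This second relation needs integration by parts, so the boundary terms must vanish; that is where the existence of $[\psi_n](f^{(j)})$ for $j\le r$ is used.

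The inductive step uses the lemmas on sums and products of fractions from \zcref{sec:module-of-pairs}. If $B_1 v=A_1u$ and $B_2 w=A_2 u$, then a common left multiple $M=U_1B_1=U_2B_2$ built by the Euclidean algorithm gives $M(v+w)=(U_1A_1+U_2A_2)u$. Composition works similarly, using a common left multiple of a denominator with a numerator. The important point is to check that each intermediate identity involves only coefficient sequences $[\psi_n](f^{(j)})$ with $j\le r$, and products of polynomials with these. Those products exist by linearity of the three-term relation. Every intermediate identity is then a genuine equality for all $n\in\N$, because the recurrence operators have coefficients that do not vanish on $\N$. Taking $L\cdot f=0$ makes the left side zero, which gives $\Hnum\cdot[\psi_n](f)=0$.

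\emph{Irreducibility.} \zcref[S]{algo:main} computes two fractions, $(\Pnum,\Pden)$ by right Horner and one by left Horner, and combines them into $(\Hnum,\Hden)$. I would first show that both fractions are valid for $L$, so that any linear combination with left multipliers obtained by Euclid is also valid. I would then prove that the gcld of numerator and denominator contains no shift. The plan is to compute the denominators explicitly: for right Horner the denominator should be a power of $B_\partial$ ($\Dden$ of order roughly $r$ on one side), and for left Horner it should be the one adapted to the other side. Their greatest common left divisor should be trivial. Any common left factor $G$ of $\Hnum$ and $\Hden$ would then have to divide both original denominators up to units, so it would be shift-free.

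The main obstacle is this last step. It requires showing that the two denominators have no common nontrivial left factor, and that the combination step does not create one. I would try to use the leading and trailing coefficients in the shift, together with the localization of the singularities of $B_\partial$ at specific integers or parameters, to rule out a common factor of positive order. A non-principal ring is a concern here, because gcld must be understood in the paper's restricted sense. If that fails, I would fall back to an order-counting argument: compare the order of $\Hnum$ with the order of the minimal annihilating recurrence obtained from the module of pairs.
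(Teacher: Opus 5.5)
Your correctness half follows the paper's argument (\zcref{prop:rec_Fourier} and \zcref{coro:rec_Fourier_module}), with one correction. The integral identity comes from the structure relation for $\sigma\psi_{n+1}'$ combined with $\langle\sigma\psi_n'|f'\rangle_w=\lambda_n\langle\psi_n|f\rangle_w$ (\zcref{lemma:int-fprime}), not from the expansion of $\psi_n$ in derivatives.

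The genuine gap is in irreducibility, and it goes beyond the step you flag, because the premise of your plan is false: the two Horner denominators do \emph{not} have trivial $\gcld$. In the paper's Laguerre-to-Laguerre example one checks that $\Horl(L)=(S_n-1)\cdot(A,B)$ and $\Horr(L)=S_n\cdot(A,B)$, with $A=(n-m+\beta-\alpha+1)S_n+m-n$ and $B=1-S_n$. The two denominators $-(S_n-1)^2$ and $-S_n(S_n-1)$ therefore share the factor $S_n-1$. Your inference that a common left factor of $(\Hnum,\Hden)$ must divide both Horner denominators also runs the wrong way. The output is a common \emph{right} factor of the two Horner fractions, so its left factors are not inherited by them. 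What has to be coprime are the extra left factors ($S_n$ and $S_n-1$ here); a Bézout combination then removes them, here simply $\Horr(L)-\Horl(L)=(A,B)$. You also have the roles reversed: the left Horner denominator is the explicit one, $S_n^m\Ddenr$ (\zcref{prop:horner_gauche}), while for right Horner what is known is that the common left factor is a power of $S_n$. Finally, \zcref{algo:main} returns $\Horl(L)$ unchanged when $\gcd(p_r,\sigma)=1$ and returns $\Horr(L)$ unchanged when $\ell=0$, so those branches need separate irreducibility proofs.

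The missing ingredients are these.
(i) $\assocan$ contains only proper fractions, since a fraction $(P,0)$ would force $P$ to annihilate $([\psi_n](\psi_k))_n$ for every $k$. Hence all associated fractions are $\RecI$-equivalent (\zcref{prop:assoc-equiv}), and two irreducible ones differ only by a factor in $\K(n)$.
(ii) $\Horr(L)=S_n^\ell\cdot(A_0,B_0)$ with $\gcld_\Sh(A_0,B_0)=1$. This is \zcref{prop:horner_droite}, an induction exploiting $\Dnum=\Xden=S_n$, \zcref{lemma:shift-gcd} and \zcref{lem:irred}.
(iii) The $\gcld_\Sh$ of the two members of $\Horl(L)$ is coprime to $S_n$. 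This is \zcref{coro:horner_elim_Sn}, from the $S_n$-valuation argument of \zcref{lem:nonzeronum}, which compares the degrees in $n$ of the constant coefficients of the summands. With \zcref{prop:almost-gcld} and (i), up to left factors in $\K[n]_{\Za}$, this gives $\Horr(L)=S_n^\ell\cdot(A,B)$ and $\Horl(L)=\hat G\cdot(A,B)$ for one irreducible $(A,B)\in\RecI^2$ and some $\hat G\in\RecI$ coprime to $S_n$.
(iv) A Bézout relation $U\hat G+VS_n^\ell=c(n)$, with denominators cleared, expresses $c(n)(A,B)$ as a $\RecI$-combination of the two Horner fractions. It is therefore irreducible and lies in $\assocal$, so (H) suffices for the annihilation (\zcref{prop:irred-frac}).

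For the branch $\gcd(p_r,\sigma)=1$ you need \zcref{prop:Lewanowicz}. It is an induction on $r$ through the two splittings $L=L_1\partial+p_0=\partial^rq_r+L_2$, showing that every denominator in $\assocan$ is a left multiple of $S_n^m\Ddenr$. Its key step, \zcref{lemma:partial-P}, comes from the Bézout identity $pu+\sigma v=1$, multiplied on the left by $\partial$ and mapped to fractions.

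None of your fallbacks (leading and trailing coefficients, singularities of $\Dden$, order counting) supplies (i)--(iv). Also, irreducibility is defined by $\gcld$ in the principal ring $\Sh$, so non-principality of $\RecI$ is not where the difficulty lies.
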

Extensions of this result with hypotheses weaker
than~\eqref{eq:hypH} are given in \zcref{sec:singular-case}.

\subsection{Plan of the article} 
\zcref{sec:orthopoly} recalls the required notation on classical orthogonal polynomials. Shift and recurrence operators are described in \zcref{sec:shift-rec}. The former are classical, the
latter are designed to prevent poles at {nonnegative} integers in the
coefficients. Fractions of recurrence operators are then defined in 
\zcref{sec:module-of-pairs}, together with operations on them that
boil down to a noncommutative version of Euclid's algorithm. Their
relation with Ore fractions is clarified. With these fractions, a 
general method of construction of recurrences is given in 
\zcref{sec:rec_coeff}, first for coefficients
of Taylor expansions and then for generalized
Fourier
coefficients.  This general
method allows one to develop different algorithms depending on choices
of
differential terms representing a linear differential operator. 
\zcref{sec:irred-fracs} studies the fractions that one obtains by
right or left Horner evaluation and gives algorithms that always
produce
irreducible fractions. In some
singular cases, it is possible to obtain recurrences that escape the
convergence constraints of the algorithms in that section. This is 
discussed in \zcref{sec:singular-case}. 
\zcref{sec:previous-algo} summarizes the relations between our
approach and previous work. 
A long
\zcref{sec:examples} presents the results of a Maple
implementation,
together with timings. A few further comments are made in \zcref{sec:conclusion}.

\section{Classical orthogonal
polynomials}\label{sec:orthopoly}
We  recall  basic definitions related to  classical orthogonal
polynomials.
We refer to classical treatises for more information~\cite{Chihara1978,Szego1975,NikiforovUvarov1988,KoekoekLeskySwarttouw2010}.

\subsection{Orthogonal polynomials}\label{subsec:cop}

An interval $-\infty \leq a < b \leq +\infty$ is given together with a
measurable function $w : (a,b) \to \ree_+$, called the \emph{weight
function}, that is non-negative and such that $\int_a^bw
(x)\,\mathrm{d}x>0$ and 
$\int_a^b x^n w(x) \, \mathrm{d}x < +\infty$ for all $n \in \nat$.

Equipped with the inner product 
\begin{equation}\label{eq:inner-product}
\langle f|g \rangle_w :=  \int_a^b f(x) g(x) w(x) \, \mathrm{d}x,
\end{equation}
the space $L^2((a,b),w)$ is a Hilbert space.

A family $(\psi_n)_{n\in \nat}$ of polynomials in~$\mathbb R[x]$ such
that $\deg\psi_n=n$ is a \emph{family of orthogonal polynomials} with
respect to~$w$ if they are orthogonal for the inner 
product~\eqref{eq:inner-product}. If $h_n$ denotes the
squared norm $\langle\psi_n|\psi_n\rangle_w$ as in \zcref{eq:hn}, then 
the triple $( (a,b), w, (h_n)_{n \in \nat})$ determines the family~$
(\psi_n)$ in a unique way.

A family $(\psi_n)_{n \in \nat}$ of orthogonal polynomials satisfies a 3-term recurrence~\cite[Chap. 2 \S6.3]{NikiforovUvarov1988}
\begin{equation}\label{eq:3term-rec}
x \psi_{n}(x) = a(n) \psi_{n+1}(x) + b(n) \psi_{n}(x)+c(n) \psi_{n-1}(x), \quad n \geq 0,
\end{equation}
where we set $\psi_{-1} = 0$.

\subsection{Classical orthogonal polynomials}

The \emph{classical} orthogonal polynomials occur as eigenfunctions of linear differential operators of order~2
\begin{equation} \label{eq:SL}
  \sigma(x) \psi_n''(x) + \tau(x) \psi_n'(x) + \lambda_n \psi_n(x) = 0, \quad n \geq 0, 
\end{equation}
with $\lambda_n= - n((n-1)\sigma''+2\tau')/2$. The
coefficients~$\sigma,\tau$ are polynomials of degree at most~2 and~1, related to the weight~$w$ by the Pearson differential equation
\begin{equation}\label{eq:Pearson}
(\sigma(x)w(x))'=\tau(x)w(x).
\end{equation}
Up to a suitable change of variable, the intervals $(-1,1), (0,+\infty)$ and $(-\infty,+\infty)$ describe all the possible $(a,b)$. 
Then, the case when $\sigma$ has degree~2 leads to the Jacobi
polynomials $P_n^{(\alpha, \beta)}(x), \alpha, \beta > -1$ and its
special cases (Chebyshev, Gegenbauer $C_n^{(\lambda)}(x)$, $\lambda >
- \frac{1}{2}, \lambda \neq 0$); degree~1 gives the Laguerre
polynomials  $L_n^{(\alpha)}(x), \alpha >-1$; the Hermite polynomials
are obtained for degree~0. The corresponding $a,b,w,h_n$ are given in
\zcref{table:classiques}~\cite[Table 18.3.1 and \S18.3.34]
{OlverLozierBoisvertClark2010,NIST:DLMF}, while
$\sigma,\tau,\lambda_n$ are presented in  \zcref{table:classiques2}. 

\begin{table}
   \begin{center}
\renewcommand{\arraystretch}{1.5}
     \begin{tabular}{l | c | c | c | c }
                Name & $\psi_n(x)$ & $(a,b)$ & $w(x)$ &  $h_n\ (n>0)$ 
                \\\hline
Chebyshev  & $T_n(x)$ & $(-1,1 )$ & $(1-x^2)^{-1/2}$ &  ${\pi}/
{2}$  \\\hline 
 Gegenbauer  & $C_n^{(\lambda)}(x), \genfrac{}{}{0pt}{1}{\lambda > - \frac{1}{2},}{\lambda \neq 0 }$ & $(-1,1 )$ & $(1-x^2)^{\lambda -1/2}$ &  $\frac{2^{1-2\lambda}\pi \Gamma (n+2\lambda)}{(n+\lambda) (\Gamma(\lambda))^2 n!}$ \\
       \hline
 Jacobi &  $P_n^{(\alpha, \beta)}(x), \alpha, \beta > -1$ & $(-1,1 )$ & $(1-x)^{\alpha} (1+x)^{\beta}$ &  $\frac{2^{\alpha+\beta+1} \Gamma (n+\alpha+1) \Gamma (n+\beta+1)}{(2n+\alpha+\beta+1) \Gamma(n+\alpha+\beta+1) n!}$ \\  \hline
Laguerre & $L_n^{(\alpha)}(x), \alpha > -1$ & $(0,+\infty)$ &  $e^{-x} x^{\alpha}$ &  $\frac{\Gamma (n+\alpha+1)}{n!}$ \\\hline
 Hermite & $H_n(x)$ & $(- \infty,+\infty)$ & $e^{-x^2}$ & $\pi^{1/2} 2^{n} n!$ \\
\end{tabular}
  \caption{Classical orthogonal polynomials. For $n=0$, the formulas
  for~$h_n$ still apply, except for Chebyshev ($h_0=\pi$) and Jacobi
  when $\alpha+\beta+1=0$ ($h_0=-\pi/\sin(\pi\alpha)$). See also
  \zcref{sec:note_on_chebyshev_series}.}
  \label{table:classiques}
  \end{center}
\end{table}

The polynomials $\psi_n$ also satisfy a \emph{structure
relation}
\begin{equation} \label{eq:structure}
   \sigma(x) \psi_{n+1}'(x) =  \delta_2(n) \psi_{n+2}(x) + \delta_1(n) \psi_{n+1}(x) + \delta_{0}(n) \psi_{n} (x), \quad n \geq 0,
\end{equation}
where the $\delta_k$s are rational functions in $\mathbb R(n)$.
This relation is actually a
characterization of classical orthogonal polynomials~%
\cite{AlSalamChihara1972}. For a slightly
larger class of measures, the Bessel
polynomials~\cite[\S18.34]{OlverLozierBoisvertClark2010,NIST:DLMF} can also be included in the classical family. For
simplicity, we omit them from our discussion, but our implementation
does handle them, see \zcref{example:bessel_expansions}.

\begin{table}
   \begin{center}
     \renewcommand{\arraystretch}{1.2}
     \begin{tabular}{l | c | c | c |c}
                Name & $\psi_n(x)$ &  $\sigma(x)$ & $\tau(x)$ & $\lambda_n$ \\\hline
Chebyshev  & $T_n(x)$ & $1 - x^2$ & $-x$ & $n^2$\\\hline 
Gegenbauer  & $C_n^{(\lambda)}(x), \genfrac{}{}{0pt}{1}{\lambda > - \frac{1}{2},}{\lambda \neq 0 }$  & $1 - x^2$ & $- (2\lambda + 1)x$ & $n(n+2\lambda)$\\ \hline
Jacobi &  $P_n^{(\alpha, \beta)}(x), \alpha, \beta > -1$ & $1 - x^2$ & $\beta - \alpha - (\alpha+\beta+2)x$ & $n(n+\alpha+\beta+1)$ \\  \hline
Laguerre & $L_n^{(\alpha)}(x), \alpha > -1$ & $x$ & $\alpha+1-x$ & $n$ \\\hline
  Hermite & $H_n(x)$ & $1$ & $-2x$ & $2n$ \\
\end{tabular}
  \caption{Classical orthogonal polynomials: Sturm-Liouville
  coefficients}\label{table:classiques2}
  \end{center}
\end{table}

\subsection{Generalized Fourier series}
We write $[\psi_n](f)$ for the generalized Fourier coefficient~$a_n$
of \zcref{eq:integrals}. This notation suggests that $[\psi_n]$ is the
linear operator extracting the coefficient of~$\psi_n$ in the
generalized Fourier series for~$f$ when it converges, but we use it
for the integrals~$a_n$ without any reference to convergence of the
series.

\subsection{Note on Chebyshev series}
\label{sec:note_on_chebyshev_series}
It is classical to use the symbol $\Sigma'$ for Chebyshev series, as
in
\[\sum_{n\ge0}{}'{a_nT_n}.\]
This means that the term in $T_0$, if there is one, is to be
halved~\cite[p.~27]{MasonHandscomb2003}. Indeed, the formulas for
$a_n$ often become more regular when stated that way. One reason for
this is that $T_n(\cos\theta)=\cos(n\theta)$ makes it natural to
define~$T_{-n}=T_n$ and consider sums over~$n\in\mathbb Z$, where
now~0 does not need special treatment. Working over~$\mathbb N$ leads
to formulas with special cases at~$n=0$, as in 
\zcref{table:classiques,table:pairs_rec_cop}. Since our algebraic
algorithms require operators with coefficients that are rational
functions in~$n$, they cannot contain a Kronecker symbol. The
recurrences produced by our algorithms in the
case of Chebyshev expansions have to be understood as holding for the
sequence~$a_n$ as in the formula above, i.e., with $a_0$ which is twice
the coefficient of $T_0$.

\section{Shift vs recurrence operators}\label{sec:shift-rec}

This section introduces two rings of operators with rational function
coefficients and relates their properties. Shift operators
encode the operations of shifting a sequence and multiplying it by
rational functions. For instance, the right-hand side of the structure
relation \zcref{eq:structure} is represented as
\[(\delta_2(n)S_n^2+\delta_1(n)S_n+\delta_0(n))\cdot\psi_n(x),\]
where $S_n$ denotes the \emph{shift} operator that maps a sequence $
(u_n)_{n\in\mathbb N}$ to the sequence $(u_{n+1})_{n\in\mathbb N}$,
while
`$\cdot$' denotes the
application of an operator to a sequence. Shift operators form a ring,
which is left Euclidean. Since the coefficients may have denominators
that vanish at positive integers, in general, the application of an
operator to a sequence requires $n$ sufficiently large. This
difficulty is avoided by \emph{recurrence operators} defined below,
that operate on sequences.
Again, these operators form a ring, but now it is not left principal.
However, it has minimal-degree common left multiples, that will be
sufficient for our constructions.

\subsection{Definitions}
\begin{definition}[Sequences and their germs]
Let $\K$ be a (commutative) field of characteristic~0. 
We write $\Ka$ for the algebra of \emph{sequences} of elements of
$\K$, 
with coordinate-wise addition and multiplication.
Taking the equivalence relation $(u_n)\sim(v_n)$
when there exists $N\in\N$ such that the sequences are identical for
$n>N$, the equivalence class of $(u_n)$ is called the \emph{germ} of $
(u_n)$ and the set of equivalence classes also forms a $\K$-algebra~%
\cite{PutSinger1997}.
\end{definition}

\begin{definition}[Shift operators] 
A \emph{shift operator} over $\K$ is a polynomial of the form
$\sum_{k = 0}^N a_k S_n^k$, with $a_k(n) \in \K (n)$.
These polynomials are added like commutative polynomials and
multiplied
with the commutation $S_nq(n)=q(n+1)S_n$ for $q(n)\in\K(n)$. The set
of these polynomials
forms a ring denoted $\Sh=\K(n)\langle S_n\rangle$. 
\end{definition}
The ring $\Sh$ is left and right principal, with greatest common right
and left divisors (gcrds and gclds) and least common left and right multiples 
(lclms and lcrms) computed by non-commutative analogues of Euclid’s
algorithm~\cite{Ore1933}\cite[\S1.1]{Cohn2006}. These are defined up
to multiplication by a rational function of~$\K(n)$. For
definiteness, the use of a definite
article (e.g., \emph{the} gcrd) corresponds to taking these
polynomials as monic with respect to~$S_n$.
They
are denoted $\gcrd_\Sh$,
$\gcld_\Sh$,
$\lclm_\Sh$ and $\lcrm_\Sh$ in this work to distinguish them from
operations on recurrence operators below. The $\lclm_\Sh$ of two operators is an operator that cancels $\K$-linear combinations of \emph{germs} of their solutions but not the solutions themselves in general.

\begin{example}\label{ex:PQlclm} Take $P=(n-1)S_n-n$ and $Q=(n-2)S_n-
(n-1)$. Then 
\[
\lclm_\Sh(P,Q)=(S_n-1)^2
\]
and the sequence of $\Q^\N$ defined by $u_0=u_1=0$ and $u_n=n-1$ for
$n\ge2$ is a solution of $P\cdot u_n=(n-1)u_{n+1}-nu_n=0$ that is not a solution of the $\lclm_\Sh$, since $u_2\neq2u_1-u_0$.
\end{example}

\begin{definition}[Recurrence operators]\label{def:rec-ops} Let $\Za$ be the set of polynomials in $\K[n]$ that do not vanish on $\N$. 
 Let $\K [n]_ {\Za}$
be the localization of $\K[n]$ at $\Za$; it is the ring of rational
functions whose poles lie outside of $\mathbb N$. We define the
ring of \emph{recurrence operators} over $\K$ as the subring 
$\RecI=\K[n]_{\Za}\langle S_n\rangle$ of $\Sh$. 
\end{definition}

The algebra of sequences~$\Ka$ is a left
$\RecI$-module, where the operation of $P=a_k(n)S_n^k+\dots+a_0
(n)\in\Rec$ on a sequence $(u_n)\in\Ka$ is the sequence $(v_n)$
defined
by
\[
v_i=a_k(i)u_{i+k}+\dots+a_0(i)u_i,\quad\text{for all }i\in \N. 
\]
This equation defines $v_i\in\K$ for all $i\in{\mathbb N}$ since none
of the denominators of the $a_j$ vanishes on ${\mathbb N}$. This is denoted
$P\cdot
u_n=v_n$.

\smallskip
Note that it is also possible to work with the ring~$\K[n]\langle
S_n\rangle$ instead of~$\Rec$. In practice however, the computations
with~$\K [n]\langle
S_n\rangle$ tend to produce bigger polynomials in intermediate
expressions and result in much slower computations, which
is why this article focuses on~$\Rec$.

\subsection{Common left multiples}

As $\K[n]_\Za$ is not a field but merely an integral domain, some
algebraic structure is lost in $\Rec$ compared to $\Sh$. For the
algorithms in
later sections however, it is important to combine elements of~$\Rec$
in ways similar to what is done for Ore fractions, using the Euclidean
algorithm.

\subsubsection{$\RecI$ is not left principal}
In
$\RecI$, $\gcld,\gcrd,\lclm,\lcrm$ do not
exist in general.
Here are simple examples illustrating these facts.
\begin{example} 
A greatest common left or right gcd of $n-1$ and $S_n$
would
have to divide~$n-1$ and thus have degree~0 in~$S_n$. 
Moreover, these gcds would divide all polynomials of the forms $U
(n-1)+VS_n$ or $(n-1)U+S_nV$. Thus, their constant coefficient wrt
$S_n$ would be divisible by $n-1$. This leaves only~$n-1$ as a
possible gcd, but it is neither a right nor a left divisor of $S_n$ in $\RecI$ and thus no $\gcrd$ or $\gcld$ exists. (Note however that $n-1$ is a right divisor of $S_n^2$ {in $\RecN$}, since $S_n^2=(\frac1
{n+1}S_n^2)(n-1)$.)
\end{example}

\begin{example}\label{ex:mclm}
With $P=n-2$ and $Q=S_n+1$, the following two
polynomials are left multiples of both $P$ and $Q$:
\begin{align*}
L&=(n-1)(n-2)(S_n+1)
&&=((n-2)S_n+n-1)P&=(n-1)(n-2)Q,\\
L'&=nS_n^2+2(n-1)S_n+n-2
&&=(S_n+1)^2P&=(nS_n+n-2)Q.
\end{align*}
Among the common left multiples of $P$ and $Q$ in $\RecI$, $L$ has
minimal degree in $S_n$, since it has the same degree as $Q$. Among
the degree~1 common left multiples, it can be checked that its leading
coefficient $(n-1)(n-2)$ has minimal degree in~$n$.
Thus if there existed a least common left multiple of $P$ and $Q$ it
would be equal to~$L$. The left multiples of $L$ of degree~2
in $S_n$ have leading coefficient divisible by~$n(n-1)$. This is not
the case of $L'$, showing that $L$ is not a lclm in $\RecI$.
\end{example}

\subsubsection{Common left multiples}\label{ssub:common_left_multiples}

While $\RecI$ is not left principal, since $\K[n]_{\Za}$ is an
integral domain, $\RecI$ is a left Ore domain~\cite[Prop.~1.1.4]
{Cohn2006}, which means any two elements have a nonzero common left
multiple. A constructive proof is given in \zcref{prop:mclm-rec}
below. Among all possible common multiples, 
it produces one that has minimal
order and leading coefficient of minimal degree, like $L$ in 
\zcref{ex:mclm} above. That this is possible comes from the
principality
of~$\K[n]_{\Za}$.

\begin{lemma}[Minimal elements in left ideals in $\RecI$]
\label{lemma:minimal} Every left ideal $\mathcal I\neq0$ in $\RecI$ has
a \emph{minimal} element, defined as the unique element of $\mathcal
I$ of minimal degree in $S_n$ with leading coefficient a
monic polynomial in $\K[n]$ of minimal degree.
\end{lemma}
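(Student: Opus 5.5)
We need existence and uniqueness of the minimal element. Note that "leading coefficient a monic polynomial in $\K[n]$" is shorthand: an element of $\RecI$ has coefficients in $\K[n]_{\Za}$, and every nonzero element of $\K[n]_{\Za}$ can be written as $p/q$ with $q\in\Za$. Moreover, $p$ is determined up to multiplication by elements of $\Za$ and by nonzero constants. Multiplying an element of $\mathcal I$ on the left by a unit $q/c$ of $\K[n]_{\Za}$ keeps it in $\mathcal I$. So we may normalize the leading coefficient to be a monic polynomial in $\K[n]$ with no factor in $\Za$, i.e., all of whose roots lie in $\N$.

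\emph{Existence.} Since $\mathcal I\neq0$, the set of degrees in $S_n$ of its nonzero elements is a nonempty subset of $\N$. Let $d$ be its minimum. Consider
\[
J=\{0\}\cup\{\text{leading coefficients of elements of }\mathcal I\text{ of degree }d\}\subset\K[n]_{\Za}.
\]
$J$ is an ideal of $\K[n]_{\Za}$. It is closed under multiplication by $r\in\K[n]_{\Za}$, because $r\cdot P\in\mathcal I$ since $\mathcal I$ is a left ideal, and left multiplication by $r$ multiplies the leading coefficient by $r$. It is closed under sums: if $P,Q$ have degree $d$ and their leading coefficients do not cancel, then $P+Q$ has degree $d$ with leading coefficient the sum. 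If they do cancel, the sum of leading coefficients is $0\in J$.

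$\K[n]_{\Za}$ is a localization of the PID $\K[n]$, hence a PID. So $J$ is generated by some element, which we normalize as above to a monic polynomial $g\in\K[n]$ whose roots all lie in $\N$. This $g$ has minimal degree among polynomial representatives of elements of $J$: any $p/q\in J$ with $p\in\K[n]$ satisfies $g\mid p$ in $\K[n]_{\Za}$, hence $g\mid p$ in $\K[n]$, because $g$ is coprime to every element of $\Za$. Pick $M\in\mathcal I$ of degree $d$ with leading coefficient $g$; this is possible after multiplying by a unit.

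\emph{Uniqueness.} Suppose $M,M'$ both have degree $d$ and leading coefficient monic of minimal degree. Both leading coefficients generate $J$, so they are associates, differing by a unit of $\K[n]_{\Za}$. This means the two monic polynomials differ by a factor in $\Za$. By minimality of degree, that factor has degree $0$, and since both are monic it equals $1$, so the leading coefficients are equal. Then $M-M'\in\mathcal I$ has degree $<d$, so by minimality of $d$ it must be $0$.

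The main obstacle is making precise the "monic polynomial of minimal degree" normalization. One has to show that minimal degree forces the absence of $\Za$-factors, so that the normalized generator is unique. This step relies on the principality of $\K[n]_{\Za}$ and on the unit group of $\K[n]_{\Za}$ being exactly $\{p/q : p,q\in\Za\}$.
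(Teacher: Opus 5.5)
Your proof is correct and follows the same route as the paper. You take the minimal $S_n$-degree $d$, note that the leading coefficients of degree-$d$ elements form an ideal in the principal ring $\K[n]_{\Za}$, normalize a generator to a monic polynomial, and get uniqueness because the difference of two such elements has smaller degree (the paper phrases this as $bA-aB=0$). Your coprimality argument, that the normalized generator $g$ divides every polynomial leading coefficient in $\K[n]$, makes explicit the ``minimal degree'' normalization that the paper asserts in one line. It also gives uniqueness of the leading coefficient directly, since a monic $p$ with $g\mid p$ and $\deg p=\deg g$ must equal $g$, so your detour through associates is unnecessary.
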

\begin{proof} Any two polynomials in $\mathcal I$ of minimal degree in $S_n$ differ only by factors in $\K(n)$: if $A$ and $B$ are two such polynomials and $a,b$ are their leading coefficients, then $bA-aB\in\mathcal I$ has smaller degree in $S_n$ and must therefore be 0. The leading coefficients of the polynomials of minimal degree in $\mathcal I$ form an ideal in $\K[n]_{\Za}$. Since this ring is principal~%
\cite{Samuel1971}, this
ideal is generated by one element. Up to multiplication by an
invertible element of $\K[n]_{\Za}$ (a fraction with
numerator and denominator in $\Za$), it can be made
polynomial, of minimal degree and with leading coefficient~1.
\end{proof}
More is known on the structure of these ideals and the computation of
their Gröbner bases, see~\cite [Ch.~46]{Mora2016}, but will not be
needed in this work. See also the related algorithms for
desingularization of shift operators~\cite{ChenKauersSinger2016}.

\begin{definition}[mclm in $\RecI$]\label{def:mclm} If
$P$ and $Q$ are recurrence operators in $\RecI$, the 
\emph{minimal-degree common left multiple} of $P$ and $Q$ is the
minimal
element $\mclm_\RecI(P,Q)$ of the left ideal $(P)\cap(Q)$ in $\RecI$.
The cofactors $U,V$ in $\RecI$ such that $UP=VQ=\mclm_\RecI(P,Q)$ are denoted $(Q)^P$ and $(P)^Q$. 
\end{definition}
\begin{example}With the operators of \zcref{ex:PQlclm}, 
\[
\mclm_\RecI(P,Q)=(n-1)(S_n-1)P=n(S_n-1)Q=n(n-1)(S_n-1)^2.
\]
Now the sequence $u_n$ of \zcref{ex:PQlclm} does satisfy $\mclm_\RecI
(P,Q)\cdot u_n=0$.
\end{example}
Since $\K[n]_{\Za}$ is a subring of the field $\K(n)$, one can obtain a common left multiple by computing a lclm in $\Sh$ and clearing denominators~\cite[p. 79]{Mora2016}. This is made precise in the following. 
\begin{proposition}[Computation of $\mclm_\RecI$]\label{prop:mclm-rec}
Let $A,B$ be in $\RecI$ and $U,V$ left-coprime in $\Sh$ be such that
\[
UA=VB=\lclm_\Sh(A,B).
\]
Let $d_u,d_v$ in $\K[n]$ be the largest monic factors of the
denominators of $U,V$ that vanish only at nonnegative integers; 
let $d=\operatorname{lcm}(d_u,d_v)$
with $d$ monic. Then
\[
\mclm_\RecI(A,B)=d\lclm_\Sh(A,B)
\]
and the corresponding cofactors are $dU,dV$.
\end{proposition}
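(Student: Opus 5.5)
The plan is to reduce everything to the principal ring $\Sh$ and then to a valuation computation in $\K[n]$ at the points $n=k$, $k\in\N$. Normalize $\lclm_\Sh(A,B)$ to be monic in $S_n$, with $U,V$ the corresponding cofactors.

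\emph{Step 1 (degree).} Every common left multiple of $A$ and $B$ in $\RecI$ is also one in $\Sh$. It is therefore a left multiple of $\lclm_\Sh(A,B)$, so its degree is at least $\deg\lclm_\Sh(A,B)$. Conversely, once Step~3 shows that $dU$ and $dV$ lie in $\RecI$, the product $d\lclm_\Sh(A,B)=(dU)A=(dV)B$ lies in $(A)\cap(B)\subset\RecI$ and attains this degree. So the minimal degree of the left ideal $(A)\cap(B)$ is exactly $\deg\lclm_\Sh(A,B)$. Moreover, any element $M$ of that degree in the ideal satisfies $M=c\,\lclm_\Sh(A,B)$ for some $c\in\K(n)$, because degree-minimal common left multiples in $\Sh$ differ by a scalar (the argument in the proof of \zcref{lemma:minimal}). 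Since $\Sh$ is a domain and $A,B\neq0$, the cofactors of $M$ are then forced to be $cU$ and $cV$.

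\emph{Step 2 (characterizing the admissible $c$).} An element $c\,\lclm_\Sh(A,B)$ lies in $(A)\cap(B)$ in $\RecI$ exactly when $cU$ and $cV$ have coefficients in $\K[n]_\Za$. Indeed, sufficiency is clear, and necessity follows from the uniqueness of cofactors in Step~1. For $k\in\N$, write $v_k$ for the $(n-k)$-adic valuation on $\K(n)$. An element of $\K(n)$ belongs to $\K[n]_\Za$ iff its valuation is nonnegative at every $n-k$ with $k\in\N$, since irreducible polynomials in $\K[n]$ not of this form are units there. Hence the condition on $c$ is
\[
v_k(c)\ \ge\ e_k:=\max_i\bigl(-v_k(u_i),-v_k(v_i)\bigr)\quad\text{for all }k\in\N,
\]
where $u_i,v_i$ range over the coefficients of $U$ and $V$.

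\emph{Step 3 (the key point: $e_k=v_k(d)$).} This is where left-coprimality enters, and I expect it to be the main subtlety. If $e_k<0$ for some $k$, then $n-k$ divides every coefficient of both $U$ and $V$. We could then write $U=(n-k)U'$ and $V=(n-k)V'$, and $n-k$ would be a nontrivial common left divisor in $\Sh$, contradicting left-coprimality. (Note that $n-k$ is a unit in $\K(n)$, so this case has to be checked against what "left-coprime" means there; the cleanest route is to observe that any $c$ with $v_k(c)<0$ is simply not needed, since the minimal element is then sought among $c$ with $v_k(c)\ge\max(e_k,0)$.) In any case, $\max(e_k,0)$ is the exponent of $n-k$ in the lcm of the parts of the denominators of $U$ and $V$ supported on $\N$, which is by definition $v_k(d_u)$ or $v_k(d_v)$, whichever is larger, i.e.\ $v_k(d)$. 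Thus $dU,dV\in\RecI$, and the admissible $c$ with nonnegative valuations at the $n-k$ are exactly $c\in d\,\K[n]_\Za$.

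\emph{Step 4 (minimality).} The leading coefficient of $c\,\lclm_\Sh(A,B)$ is $c$, because the lclm is monic. Among the elements $c=dw$ with $w\in\K[n]_\Za$, the monic polynomial of least degree is $d$ itself. Indeed, if $dw$ is a polynomial, then the numerator of $w$ lies in $\Za$ or is constant, and so does its denominator, which divides it. By \zcref{lemma:minimal} and Definition~\ref{def:mclm}, this gives $\mclm_\RecI(A,B)=d\lclm_\Sh(A,B)$ with cofactors $dU,dV$.
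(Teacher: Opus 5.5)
Your proof is correct and follows essentially the same route as the paper: $d\lclm_\Sh(A,B)=(dU)A=(dV)B$ is a common left multiple in $\RecI$ of minimal order, every such multiple is $c\lclm_\Sh(A,B)$ with cofactors $cU,cV$, and $cU,cV\in\RecI$ with $c\in\K[n]$ forces $d\mid c$, which you make explicit with the $(n-k)$-adic valuations. Left-coprimality plays no role here (the paper does not use it either), so the aborted divisibility argument in Step~3 can be deleted: in fact $e_k\ge0$ automatically, because the leading coefficient of $U$ is $1/a(n+\deg U)$ with $a\in\K[n]_\Za$ the leading coefficient of $A$; and the garbled justification in Step~4 is cleaner as ``$c$ is a polynomial with $v_k(c)\ge v_k(d)$ for all $k\in\N$, hence $d\mid c$.''
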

\begin{proof}
The left-coprimality of $U,V$ is a property of lclms in $\Sh$.

  By design, $d\lclm_\Sh (A,B)=(dU)A=(dU)B$ is a common left multiple
  of $A$ and $B$ in $\RecI$. It has minimal degree in $S_n$, since
  $\lclm_\Sh$ does in~$\Sh$. 
This implies that there exists $\delta\in\K[n]$ such that
$\mclm_\RecI(A,B)=\delta\lclm_\Sh(A,B)$. Right division by $A$ shows that $\delta U\in\RecI$. Similarly, $\delta V\in\RecI$. This implies that $\delta$ is a multiple of $d$ and therefore is equal to $d$ by minimality.
\end{proof}
This result is convenient, as the computation of $\lclm_{\Sh}$ is
implemented in several computer algebra systems. An alternative
possibility for a more direct computation is described by Mora~\cite[46.11.4]{Mora2016},  using a variant of the Euclidean algorithm based on pseudo-divisions.

\smallskip

Finally, although $\gcld$s do not exist in $\RecI$, the following
result gives an analogue of \zcref{prop:mclm-rec} that will be
sufficient for our needs. Recall that $\gcld_\Sh$ denotes
the \emph{monic} greatest common left divisor in~$\Sh$.

\begin{proposition}[Almost $\gcld$ in $\RecI$]\label{prop:almost-gcld}
Let $A,B$ in $\RecI$ and $G\in\Sh$ be such that $G=\gcld_\Sh(A,B)$.
Then there exist $\delta(n)\in\K[n]_\Za$, $\hat G,U,V$ in $\RecI$ such
that
\[
  \deg_{S_n} \hat G = \deg_{S_n} G,\quad \delta(n)A=\hat GU,\quad\delta
(n)B=\hat GV,\quad\deg_{S_n}\gcld_\Sh
(U,V)=0.
\]
\end{proposition}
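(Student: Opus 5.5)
Since $G=\gcld_\Sh(A,B)$, there are $U_0,V_0\in\Sh$ with $A=GU_0$ and $B=GV_0$. These are left-coprime in the sense that $\deg_{S_n}\gcld_\Sh(U_0,V_0)=0$: if $U_0$ and $V_0$ had a common left factor $H$ of positive degree, then $GH$ would be a common left divisor of $A$ and $B$ of degree larger than $\deg G$, which is impossible. The plan is to clear denominators in $G$, $U_0$, $V_0$ so that all three land in $\RecI$, absorbing the resulting factors into a scalar $\delta(n)$ on the left.

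First, multiply $G$ on the left by a polynomial $g(n)\in\K[n]$ that clears all the denominators of its coefficients; it is enough to take the product of the factors vanishing on $\N$, but any common denominator works. Set $\hat G=gG\in\RecI$. Left multiplication by a scalar does not change the degree in $S_n$, so $\deg_{S_n}\hat G=\deg_{S_n}G$. From $A=GU_0$ we get $gA=\hat G U_0$, and likewise $gB=\hat G V_0$.

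Next, clear the denominators of $U_0$ and $V_0$. A scalar multiplied on the right of $U_0$ would not give the required shape, so I clear on the left and then commute the scalar past $\hat G$. Let $e(n)\in\K[n]$ be a common denominator of the coefficients of $U_0$ and $V_0$, so that $U=eU_0$ and $V=eV_0$ lie in $\RecI$. Write $\hat G=\sum_{k=0}^{m} g_k(n)S_n^k$, and set $\varepsilon(n)=\prod_{k=0}^{m} e(n+k)$. For each $k$, $\varepsilon(n)S_n^k$ can be rewritten as $c_k(n)S_n^k e(n)$ with $c_k(n)=\varepsilon(n)/e(n+k)$, which is a polynomial. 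Hence $\varepsilon\hat G=\tilde G e$ for some $\tilde G\in\RecI$ of the same degree $m$; the leading coefficient is $c_m g_m\neq0$.

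Putting the two steps together, $\varepsilon gA=\varepsilon\hat G U_0=\tilde G eU_0=\tilde G U$, and similarly $\varepsilon gB=\tilde G V$. Take $\delta=\varepsilon g\in\K[n]\subset\K[n]_\Za$, and use $\tilde G$ as the $\hat G$ of the statement. Finally, $U=eU_0$ and $V=eV_0$ differ from $U_0$ and $V_0$ by the same nonzero scalar on the left. Any common left divisor of $U$ and $V$ in $\Sh$ is therefore, after absorbing the invertible factor $e^{-1}\in\K(n)$, a common left divisor of $U_0$ and $V_0$. So $\deg_{S_n}\gcld_\Sh(U,V)=0$.

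The only delicate point is commuting the scalar past $\hat G$ while staying inside $\RecI$. This needs $c_k(n)=\varepsilon(n)/e(n+k)$ to be a polynomial, which holds by construction of $\varepsilon$. It also needs $\delta$ to be nonzero, which is clear, and nothing in the statement requires $\delta$ to be a unit of $\K[n]_\Za$. Everything else is bookkeeping with left scalar multiplications.
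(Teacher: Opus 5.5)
Your proof is correct and takes essentially the paper's route. You clear the denominators of $G$ on the left by $g(n)$ and those of the cofactors on the left by a scalar, then commute that scalar past $g(n)G$ through a common left multiple; left-coprimality follows from the maximality of $\gcld_\Sh(A,B)$. The only difference is that the paper takes the minimal common left multiple $\hat G\,u(n)=d(n)g(n)G$ of $g(n)G$ and $u(n)$, whereas you use the explicit, non-minimal multiplier $\varepsilon(n)=\prod_{k=0}^{m}e(n+k)$, which suffices because the statement asks for no minimality.
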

\begin{proof}
The proof is effective. Let $\tilde U$ and $\tilde V$ be such that
$A=G\tilde U$ and $B=G\tilde V$. Let $g(n)$ and $u(n)$ be minimal
elements of $\K[n]_\Za$ (for the degree of the part that vanishes
only at
{elements of $\mathbb N$}) such that $g(n)G$,
$U:=u(n)\tilde U$ and $V:=u
(n)\tilde V$ all belong to $\RecI$. Define $\hat G$ and $d(n)$ by 
\begin{equation}\label{eq:commutewithG}
\lclm_\RecI(g(n)G,u(n))=\hat G u(n)=d(n)g(n)G
\end{equation}
and let $\delta(n)=d(n)g(n)$. Then,
$\delta(n)A=\delta(n)G\tilde U=\hat Gu(n)\tilde U=\hat GU$
and similarly for $B$. Finally, 
\[
  \gcld_\Sh(U,V)=\gcld_\Sh(u(n)\tilde U,u(n)\tilde V)=\gcld_\Sh
  (\tilde U,\tilde V)
\]
and the last $\gcld_\Sh$ has degree~0 by maximality of~$\gcld_\Sh
(A,B)$.
\end{proof}

\subsection{Operators in \texorpdfstring{$S_n$}{Sn} and its inverse}
 Another approach that has been exploited in the literature uses the
 inverse~$S_n^{-1}$ of the shift operator~$S_n$ and polynomials in the
 ring of Laurent polynomials~$\K[n]\langle S_n,S_n^{-1}\rangle$~%
 \cite{Lewanowicz1976,Lewanowicz1983,Lewanowicz1986,Lewanowicz1991,Lewanowicz1992,Lewanowicz1995,Lewanowicz1996,Rebillard1998,Lewanowicz2002,LewanowiczWozny2004,RebillardZakrajsek2007}. This is convenient when working with germs, as simplifications by powers of~$S_n$ are harmless in that context. 
 A difficulty is that the presence of $S_n^{-1}$ yields recurrence
 operators valid for some negative indices. This could force to extend
 the definition of orthogonal polynomials to negative indices, in
 particular in accordance with~\zcref{eq:3term-rec}. The adaptation is
 fine in the Chebyshev case, and more delicate in the other ones.
 Besides, when applying these operators for sequences rather
 than germs, one has to follow carefully the operations on operators, keeping track of a~$n_0$ such that all recurrences hold for~$n\ge n_0$. Using recurrence operators as in \zcref{def:rec-ops} avoids these difficulties and lets one still make use of Euclid's algorithm as shown in \zcref{prop:mclm-rec}.

\section{Fractions of recurrence operators}
\label{sec:module-of-pairs}

We now define fractions of recurrence operators.
Their addition and multiplication rely on the
cofactors in the
mclm (\zcref{def:mclm}). The algorithms of \zcref{sec:irred-fracs} produce 
\emph{irreducible} fractions, a notion that is
presented in \zcref{sub:irreducible_fractions}.  These fractions of
recurrence operators
resemble, but
differ from, Ore fractions; the difference is clarified at the end of
this section.

\subsection{Annihilators}
\begin{definition}[Annihilators] Given a $k$-tuple $U=
(u_1,\dots,u_k)$ of sequences in $\K^{\mathbb N}$, the 
\emph{annihilator}
of $U$ is the set
\[
\Ann_\RecI U=\{(p_1,\dots,p_k)\in\RecI^k\mid p_1\cdot u_1+\dots+p_k\cdot
u_k=0\}.
\]
It is a left $\Rec$-submodule of $\RecI^k$, $\RecI$ operating by left
multiplication on each component.
\end{definition}

The case $k=1$ corresponds to the annihilating left ideal of a
sequence. The main focus of this work is the
case when $k=2$ and the elements of these modules are called
\emph{pairs} of recurrence operators.

\subsection{Fractions of recurrence operators}
\label{sub:fractions_of_recurrence_operators}
The set $\RecI^\times$ of invertible elements of $\RecI$ is the set of
fractions in $\K(n)$ with both numerator and denominator belonging
to~$\Za$. 
\begin{definition}[$\Za$-equivalence and fractions]
\label{def:fractions} We write
$\FrecI$
for the
equivalence classes of pairs of recurrence operators in
$\RecI\times\RecI$ by the relation
\[
(N_1,D_1)\equiv_\Za(N_2,D_2)\Leftrightarrow\exists a\in\RecI^\times,
N_1=aN_2,D_1=aD_2.
\]
The elements of $\FrecI$ are called \emph{fractions of recurrence
operators}; their first and second components are the 
\emph{numerator} and \emph{denominator} of the fraction. 
\end{definition}
Checking that $\equivZA$ is an equivalence relation is straightforward, as is the following result that gives a motivation for considering this relation.

\begin{lemma}
Let $u,v$ be sequences in $\K^{\mathbb N}$, then $
(N,D)\in\Ann_\RecI (u,v)$
if and only if $(aN,aD)\in\Ann_\RecI(u,v)$ for all $a\in\RecI^\times$.
\end{lemma}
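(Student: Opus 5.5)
The plan is to prove both directions directly from the definitions, using only that an invertible element $a\in\RecI^\times$ is a rational function whose numerator and denominator lie in $\Za$. Such an $a$ acts on sequences by termwise multiplication by the values $a(n)$, and these values are defined and nonzero for every $n\in\N$.

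For the reverse implication, take $a=1\in\RecI^\times$; then $(N,D)=(1\cdot N,1\cdot D)\in\Ann_\RecI(u,v)$, and there is nothing more to check.

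For the direct implication, assume $N\cdot u+D\cdot v=0$ and fix $a\in\RecI^\times$. Because $\Ka$ is a left $\RecI$-module, we have $(aN)\cdot u=a\cdot(N\cdot u)$ and $(aD)\cdot v=a\cdot(D\cdot v)$, so
\[
(aN)\cdot u+(aD)\cdot v=a\cdot\bigl(N\cdot u+D\cdot v\bigr)=a\cdot 0=0 .
\]
Hence $(aN,aD)\in\Ann_\RecI(u,v)$. This direction only uses that $a\in\RecI$ and that the annihilator is a left submodule; invertibility is not needed here.

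The one point worth stating explicitly is that multiplication by $a$ on the left is well defined on all of $\N$, since the denominator of $a$ lies in $\Za$ and therefore has no zeros on $\N$. Membership in $\RecI$ already guarantees this. I do not expect a real obstacle in this lemma. Invertibility becomes relevant for the converse-type statement that $(aN,aD)\in\Ann$ implies $(N,D)\in\Ann$: one multiplies by $a^{-1}\in\RecI$, or equivalently uses that $a(n)\neq0$ for all $n\in\N$. I would add this remark after the proof to explain why $\equiv_\Za$-classes are exactly the right quotient.
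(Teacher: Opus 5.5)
Your proof is correct and matches the paper's intent. The paper gives no written proof and calls the lemma straightforward. The forward direction is exactly the left-submodule property of $\Ann_\RecI(u,v)$. The converse follows from taking $a=1$, or, under the pointwise reading, from multiplying by $a^{-1}\in\RecI$, which your closing remark already covers.
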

The set of $\Za$-equivalence classes of the elements of $\Ann_\RecI(u,v)$ is denoted $\Ann_\FrecI(u,v)$.

\subsection{Proper fractions}\label{sub:proper_fractions}
We call \emph{proper} a fraction that is either~$(0,0)$ or has a
nonzero denominator. 
Our algorithms computing annihilators of pairs of sequences will
mostly deal with $\RecI$-modules~$M$ of proper fractions, i.e., such
that $M\cap(\{0\}\times\RecI)=\{(0,0)\}$, $\Rec$ acting by left
multiplication on both components.
These modules behave almost
like principal ideals, in the following sense.

\begin{proposition}\label{lemma:quasi-gen-pair} If $M\neq\{(0,0)\}$
is a left $\Rec$-module of proper fractions, 
there
exists a fraction $G \in M$ that is a generator of the
$\Sh$-module generated by~$M$. Thus
for all $A \in M$, there exists $a
(n)\in \K [n]\setminus \{ 0 \} $ and $H \in \Rec$ that satisfy 
$a(n)A=H G$.
Such a pair is called a
\emph{quasi-generating fraction} of~$M$.
\end{proposition}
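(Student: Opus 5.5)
The plan is to pass to the field of fractions, where $\Sh$ is left principal, and then clear denominators. Let $\Sh M$ denote the left $\Sh$-submodule of $\Sh^2$ generated by $M$. We first check that its elements are still proper. An element of $\Sh M$ is a finite sum $\sum_i R_i A_i$ with $R_i\in\Sh$ and $A_i\in M$. Multiplying on the left by a common left multiple of the denominators of the coefficients of the $R_i$ (a nonzero polynomial $c(n)$ suffices, since $\K[n]\subset\K[n]_\Za$), we get $c(n)\sum_iR_iA_i=\sum_i R_i'A_i$ with $R_i'\in\RecI$. The right-hand side lies in $M$. Hence if $\sum_i R_iA_i=(N,0)$, then $(c(n)N,0)\in M$, which forces $c(n)N=0$, so $N=0$. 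Therefore $\Sh M\cap(\{0\}\times\Sh)=\{(0,0)\}$.

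The key step is to show that $\Sh M$ is cyclic. Consider the projection onto the second component, $\pi:\Sh M\to\Sh$, $(N,D)\mapsto D$. It is a morphism of left $\Sh$-modules, and by the previous paragraph it is injective. Its image is a left ideal of $\Sh$, which is left principal, say $\Sh D_0$, and $D_0\neq0$ because $M\neq\{(0,0)\}$. So $\Sh M$ is isomorphic to $\Sh D_0$ and is generated by the unique $G_0=(N_0,D_0)\in\Sh M$ with $\pi(G_0)=D_0$.

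We still need a generator that belongs to $M$ itself, not merely to $\Sh M$. As above, some nonzero $c(n)\in\K[n]$ gives $G:=c(n)G_0\in M$. Since $c(n)$ is invertible in $\Sh$, $G$ also generates $\Sh M$. Finally, take $A\in M$. Then $A=RG$ for some $R\in\Sh$. Let $a(n)\in\K[n]\setminus\{0\}$ be a common denominator of the coefficients of $R$; it can be placed on the left because $a(n)R$ having polynomial coefficients only requires clearing denominators coefficient-wise. Setting $H=a(n)R\in\K[n]\langle S_n\rangle\subset\RecI$ gives $a(n)A=HG$, as claimed.

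The main obstacle is the injectivity argument, which is exactly where properness is used; without it, $\Sh M$ could be a noncyclic submodule of $\Sh^2$. The remaining work is routine bookkeeping of denominators, and it relies only on the fact that left multiplication by polynomials in $n$ keeps us inside $\RecI$.
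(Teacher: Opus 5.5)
Your proof is correct and is essentially the paper's own argument: properness passes to the $\Sh$-module generated by $M$ by clearing denominators, and left principality of $\Sh$ applied to the projection on the second component gives a generator (your injectivity of $\pi$ is the paper's step $P-a\tilde G=(u-ax,0)=(0,0)$ by properness), after which a last clearing of denominators yields $G\in M$ and $H\in\RecI$. Two minor points: your first paragraph actually proves $\Sh M\cap(\Sh\times\{0\})=\{(0,0)\}$, not the intersection with $\{0\}\times\Sh$ as you wrote it; and your choice of a nonzero $c(n)\in\K[n]$ for $G=c(n)G_0\in M$ is the right one, whereas the paper's ``$c\in\Za$'' is a slip, since elements of $\Za$ are units of $\RecI$.
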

\begin{proof} The proof reduces to modules over the ring $\Sh$,
whose left-principal character is then exploited. 
Let $\tilde{M}$ be the left-$\Sh$-module generated
by
$M$. 

1. Properness of $M$ implies
that $ \tilde{M} \cap(\Sh\times\{0\})=\{
(0,0)\}$: The elements of~$\tilde M$ are
finite linear
combinations~$C=a_1
(n_1,d_1)+\dots+a_k (n_k,d_k)$
of elements of $M$ with coefficients~$a_i$ in~$\Sh$ rather than
$\Rec$. If such a
combination has a~0 denominator,
then, multiplying~$C$ by a common multiple of the denominators
of the~$a_i$ gives
an element of~$M$. This element is proper, which implies
that its numerator
is~0 and thus also that of~$C$.

2. $\tilde M$ has a generating pair: The projection of
$\tilde M$ on the second
coordinate is a nonzero
left ideal 
of $\Sh$. Let $y\in\Sh$~be a generator of it and~$\tilde G=(x,y)$ a
corresponding element of~$\tilde M$. For any nonzero element~$
P=(u,v)\in\tilde M$, there exists $a\in\Sh$ such that $v=ay$ and then
$P-a\tilde G=(u-ax,0)=(0,0)$ (by properness) implies that $P=a\tilde
G$. Clearing
denominators gives~$c\in\Za$ such that~$G:=c
\tilde G\in M$.

3. The pair $G$ generates~$M$: For any $A \in {M}$, there exists
$h \in \Sh$,  such that $A = h\tilde G=hc^{-1}G$.
Multiplying~$hc^{-1}$ by an appropriate polynomial~$a$ so that
$H=ahc^{-1}\in\RecI$ concludes.
\end{proof}

The following equivalence relation is classical in Ore fractions.
\begin{definition}[$\RecI$-Equivalence]\label{def:equivalent-pairs}
Two
fractions $P$ and $Q$ in $\Frec$ are  
\emph{$\RecI$-equivalent} if there exists $r,s$ in $\RecI\setminus 
\{0\}$ such that $rP=sQ$. This is denoted $
P\equiv_{\RecI}Q$.
\end{definition}
\begin{corollary}\label{lemma:equivalent-pairs}
Let $M\neq\{(0,0)\}$ be a left $\RecI$-module of proper fractions.
Then all nonzero fractions in~$M$ are $\RecI$-equivalent.
\end{corollary}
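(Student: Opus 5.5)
The plan is to deduce the corollary directly from \zcref{lemma:quasi-gen-pair}. Since $M\neq\{(0,0)\}$ is a left $\RecI$-module of proper fractions, that proposition provides a quasi-generating fraction $G\in M$, which generates the $\Sh$-module $\tilde M$ spanned by $M$. For every $A\in M$ it also gives some $a(n)\in\K[n]\setminus\{0\}$ and $H\in\RecI$ with $a(n)A=HG$.

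Let $P,Q$ be two nonzero fractions of $M$. The proposition yields $a(n)P=H_PG$ and $b(n)Q=H_QG$, where $a,b$ are nonzero polynomials and $H_P,H_Q\in\RecI$. First I check that $H_P$ and $H_Q$ are nonzero. If $H_P$ were $0$, then $a(n)P=0$; since $\RecI$ is a domain and $a\neq0$, both components of $P$ would vanish, contradicting $P\neq0$. The same argument applies to $H_Q$.

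Next I combine the two relations using the Ore property of $\RecI$. By \zcref{prop:mclm-rec} (equivalently the left Ore property recalled before it), the nonzero operators $H_P$ and $H_Q$ have a nonzero common left multiple $\mclm_\RecI(H_P,H_Q)=UH_P=VH_Q$, with $U,V\in\RecI$. Both cofactors are nonzero because the multiple is nonzero. It follows that
\[
(Ua(n))P=UH_PG=VH_QG=(Vb(n))Q .
\]
Set $r=Ua(n)$ and $s=Vb(n)$. These lie in $\RecI$ since $a,b\in\K[n]\subset\K[n]_\Za$, and they are nonzero because $\RecI$ has no zero divisors. Hence $P\equiv_\RecI Q$ in the sense of \zcref{def:equivalent-pairs}.

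The only delicate point is making sure $r$ and $s$ are nonzero. This relies on $\RecI$ being a domain, which holds as a subring of $\Sh$, together with $P$ and $Q$ being nonzero so that $H_P$ and $H_Q$ cannot vanish.
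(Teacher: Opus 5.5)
Your proof is correct and follows the paper's own argument. You take a quasi-generating fraction $G$, write $aP=H_PG$ and $bQ=H_QG$, and left-multiply by the cofactors $U,V$ of a common left multiple $UH_P=VH_Q$ in $\RecI$ to get $(Ua)P=(Vb)Q$. Your extra check that $H_P$, $H_Q$, and hence $Ua$ and $Vb$, are nonzero (using that $\RecI$ is a domain) fills in a detail the paper leaves implicit.
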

\begin{proof}
Let~$A,B$ be two nonzero fractions in~$M$ and $G$ a quasi-generating
fraction of~$M$. Then there exists $a,b$ in~$\K[n]\setminus\{0\}$ and
$H_A,H_B$ in~$\Rec$ such that
\[aA=H_AG,\quad bB=H_BG.\]
Let~$UH_A=VH_B$ be a common left multiple of~$H_A,H_B$ in~$\Rec$.
Then~$UaA=VbB$, proving the $\RecI$-equivalence of~$A,B$.
\end{proof}

\subsection{Irreducible fractions}\label{sub:irreducible_fractions}

The algorithms presented in the next sections compute fractions
annihilating pairs of sequences. The $\Za$-equivalence used to
define these fractions does not allow to simplify these fractions by
cancelling out common left factors unless they belong to $\Za$. Extra
work is thus needed to obtain quasi-generating fractions.
A simple criterion is in terms of irreducible fractions.
\begin{definition}[Irreducible fraction]
\label{def:irred-frac}\label{def:irreducible} 
A fraction $(p,q)\in\FrecI \setminus \{ (0,0) \}$ is called 
\emph{irreducible}
if the monic greatest common \emph{left} divisor of $p,q$ in $\Sh$ is~1.
\end{definition}
\noindent Note that this definition does not depend on the equivalence
class of
$(p,q)$. Irreducibility is easily detected by a gcld computation
in~$\Sh$.

\begin{lemma}[Minimality criterion]
\label{prop:minimalitySh} Let $M\neq\{(0,0)\}$ be a left
$\RecI$-module of proper fractions. 
 If $(p,q)\in M$ is irreducible,  then $ (p,q)$ is a quasi-generating
 fraction of~$M$ and all quasi-generating fractions of~$M$ are
 irreducible.
\end{lemma}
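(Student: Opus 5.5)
The plan is to work in $\Sh$ through the quasi-generating fraction $G=(x,y)$ of \zcref{lemma:quasi-gen-pair}. By step~2 of that proof, $G$ generates the left $\Sh$-module $\tilde M$ spanned by~$M$, and $y\ne0$ by properness. Since $(p,q)\in M\subset\tilde M$, there is $h\in\Sh$ with $(p,q)=h(x,y)$, that is, $p=hx$ and $q=hy$. Then $h$ is a common left divisor of $p$ and $q$ in~$\Sh$. Irreducibility says $\gcld_\Sh(p,q)=1$, so $h$ must have degree~$0$ in~$S_n$, i.e.\ $h\in\K(n)\setminus\{0\}$ ($h\neq0$ because $(p,q)\neq(0,0)$).

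\textbf{First claim: $(p,q)$ is quasi-generating.} We check that $(p,q)$ generates $\tilde M$ over~$\Sh$. Any element of $\tilde M$ has the form $kG=kh^{-1}(p,q)$ with $k\in\Sh$, so $\tilde M=\Sh\,(p,q)$. This is exactly the property in \zcref{lemma:quasi-gen-pair}. The consequence stated there also follows: for $A\in M$, write $A=k(p,q)$ with $k\in\Sh$, then clear the denominators of~$k$ by a polynomial $a(n)$, which gives $a(n)A=H(p,q)$ with $H\in\Rec$.

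\textbf{Second claim: every quasi-generating fraction is irreducible.} Let $G'=(x',y')\in M$ be any generator of~$\tilde M$ over~$\Sh$. Since $(p,q)$ is also a generator, we can write $G'=k(p,q)$ and $(p,q)=k'G'$ with $k,k'\in\Sh$. Then $(p,q)=k'k(p,q)$, so $(1-k'k)q=0$. The key step is that $q\neq0$ by properness ($(p,q)\neq(0,0)$ and $M$ contains no nonzero pair with zero denominator). Because $\Sh$ is a domain, this gives $k'k=1$, so $k$ and $k'$ both have $S_n$-degree~$0$ and $k\in\K(n)^\times$. Now any common left divisor $g$ of $x'=kp$ and $y'=kq$ yields the common left divisor $k^{-1}g$ of $p$ and~$q$. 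Hence $g$ has degree~$0$, and the monic $\gcld_\Sh(x',y')$ is~$1$.

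The only subtle points are that generators of a cyclic module over the domain $\Sh$ differ by units, which holds here because the generated module is torsion-free (thanks to $q\ne0$), and that degrees add in~$\Sh$, so units have degree~0. Neither is a real obstacle. The main thing to do carefully is to use properness to guarantee that the denominator component is nonzero, which is what lets the cancellation go through.
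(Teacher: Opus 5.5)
Your proof is correct and follows essentially the same route as the paper. You write $(p,q)$ as an $\Sh$-multiple of a generator $G$ of $\tilde M$, use irreducibility to force the multiplier into $\K(n)$, and conclude that all quasi-generating fractions differ from $(p,q)$ by nonzero factors in $\K(n)$. Your version also spells out a step the paper leaves implicit: why two $\Sh$-generators of $\tilde M$ must differ by a unit, which you get from properness ($q\neq0$) and the fact that $\Sh$ is a domain.
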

\begin{proof}
This is a direct corollary of \zcref{lemma:quasi-gen-pair}. If $
(p,q)$ is irreducible and a multiple of a quasi-generating
fraction~$G$,
then~$G$ itself has to be irreducible and therefore~$(p,q)=c(n)G$ for
some~$c(n)\in\K(n)$. This makes~$(p,q)$ a quasi-generating fraction
of~$M$. Also, all other quasi-generating fractions are multiples of
that same~$G$ by some element of~$\K(n)$, which makes them
irreducible.
\end{proof}

\subsection{Addition and multiplication of fractions}
\label{sub:operations_on_fractions}
Up to this point, the operations that have been used in $\Frec$ are
those that make it a $\RecI$ module, i.e., pairwise addition and
simultaneous multiplication by
elements of $\RecI$. We now define two more operations on these
fractions.
\begin{definition}[Addition and multiplication of fractions]%
\footnote{See \zcref{def:mclm} for the notation for cofactors of
the mclm in $\Rec$.}
\label{def:op_pairs} One defines two maps from $\FrecI\times\FrecI$ to
$\FrecI$:
\begin{align*}
  (p_1,p_2)\oplus (q_1,q_2)&=((q_2)^{p_2}p_1+(p_2)^
  {q_2}q_1,\mclm_\RecI (p_2,q_2));
\\
  (p_1,p_2)\otimes (q_1,q_2)&=((p_1)^{q_2}q_1,(q_2)^
  {p_1}p_2). 
\end{align*}
\end{definition}
\begin{proof}[Proof that the operations do not depend on the
equivalence class for $\equivZA$.]
Let $a,b$ be in $\RecI^\times$ and $p,q$ in $\RecI$. We first relate the
cofactors of $ap$ and $bq$ to those of $p,q$. First, since $\lclm$s
are taken monic with respect to $S_n$, 
we have
\[
  \lclm_\Sh \left (p, q\right) = \lclm_\Sh \left (ap,bq\right).
\]
\zcref{prop:mclm-rec} then implies
\[
  \mclm_\RecI \left (p, q\right) = \mclm_\RecI \left (ap,bq\right).
\]
In terms of cofactors, the equalities
\[
  (q)^{p} p =  (p)^{q} q = \mclm_\RecI\left (p, q\right)=
(q)^{p}\frac1a a p =  (p)^{q}\frac 1b b q
\]
give the following cofactors in $\RecI$:
\[
  (bq)^{ap}=(q)^p\frac1a,\quad 
  (ap)^{bq}=(p)^q\frac 1b.
\]
Therefore,
\begin{gather*}
(bq_2)^{ap_2}ap_1+(ap_2)^{bq_2}bq_1=
(q_2)^{p_2}\frac1a ap_1+(p_2)^{q_2}\frac1bbp_2=
(q_2)^{p_2}p_1+(p_2)^{q_2}q_1,\\
((ap_1)^{bq_2}bq_1,(bq_2)^{ap_1}ap_2)=
(p_1)^{q_2}\frac1bbq_1,(q_2)^{p_1}\frac1aap_2)=
((p_1)^{q_2}q_1,(q_2)^{p_1}p_2)
\end{gather*}
show the result for $\oplus$ and for $\otimes$.
\end{proof}
 
Our method constructs fractions inductively from the following.
\begin{proposition}[Computation of annihilators] 
\label{prop:annihilators}
Let $u,v,w$ be sequences in $\K^{\mathbb N}$.
Then
\begin{align*}
\Ann_\FrecI(u,-v)\oplus \Ann_\FrecI
(u,-w)&\subset\Ann_\FrecI(u,-
(v+w)),\\
\Ann_\FrecI(v,-w)\otimes \Ann_\FrecI
(u,-v)&\subset\Ann_\FrecI(u,-w).
\end{align*}
\end{proposition}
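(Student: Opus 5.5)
The proof is a direct verification from \zcref{def:op_pairs} and the definition of annihilators. It only uses that $\K^{\mathbb N}$ is a left $\RecI$-module and that the cofactors of the mclm lie in $\RecI$ by \zcref{def:mclm}. Both inclusions are checked on representatives. Since membership in an annihilator is invariant under $\equivZA$ (the lemma following \zcref{def:fractions}), and $\oplus,\otimes$ do not depend on the choice of representatives (as shown after \zcref{def:op_pairs}), it suffices to take pairs $(p_1,p_2)$ and $(q_1,q_2)$ in $\RecI\times\RecI$ that annihilate the given pairs of sequences.

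For the sum, assume $p_1\cdot u - p_2\cdot v=0$ and $q_1\cdot u-q_2\cdot w=0$. Write $U=(q_2)^{p_2}$ and $V=(p_2)^{q_2}$, so that $Up_2=Vq_2=\mclm_\RecI(p_2,q_2)=:m$. Apply $U$ to the first identity and $V$ to the second, then add:
\[
(Up_1+Vq_1)\cdot u = Up_2\cdot v+Vq_2\cdot w = m\cdot v+m\cdot w=m\cdot(v+w).
\]
This is exactly the statement that $(Up_1+Vq_1,m)\in\Ann_\RecI(u,-(v+w))$, which is the first inclusion.

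For the product, assume $q_1\cdot u=q_2\cdot v$ and $p_1\cdot v=p_2\cdot w$; here $(p_1,p_2)\in\Ann(v,-w)$ and $(q_1,q_2)\in\Ann(u,-v)$, matching the order in the statement. Write $U=(p_1)^{q_2}$ and $V=(q_2)^{p_1}$, so that $Uq_2=Vp_1$. Then
\[
Uq_1\cdot u=Uq_2\cdot v=Vp_1\cdot v=Vp_2\cdot w,
\]
so $(Uq_1,Vp_2)\in\Ann_\RecI(u,-w)$, which is the second inclusion.

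The only delicate point is bookkeeping, not mathematics. One must match the cofactor notation $(Q)^P$ of \zcref{def:mclm} to the correct operand. One must also observe that all operators involved lie in $\RecI$, so their action on sequences is defined for every $n\in\N$ and not just on germs. This is exactly why $\mclm_\RecI$ is used rather than $\lclm_\Sh$.
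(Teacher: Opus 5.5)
Your proof is correct and is essentially the paper's argument. The paper writes the two annihilation relations as the rows of a $2\times 3$ matrix acting on $(u,v,w)^T$, left-multiplies the rows by the same mclm cofactors you chose, and adds them. That is exactly your direct computation, and your remarks on $\Za$-equivalence and on the cofactors lying in $\RecI$ only make explicit what the paper leaves implicit.
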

\begin{proof} Take $(p_1,p_2)\in\Ann_\FrecI(u,-v)$ and $(q_1,q_2)\in\Ann_\FrecI (u,-w)$. Then the matrix
\[
N=\begin{pmatrix}p_1&-p_2&0\\ q_1&0&-q_2\end{pmatrix}
\]
is such that $N\cdot(u,v,w)^T=0$. Multiplying the first row by $(q_2)^
{p_2}$, the second one by $(p_2)^{q_2}$ and adding gives the first
inclusion. For the second one, take $(p_1,p_2)\in\Ann_\FrecI(v,-w)$ and
$(q_1,q_2)\in\Ann_\FrecI(u,-v)$. The matrix
\[
M=\begin{pmatrix}0&p_1&-p_2\\q_1&-q_2&0\end{pmatrix},
\]
is such that $M\cdot(u,v,w)^T=0$. Multiplying the first row by $(q_2)^{p_1}$, the second one by $(p_1)^{q_2}$ and adding concludes.
\end{proof}

\subsection{Ore fractions}
\label{rk:Orefrac} 
Since $\RecI$ is a left Ore domain, Ore's construction can be applied
to construct the skew field $\operatorname{Frac}(\RecI)$~\cite{Ore1931}. This is
obtained by defining sum and product with the operations of 
\zcref{def:op_pairs} (except that any common left multiple can be
used in place of the $\mclm$) and quotienting by $\RecI$-equivalence.
We call these
fractions \emph{Ore fractions} in order to distinguish them
from those constructed and used here.

By contrast, the fractions in $\FrecI$ are not equivalence classes
for $\Rec$-equivalence.
They are pairs. They can be added by adding numerators
and denominators. Numerator and denominator can be left multiplied by
the same element of $\RecI$. They can also be subjected to the
operations $\oplus$ and $\otimes$ of \zcref{def:op_pairs}, which is
why we use the word `fractions' for them. 
However, $\FrecI$ is not a ring with these operations; it is not even
a group for~$\oplus$. 

The motivation for using $\FrecI$ rather than $
\operatorname{Frac}
(\RecI)$ is that $\RecI$-equivalence of pairs is not  compatible with
operations on sequences or on their germs. For instance,
if $u_n$ is the constant sequence defined by $u_n=1$ for all $n$ and similarly $v_n=2$ for all $n$, then clearly $(S_n-1,S_n-1)\in\Ann_\Sh
(u_n,v_n)$, but the $\RecI$-equivalent pair $(1,1)$ does not belong to
$\Ann_\Sh (u_n,v_n)$. 

\section{Recurrences for coefficients}\label{sec:rec_coeff}
Given a linear differential operator~$L$ (defined formally in \zcref{sub:linear_differential_operators}),
it
is classical that
two power series~$f$ and~$g$ related by the linear
differential equation $L\cdot f=g$ have sequences
of
Taylor coefficients that are related by a linear recurrence operator.
In the case of generalized Fourier coefficients, this approach
extends, but the relation involves a \emph{fraction} of recurrence
operators.
Then when~$g=0$, the \emph{numerator} of that fraction gives a
linear recurrence for the Fourier coefficients of~$f$. 

\begin{figure}[h]
\begin{tikzpicture}[
    node distance=1cm and 2cm,
    mynode/.style={align=center, font=\normalsize},
    myarrow/.style={->, >=Stealth, thick, shorten >=2pt, shorten <=2pt}
]

\node[mynode] (diffop) {$\mathbb{K}[x]\langle \partial \rangle$};
\node[mynode, below=-0.2cm of diffop] (diffoptexte) {(differential operators)};
\node[mynode, above right=1.5cm and 0.5cm of diffop] (diffterms) {$\mathcal{D}$ (differential terms)};
\node[mynode, below right=1.5cm and 0.5cm of diffterms] (prec) {$\FrecI$};

\node[mynode, below=-0.12cm of prec] (pairs) {(fraction of recurrence
operators)};
\path (diffoptexte.east) -- (pairs.west) node[pos=0.5] {};

\draw[myarrow] (diffterms) -- (diffop) node[midway, left=0.15cm] {$
\mathfrak{D}_{\delta}$};
\draw[myarrow] (diffterms) -- (prec) node[midway, right=0.1cm] {$\mathfrak{R}_{\delta}^\psi$};

\end{tikzpicture}

  \caption{
\label{fig:difftermsopsrec}}
\end{figure}

Our approach is summarized in \zcref{fig:difftermsopsrec}.
The input
is a differential operator in the bottom left leaf
of the picture.
This operator can be written in many ways, corresponding to
distinct differential terms (\zcref{sub:differential_terms}) at
the root of the tree in the picture. To
each of these terms one can associate a fraction of recurrences 
(bottom right leaf).
Depending
on the choice of the term and sometimes on the choice of the
derivation~$\delta$, this gives distinct recurrence operators that can
sometimes be combined to reduce the order of the final recurrence.
Such a recombination is described in the next section.
This approach is very versatile. It is first illustrated in
the
simple case of Taylor series (\zcref{sub:taylor_expansions}), before
turning to the more involved case of generalized Fourier expansions 
(\zcref{sub:recurrences_for_generalized_fourier_coefficients}).


\subsection{Linear differential operators}\label{sub:linear_differential_operators}
A \emph{derivation} $\delta$ on the ring of polynomials~$\K[x]$
over the field~$\K$ of characteristic~0 is a $\K$-linear
map that satisfies $\delta(ab)=a\delta(b)+b\delta(a)$. For instance,
$\delta=d/dx$ is a derivation, but also more generally~$\delta=q
(x)d/dx$ for $q(x)\in\K[x]$ is a derivation and all derivations
of~$\K[x]$ are of this type~\cite[VIII \S5 Prop.~5.5]{Lang2002}.
Examples of such
derivations are used in \zcref{sec:singular-case}. 

\emph{Linear differential operators} are polynomials in a
variable~$\partial$ with coefficients in~$\K[x]$, with usual
commutative addition and a multiplication defined by the commutation
$\partial a=a\partial+\delta(a)$ for $a\in\K[x]$. The ring of linear
differential operators is denoted~$\K
[x]\langle\partial;\delta\rangle$ and we omit~$\delta$ when it is the
usual derivation~$d/dx$. 

\subsection{Differential terms}\label{sub:differential_terms}

\begin{figure}
\begin{tikzpicture}[scale=0.4, every node/.style={transform shape}]
\node (A) {
\begin{forest}
for tree={
  draw,
  circle,
  minimum size=1.2em,
  inner sep=1pt,
  s sep=7mm,
  l sep=10mm,
  align=center
}
[$\times$
  [$\times$
    [$\partial$]
    [$+$
      [$x$]
      [$+$
        [$x$]
        [$1$]
      ]
    ]
  ]
  [$\partial$]
]
\end{forest}
};

\node[right=4cm of A] (B) { 
\begin{forest}
for tree={
  draw,
  circle,
  minimum size=1.2em,
  inner sep=1pt,
  s sep=7mm,
  l sep=10mm,
  align=center
}
[$\times$
  [$+$
    [$\times$
      [$+$
        [$\times$
          [$2$]
          [$x$]
        ]
        [$1$]
      ]
      [$\partial$]
    ]
    [$2$]
  ]
  [$\partial$]
]
\end{forest}
};
\end{tikzpicture}
\caption{Tree representation of two differential terms:\\ $
(\partial\times (x+
(x+1)))\times\partial$ (left) and $((2\times
x+1)\times\partial+2)\times\partial$ (right).
$\mathfrak D_{d/dx}$ maps both to the differential operator
$(2x+1)\partial^2+2\partial$.
\label{fig:diff-terms}}
\end{figure}
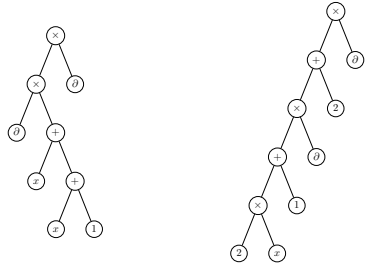

We make use of differential terms. They can be thought of as binary
trees with $+,\times$ at the internal nodes and $x,\partial$ and
elements of $\K$ at the leaves. No assumption on associativity or
commutativity of the operations is implied. Two differential terms are
displayed in \zcref{fig:diff-terms}. This notion is formalized as
follows.
\begin{definition} Let $\mathcal D$ be the term algebra (see, e.g.,
\cite{Hodges1997}) whose
constants are the elements of $\K$ and the symbols $x,\partial$,
together with two binary operations $+,\times$. The elements of
$\mathcal D$ are called \emph{differential terms}. 
\end{definition}

If $\delta=q(x)d/dx$ is a derivation of~$\K[x]$, we write $\mathfrak
D_\delta$ for the homomorphism from $\mathcal D$ to the
ring of linear differential operators $\K
[x]\langle\partial\rangle$ that maps the elements of~$\K$
and~$x$ to themselves, $\partial$ to $q(x)\partial$, and the
operations $+,\times$ to these operations in $\K
[x]\langle\partial\rangle$. 
Note that
many distinct elements of $\mathcal D$ are mapped to the same element
of $\K
[x]\langle\partial\rangle$. For instance the terms in \zcref{fig:diff-terms} both
map to the differential operator $(2x+1)\partial^2+2\partial$ in $\K
[x]\langle\partial\rangle$. This
will let us construct several different recurrences for Fourier
coefficients of solutions of linear differential equations, and
combine them.

\subsection{Taylor expansions}\label{sub:taylor_expansions}
We first illustrate the approach in the simple case of Taylor coefficients.

\begin{definition}[Taylor morphism] We write  $\mathfrak R_{
\mathrm{Taylor},d/dx}$
for the homomorphism from $(\mathcal D,+,\times)$ to $
(\Frec,\oplus,\otimes)$ defined by
\[
  \mathfrak R_{\mathrm{Taylor},d/dx}
  (x)=(1,S_n),\qquad\mathfrak R_{\mathrm{Taylor},d/dx}(\partial)=((n+1)S_n,1)
\]
and
for
all $\lambda\in\K$, $\mathfrak R_{\mathrm{Taylor},d/dx}(\lambda)=(\lambda,1)$.
\end{definition}

\begin{proposition}[Recurrences for Taylor coefficients] Let
$t\in\mathcal D$ be a differential term, $L$ be the differential
operator $\mathfrak D_{d/dx}(t)$ and let $(\mathcal R_\text{num},\mathcal
R_\text{den})=\mathfrak R_{\mathrm{Taylor},d/dx}(t)$. Then for any power
series
$F\in\K [[x]]$\footnote{The notation $[x^n](F)$ represents the
coefficient of~$x^n$ in the power series~$F$.},
\[
\mathcal R_\text{num}\cdot[x^n] (F)=\mathcal R_\text{den}\cdot[x^n] 
(L\cdot F).
\]
In particular, if $F$ is a power series such that $L\cdot F=0$ then
its sequence of Taylor coefficients is annihilated by the numerator:
$\mathcal R_\text{num}\cdot[x^n](F)=0$.
\end{proposition}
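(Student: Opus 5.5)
The plan is a structural induction on the differential term $t$. The statement to carry through the induction is the stronger claim that, for every $F\in\K[[x]]$, the pair $(\mathcal R_\text{num},\mathcal R_\text{den})$ lies in $\Ann_\RecI\bigl(u,-v\bigr)$, where $u_n=[x^n](F)$ and $v_n=[x^n](L\cdot F)$. The "in particular" part then follows immediately, by taking $v=0$.

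First, the leaves. For a constant $\lambda\in\K$, the fraction $(\lambda,1)$ gives $\lambda u_n=[x^n](\lambda F)$. For $x$, the fraction $(1,S_n)$ says $u_n=S_n\cdot[x^n](xF)=[x^{n+1}](xF)$, which holds for all $n\in\N$. For $\partial$, the fraction $((n+1)S_n,1)$ says $(n+1)u_{n+1}=[x^n](F')$, which is the usual formula for the derivative. Each of these identities is valid for every $n\ge0$, and all coefficients involved belong to $\K[n]_\Za$.

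Next, the inductive step. Write $t=t_1+t_2$ or $t=t_1\times t_2$, and let $L_i=\mathfrak D_{d/dx}(t_i)$. Suppose the claim holds for $t_1$ and $t_2$ and for every power series. In the sum case, the induction hypothesis gives $\mathfrak R(t_1)\in\Ann(u,-[x^n](L_1F))$ and $\mathfrak R(t_2)\in\Ann(u,-[x^n](L_2F))$. The first inclusion of \zcref{prop:annihilators} then places $\mathfrak R(t_1)\oplus\mathfrak R(t_2)$ in $\Ann(u,-[x^n]((L_1+L_2)F))$, since extracting coefficients is linear. In the product case, $L=L_1L_2$, and we set $G=L_2\cdot F$. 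Applying the hypothesis for $t_2$ to $F$ gives $\mathfrak R(t_2)\in\Ann(u,-[x^n](G))$. Applying the hypothesis for $t_1$ to the power series $G$ gives $\mathfrak R(t_1)\in\Ann([x^n](G),-[x^n](L_1G))$. The second inclusion of \zcref{prop:annihilators} then yields $\mathfrak R(t_1)\otimes\mathfrak R(t_2)\in\Ann(u,-[x^n](LF))$. The order of the factors matters here, and it matches the order in the definition of $\otimes$. Finally, $\mathfrak R$ and $\mathfrak D$ are homomorphisms, so the fraction attached to $t$ is exactly this $\oplus$ or $\otimes$ combination, and the operator attached to $t$ is $L_1+L_2$ or $L_1L_2$.

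The only delicate points are bookkeeping. The product case must quantify over all power series, because it applies the hypothesis to $G$ rather than to $F$. The relations also have to hold for every $n\in\N$ and not merely for germs. That is guaranteed because all base fractions lie in $\RecI$, and the mclm cofactors used by $\oplus$ and $\otimes$ stay in $\RecI$ by \zcref{prop:mclm-rec}. Finally, the $\Za$-equivalence class does not matter, by the lemma following \zcref{def:fractions}. No convergence issue arises, since everything is formal in $\K[[x]]$.
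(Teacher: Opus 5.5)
Your proposal is correct and follows the paper's proof: structural induction on $t$, with the three leaf fractions checked directly and the two inclusions of \zcref{prop:annihilators} handling $+$ and $\times$. As in the paper, the product case applies the hypothesis for $t_1$ to $G=L_2\cdot F$, which is why the induction must quantify over all power series.
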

\begin{proof} The proof is by induction. For any $\lambda\in\K$ and
any sequence $u$, one has $(\lambda,1)\in\Ann_\FrecI(u,-\lambda u)$. If
$(f_n)$ denotes the sequence of coefficients of $F$, then the
coefficient of $x^n$ in $xF$ is $f_{n-1}$, thus they are related by $
[x^n](F)=S_n[x^n](xF)$ or equivalently $(1,S_n)\in\Ann_\FrecI([x^n]
(F),-[x^n](xF))$. Similarly, the coefficient of $x^n$ in $F'$ is $(n+1)f_
{n+1}$, which proves the case of $\partial$. If $t=t_1+t_2$, let
$L_i=\mathfrak D_\delta(t_i)$ for $i=1, 2$. Then we have $L=L_1+L_2$ and by
induction, for any $F$, $\mathfrak R_{\mathrm{Taylor}}(t_i)\in\Ann_\FrecI([x^n]
(F),-[x^n](L_i(F)))$ for $i=1,2$ so that the result follows by 
\zcref{prop:annihilators}. Finally, if $t=t_1\times t_2$, then with
$L_i$ as above, $L=L_1L_2$ and by induction, for any power series
$F,G$, $\mathfrak R_{\mathrm{Taylor}}(t_1)\in\Ann_\Frec([x^n](G),-[x^n](L_1
(G)))$ and $\mathfrak R_{\mathrm{Taylor}}(t_2)\in\Ann_\FrecI([x^n](F),-[x^n](L_2
(F)))$. With $G=L_2(F)$, \zcref{prop:annihilators} concludes.
\end{proof}

\begin{example} Consider the following two differential terms with
identical image $L$ by $\mathfrak D_{d/dx}$: $x\partial -2$ and $\partial
x-3$.

Applying $\mathfrak R_{\mathrm{Taylor},d/dx}$ to the first one gives the
fraction $(
(n-1)S_n,S_n)$, while the second one produces $(n-2,1)$. If one takes
for $F$ the power series $x^2$ which is such that $L\cdot F=0$, then
one
deduces two different recurrences for its coefficients $f_n$: the
first one gives $(n-1)f_{n+1}=0$ while the second one gives $
(n-2)f_n=0$. Obviously, the polynomial coefficients cannot be removed
if one does not consider germs only. Both recurrences hold for all
$n\ge0$. In both cases, one deduces that $f_1=0$ and $f_j=0$ for
$j>2$, but only the second one captures the fact that $f_0=0$. This is due to the fact that the pair $( (n+1)S_n, 1)$ does not capture all the information about multiplication by $x$.
The same type of phenomenon for generalized Fourier coefficients occurs below.

Another approach in this example consists in using the
derivation~$\theta=x{d}/{dx}$, with  $\mathfrak R_{
\mathrm{Taylor},\theta} (\partial)=
(n,1)$ and the same values for $x$ and constants. Then
the differential term of interest is~$\partial -2$,
for which~$\mathfrak R_{
\mathrm{Taylor},\theta}$
gives
the recurrence $(n-2)f_n=0$, as desired. Analogous constructions
in the case of generalized Fourier coefficients are given in \zcref{sec:singular-case}.
\end{example}

\subsection{Recurrences for generalized Fourier coefficients}\label{sub:recurrences_for_generalized_fourier_coefficients}

\begin{table}
\begin{center}
\renewcommand{\arraystretch}{1.5}
\begin{tabularx}{.9\textwidth}{p{1.8cm} | c | Y | Y}
Name & $\psi_n$ & $\Xnum$ & $\Dden$\\
\hline
Chebyshev
& $T_n$
& \cellscript{
   \tfrac12 S_n^2
   + \tfrac{1}{2}
 }
& \cellscript{
   -\tfrac{1}{2(n+1)} S_n^2
   + \tfrac{1}{2(n+1)}
 } \\
\hline
Gegenbauer
& $C_n^{(\lambda)}$
& \cellscript{
   \frac{n+1+2\lambda}{2(n+2+\lambda)} S_n^2 
   + \frac{n+1}{2(n+\lambda)}
 }
& \cellscript{
   -\frac{1}{2(n+2+\lambda)} S_n^2 
   + \frac{1}{2(n+\lambda)}
 } \\
\hline
Jacobi
& $P_n^{(\alpha,\beta)}$
& \cellscript{
   \frac{2(n+\alpha+2)(n+\beta+2)}
     {(2n+4+\alpha+\beta)_2} S_n^2 \\
   + \frac{\beta^2-\alpha^2}
     {(2n+2+\alpha+\beta)(2n+4+\alpha+\beta)} S_n \\
   \quad+ \frac{2(n+1)(n+1+\alpha+\beta)}
     {(2n+1+\alpha+\beta)_2}
 }
& \cellscript{
   -\frac{2(n+2+\alpha)(n+2+\beta)}
     {(n+2+\alpha+\beta)(2n+4+\alpha+\beta)_2} S_n^2 \\
   + \frac{2(\alpha-\beta)}
     {(2n+2+\alpha+\beta)(2n+4+\alpha+\beta)} S_n \\
   \quad+ \frac{2(n+1+\alpha+\beta)}
     {(2n+1+\alpha+\beta)_2}
 } \\
\hline
Laguerre
& $L_n^{(\alpha)}$
& \cellscript{
   -(n+\alpha+2) S_n^2
   + (2n+3+\alpha) S_n \\
   \quad\quad-(n+1)
 }
& \cellscript{
   S_n - 1
 } \\
\hline
Hermite
& $H_n$
& \cellscript{
   (n+2) S_n^2 + \tfrac12
 }
& \cellscript{
   \tfrac{1}{2(n+1)}
 } \\
\end{tabularx}
\end{center}
  \caption{Recurrence operators $\Xnum$ and $\Dden$ associated to
  classical orthogonal polynomials (see \zcref{eq:xnumxden} and 
  \zcref{eq:dnumddencop}). For Chebyshev, see
  \zcref{sec:note_on_chebyshev_series}.}
  \label{table:pairs_rec_cop}
\end{table}

In this section, $\RecI$ and $\FrecI$ depend
on the family $\psi_n$ through the ground field $\K$: $\K=\Q$ for Chebyshev and Hermite, $\K=\Q(\lambda)$ for Gegenbauer, $\Q(\alpha)$ for Laguerre and $\Q(\alpha,\beta)$ for Jacobi. Extensions are also possible if one works with differential equations involving other parameters.

\subsubsection{Basic operations on generalized Fourier coefficients}

From the 3-term recurrence \eqref{eq:3term-rec}, we derive two 
operators 
\begin{equation}\label{eq:xnumxden}
  \Xnum = \frac{1}{h_{n+1}}  (a(n) S_n^2+b(n) S_n+c(n))h_n, \quad \Xden  = S_n.
\end{equation}
The values of $\Xnum$ for classical orthogonal polynomials are
displayed in~\zcref{table:pairs_rec_cop}. Note that they all belong
to~$\Rec$. Indeed, in the case of Jacobi
polynomials, when $\alpha +
\beta + 1 = 0$, the constant coefficient of 
$\Xnum$ simplifies to $({n+1})/({2n+1})$.

\begin{lemma}\label{lem:x} 
  Let $f$ be 
 such that the sequence  of Fourier coefficients $([\psi_n](f))_{n
   \in \nat}$ exist. 
 Then $([\psi_n](xf))_{n\in\nat}$ exists and they are related by the
 recurrence
 \begin{equation}\label{eq:xnumnxdenn}
   \Xnum \cdot[\psi_n](f)= \Xden \cdot [\psi_n](xf)\quad \textrm{
   for all } n \in \N.
\end{equation}
\end{lemma}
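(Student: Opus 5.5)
We argue directly from the integral definition~\eqref{eq:integrals} and the three-term recurrence~\eqref{eq:3term-rec}. There are two steps: first show that the coefficients of~$xf$ exist, then derive the relation by moving the factor~$x$ from~$f$ onto~$\psi_n$.

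For existence, note that $x\psi_n(x)$ is a polynomial of degree~$n+1$. Expand it in the basis $\psi_0,\dots,\psi_{n+1}$; in fact~\eqref{eq:3term-rec} already gives this expansion. Since each integral $\int_a^b f\psi_k w\,dx$ converges by hypothesis, linearity shows that $\int_a^b (xf)\psi_n w\,dx$ converges as well. Hence $[\psi_n](xf)$ exists for every~$n$.

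For the relation, fix $n\ge 0$ and look at $[\psi_{n+1}](xf)$, since $\Xden=S_n$ puts this quantity on the right-hand side. Write
\[
h_{n+1}[\psi_{n+1}](xf)=\int_a^b f(x)\,x\psi_{n+1}(x)\,w(x)\,dx .
\]
Apply~\eqref{eq:3term-rec} at index~$n+1$:
\[
x\psi_{n+1}=a(n+1)\psi_{n+2}+b(n+1)\psi_{n+1}+c(n+1)\psi_n .
\]
Integrating term by term gives
\[
h_{n+1}[\psi_{n+1}](xf)=a(n+1)h_{n+2}u_{n+2}+b(n+1)h_{n+1}u_{n+1}+c(n+1)h_nu_n,
\]
where $u_k=[\psi_k](f)$. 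The index $n+1\ge1$ avoids the boundary case $\psi_{-1}=0$, and this is precisely why the denominator is $S_n$ rather than~$1$.

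The main thing to check is that this matches the operator in~\eqref{eq:xnumxden}, read with its conventions. There, $h_n$ and the recurrence coefficients act after shifting, which may require reindexing~$a,b,c$ to the shifted indices. Composing the operators, $(a(n)S_n^2+b(n)S_n+c(n))h_n$ applied to~$u$ gives $a(n)h_{n+2}u_{n+2}+b(n)h_{n+1}u_{n+1}+c(n)h_nu_n$, and then we divide by $h_{n+1}$. So $\Xnum$ should be read with $a,b,c$ evaluated at the shifted index, equivalently with the three-term relation written for $x\psi_{n+1}$. The entries of \zcref{table:pairs_rec_cop} should be verified against the classical values of $a,b,c,h_n$. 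For Chebyshev, for example, $a=c=\tfrac12$ and $h$ is constant for $n\ge1$, which gives $\tfrac12S_n^2+\tfrac12$.

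The remaining obstacle is the exceptional~$h_0$. It arises for Chebyshev, and for Jacobi with $\alpha+\beta+1=0$. In the Chebyshev case it is handled by the halving convention of \zcref{sec:note_on_chebyshev_series}. In the Jacobi case I would check directly at $n=0$ that the simplified constant coefficient $(n+1)/(2n+1)$ absorbs the special value of~$h_0$. Finally, every coefficient lies in~$\K[n]_\Za$, so the identity holds for all $n\in\N$.
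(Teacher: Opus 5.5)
Your argument is the paper's own: existence from the expansion of $x\psi_n$ given by the three-term recurrence, then $h_{n+1}[\psi_{n+1}](xf)=\langle f\,|\,x\psi_{n+1}\rangle_w$, with $x\psi_{n+1}$ expanded and integrated term by term. Your index-shift remark is correct. The paper's step $\langle f|x\psi_{n+1}\rangle_w=\langle f|(a(n)S_n^2+b(n)S_n+c(n))\cdot\psi_n\rangle_w$ silently uses the coefficients of $x\psi_{n+1}$, and the table entries are computed that way.

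What needs fixing is your last paragraph's claim about Jacobi with $\alpha+\beta+1=0$: the simplified coefficient $(n+1)/(2n+1)$ does \emph{not} absorb the special $h_0$.

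\begin{itemize}
\item That rational function is $c(n+1)h_n/h_{n+1}$, with $h_n$ taken from the tabulated formula specialized at $\alpha+\beta=-1$. The simplified value of that formula at $n=0$ is half of the true $h_0=-\pi/\sin(\pi\alpha)$.
\item With the true $h_0$, the coefficient of $u_0$ at $n=0$ is $c(1)h_0/h_1=2$, which is the generic value $2/(\alpha+\beta+2)$, not $1$.
\item Concrete check with $f=1$: you have $[\psi_n](1)=\delta_{n,0}$. Since $P_1^{(\alpha,\beta)}(x)=\tfrac x2+\alpha+\tfrac12$, you get $[\psi_1](x)=2$. So at $n=0$ the two sides of \eqref{eq:xnumnxdenn} are $1$ and $2$.
\end{itemize}

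This family contains Chebyshev ($\alpha=\beta=-\tfrac12$), and the remedy is the same. The $n=0$ instance holds only if $u_0$ is defined with half the true $h_0$ (i.e.\ doubled), exactly as in the Chebyshev convention; for $n\ge1$ nothing changes. The paper's proof does not discuss this point either; it implicitly relies on that convention, and the caption of its first table points to it. You should state the convention rather than assert that a direct check at $n=0$ succeeds.
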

\begin{proof} Part of the proof is due to Lewanowicz~%
\cite{Lewanowicz1995}. We give it here for the sake of
completeness. {The existence of the sequence  $([\psi_n](xf))_{n \in
\nat}$ is a consequence of the fact that, for all $n\geq 0$, $x \psi_n
(x)$ is a finite linear combination of $\psi_{n+1}, \psi_n$ and $\psi_
{n-1}$, by the 3-term recurrence~\eqref{eq:3term-rec}, and the
existence of $ ([\psi_n](f))_{n \in \nat}$}.
The proof reduces to 
\begin{align*}
 h_{n+1} \Xden \cdot [\psi_n](xf)  & =  \langle  x f | \psi_{n+1} \rangle_w =  \langle   f |  x  \psi_{n+1} (x) \rangle_w  \\
 &  =   \langle  f |   (a(n) S_n^2+b(n) S_n+c(n)) \cdot \psi_n(x)
 \rangle_w \\ 
  & =  (a(n) S_n^2+b(n) S_n+c(n))   \cdot \langle  f |  \psi_n(x) \rangle_w \\
  & = h_{n+1}  \Xnum \cdot [\psi_n](f).\qedhere
\end{align*}
\end{proof}

For the differentiation operator for classical orthogonal polynomials, we start from  the structure  relation~\eqref{eq:structure} and define  two  operators
\begin{equation}\label{eq:dnumddencop}
 \Dnum  = S_n, \qquad  \Dden =  \frac{1}{\lambda_{n+1}h_{n+1}} \cDiffden h_n,
\end{equation}
where $\cDiffden  =  \delta_2(n) S_n^2 + \delta_1(n) S_n + \delta_{0}(n)$.
The values of $\Dden$ are displayed in~\zcref{table:pairs_rec_cop}.
Again, they all belong to~$\Rec$: the constant coefficient of $\Dden$
in the case of Jacobi polynomials   when  $\alpha  + \beta +1 =0
$ simplifies to $1/({2n+1})$.

\begin{lemma} \label{lem:Dcop} Let $f$ be such that the Fourier
coefficients $([\psi_n](f))_{n \in \nat}$ and  $([\psi_n](f'))_{n \in \nat}$ exist. 
They are related by the recurrence
\begin{equation}\label{eq:Dcop}
  \Dnum \cdot[\psi_n](f) = \Dden \cdot [\psi_n](f')\quad \textrm{
  for all } n \in \N.
\end{equation}
\end{lemma}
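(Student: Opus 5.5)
The plan is to reduce \eqref{eq:Dcop} to an integration by parts based on the self-adjoint form of the Sturm--Liouville equation \eqref{eq:SL}. By the definition of $\Dden$, multiplying \eqref{eq:Dcop} by $\lambda_{n+1}h_{n+1}$ turns it into
\[
\lambda_{n+1}\langle f|\psi_{n+1}\rangle_w=\delta_2(n)\langle f'|\psi_{n+2}\rangle_w+\delta_1(n)\langle f'|\psi_{n+1}\rangle_w+\delta_0(n)\langle f'|\psi_n\rangle_w .
\]
This holds as long as $\lambda_{n+1}\neq0$, which is the case for all $n\in\N$ by \zcref{table:classiques2}. By the structure relation \eqref{eq:structure}, the right-hand side equals $\langle f'|\sigma\psi_{n+1}'\rangle_w$. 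It is a finite combination of existing integrals, so it converges. The claim is therefore $\lambda_{n+1}\langle f|\psi_{n+1}\rangle_w=\langle f'|\sigma\psi'_{n+1}\rangle_w$.

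To prove this claim, combine the Pearson equation \eqref{eq:Pearson} with \eqref{eq:SL}. This gives $(\sigma w\psi_{n+1}')'=w(\sigma\psi_{n+1}''+\tau\psi_{n+1}')=-\lambda_{n+1}w\psi_{n+1}$. Integrate by parts on a compact subinterval $[a',b']\subset(a,b)$, assuming $f$ differentiable with $f'$ locally integrable (implicit in the existence of $[\psi_n](f')$):
\[
\lambda_{n+1}\int_{a'}^{b'}f\psi_{n+1}w
=-\bigl[f\sigma w\psi'_{n+1}\bigr]_{a'}^{b'}+\int_{a'}^{b'}f'\sigma\psi'_{n+1}w .
\]
Both integrals converge as $a'\to a$, $b'\to b$, by hypothesis. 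Hence the one-sided limits $\ell_b=\lim_{x\to b}f\sigma w\psi'_{n+1}$ and $\ell_a$ exist (taking the endpoints separately). It remains to show $\ell_a=\ell_b=0$.

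This vanishing of the boundary terms is the main obstacle, since we only assume existence of the integrals, not any decay of $f$. I would argue by contradiction at each endpoint, say $b$.

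\emph{Finite endpoint} ($b=1$ for Jacobi-type weights, or $a=0$ for Laguerre). Here $\sigma$ has a simple zero, and $\psi'_{n+1}(b)\neq0$ because the zeros of $\psi'_{n+1}$ lie strictly inside $(a,b)$ (they interlace the zeros of $\psi_{n+1}$). If $\ell_b\neq0$, then near $b$ we get $f w\sim \ell_b/(\sigma\psi'_{n+1}(b))$. This has constant sign and is of order $1/|b-x|$, so $\int fw=h_0[\psi_0](f)$ diverges, a contradiction.

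\emph{Infinite endpoint} (Laguerre at $+\infty$, Hermite at $\pm\infty$). If $\ell\neq0$, then $f w\sim \ell/(\sigma\psi'_{n+1})$, which has eventually constant sign and decays only polynomially, like $|x|^{-m}$ with $m=\deg(\sigma\psi'_{n+1})$. Choose $k=m+1$. Then $f\psi_kw$ has eventually constant sign and behaves like $|x|$, so $\langle f|\psi_k\rangle_w$ diverges, contradicting the existence of $[\psi_k](f)$.

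With $\ell_a=\ell_b=0$, letting $a'\to a$ and $b'\to b$ gives the claim and hence \eqref{eq:Dcop}. For the Chebyshev normalization of \zcref{sec:note_on_chebyshev_series} and the exceptional $h_0$ values, nothing changes: only $h_{n+1}$ with $n+1\ge1$ is divided by, and the $h_n$ factor in $\Dden$ simply cancels against $\langle f'|\psi_n\rangle_w=h_n[\psi_n](f')$.
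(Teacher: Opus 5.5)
Your proof is correct and follows the paper's route: reduce via the structure relation to $\lambda_{n+1}\langle f|\psi_{n+1}\rangle_w=\langle f'|\sigma\psi'_{n+1}\rangle_w$ (the paper's \zcref{lemma:int-fprime}), integrate by parts using $(\sigma w\psi'_{n+1})'=-\lambda_{n+1}w\psi_{n+1}$, and show the boundary terms vanish by contradiction with the existence of the Fourier coefficients. The only difference is in that last step: the paper uses the integrability of $f\psi'_{n+1}w$, which is asymptotic to $\ell/\sigma$ at every endpoint, so all endpoints are handled uniformly, whereas you need the zero-interlacing fact at finite endpoints and a high-degree $\psi_k$ at infinite ones.
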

\begin{proof} Here again, part of the proof is due to Lewanowicz~%
  \cite{Lewanowicz1995}.  We write it for the sake of readability.

For any $n \in \nat$,
\begin{align*}
h_{n+1} \Dden\cdot[\psi_n](f')&=
\lambda_{n+1}^{-1} (\delta_2(n) S_n^2 + \delta_1(n) S_n + \delta_{0}(n))  \cdot \langle f' |\psi_n\rangle_w \\
&=  \lambda_{n+1}^{-1} \langle f'|\sigma\psi'_{n+1}\rangle_w \\
&= S_n \cdot \langle f|\psi_n\rangle_w = h_{n+1}\Dnum \cdot [\psi_n]( f),
\end{align*}
by 
the definition of $\Dden$, the structure
relation~\eqref{eq:structure} and the following lemma.
\end{proof}

\begin{lemma}\label{lemma:int-fprime}
If $(\psi_n)_{n\in\mathbb N}$ is one of the \emph{classical} families of orthogonal
polynomials on~$(a,b)$ and $f$ is 
 such that the sequences $([\psi_n](f^{(j)}))_{n \in \nat}$ exist for $j = 0, 1$, then $\langle\sigma\psi_n'|f'\rangle_w = \lambda_n\langle\psi_n|f\rangle_w$ for all $n \in \nat$.
\end{lemma}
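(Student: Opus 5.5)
The plan is to integrate by parts, using the Pearson equation~\eqref{eq:Pearson} and the Sturm--Liouville equation~\eqref{eq:SL}. Two facts drive everything. Equation~\eqref{eq:SL} can be written in self-adjoint form: multiplying by $w$ and using $(\sigma w)'=\tau w$ gives
\[
(\sigma w\psi_n')'=\sigma w\psi_n''+\tau w\psi_n'=-\lambda_n w\psi_n .
\]
Hence, on a compact subinterval $[\alpha,\beta]\subset(a,b)$,
\[
\int_\alpha^\beta \sigma\psi_n' f' w\,dx=\bigl[\sigma w\psi_n' f\bigr]_\alpha^\beta+\lambda_n\int_\alpha^\beta \psi_n f w\,dx .
\]
I would take $f$ absolutely continuous on compact subintervals, which is implicit in the existence of $[\psi_n](f')$. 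The statement then reduces to showing that the boundary term $\sigma(x)w(x)\psi_n'(x)f(x)$ tends to~$0$ as $x\to a^+$ and $x\to b^-$, or at least along sequences where both integrals converge.

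Both integrals have limits. Indeed $\sigma\psi_n'$ is a polynomial, hence a finite combination of the $\psi_k$, and $f,f'$ have all their generalized Fourier coefficients by hypothesis; so $\int\sigma\psi_n'f'w$ and $\int\psi_n f w$ converge as improper integrals. Consequently the boundary term $\sigma w\psi_n' f$ has a limit $\ell_a$ at $a$ and a limit $\ell_b$ at $b$, and it remains to show that $\ell_a=\ell_b=0$.

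I would treat the endpoints case by case from Table~\ref{table:classiques2}.

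\emph{Jacobi case, endpoint $b=1$.} Here $\sigma w$ behaves like $c(1-x)^{\alpha+1}$. Suppose $\ell_b\neq0$. Then $f$ behaves like $C(1-x)^{-\alpha-1}/\psi_n'(1)$ near $1$, provided $\psi_n'(1)\neq0$. For $n=0$ the boundary term is identically zero, so this case is trivial. Integrability of $f w\psi_m$ near $1$ requires $\int (1-x)^{-\alpha-1}(1-x)^{\alpha}\,dx<\infty$, i.e. $\int (1-x)^{-1}\,dx<\infty$. This diverges, giving a contradiction. To get integrability I would use $\psi_0=1$, i.e. the existence of $[\psi_0](f)$, which gives $\int |f|w<\infty$ if I read existence as absolute convergence. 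If $\psi_n'$ vanishes at $1$ I would instead apply the same argument to $\psi_m$ with $\psi_m(1)\neq0$; more simply, use $\psi_0$ together with $\lim \sigma w f=\ell_b/\psi_n'(1)$ whenever $\psi_n'(1)\ne0$. The endpoint $-1$ is symmetric.

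\emph{Laguerre case.} At $0$, $\sigma w=x^{\alpha+1}e^{-x}$, and the same $\int dx/x$ divergence argument applies. At $+\infty$ the argument becomes: if $x^{\alpha+1}e^{-x}\psi_n'(x) f(x)\to\ell\neq0$, then $|f w|\sim |\ell|/(x|\psi_n'(x)|)$, which is $\asymp x^{-n}$. Pairing with $\psi_m$ for $m\ge n-1$ gives a non-integrable function like $1/x$. Since $[\psi_m](f)$ exists for every $m$, this is a contradiction.

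\emph{Hermite case.} The same argument at $\pm\infty$.

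The main obstacle is exactly this boundary-term analysis. The argument needs $|f|w\psi_m$ to be genuinely non-integrable, which requires interpreting the existence of the coefficients as absolute convergence (Lebesgue integrability). It also requires handling cases where $\psi_n'$ vanishes at the endpoint. My uniform fix is the following. The existence of a nonzero limit $\ell$ of $\sigma w\psi_n' f$ implies that $|f w|\ge c/|\sigma\psi_n'|$ near the endpoint. Choosing a polynomial $p$ of the form $\sigma\psi_n'$ times a suitable factor — for infinite endpoints, the right power of $x$; for finite endpoints, simply $p=1$ after checking $\int w/(\sigma w)=\int 1/\sigma$ diverges since $\sigma$ has a simple zero there — makes $\int |f|\,|p|\,w$ diverge. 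Since $p$ is a combination of the $\psi_k$, this contradicts the hypothesis. This forces $\ell_a=\ell_b=0$, which proves the identity.
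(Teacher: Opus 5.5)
Your proof is correct and follows the paper's argument: you integrate by parts using $(\sigma w\psi_n')'=-\lambda_n w\psi_n$, you get limits of the boundary term at both endpoints from the convergence of the two integrals, and you rule out a nonzero limit because $1/\sigma$ is not integrable at any endpoint of the classical families. The paper makes this last step uniform by testing against $\psi_n'$ itself, so that $f\psi_n'w\sim A/\sigma$; this removes your case split on whether $\psi_n'$ vanishes at an endpoint, and because that integrand has eventually constant sign, divergence holds even when existence is read as improper rather than absolute convergence.
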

\begin{proof} The case $n =0$ is obvious. For all $a < u \le v <  b$, integration by parts gives, for any $n \in \nat \setminus \{ 0 \}$,
\begin{align*}
  \int_u^v f'(x)& \psi_{n}' (x) \sigma (x) w(x) \mathrm{d}x\\
  &=  \left [  f(x) \psi_{n}' (x) \sigma (x) w(x) \right ]_u^v - 
  \int_u^v f(x) (\psi_{n}' (x) \sigma (x) w(x))' \mathrm{d}x\\
  &=  \left [  f(x) \psi_{n}' (x) \sigma (x) w(x) \right ]_u^v + 
  \int_u^v f(x) \lambda_n\psi_n(x)w(x) \mathrm{d}x.
\end{align*}
We now show that the term between brackets is~0, following
Nikiforov and Uvarov~\cite [Chap.
2, \S 8]{NikiforovUvarov1988}. The existence and finiteness of the limit of the second integral as $u\rightarrow a$ or $v\rightarrow b$ follows from the fact that $[\psi_n](f)$ exists.
Similarly, by the structure relation  \eqref{eq:structure}, the limit of the first integral exists and is finite, since $[\psi_{n-1+k}](f)$ exists for $k = 0, 1, 2$.   
This implies that there exist $A, B \in \ree$ such that $\lim_{u \rightarrow a}  f(u) \psi_{n}' (u) \sigma (u) w(u) = A$ and  $\lim_{v \rightarrow b}  f(v) \psi_{n}' (v) \sigma (v) w(v) = B$. If $A\neq 0$, then as $u\rightarrow a$, $f(u) \psi_n' (u) w(u) \sim A/\sigma (u)$, which contradicts the existence of $\int_a^b f(u) \psi_n' (u) w(u) \mathrm{d}u$, given  the values of $\sigma(x)$ in Tables~\ref{table:classiques} and~\ref{table:classiques2}. 
  The same argument proves $B =0$.
\end{proof}

\subsubsection{Fourier morphism}
In order to manipulate convergent integrals only, it is
important to pay attention to the degree of the terms.

\begin{definition}[Degree of a term] The \emph{degree} of a
differential term $t\in\mathcal D$ wrt $\partial$ is denoted
$\deg_\partial t$ and is defined inductively: $\deg_\partial t=0$ if
$t\in\K\cup\{x\}$, $\deg_\partial \partial =1$ and
$\deg_\partial(t_1+t_2)=\max(\deg_\partial t_1,\deg_\partial t_2)$,
$\deg_\partial(t_1\times t_2)=\deg_\partial t_1+\deg_\partial t_2$.
The degree in $x$ can be defined similarly; it will not be necessary
in this work.
\end{definition}

\begin{definition}[Fourier morphism]\label{def:fourier-morphism}
  We write $\mathfrak R^\psi_{d/dx}$ for the homomorphism from $
  (\mathcal
  D,+,\times)$ to $ (\FrecI,\oplus,\otimes)$ defined by
\begin{equation}\label{eq:Phi_psi}
  \mathfrak R^\psi_{d/dx}(x)=(\Xnum, \Xden),\qquad
\mathfrak R^\psi_{d/dx}(\partial)=(\Dnum,\Dden)
\end{equation}
and for all $\lambda\in\K$, $\mathfrak R^\psi_{d/dx}(\lambda)=
(\lambda,1)$.
\end{definition}
The analogue of the proposition in the Taylor case gives a first
algorithm for the Fourier case. It takes into account the
constraints on the operations $x$ and $\partial$ coming from analysis.

\begin{proposition}[Recurrences for generalized Fourier coefficients]
\label{prop:rec_Fourier}
Let $(\psi_n)$ be a
family of \emph{classical} orthogonal polynomials. 
Let $t\in\mathcal D$ be a differential term, $L$ be the differential
operator $\mathfrak D_{d/dx}(t)$ and let $\mathfrak R^\psi_{d/dx}
(t)=(\mathcal R_
\text{num},\mathcal R_\text{den})$. Then for any function $f$ such
that {$ ([\psi_n](f^{(j)}))_{n \in \nat}$ exists} for $0\le
j\le\deg_\partial t$, the sequence $([\psi_n](L\cdot f))_{n\in\nat}$
also exists and they are related by
\begin{equation}\label{eq:rec_Fourier}
\mathcal R_\text{num}\cdot [\psi_n] (f)=
\mathcal R_\text{den}\cdot[\psi_n] (L\cdot f).
\end{equation}
In particular, if $f$ satisfies the properties above and $L\cdot f=0$
then
its sequence of Fourier coefficients is annihilated by the numerator:
\[\mathcal R_\text{num}\cdot[\psi_n](f)=0.\]
\end{proposition}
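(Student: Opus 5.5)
The proof goes by structural induction on the differential term $t$, in the same way as in the Taylor case. The statement to carry through the induction has two parts. First, for every $f$ such that $([\psi_n](f^{(j)}))_{n}$ exists for $0\le j\le\deg_\partial t$, the sequence $([\psi_n](L\cdot f))_n$ exists. Second, with $L=\mathfrak D_{d/dx}(t)$,
\[
\mathfrak R^\psi_{d/dx}(t)\in\Ann_\FrecI\bigl([\psi_n](f),-[\psi_n](L\cdot f)\bigr).
\]
The final claim then follows by taking $L\cdot f=0$, since then the denominator term vanishes identically.

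\textbf{Base cases.} For a constant $\lambda$, existence is trivial and $(\lambda,1)$ annihilates the pair $(u,-\lambda u)$. For $x$, this is exactly \zcref{lem:x}, which only needs $[\psi_n](f)$ to exist; the degree is $0$, so this matches the hypothesis. For $\partial$, the degree is $1$, so we are given that $[\psi_n](f)$ and $[\psi_n](f')$ exist, and \zcref{lem:Dcop} gives $(\Dnum,\Dden)$ in the annihilator.

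\textbf{Sum.} Let $t=t_1+t_2$ with $L_i=\mathfrak D_{d/dx}(t_i)$. Since $\deg_\partial t_i\le\deg_\partial t$, the hypotheses for $t$ imply those for $t_1$ and $t_2$. By induction, the sequences $[\psi_n](L_i\cdot f)$ exist, and by linearity of the integrals so does $[\psi_n](L\cdot f)=[\psi_n](L_1\cdot f)+[\psi_n](L_2\cdot f)$. The first inclusion of \zcref{prop:annihilators} then shows that $\mathfrak R^\psi(t_1)\oplus\mathfrak R^\psi(t_2)$ lies in the required annihilator.

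\textbf{Product.} Let $t=t_1\times t_2$, so that $L=L_1L_2$ and $\deg_\partial t=d_1+d_2$ with $d_i=\deg_\partial t_i$. This is where the main obstacle lies: to apply the induction hypothesis for $t_1$ to $g=L_2\cdot f$, we need $[\psi_n](g^{(j)})$ to exist for $j\le d_1$, and the induction hypothesis as stated does not give this.

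The remedy is to observe that $g^{(j)}=(\partial^jL_2)\cdot f$, and that $\partial^j\times t_2$ is a differential term of degree $j+d_2\le\deg_\partial t$. Applying the induction to this term would give existence, but it is not a subterm of $t$, so the induction must be arranged differently. Two options:
\begin{itemize}
\item Strengthen the induction hypothesis to the following \emph{existence lemma}, proved independently: if $[\psi_n](f^{(j)})$ exists for $j\le m$, then for every differential operator $P$ with $\deg_\partial P\le m$, the sequence $[\psi_n](P\cdot f)$ exists. This holds because $P\cdot f=\sum_j p_j(x)f^{(j)}$ with polynomial $p_j$, and multiplication by $x$ preserves existence by \zcref{lem:x}, iterated finitely many times, together with linearity.
\item Alternatively, first prove by induction on $t$ that $\deg_\partial\mathfrak D_{d/dx}(t)\le\deg_\partial t$, which is immediate from the multiplication rule in $\K[x]\langle\partial\rangle$.
\end{itemize}
With the existence lemma, $g^{(j)}=(\partial^jL_2)\cdot f$ has $\partial$-degree at most $j+d_2\le\deg_\partial t$, so $[\psi_n](g^{(j)})$ exists for $j\le d_1$. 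The induction hypotheses then give
\[
\mathfrak R^\psi(t_1)\in\Ann_\FrecI\bigl([\psi_n](g),-[\psi_n](L_1\cdot g)\bigr),\qquad
\mathfrak R^\psi(t_2)\in\Ann_\FrecI\bigl([\psi_n](f),-[\psi_n](g)\bigr),
\]
and the second inclusion of \zcref{prop:annihilators} yields that $\mathfrak R^\psi(t_1)\otimes\mathfrak R^\psi(t_2)$ annihilates $\bigl([\psi_n](f),-[\psi_n](L\cdot f)\bigr)$. This completes the induction.
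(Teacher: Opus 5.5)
Your proof is correct and follows the paper's argument. The paper also uses structural induction on $t$ exactly as in the Taylor case, with \zcref{lem:x} and \zcref{lem:Dcop} for the leaves and \zcref{prop:annihilators} for $\oplus$ and $\otimes$; it compresses all the existence bookkeeping into the remark that it ``follows from the definition of degree and the hypothesis on~$f$'', and your product case makes this precise. One correction: your two bulleted facts are not alternatives. The existence of $[\psi_n](g^{(j)})$ for $j\le d_1$ uses both the existence lemma for operators of $\partial$-degree at most $m$ and the bound $\deg_\partial\mathfrak D_{d/dx}(t_2)\le\deg_\partial t_2$.
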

\begin{proof} The proof is very similar to that of the Taylor case,
using the lemmas above for the cases $x$ and $\partial$. The only
novelty is that it is necessary to pay attention to the {existence of
the Fourier coefficients} of the functions that are differentiated.
The existence of that of $L\cdot f$ follows by linear combination.
That
the property holds at each step of the induction follows from the definition of degree and the hypothesis on the functions~$f$.
\end{proof}

\begin{remark}
This result depends on the derivation~$d/dx$ through \zcref{lem:Dcop}
only. It extends \emph{mutatis mutandis} to any other
derivation~$\delta$ as
soon as an analogue of \zcref{eq:Dcop} is available for~$\delta$. This
is crucial in \zcref{sec:singular-case}.
\end{remark}

\subsubsection{Associated fractions}
\zcref[S,noabbrev]{prop:rec_Fourier} lets us
associate
fractions of
recurrence operators
to a
linear differential operator, in the following sense.
  \begin{definition} \label{def:assocpair}
    Let $  L=p_r(x)\partial^r+\dots+p_0(x)$ be a linear differential
    operator in $ {\K}[x]\langle \partial\rangle $. A 
    fraction $(\Pnum,\Pden) \in \Frec$ is said to  be 
    \emph{associated to
    $L$ in degree~$k$} for a family $(\psi_n)_{n\in\nat}$ if there is a
    differential
    term $t\in\mathcal D$ \emph{with} $\deg_\partial t\le k$ and a
    derivation $\delta$ on $\K[x]$ such that
    \[\mathfrak D_{\delta}(t)=L\quad\text{and}\quad \mathfrak R^
    \psi_\delta (t)=
    (\Pnum,\Pden).\]
    It is said to be \emph{associated to $L$} 
    (without reference to a degree) when for all~$j\ge0$, 
    \[\Pnum\cdot[\psi_n](\psi_j)=\Pden\cdot[\psi_n](L\cdot\psi_j).\]
  \end{definition}
This definition is consistent: by \zcref{prop:rec_Fourier}, a fraction
associated to~$L$ in degree~$k$ is associated to~$L$.

As in the case of Taylor coefficients, there are many
fractions of recurrences associated  to a given linear differential
operator. They generate modules of which our algorithms compute
generators. 
\begin{definition}
The \emph{algebraic module of fractions associated} to a linear
differential
operator~$L$ for the family~$(\psi_n)_{n\in\N}$ is the left
$\RecI$-module generated by all fractions in $\Frec$ associated
to~$L$ in degree~$\deg_\partial L$. It is denoted~$\assocal$.
Similarly, the \emph{analytic module of fractions associated} to~$L$
for the family~$(\psi_n)_{n\in\N}$ is the left
$\RecI$-module generated by all fractions in $\Frec$ associated
to~$L$. It is denoted~$\assocan$.
\end{definition}

The motivation for considering $\assocal$ comes from 
\zcref{prop:rec_Fourier}:
\begin{corollary}\label{coro:rec_Fourier_module}
Let $(\psi_n)$ be a family of classical orthogonal
polynomials and $L$ a linear differential operator. For any
function~$f$
such that~$ [\psi_n] (f^ {(j)})_ {n\in\N}$
exists for $j\le\deg_\partial L$ and any $(p,q)$ in~$\assocal$,
$p\cdot
[\psi_n](f)=q\cdot[\psi_n](L\cdot f)$.
In particular, if $L\cdot f=0$ then $p\cdot[\psi_n](f)=0$.
\end{corollary}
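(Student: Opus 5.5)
The plan is to reduce the statement to \zcref{prop:rec_Fourier} and then pass to the generated module by linearity. The set of pairs $(p,q)\in\RecI\times\RecI$ satisfying $p\cdot[\psi_n](f)=q\cdot[\psi_n](L\cdot f)$ for all $n\in\N$ is a left $\RecI$-module. Equivalently, it is $\Ann_\RecI([\psi_n](f),-[\psi_n](L\cdot f))$. Since $\assocal$ is generated by the fractions associated to~$L$ in degree~$\deg_\partial L$, it suffices to show that every such generator lies in this annihilator.

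\textbf{Closure.} Fix a function $f$ such that $[\psi_n](f^{(j)})$ exists for $j\le\deg_\partial L$. First observe that $[\psi_n](L\cdot f)$ exists: writing $L=\sum p_i(x)\partial^i$, repeated use of \zcref{lem:x} on each $f^{(i)}$ shows that the coefficients of $p_i f^{(i)}$ exist, and linearity does the rest. Next, if $(p_1,q_1)$ and $(p_2,q_2)$ both satisfy the relation, then so does $(p_1+p_2,q_1+q_2)$. Likewise $(rp,rq)$ satisfies it for any $r\in\RecI$, simply by applying $r$ to both sides of a valid identity between sequences. The relation is also independent of the representative modulo $\equivZA$, by the lemma following \zcref{def:fractions}. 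So this set is indeed a left $\RecI$-module.

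\textbf{Generators.} Let $(\Pnum,\Pden)$ be associated to $L$ in degree $\deg_\partial L$. Then there is a term $t$ with $\deg_\partial t\le\deg_\partial L$ and a derivation $\delta$ such that $\mathfrak D_\delta(t)=L$ and $\mathfrak R^\psi_\delta(t)=(\Pnum,\Pden)$. For $\delta=d/dx$, the hypothesis on $f$ covers all $j\le\deg_\partial t$, so \zcref{prop:rec_Fourier} gives directly $\Pnum\cdot[\psi_n](f)=\Pden\cdot[\psi_n](L\cdot f)$ for all $n$.

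\textbf{Main obstacle.} The difficulty is the case of other derivations $\delta$. Here I would invoke the remark after \zcref{prop:rec_Fourier}: the induction goes through once an analogue of \zcref{lem:Dcop} holds for~$\delta$. That analogue needs the coefficients of $\delta^k f$ for $k\le\deg_\partial t$. Writing $\delta=q(x)\,d/dx$, each $\delta^k f$ is a polynomial combination of $f,\dots,f^{(k)}$, so by \zcref{lem:x} these coefficients exist under the stated hypothesis. If the intended reading restricts $\delta$ to $d/dx$ in this setting, this step is vacuous. Otherwise I would assume the $\delta$-analogue of \zcref{eq:Dcop} to be part of what "associated" presupposes, as the remark indicates. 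Finally, when $L\cdot f=0$ the right-hand side is $q\cdot 0=0$, which gives $p\cdot[\psi_n](f)=0$.
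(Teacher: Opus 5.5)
Your proposal is correct and takes essentially the same route as the paper. The paper writes $(p,q)$ as a finite $\RecI$-combination $\sum_i o_i(p_i,q_i)$ of fractions associated to $L$ in degree $\deg_\partial L$, applies \zcref{prop:rec_Fourier} to each $(p_i,q_i)$, and recombines by left multiplication and addition, which is exactly your module-closure-plus-generators argument; your extra treatment of derivations other than $d/dx$ goes beyond the paper's proof, which simply cites \zcref{prop:rec_Fourier} and implicitly relies on the remark that follows it.
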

\begin{proof}
Since $(p,q)\in\assocal$, there are finitely many
fractions $(p_1,q_1),\dots,(p_m,q_m)$ in $\Frec$ associated to~$L$ in
degree~$\deg_\partial L$ and finitely many recurrence operators
$o_1,\dots,o_m$ in $\Rec$ such that 
\[(p,q)=o_1(p_1,q_1)+\dots+o_m(p_m,q_m).\]
By \zcref{prop:rec_Fourier}, for each $i=1,\dots,m$, $p_i\cdot
[\psi_n](f)=q_i\cdot[\psi_n](L\cdot f)$. Multiplying on the left by~$o_i$ and adding each of these
recurrence equations gives the result.
\end{proof}
A direct consequence is that $\assocal\subset\assocan$.

\begin{proposition} \label{prop:assoc-equiv}
Let $L$ be a linear differential operator in $ {\K}[x]\langle
\partial\rangle $ and let $(\psi_n)$ be a family of classical orthogonal
polynomials. Then,
\begin{enumerate}[label=(\alph*)]
\item \label{prop:assoc-equiv:a} all nonzero fractions of
$\assocan$ are $\RecI$-equivalent;
\item \label{prop:assoc-equiv:b} there exists a quasi-generating
fraction of both $\assocal$ and $\assocan$ (see def. in 
\zcref{lemma:quasi-gen-pair});
\item if there exists an irreducible fraction in $\assocal$, it is a
quasi-generating fraction of both $\assocal$ and $\assocan$ and all
quasi-generating fractions of $\assocal$ or $\assocan$ are
irreducible.
\end{enumerate}
\end{proposition}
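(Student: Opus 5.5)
The plan is to reduce all three items to the general results of \zcref{sec:module-of-pairs}: \zcref{lemma:equivalent-pairs}, \zcref{lemma:quasi-gen-pair} and \zcref{prop:minimalitySh}. For this, the key preliminary step is to show that $\assocan$ (hence also its submodule $\assocal$) is a nonzero left $\RecI$-module of \emph{proper} fractions.

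\emph{Nonzero.} Write $L$ as a differential term $t$ with $\deg_\partial t=\deg_\partial L$, for instance by Horner's scheme. Its image $\mathfrak R^\psi_{d/dx}(t)$ is associated to $L$ in degree $\deg_\partial L$. Its denominator is built from $1$, $\Xden=S_n$ and $\Dden\neq0$ by products and $\mclm$s. Since $\RecI$ is a domain, this denominator is nonzero.

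\emph{Stability of the relation.} If $(P,Q)$ satisfies $P\cdot[\psi_n](\psi_j)=Q\cdot[\psi_n](L\cdot\psi_j)$ for all $j$, then so does $o(P,Q)$ for every $o\in\RecI$. Sums of such fractions satisfy it too. Hence every element of $\assocan$ satisfies the defining relation of \zcref{def:assocpair}.

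\emph{Properness.} Suppose $(p,0)\in\assocan$ with $p=\sum_{k\le K}a_k(n)S_n^k\neq0$. Then $p\cdot e^{(j)}=0$ for all $j$, where $e^{(j)}=[\psi_n](\psi_j)$ is the Kronecker sequence at $j$. Evaluating at $n=j-K$ gives $a_K(j-K)=0$ for all $j\ge K$. This is impossible for a nonzero rational function, so $p=0$.

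Item (a) then follows directly from \zcref{lemma:equivalent-pairs} applied to $\assocan$.

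For item (b), \zcref{lemma:quasi-gen-pair} applies to each module and gives a quasi-generating fraction $G_{\text{al}}\in\assocal$ and one for $\assocan$. Let $\tilde M_{\text{al}}\subset\tilde M_{\text{an}}$ be the generated $\Sh$-modules. Each is principal, with generator determined by the projection onto the denominator ideal (step 2 of that proof). So $G_{\text{al}}=cG_{\text{an}}$ for some $c\in\Sh$. What remains is to show that $c$ has degree~$0$ in $S_n$, i.e., that $G_{\text{al}}$ already generates $\tilde M_{\text{an}}$. This is the main obstacle.

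My first attempt would exploit the explicit structure of the relation on the basis $(\psi_j)$. Let $T$ be the banded infinite matrix sending $e^{(j)}$ to $[\psi_n](L\cdot\psi_j)$. Elements of $\tilde M_{\text{an}}$ are exactly pairs $(p,q)$ with $p=q\,T$ as operators on germs of finitely supported sequences. I would then show that the Horner fraction already realises a minimal denominator for this relation. The mechanism is that $\Xden$ and $\Dden$ are exactly the denominators forced by $x$ and $\partial$ acting on the $\psi_j$ (\zcref{lem:x,lem:Dcop}), and $\mclm$s are minimal. If that fails, I would fall back to taking $G_{\text{an}}$ itself and clearing denominators. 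The goal would then be to show that some element of $\assocal$ is a $\K(n)$-multiple of it, by combining left and right Horner fractions as announced in \zcref{sec:irred-fracs}.

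For item (c), suppose $(p,q)\in\assocal$ is irreducible. By \zcref{prop:minimalitySh} applied to $\assocal$, it is quasi-generating there. It also lies in $\assocan$, a module of proper fractions, so \zcref{prop:minimalitySh} applied to $\assocan$ makes it quasi-generating there too. That lemma also gives that all quasi-generating fractions of either module are irreducible. Note that (c) does not need the hard step of (b).
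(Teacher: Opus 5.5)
For (a), (c), and for (b) read as ``each of $\assocal$ and $\assocan$ has a quasi-generating fraction'', your argument is the paper's. You reduce to $\assocan$ being a nonzero module of proper fractions and then apply \zcref{lemma:quasi-gen-pair,lemma:equivalent-pairs,prop:minimalitySh}. Your properness argument, via the leading coefficient evaluated on the Kronecker sequences $[\psi_n](\psi_j)$, is a more explicit form of the paper's ``infinite-dimensional solution space'' remark. You also correctly obtain the relation for all of $\assocan$ from \zcref{def:assocpair} by linearity; the paper instead cites \zcref{coro:rec_Fourier_module}, which is stated for $\assocal$. Your nonzero check is a hypothesis of \zcref{lemma:quasi-gen-pair} that the paper leaves implicit.

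The step you call the main obstacle is left open in your proposal. The paper's proof does not close it either: the lemmas it invokes, together with the inclusion $\assocal\subset\assocan$, only give a quasi-generator of each module separately, which you already have.

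Your first attempt would not work as stated, for two reasons.
\begin{itemize}
\item On germs, finitely supported sequences vanish, so ``$p=qT$ on germs'' is an empty condition. What separates a quasi-generator $G$ from a left multiple such as $S_n^\ell G$ is exactly the validity of the relation at the first indices, where only $\RecI$-coefficients can be evaluated.
\item Neither Horner fraction realises a minimal denominator in general. In the Laguerre-to-Laguerre example of \zcref{sec:irred-fracs}, right Horner gives a fraction with common left factor $S_n$, and the left Horner fraction is not irreducible either. Left Horner is only guaranteed irreducible when $\gcd(p_r,\sigma)=1$ (\zcref{prop:Lewanowicz}).
\end{itemize}

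Your fallback is the right route. \zcref{prop:irred-frac} builds an irreducible fraction in $\assocal$ by recombining the two Horner fractions, through \zcref{prop:almost-gcld} and a B\'ezout relation with $S_n^\ell$. Then (c) makes it a quasi-generator of both modules, so $\tilde M_{\text{al}}=\tilde M_{\text{an}}$. This is not circular, since the proof of \zcref{prop:irred-frac} uses only part (a). So the ``single common quasi-generator'' reading of (b) is available only through this forward reference, not from the general results of \zcref{sec:module-of-pairs} alone.
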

\begin{proof}
By 
\zcref{lemma:quasi-gen-pair,lemma:equivalent-pairs,prop:minimalitySh}
and the inclusion $\assocal\subset\assocan$,
it is sufficient to show that~$\assocan$ contains only proper
fractions.

Indeed, assume a fraction $(P,0)$ belongs to $\assocan$. By 
\zcref{coro:rec_Fourier_module} applied to~$f=\psi_k$, the
operator~$P$ cancels the sequences $([\psi_n] (\psi_k))_{n \in
\nat}$ for all~$k$. This is an infinite dimensional vector space of
sequences solution of~$P$, so that~$P=0$.
\end{proof}

\section{Irreducibility and operators of minimal degree}
\label{sec:irred-fracs}

\zcref[S,noabbrev]{prop:rec_Fourier} provides us with a family of
algorithms that are fixed by choosing a differential term  
representing an input linear differential operator
\[L=p_r(x)\partial^r+\dots+p_0(x)\]
with polynomial coefficients~$p_i(x)$.
A priori, each
choice
produces a different fraction of recurrence operators. 
\zcref[S,noabbrev]{prop:assoc-equiv} shows that they are all
$\RecI$-equivalent and that finding an irreducible one ensures that it
is a quasi-generator of $\assocan$.
In this section and the next one, we show how to compute
such irreducible fractions of recurrence operators that belong
to the annihilator
$\Ann_\Rec([\psi_n](f),-[\psi_n](L\cdot f))$, 
under explicit convergence constraints on the functions~$f$ whose
Fourier
series is considered. 

\subsubsection*{Elementary computations}
For polynomials $p(x)=a_0+\dots+a_dx^d$ in~$x$ only
and no $\partial$, the
differential term corresponding to \emph{right Horner} evaluation, i.e., 
\begin{equation}\label{eq:horr-pol}
\HorrI(p(x))=\mathfrak R^\psi_{d/dx}\!\left((((\dotsb((a_d\times
x)+a_
{d-1})x+\dotsb)+a_1)\times x)+a_0\right)
\end{equation}
is associated to a fraction that is always irreducible
(\zcref{prop:horr-pol-irred}).
Moreover, the computation of this fraction is efficient thanks to the
special form $\Xden=S_n$ in~\eqref{eq:xnumxden}: the
denominators are all powers of~$S_n$, cofactors in the
additions and multiplications of fractions (\zcref{def:op_pairs}) can
be computed easily. Note also that this computation depends only
on the pair $(\Xnum,\Xden)$ and not on the
specific
derivation~$d/dx$; this irreducibility result therefore holds for
any derivation.

\subsubsection*{Left Horner evaluation}
The irreducible fractions from \zcref{eq:horr-pol} are the building blocks of the
construction of fractions for linear differential operators. 
In particular, after first rewriting the operator
in the form
\[L=\partial^rq_r(x)+\dots+q_0(x),\]
its \emph{left Horner} evaluation is
  \begin{equation}\label{eq:Horner_gauche}
    \Horl(L)=\mathfrak R^\psi_{d/dx}\!\left(\partial(\dotsb \partial 
    (\partial
    q_r
    (x)+q_ {r-1} (x)) +
    \dots + q_1 (x)) + q_0(x)\right),
  \end{equation}
where each of the coefficients $q_i(x)$ are themselves evaluated by
right
(not left) Horner evaluation as in \zcref{eq:horr-pol}. 
It is important to note that the special form~$\Dnum$ $ =S_n$
in~\eqref{eq:dnumddencop} makes the multiplications and additions easy
to compute efficiently: again, the denominators are always
simply powers of~$S_n$. Thus all operations are elementary and it is not necessary to discuss Euclid's algorithm in
that situation. Using polynomials in~$S_n$ and~$S_n^{-1}$
instead of fractions, special cases of this method have been developed
by Paszkowski~\cite[T13.1, Chap. 13]{Paszkowski1975} for
Chebyshev expansions, by Lewanowicz~\cite[\S 3]{Lewanowicz1976} for 
Gegenbauer polynomials and later for Jacobi polynomials~\cite{Lewanowicz1983,Lewanowicz1986}, \cite[\S11.2.3]{Wimp1984}.

Another feature of this representation is that
when the leading coefficient $q_r(x)(=p_r(x))$ is relatively prime
with the
polynomial~$\sigma(x)$ from the definition of the family~$(\psi_n)$
in \zcref{eq:Pearson},
then the fraction is irreducible (\zcref{prop:Lewanowicz}).
This makes it our method of choice
in this case. A typical example is the computation of a recurrence
of order~12 with coefficients of degree up to~28 for
the coefficients of the expansion of the product of Hermite
polynomials~$H_i (x)H_j (x)$ in
the basis of Jacobi polynomials~$P_n^{(\alpha,\beta)}(x)$, in a
matter of seconds (see \zcref{ssub:linearization_coefficients}).

\subsubsection*{Right Horner evaluation}
When the leading coefficient of the differential
equation is not relatively prime with~$\sigma(x)$, left Horner
evaluation does not produce an irreducible fraction in general. This
is the situation where the algebraic machinery introduced in the previous
sections pays off through non-trivial applications of Euclid's
algorithm. The \emph{right Horner} representation is 
  \begin{equation}\label{eq:Horner}
\HorrZ (L)=\mathfrak R^\psi_{d/dx}\!\left((\dotsb(p_r(x)\partial+p_
{r-1}
(x))\partial+\dotsb+p_1 (x))\partial+p_0 (x)\right)
  \end{equation}
where each of the coefficients $p_i(x)$ is itself evaluated by right
Horner evaluation as in \zcref{eq:horr-pol}. This fraction is
irreducible
up to a power of~$S_n$ as possible common factor~(%
\zcref{prop:horner_droite}). In particular, when $S_n$ does not divide
the numerator or the denominator, then the result is an irreducible
fraction.
\begin{example}
For the expansion of $\exp(x)$ on the basis of Chebyshev polynomials,
the linear differential operator~$\partial-1\in\K
[x]\langle\partial\rangle$ maps to the fraction
\[
  (S_n^2+2(n+1)S_n-1,1-S_n^2).
\]
Neither numerator nor denominator is divisible by~$S_n$. 
\zcref{prop:horner_droite} implies that the fraction is irreducible. The numerator gives a recurrence of
order~2 for the coefficients of the Chebyshev expansion of $\exp(x)$.
The recurrence can be checked in this example where the coefficients
are known to equal the sequence $2I_n (1)$, with $I_n$ the
modified Bessel
function of the first kind~\cite[10.29.1]
{OlverLozierBoisvertClark2010}. (See 
\zcref{sec:note_on_chebyshev_series} for the coefficient of $T_0$.)
\end{example}

\subsubsection*{Combination of left and right Horner evaluation} 
Finally, there are cases when the leading coefficient of the
differental operator is not relatively prime to~$\sigma(x)$ while the
right
Horner evaluation produces a fraction with a nonzero power of~$S_n$ in
factor of both its numerator and denominator. Working in $\Rec$ rather than
$\Sh$ does not allow us to conclude that the common power of~$S_n$ can be
simplified out.
\begin{example}
Howell~\cite{Howell1937} gave an explicit expression for the
coefficients of the expansion of the Laguerre polynomials~$L_m^{
(\alpha)}(x)$ in the basis~$L_n^{(\beta)}(x)$. The leading
term of the differential equation is~$x=\sigma(x)$, so that
left Horner
evaluation is not expected to produce an irreducible fraction, and
indeed it produces
\[\left((n-m+\beta-\alpha+2)S_n^2
+(2m-2n+\alpha-\beta-2)S_n
+(n-m),
-(S_n-1)^2
\right),\]
where the numerator factors as $((n-m+\alpha+\beta+2)S_n+m-n)(S_n-1)$.
Right Horner evaluation produces the fraction
\begin{equation}\label{eq:righthorner-exp}
\left((n-m+\beta-\alpha+2)S_n^2+(m-n-1)S_n,S_n-S_n^2\right),
\end{equation}
with $S_n$ as a common factor of the numerator and denominator.
By \zcref{prop:rec_Fourier}, the numerator gives a 2-term recurrence of
order~2 for the coefficients. It is
however possible to obtain a \emph{first} order equation by
subtracting termwise the second pair from the first,
leading to the right factor of the quotient of the numerator of 
\zcref{eq:righthorner-exp} by $S_n$. This is the result obtained by
our implementation, see
\zcref{ssub:connection_coefficients}.
\end{example}
The process outlined in this example is an instance of our
algorithm in the general case. Even when the fraction produced by left
Horner evaluation is not irreducible, its numerator and denominator
cannot have a power of~$S_n$ in common (\zcref{coro:horner_elim_Sn}).
Combining both fractions makes it possible to remove the common power
of~$S_n$ from $\Horr(L)$, resulting in an irreducible fraction, at the possible expense of a
factor~$c (n)\in\mathcal Z_\mathbb N$. This approach is summarized in
the following result.
\begin{theorem}[Recurrences for Fourier coefficients]
\label{thm:horner}
Let $(\psi_n)$ be a family of {classical} orthogonal polynomials 
and let $L\in\K[x]\langle\partial\rangle$ be a linear
differential operator. Then \zcref[S]
{algo:main} with $L$ as input returns an irreducible, therefore
quasi-generating,
fraction $(\Hnum,\Hden)\in\assocal$. In other words, if $f$
is a function such that
\begin{equation}\zcsetup{reftype=hypothesis}
\label{eq:hypH}\tag{H}
\text{the sequences }([\psi_n](f^{(j)}))_{n \in \nat}\text{ exist
for }j\le\deg_\partial L,
\end{equation}
then the sequence $([\psi_n](L\cdot f))_{n\in\nat}$ exists and 
\begin{equation}\label{eq:outputAlgo1}
  \Hnum\cdot[\psi_n](f)=\Hden\cdot[\psi_n](L\cdot f).
\end{equation}
In particular, if $f$ satisfies \zcref{eq:hypH} and $L\cdot f=0$ then
its
sequence of generalized Fourier coefficients is annihilated by the
numerator: $\Hnum\cdot[\psi_n](f)=0$.
\end{theorem}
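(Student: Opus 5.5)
We will treat \zcref{thm:horner} as an assembly of three facts that the paper announces and proves later: the right Horner fraction is irreducible up to a common power of~$S_n$ (\zcref{prop:horner_droite}); the left Horner fraction never has a common power of~$S_n$ in numerator and denominator (\zcref{coro:horner_elim_Sn}); and a fraction in a module of proper fractions is quasi-generating as soon as it is irreducible (\zcref{prop:minimalitySh}, \zcref{prop:assoc-equiv}). The analytic statement is the easy part. Both $\Horl(L)$ and $\Horr(L)$ are values of $\mathfrak R^\psi_{d/dx}$ on differential terms of $\partial$-degree exactly $\deg_\partial L=r$. Rewriting $L$ as $\partial^rq_r+\dots+q_0$ does not increase the number of nested $\partial$'s, and the coefficients are evaluated by \eqref{eq:horr-pol}, which has degree~$0$. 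Hence both fractions are associated to~$L$ in degree~$r$ and lie in $\assocal$. Since $\assocal$ is a left $\Rec$-module, any $\Rec$-linear combination of them also lies in $\assocal$. \zcref{coro:rec_Fourier_module} then gives \eqref{eq:outputAlgo1} under \eqref{eq:hypH}, together with the existence of $[\psi_n](L\cdot f)$ (by \zcref{prop:rec_Fourier}). The case $L\cdot f=0$ is immediate.

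The algebraic core is irreducibility of the output. Write $\Horr(L)=(N,D)$. By \zcref{prop:horner_droite}, $\gcld_\Sh(N,D)=S_n^k$ for some $k\ge0$. If $k=0$, the algorithm returns $(N,D)$ and we are done. Otherwise, note that $\assocal$ consists of proper fractions (\zcref{prop:assoc-equiv}), so $(N,D)$ and $\Horl(L)=(N',D')$ are $\Rec$-equivalent. In $\Sh$ we can therefore write $(N,D)=S_n^k(\tilde N,\tilde D)$ with $(\tilde N,\tilde D)$ irreducible, and $(N',D')=C(\tilde N,\tilde D)$ for some $C\in\Sh$, because the $\Sh$-module is generated by the irreducible element. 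By \zcref{coro:horner_elim_Sn}, $S_n$ does not left-divide $C$ in a way that would give a common power of $S_n$. Hence $\gcld_\Sh(S_n^k,C)$ has degree~$0$, up to the usual subtlety that $S_n$ and $C$ are only left-coprime in $\Sh$.

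The step we expect to be the main obstacle is the next one: performing a Bezout-type combination inside $\Rec$ rather than $\Sh$. In $\Sh$ there exist $U,V$ with $US_n^k+VC=1$, hence $U(N,D)+V(N',D')=(\tilde N,\tilde D)$. Clearing denominators of $U,V$, by the minimal $\Za$-type argument of \zcref{prop:almost-gcld} and \zcref{prop:mclm-rec}, yields $u,v\in\Rec$ and a polynomial $c(n)$ with $u(N,D)+v(N',D')=c(n)(\tilde N,\tilde D)$. We must show that this combination is what \zcref{algo:main} computes, via the Euclidean algorithm on $S_n^k$ and the cofactor $C$ (equivalently on the denominators). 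We must also show that the resulting fraction is irreducible. Multiplication by $c(n)\in\K[n]$ does not change the monic $\gcld_\Sh$, so $\gcld_\Sh(c\tilde N,c\tilde D)=\gcld_\Sh(\tilde N,\tilde D)=1$, which gives irreducibility. The care needed is to check that the factor $c(n)$ possibly vanishing on $\N$ is harmless: the output is still a genuine $\Rec$-combination of elements of $\assocal$, so membership and \eqref{eq:outputAlgo1} hold. Quasi-generation then follows from \zcref{prop:assoc-equiv}(c).
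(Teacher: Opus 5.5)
Your handling of the branch of \zcref{algo:main} that goes through \textsf{RightHorner} matches the paper's proof of \zcref{prop:irred-frac}. Both use a B\'ezout identity between the common factor $S_n^\ell$ of $\Horr(L)$ and the cofactor of the irreducible part inside $\Horl(L)$, then clear denominators. Getting that cofactor $C\in\Sh$ directly from $\Rec$-equivalence is a slightly leaner substitute for \zcref{prop:almost-gcld}.

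The genuine gap is that \zcref{algo:main} starts with a shortcut your proof never addresses. When $\gcd(p_r(x),\sigma(x))=1$ it returns $\Horl(L)$ itself, with no Euclidean step. For that output you must show, in your notation $(N',D')=C(\tilde N,\tilde D)$, that $\deg_{S_n}C=0$, not merely $\valSn C=0$. None of the facts you assemble gives this. \zcref{coro:horner_elim_Sn} only excludes a common power of $S_n$, and left Horner evaluation is reducible in general: in the Laguerre example preceding \zcref{thm:horner}, its denominator $-(S_n-1)^2$ has order~$2$, whereas the irreducible fraction has a denominator of order~$1$.

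Irreducibility in the coprime case is exactly \zcref{prop:Lewanowicz}, a separate and nontrivial result. The paper proves it by induction on $r$, using the two splittings $L=L_1\partial+p_0=\partial^rq_r+L_2$, showing that every fraction of $\assocan$ has a denominator that is a left multiple of $S_n^m\Ddenr$. The key input is \zcref{lemma:partial-P}. That lemma is obtained by left-multiplying the polynomial B\'ezout identity $1=pu+\sigma v$ by $\partial$, sending $\sigma\partial$ to $(\Thnum,S_n)$, and comparing the numerators of the resulting $\Rec$-equivalent fractions. As it stands, your argument proves the theorem only for the algorithm stripped of its first line.

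Within the other branch, a few repairs are needed.
\begin{itemize}
\item Do not justify $(N',D')=C(\tilde N,\tilde D)$ by saying that $(\tilde N,\tilde D)$ generates the $\Sh$-module spanned by $\assocal$: it is not a priori in that module, since $S_n^{-k}\notin\Sh$. Use instead the right B\'ezout relation $\tilde Nu+\tilde Dv=1$, which gives $C=N'u+D'v$ for any proper fraction $\Rec$-equivalent to $(\tilde N,\tilde D)$.
\item The left/right coprimality worry is void. Left and right divisors of $S_n^k$ in $\Sh$ are all of the form $a(n)S_n^j$, so both conditions just mean $\valSn C=0$.
\item You still owe the link with what the algorithm actually computes, but it is short. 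By \zcref{prop:horner_gauche}, $D'=S_n^m\Ddenr$, and $\valSn\Ddenr=0$. So $D'=C\tilde D$ forces $m=\valSn\tilde D$. Hence the algorithm's $G$, obtained by right division of $S_n^{\valSn B}\Ddenr$ by $B$, equals $C$ up to rational factors in~$n$. Then $G(A,B)$ is a polynomial multiple of $\Horl(L)$, and $c(n)(A,B)\in\assocal$ follows from $UG+VS_n^\ell=1$ exactly as in your combination.
\end{itemize}
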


\begin{example}\label{ex:exp}
In any of the bases of \zcref{table:classiques} except for the
Laguerre polynomials, the function $\exp(x)$ satisfies~
\zcref{eq:hypH}. Since $\exp(x)$ is cancelled by
$\partial-1\in\K [x]\langle\partial\rangle$, the corresponding
Fourier coefficient sequences are cancelled by a numerator of $(\Dnum, \Dden) \oplus(- 1,1)$. Since $\Dnum=S_n$, one such numerator is $S_n - \Dden,$ with $\Dden$ from the corresponding entry in \zcref{table:pairs_rec_cop}.
Here are the final expressions returned by our program in the bases
$T_n (x)$, $H_n(x)$, $C_n^{(\lambda)}(x)$ (the expression for $P_n^{
(\alpha,\beta)}(x)$ is larger and omitted here): 
\begin{Verbatim}
> for Basis in [ChebyshevT(n,x),HermiteH(n,x),GegenbauerC(n,lambda,x)]
    do FunctiontoRec(exp(x),Basis,u) od;
\end{Verbatim}
{\color{blue}
\begin{gather*}
u_{n+2}+2(n+1)u_{n + 1}-u_n  =0,\\
2(n+1)u_{n + 1}-u_n =0,\\
(\lambda+ n)u_{n+2}+ 2(\lambda + n)(\lambda + n + 2)u_{n + 1} -(\lambda + n + 2)u_{n} 
=0.
\end{gather*}}
A purely symbolic use of \zcref{thm:horner} for $\exp$ in the basis of
Laguerre polynomials, where \zcref{eq:hypH} does not hold, 
results in the nonsensical $u_n=0$. This shows the importance of
convergence
considerations when using these methods.
\end{example}

The rest of this section is devoted to a rather technical proof
of the irreducibility of the fraction, that can safely be skipped in a
first reading. The existence of $[\psi_n](L\cdot f)$ and the
recurrences then
follow
as in \zcref{prop:rec_Fourier,coro:rec_Fourier_module}.

Throughout, if $A = \sum_
{j=0}^d a_j(n) S_n^j\in \Sh$, then for any $k \in \rel$, we write
 \[A(n+k,S_n)= \sum_{j=0}^d a_j(n+k) S_n^j \in \Sh,\]
 which satisfies the commutation $S_n^kA(n,S_n)=A(n+k,S_n)S_n^k$.
 Note that if $k\in\mathbb N$ and $A(n,S_n)\in\RecN$, then $A
(n+k,S_n)\in\RecN$. 

\subsection{Right Horner evaluation of polynomials in 
\texorpdfstring{$x$}{x}}

\begin{algorithm}[t]
    \SetAlgoRefName{RightHornerPoly}\SetKwFunction{RightHornerPoly}{RightHornerPoly}
      \Input{$P=\sum_{i=0}^d p_ix^i \in \K[x]$; 
                \ the pair $X^\psi =(\Xnum,\Xden)$ from 
                \zcref{eq:xnumxden}}
      \Output{The irreducible fraction $\HorrI(P)\in\FrecI$ from 
      \zcref{eq:horr-pol}}
        $S \gets (0,1)$\\
        \For{$i\gets d$ \KwTo $0$}{
            $S \gets S\otimes X$\qquad \tcp{$\otimes$ and
            $\oplus$ from \zcref{def:op_pairs}}
            $S\gets S\oplus (p_i,1)$
        }
        \KwRet{S}
    \caption{Horner evaluation of polynomials in $x$} 
    \label{algo:HornerPol}
\end{algorithm}

\zcref[S]{algo:HornerPol} implements the formula defining $\Horr$ in
\zcref{eq:horr-pol}. The irreducibility of its output is given by the
following.

\begin{proposition}\label{prop:horr-pol-irred}
Let $P = \sum_{i = 0}^d p_i x^i$ be a nonzero polynomial in $\K[x]$ and let $(\psi_n)$ be a  family of classical orthogonal polynomials. Then the fraction
    $\HorrI (P)$ from \zcref{eq:horr-pol}
is irreducible. Its denominator is $S_n^{\deg P}$.
\end{proposition}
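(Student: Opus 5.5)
We argue by induction along the loop of \zcref{algo:HornerPol}. The invariant is that after the step handling $p_i$, with $k=d-i$, the current fraction is $(N_k,S_n^k)$, where $N_k\in\RecI$ has a \emph{nonzero constant coefficient} (coefficient of $S_n^0$). Irreducibility then follows from the shape of the denominator. In $\Sh$, $S_n^k$ cannot have $S_n$ as a left factor of $\gcld_\Sh(N_k,S_n^k)$ unless $N_k\in S_n\Sh$, since every left divisor of $S_n^k$ in $\Sh$ is, up to a factor in $\K(n)$, of the form $S_n^j$. Moreover $N_k=S_nM$ forces the constant coefficient of $N_k$ to vanish. Hence the monic $\gcld_\Sh$ is $1$, and the denominator at the end is $S_n^{d}=S_n^{\deg P}$.

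\emph{Base case.} Take $d=\deg P$, so $p_d\neq0$; leading zero coefficients can be skipped. The first iteration starts from $(0,1)$, and the product $(0,1)\otimes X$ is degenerate since it involves $\mclm_\RecI(0,S_n)$. I will treat it directly: the first step should be read as producing $(p_d,1)$, which corresponds to the term $a_d$ in \zcref{eq:horr-pol}. Its constant coefficient is $p_d\neq0$, and the denominator is $S_n^0$.

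\emph{Multiplication step.} Given $(N,S_n^k)$ as in the invariant, compute $(N,S_n^k)\otimes(\Xnum,S_n)=((N)^{S_n}\Xnum,(S_n)^{N}S_n^k)$. This needs $\mclm_\RecI(N,S_n)$.
\begin{enumerate}
\item Since the constant coefficient of $N$ is nonzero, $S_n$ is not a right divisor of $N$. Therefore $\gcrd_\Sh(N,S_n)=1$ and $\lclm_\Sh$ has order $\deg_{S_n}N+1$.
\item Write the cofactor of $N$ as $U=u_1S_n+u_0$. Comparing constant coefficients in $UN=VS_n$ gives $u_0n_0=0$, so $u_0=0$.
\item Hence the lclm is $S_nN=N(n+1,S_n)S_n$, with cofactors $S_n$ and $N(n+1,S_n)$. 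Both already lie in $\RecI$, so by \zcref{prop:mclm-rec} no extra factor $d$ appears.
\item The new fraction is $(N(n+1,S_n)\Xnum,\,S_n^{k+1})$. Its constant coefficient is $n_0(n+1)$ times the constant coefficient of $\Xnum$. The latter is $c(n)h_n/h_{n+1}$, which is nonzero by \zcref{table:pairs_rec_cop} (including the special Jacobi case $\alpha+\beta+1=0$).
\end{enumerate}

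\emph{Addition step.} Next, $\oplus(p_i,1)$ needs $\mclm_\RecI(S_n^{k+1},1)=S_n^{k+1}$, with cofactors $1$ and $S_n^{k+1}$. The numerator becomes $N'+S_n^{k+1}p_i$ and the denominator stays $S_n^{k+1}$. Since $k+1\ge1$, the added term does not touch the constant coefficient, so the invariant is preserved.

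The main obstacle is the mclm computation in the multiplication step: I must verify that the lclm in $\Sh$ really is $S_nN$ and that no denominator vanishing on $\N$ appears in the cofactors. I also must check, from the tables, that the constant coefficient of $\Xnum$ does not vanish identically for every classical family.
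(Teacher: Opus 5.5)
Your proof is correct and is essentially the paper's argument. Both run an induction along the Horner scheme. The product step uses $\mclm_\RecI(N,S_n)=S_nN=N(n+1,S_n)S_n$ (the paper cites \zcref{lemma:tech-pairs-item-c}, where you rederive it), which gives the fraction $(N(n+1,S_n)\Xnum+S_n^{k+1}p_i,\,S_n^{k+1})$, and irreducibility holds because $S_n$ cannot divide this numerator. Your nonzero-constant-coefficient invariant is a concrete form of the paper's ``$S_n$ is not a right divisor of $B$''. Your explicit check that $\Xnum$ has a nonzero constant coefficient, and your handling of the degenerate first step from $(0,1)$, fill in details the paper leaves implicit.
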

\begin{proof}
The proof is by induction on $\deg P$. The statement is clear
when~$\deg P=0$. Otherwise, $P(x)=Q(x)x+p_0$ with $\deg Q=\deg P-1$.
The induction hypothesis gives $\mathfrak R^\psi_{d/dx} (Q) =  (B,S_n^
{\deg P-1})$ and this fraction is irreducible. \zcref{def:op_pairs}
then gives 
\begin{align*}
  \mathfrak R^\psi_{d/dx} (P) &=( \mathfrak R^\psi_{d/dx} (Q)
  \otimes \mathfrak R^\psi_{d/dx} (x)) \oplus  \mathfrak R^
  \psi_{d/dx} (p_0) \\ & = (
  (B,S_n^{\deg P-1})  \otimes  (\Xnum,S_n))  \oplus  
  (p_0,1)\\
  & =  (  (B)^{S_n} \Xnum,(S_n)^{B} S_n^{\deg P-1})  
  \oplus (p_0,1)\\
  & =  (  B(n+1,S_n) \Xnum,S_n^{\deg P})  
    \oplus (p_0,1)\\
  &= (B(n+1,S_n) \Xnum+S_n^{\deg P}p_0,S_n^{\deg P}).
\end{align*}
By irreducibility and the special case of constant polynomials, $S_n$ is not a right divisor of $B$, which allows us to use  \zcref{lemma:tech-pairs-item-c} below in the fourth line and gives the irreducibility of the left $\oplus$ summand in that line, which implies that of the result.
 \end{proof}
\begin{lemma}\label{lemma:tech-pairs-item-c}
Let $A \in \Rec\setminus\{0\}$ be relatively prime with~$S_n$.
     Then for any $j \in \nat$, 
       \[
         \mclm_{\Rec} (A,S_n^j)=
             (A(n+j,S_n))S_n^j=(S_n^{j})A,
      \]
      where the parenthesized terms give the cofactors.
\end{lemma}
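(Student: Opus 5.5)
The plan is to work first in $\Sh$, where lclms are well defined, and then apply \zcref{prop:mclm-rec} to pass to $\RecI$. The identity $(S_n^j)A=A(n+j,S_n)S_n^j$ is just the commutation rule $S_n^kA(n,S_n)=A(n+k,S_n)S_n^k$ recalled above. So $L:=S_n^jA$ is a common left multiple of $A$ and $S_n^j$, and both cofactors $S_n^j$ and $A(n+j,S_n)$ lie in $\RecI$: $S_n^j$ trivially, and $A(n+j,S_n)$ because $j\in\N$ and $A\in\RecI$. What remains is to show that $L$ is the minimal element of the left ideal $(A)\cap(S_n^j)$ in $\RecI$.

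\textbf{Step 1: $L$ has minimal degree.} Write $a=\deg_{S_n}A$. The operator $L$ has degree $a+j$. The hypothesis that $A$ is relatively prime with $S_n$, read as right-coprime in $\Sh$, means that $\gcrd_\Sh(A,S_n^j)=1$. Indeed, any right divisor of $S_n^j$ in $\Sh$ is, up to a unit of $\K(n)$, a power of $S_n$, since $S_n$ is the only irreducible right factor of $S_n^j$ up to units. In a left principal ring one has $\deg\lclm_\Sh(A,B)=\deg A+\deg B-\deg\gcrd_\Sh(A,B)$. Hence $\deg\lclm_\Sh(A,S_n^j)=a+j$, and $\lclm_\Sh(A,S_n^j)$ is the monic normalization of $L$. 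The cofactors in $\Sh$ are $\ell^{-1}S_n^j$ and $\ell^{-1}A(n+j,S_n)$, where $\ell$ is the leading coefficient of $L$, namely $\ell(n)=\mathrm{lc}(A)(n+j)$.

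\textbf{Step 2: normalization by \zcref{prop:mclm-rec}.} I would compute $d_u,d_v$, the largest monic factors vanishing only on $\N$ of the denominators of the cofactors $\ell^{-1}S_n^j$ and $\ell^{-1}A(n+j,S_n)$. This gives $\mclm_\RecI=d\,\ell^{-1}L$ with cofactors $d\ell^{-1}S_n^j$ and $d\ell^{-1}A(n+j,S_n)$. The claimed equality holds up to the unit normalization of \zcref{lemma:minimal}, i.e.\ up to an element of $\RecI^\times$, and the statement should be read in that sense.

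To obtain it, I want $d\ell^{-1}\in\RecI^\times$. The cofactor $\ell^{-1}S_n^j$ has denominator $\ell$, and the part of $\ell$ vanishing on $\N$ is exactly what must be cleared. The other cofactor has denominator dividing the same $\ell$: its numerator coefficients $a_i(n+j)$ have denominators in $\Za$, so they contribute nothing vanishing on $\N$. Thus $d$ is the $\N$-vanishing part of the numerator of $\ell$. The remaining factor $d\ell^{-1}$ then has numerator and denominator free of zeros on $\N$, hence lies in $\RecI^\times$, and the statement follows.

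The main obstacle is Step 1: justifying the degree formula, and checking that coprimality with $S_n$ forces right-coprimality with $S_n^j$. That $S_n$ is not a right divisor of $A$ means the constant coefficient $a_0\ne0$. For $j\ge1$, any nontrivial right divisor of $S_n^j$ is right-divisible by $S_n$, so a nontrivial common right divisor of $A$ and $S_n^j$ would make $S_n$ a right divisor of $A$, which is excluded. Alternatively, one can bypass the formula directly. A common left multiple $UA=VS_n^j$ has vanishing coefficients of $S_n^0,\dots,S_n^{j-1}$. If $\deg U<j$, then since $a_0\ne0$, comparing coefficients of $S_n^0,\dots,S_n^{\deg U}$ successively forces every coefficient of $U$ to vanish, so $U=0$. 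This triangular argument is the one I would actually write.
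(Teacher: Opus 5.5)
Your proof is correct, but it differs from the paper's on both minimality conditions.

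\textbf{Degree.} The paper observes that the cofactors $A(n+j,S_n)$ and $S_n^j$ are left-coprime in $\Sh$, which forces $S_n^jA$ to be a $\K(n)$-multiple of $\lclm_\Sh(A,S_n^j)$. Your triangular comparison of the coefficients of $S_n^0,\dots,S_n^{j-1}$, using only $a_0\neq0$, reaches the same conclusion more elementarily. Your first variant is justified too loosely, though: factorizations in $\Sh$ are unique only up to similarity, so ``$S_n$ is the only irreducible right factor of $S_n^j$'' needs the additivity of $\deg_{S_n}$ and $\valSn$, which forces every factor of $S_n^j$ to be a monomial $c(n)S_n^k$. You are right to prefer the triangular argument.

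\textbf{Leading coefficient.} The paper bypasses Proposition~\ref{prop:mclm-rec}. A minimal-degree common left multiple in $\Rec$ is $UA$ with $U\in\Rec$ of degree $j$, so its leading coefficient $u_j(n)\,\mathrm{lc}(A)(n+j)$ is a $\K[n]_\Za$-multiple of that of $S_n^jA$. This one-line argument is shorter than your bookkeeping on the denominators of the $\Sh$-cofactors $\ell^{-1}S_n^j$ and $\ell^{-1}A(n+j,S_n)$. What your route buys is an explicit unit $d\ell^{-1}\in\RecI^\times$ relating the stated operators to the normalized $\mclm_\RecI$. That makes precise the ``up to $\RecI^\times$'' reading of the statement, which the paper's proof also adopts and which is needed, since the stated operators need not have a monic polynomial leading coefficient.
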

\begin{proof}
That the products on the right are left multiples in $\Rec$ of both~$A$
and~$S_n^j$ is clear from their presentation. In both cases, the
cofactors are left-coprime since $S_n$ being prime with~$A$ implies
that it does not divide~$A(n+j-k,S_n)$. Thus the co-factors
have minimal degree in~$S_n$ and this is also the case for the
products.  The minimality of the leading coefficient up to invertible elements in $\Rec^\times$ comes from the fact that the product of~$A$ by an element of~$\Rec$ of degree~$j$ in~$S_n$ is a multiple of the leading coefficient of its product by~$S_n^{j}$.
\end{proof}

\subsection{Left Horner}\label{sec:lefthorner}

\begin{algorithm}[t]
    \SetAlgoRefName{LeftHorner}
    \SetKwFunction{LeftHorner}{LeftHorner}
      \Input{$L=\sum_{i=0}^r \partial^iq_i(x) \in \K
      [x]\langle\partial\rangle$;\\
      \qquad the pairs $X^\psi = (\Xnum,\Xden)$, $D^\psi =
      (\Dnum,\Dden)$ from
    \zcref{eq:Phi_psi}}
      \Output{The evaluation of $\Horl(L)\in\Frec$ from
      \zcref{eq:Horner_gauche}}
        $S \gets (0,1)$\\
        \For{$i\gets r$ \KwTo $0$}{
            $S \gets D\otimes S$ \tcp{{\zcref{def:op_pairs}}}
            $Q_i \gets $ \zcref[noname,font=\textsf]{algo:HornerPol}$
            (q_i,X)$\\
            $S\gets S\oplus Q_i$ \tcp{{\zcref{def:op_pairs}}} }
        \KwRet{S}
    \caption{Left Horner evaluation of differential terms} 
    \label{algo:LeftHorner}
\end{algorithm}
\zcref[noabbrev,S]{algo:LeftHorner} is a direct implementation of
\zcref{eq:Horner_gauche}, where the polynomial coefficients are still evaluated by right Horner evaluation, so as to take advantage of the efficiency of that computation and irreducibility of the result. Its use relies on the following classical observation, whose proof is a simple induction. 
\begin{lemma} \label{lem:conversion}
Let $L = \sum_{j=0}^{r} p_j(x) \partial^{j}\in\K
[x]\langle\partial\rangle$, with $p_j (x) \in \K
[x]$, then $L$ can be written $\sum_{j=0}^{r}  \partial^{j} q_j(x)$,
with $q_j(x)\in\K[x]$ 
for $j = 0, \ldots, r$ and $q_r(x) = p_r(x)$.
\end{lemma}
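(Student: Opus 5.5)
The plan is an induction on the order $r$ of $L$, based on the commutation rule $\partial a=a\partial+a'$ in $\K[x]\langle\partial\rangle$.

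The essential identity is
\[\partial^{j}p(x)=\sum_{i=0}^{j}\binom{j}{i}p^{(j-i)}(x)\,\partial^{i},\]
which follows from the Leibniz rule by induction on $j$. Read from right to left, it shows that $p(x)\partial^{j}$ equals $\partial^{j}p(x)$ plus a combination of terms $p^{(j-i)}(x)\partial^{i}$ with $i<j$ and polynomial coefficients. It is more convenient to use the reversed form
\[p(x)\partial^{j}=\sum_{i=0}^{j}(-1)^{j-i}\binom{j}{i}\partial^{i}p^{(j-i)}(x),\]
which is the adjoint-type identity. It can be proved by the same induction on $j$: for $j=1$ it is just $p\partial=\partial p-p'$, and for the step one writes $p\partial^{j+1}=(p\partial^{j})\partial$ and applies the $j=1$ case to each $\partial^{i}p^{(j-i)}\partial$.

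With this formula, every term $p_j(x)\partial^{j}$ of $L$ becomes a sum of terms $\partial^{i}q(x)$ with $i\le j$. The single term with $i=j$ is exactly $\partial^{j}p_j(x)$. Collecting the terms by powers of $\partial$ then gives
\[q_i=\sum_{j\ge i}(-1)^{j-i}\binom{j}{i}p_j^{(j-i)}.\]
These are polynomials. For $i=r$ only $j=r$ contributes, so $q_r=p_r$.

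As an alternative that avoids the closed formula, I would induct on $r$ directly. One writes $p_r\partial^{r}=\partial^{r}p_r+R$, where $R$ has order $<r$: this holds because $\partial^{r}p_r-p_r\partial^{r}$ has order $<r$ by the commutation rule. The induction hypothesis is then applied to $R+\sum_{j<r}p_j\partial^{j}$.

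No step is a real obstacle. The only point needing care is checking that the leading coefficient is unchanged, which holds because the correction terms in the commutation strictly lower the order in $\partial$.
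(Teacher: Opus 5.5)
Your proposal is correct. Your second, ``alternative'' argument is exactly the paper's: the paper only calls the lemma a simple induction, and right after it describes taking $q_r=p_r$, subtracting $\partial^r q_r$ to get a lower-order operator with polynomial coefficients, and recursing.

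Your main route is a genuinely different one. You prove the adjoint-type identity $p\,\partial^{j}=\sum_{i=0}^{j}(-1)^{j-i}\binom{j}{i}\partial^{i}p^{(j-i)}$; your induction step checks out, since it reduces to $\binom{j}{k-1}+\binom{j}{k}=\binom{j+1}{k}$. You then read off $q_i=\sum_{j\ge i}(-1)^{j-i}\binom{j}{i}p_j^{(j-i)}$.

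That route costs a small combinatorial verification. In return you get the $q_i$ explicitly, so polynomiality and $q_r=p_r$ are immediate. You also get degree information for free, namely $\deg q_i-i\le\max_{j\ge i}(\deg p_j-j)$, which bounds the exponent $m$ appearing in the left Horner fraction. The paper's iterative version needs nothing beyond the commutation rule $\partial a=a\partial+a'$, and it mirrors how the $q_i$ are actually computed.
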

The polynomials~$q_i$ can be computed inductively starting with~$q_k=p_k$ and 
subtracting~$\partial^kq_k$ to produce a smaller order operator.
Writing a differential operator in this way was used by
Paszkowski~\cite[Ch.~13]{Paszkowski1975} to derive expansions in the
Chebyshev basis. This was then extended by Lewanowicz to Gegenbauer
and to Jacobi polynomials~\cite{Lewanowicz1976,Lewanowicz1983}. The
following proposition gives a synthetic formula
in the general case. 
\begin{proposition}\label{prop:horner_gauche}
  Let~$L= \partial^r q_r(x) +\dots+\partial q_1(x)+q_0(x)$ be a
  linear differential operator in $\K[x]\langle\partial\rangle$.
  Let
  $(\psi_n)$ be a family of {classical} orthogonal polynomials.
  The fraction $\Horl(L)$ from \zcref{eq:Horner_gauche}
 is of the form 
\begin{multline} \label{eq:paire_horner_gauche}
  (\Hlnum,\Hlden)  = \\ \left  ( \sum_{k = 0}^r S_n^{m  - \deg q_k + k}
  \Ddenrmk(n+\deg q_{k},S_n) Q_{k},\quad   S_n^{m} \Ddenr \right ),
\end{multline}
with $m = \max_{k = 0, \ldots, r}  ( \deg q_k - k)^+$, with $x^+=\max 
(x,0)$ denoting the positive part, and
for $i = 0, \ldots, r$, the operator
 $Q_i \in \RecN$ is defined by $(Q_i,S_n^{\deg q_i})=\HorrN(q_i)$,
 while $\Ddeni$ is defined by
 \begin{equation}\label{lem:tech_paires} 
 \Ddeni = \prod_{j = 0}^{i-1} \Dden(n+i-1-j,S_n).
 \end{equation}
\end{proposition}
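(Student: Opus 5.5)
The plan is to prove \eqref{eq:paire_horner_gauche} by induction on~$r$, following the loop of \zcref{algo:LeftHorner}. The invariant after processing $q_r,\dots,q_i$ is the analogous formula for the truncated operator $L_i=\partial^{r-i}q_r+\dots+\partial q_{i+1}+q_i$. That is, the current fraction should be
\[
\Bigl(\sum_{k=i}^{r} S_n^{m_i-\deg q_k+k-i}\,D^\psi_{\text{den},r-k}(n+\deg q_k,S_n)\,Q_k,\ S_n^{m_i}D^\psi_{\text{den},r-i}\Bigr),
\qquad m_i=\max_{k\ge i}(\deg q_k-(k-i))^+ .
\]
For $i=0$ this is the claim. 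The base case $i=r$ is $(0,1)\oplus\HorrN(q_r)$, which by \zcref{prop:horr-pol-irred} equals $(Q_r,S_n^{\deg q_r})$, matching the invariant since $D^\psi_{\text{den},0}=1$.

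Two arithmetic facts drive everything. The first is the commutation $S_n^kA(n,S_n)=A(n+k,S_n)S_n^k$, which lets me move powers of~$S_n$ across $D^\psi_{\text{den},j}$. The second is \zcref{lemma:tech-pairs-item-c}, which computes $\mclm_\Rec(A,S_n^j)$ and its cofactors exactly when $A$ is coprime with~$S_n$. To apply the lemma I first check that each $D^\psi_{\text{den},j}$, and each of its shifts, is coprime with~$S_n$. By \zcref{table:pairs_rec_cop} the constant coefficient of $\Dden$ is nonzero in every classical case, including the simplified Jacobi case $\alpha+\beta+1=0$. Shifts of this coefficient stay nonzero, and the constant coefficient of the product \eqref{lem:tech_paires} is the product of these constant coefficients, so it is nonzero as well.

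The inductive step has two parts.

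\emph{(i) The product $D\otimes S$.} By \zcref{def:op_pairs} this requires $\mclm_\Rec(S_n,\,S_n^{m}E)$ with $E=D^\psi_{\text{den},r-i}$. I rewrite the denominator as $S_n^{m}E=E(n+m,S_n)S_n^{m}$. If $m\ge1$, this element is already a left multiple of~$S_n$; it has minimal order, so it is the mclm, the cofactor of the denominator is~$1$ and that of $S_n$ is $E(n+m,S_n)S_n^{m-1}$. If $m=0$, \zcref{lemma:tech-pairs-item-c} with $j=1$ gives $\mclm=E(n+1,S_n)S_n=S_nE$, with cofactors $S_n$ and $E(n+1,S_n)$. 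In both cases the new denominator $E(n+m,S_n)S_n^{m}\Dden$ or $S_nE\,\Dden$ is rewritten as $S_n^{m'}D^\psi_{\text{den},r-i+1}$. This uses the telescoping identity $D^\psi_{\text{den},j}(n+1,S_n)\,\Dden = D^\psi_{\text{den},j+1}$, which is immediate from \eqref{lem:tech_paires}. The numerator acquires one extra power of~$S_n$ or a shift, and this accounts for the exponent $k-i$ in the invariant.

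\emph{(ii) The sum $\oplus\,(Q_{i-1},S_n^{\deg q_{i-1}})$.} This needs $\mclm_\Rec(S_n^{a}E',S_n^{b})$ with $E'$ coprime to~$S_n$. Writing $S_n^aE'=E'(n+a,S_n)S_n^a$ and applying \zcref{lemma:tech-pairs-item-c} with $j=\max(a,b)$, I get $\mclm=S_n^{\max(a,b)}E'$. One cofactor is $S_n^{(b-a)^+}$ and the other is $S_n^{(a-b)^+}E'$ shifted by~$b$. The shift $E'(n+\deg q_{i-1},S_n)$ that this produces is exactly the factor $D^\psi_{\text{den},r-k}(n+\deg q_k,S_n)$ appearing in the formula. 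The exponent update $m\mapsto\max(m,\deg q_{i-1})$, together with the $+1$ shift from (i), reproduces the positive part in $m=\max_k(\deg q_k-k)^+$.

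I expect the main obstacle to be exactness. Fractions are only defined up to $\RecI^\times$, but the claimed formula fixes a representative, so every mclm must be identified exactly: it must be monic in the sense of \zcref{lemma:minimal}, with the exact cofactors. This is why I rely on \zcref{lemma:tech-pairs-item-c} rather than on \zcref{prop:mclm-rec}. The delicate sub-case is $m\ge1$ in step~(i), where I claim the mclm is the denominator itself. Here the lemma does not apply verbatim, so I will argue directly. Any common left multiple has order at least $m+\deg E$. The only left multiples of that order are the multiples $cE(n+m,S_n)S_n^m$ by an element $c\in\K[n]_\Za$. Minimality of the leading coefficient then forces $c\in\RecI^\times$. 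After that, the remaining work is careful bookkeeping of the exponents and shifts, which I will not grind through here.
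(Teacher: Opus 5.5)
Your proposal follows the paper's proof: the same downward induction on $i$, with the same invariant (including $m_i=\max_{k\ge i}(\deg q_k+i-k)^+$). Each step is the product by $(S_n,\Dden)$ followed by the sum with $(Q_{i-1},S_n^{\deg q_{i-1}})$, and the cofactors come from the mclm lemma for an operator coprime with $S_n$ and $S_n^j$.

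There is one slip to fix in step~(i). By the definition of $\otimes$, the new denominator is the cofactor of $S_n$ times $\Dden$, not the mclm times $\Dden$. It is therefore $S_n^{(m-1)^+}E(n+1,S_n)\Dden=S_n^{(m-1)^+}\Ddenrmipu$. The numerator becomes simply $S_n^{(1-m)^+}\Qnumi$, with no shift. Your expressions $E(n+m,S_n)S_n^{m}\Dden$ and $S_nE\Dden$ contain an extra factor $S_n$, and are in general not of the form $S_n^{m'}\Ddenrmipu$.

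With that corrected, the bookkeeping you skip reduces to two identities, $(m_i-1)^+-(1-m_i)^+=m_i-1$ and $m_{i-1}=\max((m_i-1)^+,\deg q_{i-1})$. This is exactly how the paper closes the induction.
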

\begin{proof}
For $i = 0, \ldots, r$, let $d_i = \deg q_i$ and recall that the
fraction
$(Q_i, S_n^{d_i})$ is irreducible by~\zcref{lem:tech_paires}.
We prove by induction that the differential term corresponding to the
left Horner evaluation of 
\[\partial^{r-i} q_r(x) +\dots+q_i(x)\]
has for image by $\mathfrak R^\psi_{d/dx}$ the fraction
\[
  (\Qnumi,\Qdeni) = \left(\sum_{k = i, \ldots, r} S_n^{m_i - d_k + k -i}
\Ddenrmk(n+d_{k},S_n) Q_{k},
S_n^{m_i} \Ddenrmi
\right),
\]
with $m_i = \max_{k = i, \ldots, r}  ( \deg q_k + i - k)^+$.

For $i=r$, $m_r=d_r$ and the result~$(Q_r,S_n^{d_r})$ is given 
by~\zcref{prop:horr-pol-irred}. If the result holds at rank~$k$, then
Horner evaluation first evaluates the product
\begin{align*}
  (S_n,\Dden)\otimes (\Qnumi,\Qdeni) 
  & =  (\ppcm{S_n}{ S_n^{m_i}\Ddenrmi} \Qnumi, \ppcm{ S_n^{m_i}\Ddenrmi}{S_n} \Dden) \\
    & = ( S_n^{(1 -m_i)^+} \Qnumi,  S_n^{(m_i-1)^+} \Ddenrmi
    (n+1,S_n) \Dden)\\
    & = ( S_n^{(1 -m_i)^+} \Qnumi,  S_n^{(m_i-1)^+} \Ddenrmipu).
\end{align*}
Next, we use $m_{i-1}=\max ( (m_i - 1)^+ , d_{i-1})$ in the
 addition 
\begin{align*}
&( S_n^{(1 -m_i)^+} \Qnumi,  S_n^{(m_i-1)^+} \Ddenrmipu) \oplus (Q_
{i-1},S_n^{d_{i-1}})\\
& = (S_n^{m_{i-1} - (m_i-1)^+}  S_n^{(1-m_i)^+} \Qnumi +
  S_n^{m_{i-1}-d_{i-1}}  \Ddenrmipu(n+d_{i-1},S_n) Q_{i-1}, \\
&  \qquad\qquad\qquad
  S_n^{m_{i-1}} \Ddenrmipu)\\
& = ( S_n^{m_{i-1} - (m_i-1)}  \Qnumi +
  S_n^{m_{i-1}-d_{i-1}}  \Ddenrmipu(n+d_{i-1},S_n) Q_{i-1}, \\
&  \qquad\qquad\qquad
  S_n^{m_{i-1}} \Ddenrmipu).
\end{align*}
This gives $Q_{\text{den},i-1}=S_n^{m_{i-1}} \Ddenrmipu$ and
\[
Q_{\text{num},i-1}=S_n^{m_{i-1} - (m_i-1)}  \Qnumi +
  S_n^{m_{i-1}-d_{i-1}}  \Ddenrmipu(n+d_{i-1},S_n) Q_{i-1},
\]
which concludes the proof of the induction.
\end{proof}
Before stating the next technical lemma, we recall that the valuation in $S_n$ is the function
\begin{equation}\label{eq:valSn}
\valSn : \sum_{k =0}^{m} a_k(n) S_n^k \in \RecI \setminus 
\{0 \} \mapsto \min \{ k \mid a_k(n) \neq 0 \}.
\end{equation}
\begin{lemma} \label{lem:nonzeronum}
  Under the same assumptions as~\zcref{prop:horner_gauche},   if $L
  \neq 0$, let $m' = \min_{\genfrac{}{}{0pt}{1}{k = 0, \ldots, r}{\textrm{s.t. } Q_k \neq 0}}m -  ( \deg q_k - k)$, then the valuation in $S_n$ of the numerator 
   of the pair~\eqref{eq:paire_horner_gauche} is $m'$.  
\end{lemma}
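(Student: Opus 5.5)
The plan is to compute the valuation of each summand of the numerator in \zcref{eq:paire_horner_gauche} separately. Then I will show that the summands of minimal valuation cannot cancel, by comparing the growth in~$n$ of their lowest coefficients. Write $d_k=\deg q_k$ and $e_k=m-d_k+k\ge0$, so that the numerator is $\sum_k S_n^{e_k}\Ddenrmk(n+d_k,S_n)Q_k$ and $m'=\min_{Q_k\ne0}e_k$.

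\emph{Step 1: valuations multiply.} In $\Sh$ one has $\valSn(AB)=\valSn(A)+\valSn(B)$. Indeed, the lowest coefficient of $AB$ is $a_{v}(n)b_{w}(n+v)$, which is nonzero. Shifting~$n$ does not change the valuation.

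\emph{Step 2: valuation of each factor.} From \zcref{table:pairs_rec_cop}, the constant coefficient $\gamma(n)$ of $\Dden$ is a nonzero rational function in every classical case, including the Jacobi case with $\alpha+\beta+1=0$. Hence $\valSn\Ddeni(n+d,S_n)=0$, and its constant coefficient is $\prod_j\gamma(n+d+i-1-j)$. For $Q_k\neq0$ I claim $\valSn Q_k=0$.
\begin{itemize}
\item If $d_k=0$, then $Q_k=q_k$ is a nonzero constant.
\item If $d_k>0$, suppose $Q_k$ had zero constant coefficient. Then $Q_k=S_n\tilde Q$ in $\Sh$, and $S_n$ would be a common left divisor of $Q_k$ and $S_n^{d_k}$. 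This contradicts the irreducibility in \zcref{prop:horr-pol-irred}.
\end{itemize}
More precisely, I would track the constant coefficient through \zcref{algo:HornerPol}. The recursion $Q=B(n+1,S_n)\Xnum+S_n^{\deg P}p_0$ with $\deg P>0$ shows that the constant coefficient of $Q_k$ equals $\operatorname{lc}(q_k)\prod_{i=0}^{d_k-1}\xi(n+i)$, up to the precise shifts, where $\xi(n)$ is the constant coefficient of $\Xnum$. Hence each nonzero summand has valuation exactly~$e_k$.

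\emph{Step 3: no cancellation (main obstacle).} The coefficient of $S_n^{m'}$ in the numerator is a sum over the indices~$k$ with $e_k=m'$, that is, with $d_k-k$ constant. Each of these terms is a nonzero rational function of the form
\[
c_k\prod_{j}\gamma(n+\cdot)\prod_{i}\xi(n+\cdot),
\]
with $r-k$ factors $\gamma$ and $d_k$ factors $\xi$. I would compare their degrees in~$n$, meaning their orders of growth as $n\to\infty$, family by family using \zcref{table:pairs_rec_cop}.
\begin{itemize}
\item Chebyshev, Gegenbauer, Jacobi, Hermite: $\deg\gamma=-1$ and $\deg\xi=0$, so the term has degree $-(r-k)$.
\item Laguerre: $\deg\gamma=0$ and $\deg\xi=1$, so the term has degree $d_k=k+\text{const}$.
\end{itemize}
In all cases distinct~$k$ give distinct degrees in~$n$, so the sum cannot vanish. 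This shows that $m'$ is the valuation. The delicate points are the exact bookkeeping of the shifts in Step~2 and checking the degree claims in the special Jacobi case.
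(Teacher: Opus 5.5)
Your proposal is correct and follows essentially the paper's proof. You keep only the summands with $e_k=m'$, observe that each factor has valuation $0$ in $S_n$, and use \zcref{table:pairs_rec_cop} to see that their lowest coefficients have pairwise distinct degrees in $n$ ($k-r$ in general, $d_k$ for Laguerre), so they cannot cancel. You also make explicit what the paper leaves implicit, namely additivity of $\valSn$ and the constant coefficient $\operatorname{lc}(q_k)\prod_{i=0}^{d_k-1}\xi(n+i)$ of $Q_k$ from the Horner recursion; both are right, and the exact shifts are irrelevant anyway since degree in $n$ is invariant under shifting.
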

\begin{proof}
By
 definition of $m'$,   $S_n^{m'}$ divides
 the numerator. Let $0
 \leq k_0 < \cdots < k_{s} \leq r$ be the
 indices such that $m' = m  + k_j - \deg q_{k_j} $ for $j = 0, \ldots, s$. For all the other indices, the term   $ S_n^{m  - \deg q_{k} + k}  \Ddenrmk(n+\deg q_{k}) Q_{k}$ is divisible  by $S_n^{m'+1}$, hence cannot contribute to the term in $S_n^{m'}$.

From~\zcref{table:pairs_rec_cop}, it follows that the
 constant term of  $\Ddenrmkj(n+\deg q_{k_j}) Q_{k_j}$ is of the same
 order as
   $ n^{\deg q_{k_j}} =  n^{k_j + m - m'}$   if 
   the $\psi_n$'s are Laguerre  polynomials;
   ${n^{k_j-r}}$    in the other cases.

 This implies that, whatever the polynomial family, these constant
 terms cannot cancel each other. Hence, the
 coefficient of $S_n^ {m'}$ of the numerator is nonzero.
\end{proof}
\begin{corollary}\label{coro:horner_elim_Sn}
  Let~$L = \partial^r q_r(x) +\dots+\partial q_1(x)+q_0(x) $ be a
  linear differential operator in $\K[x]\langle\partial\rangle$, let
  $(\psi_n)$ be a family of classical orthogonal polynomials.  The
  pair~\eqref{eq:paire_horner_gauche} is such that $S_n$
  does not
  divide its two elements simultaneously.   Its denominator is
  nonzero; so is its numerator if $L \neq 0$.
\end{corollary}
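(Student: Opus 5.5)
The plan is to read the corollary directly off the explicit form \eqref{eq:paire_horner_gauche} given by \zcref{prop:horner_gauche}, combined with the valuation computation of \zcref{lem:nonzeronum}. There are two things to establish: the denominator and numerator are nonzero, and $S_n$ does not divide both simultaneously. In both cases, "dividing by $S_n$" means on the left or on the right, and for powers of $S_n$ the valuation $\valSn$ controls both.

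\textbf{Denominator.} The denominator is $S_n^m\Ddenr$, where $\Ddenr=\prod_{j=0}^{r-1}\Dden(n+r-1-j,S_n)$. By \zcref{table:pairs_rec_cop}, each $\Dden$ has a nonzero constant coefficient in $S_n$: for example $\frac{1}{2(n+1)}$ for Chebyshev, $-1$ for Laguerre, and $\frac{1}{2(n+1)}$ for Hermite. In the Jacobi case with $\alpha+\beta+1=0$, the constant coefficient simplifies to $1/(2n+1)$. Shifting $n$ keeps these constants nonzero, and in a product the constant coefficients multiply. Hence $\valSn(\Ddenr)=0$ and $\Ddenr\neq0$. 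The denominator is therefore nonzero, with valuation exactly $m$.

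\textbf{Numerator.} If $L\neq0$, some $q_k$ is nonzero, so the corresponding $Q_k$ is nonzero by \zcref{prop:horr-pol-irred}. \zcref{lem:nonzeronum} then gives that the numerator has valuation $m'$, and in particular it is nonzero.

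\textbf{No common factor $S_n$.} Suppose $S_n$ divided both numerator and denominator. Dividing the denominator forces $m\ge1$, since its valuation is $m$. By definition $m=\max_k(\deg q_k-k)^+$, so $m\ge1$ means $m=\deg q_{k}-k$ for some $k$ with $q_k\neq0$. For that $k$ we get $m-(\deg q_k-k)=0$, hence $m'=0$ and the numerator has valuation $0$. This contradicts $S_n$ dividing the numerator.

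The case $L=0$ needs a separate remark. There all $Q_k$ vanish, the numerator is $0$, and $m=0$, so the denominator $\Ddenr$ has valuation $0$ and is not divisible by $S_n$. The claim holds there too.

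\textbf{Main obstacle.} The delicate point is making "divisible by $S_n$" match valuation on both sides. Left divisibility $S_nA'$ and right divisibility $A'S_n$ both imply that the constant coefficient vanishes, and conversely, since $S_n a(n)=a(n+1)S_n$ keeps coefficients in $\Rec$. Beyond that, I would double-check from the table the nonvanishing of the constant coefficients of $\Dden$ for every family, including the special Jacobi case.
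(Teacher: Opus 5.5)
Your proof is correct and follows the paper's own argument: when $m=0$, the denominator $\Ddenr$ is not divisible by $S_n$ (you usefully justify this via the nonzero constant coefficients of $\Dden$ in \zcref{table:pairs_rec_cop}), and when $m\ge1$ one gets $m'=0$ in \zcref{lem:nonzeronum}, so the numerator has a nonzero constant term. One small caveat on your closing remark: a vanishing constant coefficient gives left divisibility by $S_n$ in $\Sh$ but not always in $\Rec$ (e.g.\ $\frac1{n+1}S_n=S_n\cdot\frac1n$ with $\frac1n\notin\Rec$); this is harmless here, since you only use the direction ``divisible $\Rightarrow$ zero constant coefficient''.
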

\begin{proof} 
The denominator $S_n^m \Ddenr$ is nonzero.
\zcref[S]{lem:nonzeronum} implies that the numerator $\Hlnum$ is
nonzero if $L\neq 0$.
 Regarding the first statement, if $m = 0$, it is a consequence of
 $S_n \nmid \Ddenr = \Hlden$.
If $m \geq 1$, then $m'$ from~\zcref{lem:nonzeronum} is~0 and that
 lemma gives $S_n \nmid \Hlnum$.  
\end{proof}

\subsection{Right Horner}\label{sec:righthorner}
\begin{algorithm}[t]
    \SetAlgoRefName{RightHorner}\SetKwFunction{RightHorner}{RightHorner}
      \Input{$L=\sum_{i=0}^r p_i(x)\partial^i \in \K
      [x]\langle\partial\rangle$;\\
      \qquad the pairs $X^\psi = (\Xnum,\Xden)$, $D^\psi =
      (\Dnum,\Dden)$ from \zcref{eq:Phi_psi}}
       \Output{The evaluation of $\HorrZ(L)\in\FrecZ$ from
      \zcref{eq:Horner}}
        $S \gets (0,1)$\\
        \For{$i\gets r$ \KwTo $0$}{
            $S \gets S\otimes D$
            \tcp{{\zcref{def:op_pairs}}}
            $P_i \gets $ \zcref[noname,font=\textsf]{algo:HornerPol}$
            (p_i,X)
            $\\
            $S\gets S\oplus P_i$ \tcp{{\zcref{def:op_pairs}}} }
        \KwRet{S}
    \caption{Right Horner evaluation of differential terms} 
    \label{algo:RightHorner}
\end{algorithm}

The next proposition shows that the fraction obtained thanks to a
classical
right Horner scheme as given by \zcref{eq:Horner} is
not far from irreducible: the $\gcld_\Sh$ of its
numerator and
denominator is a power of~$S_n$. \zcref[noabbrev]{algo:RightHorner}
implements this result.
It is an extension of an algorithm
 stated by Lewanowicz in the Gegenbauer case~%
\cite{Lewanowicz1976}.

\begin{proposition}\label{prop:horner_droite}
  Let~$L=p_r(x)\partial^r+\dots+p_1(x)\partial+p_0(x)$ be a linear
  differential operator in $\K[x]\langle\partial\rangle$.
   Let $(\psi_n)$ be a family of {classical} orthogonal
   polynomials. The fraction $\HorrZ(L)$ from \zcref{eq:Horner}
   is of the form $
   (S_n^\ell \Hnum,S_n^\ell\Hden)\neq(0,0)$,  with $\ell \in \nat$,
   such that $\gcld_\Sh  (\Hnum,\Hden)= 1$.
\end{proposition}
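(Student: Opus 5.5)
Induction on the order $r$ of $L$, following the loop of \zcref[noname,font=\textsf]{algo:RightHorner}. The invariant is: after processing $p_r,\dots,p_i$, the current fraction is $(S_n^{\ell_i}N_i,S_n^{\ell_i}E_i)$ with $\gcld_\Sh(N_i,E_i)=1$, and $E_i$ is a nonzero left multiple of $\Dden$-type operators (in particular $E_i\neq0$). For $i=r$ the fraction is $\Horr(p_r)=(P_r,S_n^{\deg p_r})$, irreducible by \zcref{prop:horr-pol-irred}, so $\ell_r=0$.

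At each step two operations occur. First the product $S\otimes(\Dnum,\Dden)=S\otimes(S_n,\Dden)$. Its numerator is $(S_n^{\ell}N)^{\Dden}\,S_n$ and its denominator is $(\Dden)^{S_n^\ell N}S_n^\ell E$. The cofactors come from $\mclm_\Rec(S_n^\ell N,\Dden)$, which equals $d\lclm_\Sh$ by \zcref{prop:mclm-rec}. I would work in $\Sh$, where fractions up to left multiplication behave like Ore fractions. The $\Sh$-product $(S_n^\ell N)(\Dden)^{-1}\cdots$ is best rewritten as an Ore fraction $E^{-1}N\cdot \Dden^{-1}S_n$. The key claim to establish is that the left-coprimality of the cofactors $U=(\Dden)^{S_n^\ell N}$ and $V=(S_n^\ell N)^{\Dden}$ in $\Sh$ makes the new pair $(VS_n,UE')$ coprime except possibly for extra powers of $S_n$. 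Here the numerator is $V S_n$, and a left common divisor $G$ of $VS_n$ and $UE$ must be analysed.

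The argument is the standard fact for Ore fractions. If $(a,b)$ and $(c,e)$ are left-coprime, with $b^{-1}a$ and $e^{-1}c$, then the product $e^{-1}c\,b^{-1}a$ computed through $\lclm(c,b)=b'c=c'b$ equals $(b'e)^{-1}(c'a)$. Any common left divisor of $c'a$ and $b'e$ comes from a common right factor of $a$ with $b'$-related data, so it can only come from right factors shared between $a$ and $b$-side quantities. In our situation $a=S_n$, and $\Dden$ has nonzero constant term: by \zcref{table:pairs_rec_cop}, $S_n\nmid\Dden$ on the right. Hence the only possible nontrivial gcld is a power of $S_n$. Concretely, I would show $\gcld_\Sh(VS_n,UE)$ divides, up to units, $\gcld_\Sh(V S_n, V S_n^{\ell}N\cdot\ldots)$, using $UE$ versus the relation $U\Dden=VS_n^\ell N$.

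Second, the addition $\oplus(P_i,S_n^{\deg p_i})$. Here the denominators are $S_n^{\ell}E$ and $S_n^{d}$, and the mclm is governed by \zcref{lemma:tech-pairs-item-c} applied after factoring out powers of $S_n$. I must show that adding an irreducible fraction whose denominator is a power of $S_n$ preserves the property that the gcld is a power of $S_n$. This is the Ore-sum analogue: in $\Sh$, adding $b^{-1}a$ and $e^{-1}c$ with coprime-denominator situations yields a reduced fraction up to common factors of the denominators, which are powers of $S_n$. Nonvanishing $(\cdot,\cdot)\neq(0,0)$ follows since denominators are products of nonzero operators.

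The main obstacle is making the gcld bookkeeping rigorous with the $\mclm_\Rec$ cofactors rather than $\Sh$-lclm cofactors, since these differ by polynomial factors $d(n)$, which are units in $\Sh$. I would therefore do all gcld reasoning in $\Sh$ via \zcref{prop:mclm-rec}. I expect the delicate point to be the case where $S_n$ right-divides $N$ (as in the Laguerre example), where the product step genuinely creates a common $S_n$ power; there I would track $\valSn$ explicitly to show that nothing other than $S_n$ can be created.
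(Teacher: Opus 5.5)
\textbf{Verdict: the skeleton is right, but there is a genuine gap.} Your outline matches the paper's: induction along \textsf{RightHorner}, an invariant that the $\gcld_\Sh$ of numerator and denominator is a power of $S_n$, one step for $\otimes(S_n,\Dden)$, one step for $\oplus(P_i,S_n^{d_i})$, and gcld reasoning in $\Sh$ (where the polynomial factors of \zcref{prop:mclm-rec} are units). That invariant is equivalent to your form $(S_n^{\ell}N,S_n^{\ell}E)$ with $\gcld_\Sh(N,E)=1$. However, the two inductive steps, which are the whole content of the proposition, are argued only by analogy with commutative fractions, and the mechanism you give for the product step is wrong.

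You place the common left factors of the product in what $a=S_n$ shares with the $\Dden$ side, and you invoke $S_n\nmid\Dden$. That would predict a trivial gcld. Take Laguerre and $L=x\partial$. Then $\Xnum=W(S_n-1)$ with $W=-(n+\alpha+2)S_n+(n+1)$, so $(\Xnum,S_n)\otimes(S_n,S_n-1)$ is $\Zn$-equivalent to $(WS_n,S_n)$, and $WS_n=S_n\bigl(-(n+\alpha+1)S_n+n\bigr)$. A common $S_n$ appears even though $\gcld_\Sh(S_n,\Dden)=1$. It appears even though $S_n$ does not right-divide $N=\Xnum$, so this is not your ``delicate case'' either. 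The factor comes from the numerator $S_n$ of $D$ meeting the current \emph{denominator}. Even that commutative picture has no direct analogue in $\Sh$; what makes the argument work is a property specific to $S_n$. Also, with your $U,V$ the defining relation is $US_n^\ell N=V\Dden$, not $U\Dden=VS_n^\ell N$.

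\textbf{First missing ingredient: $S_n$ is normal.} Since $S_n^m\Sh=\Sh S_n^m$, one has $\gcld_\Sh(AS_n^m,BS_n^m)=\gcld_\Sh(A,B)S_n^m$. Hence $\gcld_\Sh(A,BS_n^m)$ left-divides $\gcld_\Sh(A,B)S_n^m$. Left divisors of $S_n^k$ are monomials (compare valuation and degree). So if $\gcld_\Sh(A,B)$ is a power of $S_n$, so is $\gcld_\Sh(A,BS_n^m)$; this is \zcref{lemma:shift-gcd}.

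\textbf{Second missing ingredient: mclm cofactors are left-coprime.} Suppose $\gcld_\Sh(p_1,p_2)=S_n^\ell$ and $q=\Dden$, and set $G=\gcld_\Sh\bigl((p_1)^q,(q)^{p_1}p_2\bigr)$.
\begin{itemize}
\item $G$ left-divides $(p_1)^q\,q=(q)^{p_1}p_1$ and $(q)^{p_1}p_2$, hence it left-divides $(q)^{p_1}\gcld_\Sh(p_1,p_2)=(q)^{p_1}S_n^\ell$.
\item Since $\gcld_\Sh\bigl((p_1)^q,(q)^{p_1}\bigr)=1$ by \zcref{prop:mclm-rec}, the first ingredient makes $G$ a power of $S_n$.
\item Applying the first ingredient once more, to the right factor $S_n$ in the new numerator $(p_1)^q S_n$, finishes the product step.
\end{itemize}
This is the argument of \zcref{lem:irred}, run with $S_n^\ell$ in place of $1$. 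The paper instead uses $S_n\nmid\Dden$ to compute the cofactors of $S_n^\ell\Hnum$ explicitly, pulls $S_n^\ell$ out on the left, and applies \zcref{lem:irred} to the irreducible $(\Hnum,\Hden)$. Neither route needs $\valSn$ tracking.

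\textbf{The sum step.} Write $c(n)\Pden=S_n^mA$ with $S_n\nmid A$. Then the mclm with $S_n^d$ is $S_n^{(d-m)^+}c(n)\Pden=S_n^{(m-d)^+}A(n+d,S_n)S_n^d$. Setting $X=S_n^{(d-m)^+}c(n)\Pnum$ and $Y=S_n^{(m-d)^+}A(n+d,S_n)$, the sum is $(X+YQ,\,YS_n^d)$. Now $\gcld_\Sh(X+YQ,Y)=\gcld_\Sh(X,Y)$, and this left-divides $\gcld_\Sh(X,YS_n^d)=S_n^{\ell+(d-m)^+}$. The first ingredient then concludes. Irreducibility of $(Q,S_n^d)$ is never used; only the fact that the added denominator is a power of $S_n$ matters.
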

\begin{proof}
If $r=0$, then $\HorrZ(p_0)$
is irreducible by~\zcref{prop:horr-pol-irred} and the property holds.
From there, the proof is by induction using the following
lemmas.
\end{proof}

\begin{lemma}\label{lemma:shift-gcd}
Let $A,B$ be shift operators in $\Sh$. 
If $\gcld_\Sh(A,B)$ is a power of~$S_n$, then so is $\gcld_\Sh
(A,BS_n^m)$ for any $m\in\mathbb N$.
\end{lemma}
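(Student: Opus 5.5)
The plan is to work with right ideals of the left and right principal ring $\Sh$. The point is to show that $\gcld_\Sh(A,BS_n^m)$ left-divides a power of~$S_n$, and then to observe that every monic left divisor of a power of~$S_n$ is itself a power of~$S_n$.

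First I would record that $S_n$ is a \emph{normal} element of $\Sh$: the commutation $S_n^jP(n,S_n)=P(n+j,S_n)S_n^j$ holds, and $P\mapsto P(n+j,S_n)$ is a ring automorphism of $\Sh$ with inverse $P\mapsto P(n-j,S_n)$. Hence $S_n^j\Sh=\Sh S_n^j$ for all $j\in\N$.

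Let $S_n^k=\gcld_\Sh(A,B)$, so that $A\Sh+B\Sh=S_n^k\Sh$, and let $G=\gcld_\Sh(A,BS_n^m)$, so that $G\Sh=A\Sh+BS_n^m\Sh$.

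For the upper inclusion, $BS_n^m\Sh\subset B\Sh$ gives $G\Sh\subset S_n^k\Sh$. This is not actually needed for the conclusion.

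The key inclusion goes the other way. Normality gives
\[
AS_n^m\Sh=A\Sh S_n^m\subset A\Sh\quad\text{and}\quad BS_n^m\Sh=B\Sh S_n^m.
\]
Therefore
\[
G\Sh\supset AS_n^m\Sh+BS_n^m\Sh=(A\Sh+B\Sh)S_n^m=S_n^k\Sh S_n^m=S_n^{k+m}\Sh.
\]
So $G$ left-divides $S_n^{N}$ with $N=k+m$, that is, $S_n^N=GH$ for some $H\in\Sh$.

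It remains to show that a monic $G$ with $GH=S_n^N$ is a power of $S_n$. Both the degree and the valuation $\valSn$ of~\eqref{eq:valSn} are additive on products in $\Sh$. For the valuation, the lowest coefficient of $GH$ is $g(n)h(n+\valSn G)$, which is nonzero because $\K(n)$ is a field. The same argument with the leading coefficients gives additivity of the degree. Hence
\[
\deg G+\deg H=N=\valSn G+\valSn H.
\]
Since $\valSn\le\deg$ for each factor, equality is forced, so $\deg G=\valSn G$. This means $G$ is a monomial $c(n)S_n^d$, and monicity gives $G=S_n^d$.

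The only step needing care is the normality identity $S_n^m\Sh=\Sh S_n^m$, which is what allows $S_n^m$ to be moved out of the right ideals. Once that is in place, the argument is formal and needs no induction on~$m$.
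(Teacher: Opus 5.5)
Your proof is correct and is essentially the paper's argument. The paper right-multiplies the Bézout identity for $(A,B)$ by $S_n^m$ and commutes $S_n^m$ past the cofactors to get $\gcld_\Sh(A,BS_n^m)\mid_\ell\gcld_\Sh(AS_n^m,BS_n^m)=\gcld_\Sh(A,B)S_n^m$, which is your inclusion $S_n^{k+m}\Sh\subset A\Sh+BS_n^m\Sh$ phrased through the normality of $S_n$ and right ideals; your degree/valuation argument that a monic left divisor of $S_n^N$ is a power of $S_n$ supplies a step the paper leaves implicit.
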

\begin{proof}
We write $\mid_\ell$ for left divisibility in $\Sh$.
In general, it is not the case that $\gcld_\Sh(AC,BC)=\gcld_\Sh
(A,B)C$, but the situation is different with~$S_n$ that is always
simultaneously a right and a left factor of the operators it divides. 
Thus, for $d\in\mathbb N$,
$\gcld_\Sh
(AS_n^d,BS_n^d)=\gcld_\Sh
(A,B)S_n^d$. This property can be seen by multiplying by~$S_n^d$ on the
right the Bézout identity for $(A,B)$ and commuting~$S_n^d$ with the
cofactors.
It follows that 
\[\gcld_\Sh(A,BS_n^m)|_\ell\gcld_\Sh(AS_n^{m},BS_n^m)=\gcld_\Sh
(A,B)S_n^{m}.\qedhere\]
\end{proof}

\begin{lemma}
Let $P\in\FrecZ$ be the fraction $(\Pnum,\Pden) = (S_n^\ell\Hnum,S_n^\ell\Hden)$ with $\gcld_\Sh(\Hnum,\Hden)=1$. Then the product $P \otimes (\Dnum,\Dden)$ has the
same form.    
\end{lemma}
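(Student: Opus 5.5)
We compute the product explicitly from \zcref{def:op_pairs} and then show that the gcld of the result is a power of~$S_n$. By definition,
\[
P\otimes(\Dnum,\Dden)=\bigl((\Pnum)^{\Dden}\,\Dnum,\ (\Dden)^{\Pnum}\,\Pden\bigr),
\]
where the cofactors come from $\mclm_\Rec(\Pnum,\Dden)$ and $\Dnum=S_n$. The first step is to identify these cofactors. Write $\Pnum=S_n^\ell\Hnum$. From \zcref{table:pairs_rec_cop}, $\Dden$ is never divisible by $S_n$, since its constant coefficient is nonzero in every classical case. Hence $\Dden$ is relatively prime with $S_n$. We then compute $\lclm_\Sh(S_n^\ell\Hnum,\Dden)$ in~$\Sh$ and obtain the cofactors $U,V$ with
\[
U S_n^\ell\Hnum=V\Dden .
\]
By \zcref{prop:mclm-rec}, the $\Rec$ cofactors are $dU,dV$ with $d\in\K[n]$. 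This factor $d$ is a common left factor of both components, and it is harmless for gcld in~$\Sh$ because it is a unit of~$\K(n)$. So it suffices to work with $(US_n,\,VS_n^\ell\Hden)$ up to a left factor in~$\K(n)$.

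The second step is to show that $\gcld_\Sh(U S_n, V S_n^\ell \Hden)$ is a power of~$S_n$. Since the lclm is taken in~$\Sh$, the cofactors $U,V$ are left-coprime. Suppose $G$ is a common left divisor of $US_n$ and $VS_n^\ell\Hden$. I would show that the gcld of the product fraction equals (up to a power of~$S_n$) the gcld of the Ore-style product of the fractions $(\Hnum,\Hden)$ and $(S_n,\Dden)$. The argument works at the level of $\Sh$-modules of pairs. The $\Sh$-module generated by $(U\Pnum,V\Pden)$-type relations is equivalent to the composition of two relations; irreducibility of $(\Hnum,\Hden)$ makes the first relation a generator, and $\gcld_\Sh(S_n,\Dden)=1$ makes the second one a generator too. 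This should force the product to be irreducible up to the only possible degeneracy, namely the factors of $S_n$ commuting through. Concretely, I plan to proceed as follows:
\begin{enumerate}
\item First handle $\ell=0$, which gives an irreducible product.
\item Then deduce the general case by noting that $S_n^\ell$ commutes past operators via $S_n^\ell A(n,S_n)=A(n+\ell,S_n)S_n^\ell$. This lets us rewrite the cofactors as shifted cofactors for $(\Hnum,\Dden(n-\ell,\cdot))$.
\item Finally apply \zcref{lemma:shift-gcd} to absorb the extra right factor $S_n^{m}$ into a power of~$S_n$ in the gcld.
\end{enumerate}

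The third step is to extract the final form. The gcld is $S_n^{\ell'}$, and $S_n$ is simultaneously a left and right factor, as in the proof of \zcref{lemma:shift-gcd}. So both components can be written as $S_n^{\ell'}$ times operators of~$\Rec$ whose $\gcld_\Sh$ is~$1$; shifting coefficients keeps them in~$\Rec$. Nonvanishing of the pair follows because $\Pden\neq0$ and the cofactor $V$ is nonzero.

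The main obstacle is the irreducibility argument for $\ell=0$, i.e. that the product of two irreducible fractions via minimal cofactors stays irreducible. My first attempt would be a degree count. The product has denominator of degree $\deg U+\deg\Hden$ where $\deg U=\deg\Dden-\deg\gcrd_\Sh(\Hnum,\Dden)$. Any common left factor would give a pair in the $\Sh$-annihilator module of smaller degree, contradicting that the composed module is generated by an element of exactly this degree. This needs the module-theoretic description from \zcref{lemma:quasi-gen-pair} applied over~$\Sh$.
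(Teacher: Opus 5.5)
\paragraph{Gap: the case $\ell=0$ is not irreducible.}
Your plan rests on a false claim. Step~(1) and your ``main obstacle'' paragraph assert that for $\ell=0$ the product of the irreducible fractions $(\Hnum,\Hden)$ and $(S_n,\Dden)$ is again irreducible. The paper itself contains a counterexample. For Laguerre polynomials, $\Dden=S_n-1$ and $\Xnum=\bigl(-(n+\alpha+2)S_n+n+1\bigr)(S_n-1)$, so $\Dden$ right-divides $\Xnum$. The fraction $(\Xnum,S_n)$ is irreducible by \zcref{prop:horr-pol-irred}. Yet, up to $\Za$-equivalence, its product with $(S_n,\Dden)$ is $\bigl(-(n+\alpha+2)S_n^2+(n+1)S_n,\,S_n\bigr)=S_n\bigl(-(n+\alpha+1)S_n+n,\,1\bigr)$. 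This is the pair \eqref{eq:Laguerre1}, and its gcld is $S_n$. So multiplying by $(S_n,\Dden)$ can itself create a common left power of $S_n$ even when $\ell=0$; this is why the lemma only asserts ``the same form''. Your degree count fails for the same reason. Here $\deg U+\deg\Hden=1$, while the irreducible fraction $(-(n+\alpha+1)S_n+n,1)$ of \zcref{lem:sigmadxlag} relates the same pair of sequences with a denominator of degree~$0$.

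\paragraph{Missing idea and repair.}
Split off $\Dnum=S_n$: $P\otimes(S_n,\Dden)$ is $P\otimes(1,\Dden)$ with its numerator multiplied on the right by $S_n$. It is $(\Hnum,\Hden)\otimes(1,\Dden)=\bigl((\Hnum)^{\Dden},(\Dden)^{\Hnum}\Hden\bigr)$ that is irreducible (\zcref{lem:irred}). Indeed, let $G$ be a common left divisor. Then $G$ left-divides $(\Hnum)^{\Dden}\Dden=(\Dden)^{\Hnum}\Hnum$ and $(\Dden)^{\Hnum}\Hden$. Hence it left-divides $(\Dden)^{\Hnum}\gcld_\Sh(\Hnum,\Hden)=(\Dden)^{\Hnum}$, and therefore $\gcld_\Sh\bigl((\Hnum)^{\Dden},(\Dden)^{\Hnum}\bigr)=1$. \zcref[S]{lemma:shift-gcd} then shows that the right factor $S_n$ can only turn this gcld into a power of $S_n$, which is exactly the conclusion.

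For $\ell>0$, your rewriting via $\Dden(n-\ell,S_n)$ produces common multiples of $\Hnum$ and $\Dden S_n^\ell$, not of $S_n^\ell\Hnum$ and $\Dden$. The useful identity is $S_n^\ell(\Hnum)^{\Dden}\,\Dden=(\Dden)^{\Hnum}(n+\ell,S_n)\,S_n^\ell\Hnum$. Its cofactors are left-coprime. A common left divisor also divides $(\Dden)^{\Hnum}(n+\ell,S_n)S_n^\ell=S_n^\ell(\Dden)^{\Hnum}$, hence divides $S_n^\ell$. But $S_n\nmid(\Dden)^{\Hnum}$, because $S_n\nmid\Dden$. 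The product is therefore $\bigl(S_n^\ell(\Hnum)^{\Dden}S_n,\,S_n^\ell(\Dden)^{\Hnum}\Hden\bigr)$. Factoring $S_n^\ell$ out on the left reduces to the case just described.

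\paragraph{Bookkeeping.}
In your first step the cofactors are swapped. With $US_n^\ell\Hnum=V\Dden$, the product is $(VS_n,\,US_n^\ell\Hden)$, not $(US_n,VS_n^\ell\Hden)$.
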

\begin{proof}
This product is
\[
  (\ppcm{\Pnum}{\Dden}S_n,\ppcm{\Dden}{\Pnum}\Pden).
\]
The cofactors in this pair are related to those of $\Hnum$ and
$\Dden$: from
\[\gcld_\Sh(\ppcm{\Hnum}{\Dden},\ppcm{\Dden}{\Hnum})=1,\]
which is a consequence of the definition of cofactors, it follows that
\[\gcld_\Sh(S_n^\ell\ppcm{\Hnum}{\Dden},S_n^\ell\ppcm{\Dden}
{\Hnum})=S_n^\ell.\]
Since $\Dden$ is relatively prime with~$S_n$, so is the cofactor
$\ppcm{\Dden}{\Hnum}$ and thus by \zcref{lemma:shift-gcd},
\[\gcld_\Sh(S_n^\ell\ppcm{\Hnum}{\Dden},\ppcm{\Dden}
{\Hnum(n+\ell)})=1,\]
so that
\[
\ppcm{\Pnum}{\Dden}=S_n^\ell\ppcm{\Hnum}{\Dden}\quad\text{and}\quad
\ppcm{\Dden}{\Pnum}=\ppcm{\Dden}{\Hnum(n+\ell)}.
\]
Thus the product above rewrites
\[(S_n^\ell\ppcm{\Hnum}{\Dden}S_n,\ppcm{\Dden}
{\Hnum(n+\ell)}\Pden)=
(S_n^\ell\ppcm{\Hnum}{\Dden}S_n,S_n^\ell\ppcm{\Dden}{\Hnum}\Hden).\]
The gcld of this pair is therefore $S_n^\ell$ times $\gcld_\Sh(
\ppcm{\Hnum}{\Dden}S_n,\ppcm{\Dden}{\Hnum}\Hden)$. By
\zcref{lem:irred} below, the irreducibility
of
$(\Hnum,\Hden)$ implies that \[\gcld_\Sh(\ppcm{\Hnum}{\Dden},
\ppcm{\Dden}
{\Hnum}\Hden)=1,\] 
whence the conclusion by~\zcref{lemma:shift-gcd}.
\end{proof}
\begin{lemma} \label{lem:irred}
Let $(p_1,p_2)\in\FrecZ^2$ be an \emph{irreducible}
fraction and $q\in\RecZ$. Then the fraction $(p_1,p_2)\otimes (1,q)$ is
irreducible.
\end{lemma}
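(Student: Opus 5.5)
We need to show: if $\gcld_\Sh(p_1,p_2)=1$, then $(p_1,p_2)\otimes(1,q) = ((p_1)^{q}\,q,\ (q)^{p_1}p_2)$ is irreducible. Here $(p_1)^q$ and $(q)^{p_1}$ are the mclm cofactors, so that $(p_1)^q q=(q)^{p_1}p_1=\mclm_\RecI(p_1,q)$.

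The plan is to work entirely in $\Sh$, where irreducibility is defined. First note that by \zcref{prop:mclm-rec} the cofactors are $U:=(p_1)^q=d\,U_0$ and $V:=(q)^{p_1}=d\,V_0$. Here $U_0,V_0$ are left-coprime cofactors of $\lclm_\Sh(q,p_1)$ and $d\in\K[n]$; multiplication by the unit $d$ of $\Sh$ does not change the $\gcld_\Sh$. So it suffices to prove
\[
\gcld_\Sh(U_0q,\;V_0p_2)=1,\qquad U_0q=V_0p_1=\lclm_\Sh(p_1,q).
\]

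Suppose $G$ is a common left divisor, with $U_0q=GA$ and $V_0p_2=GB$. Then $G$ left-divides $V_0p_1$ and $V_0p_2$. The key step is to show that $G$ left-divides $V_0$; coprimality of $V_0$ with $U_0$ will then force $G$ to be trivial.

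To get $G\mid_\ell V_0$, use the Bézout relation in the left principal ring $\Sh$. Since $\gcld_\Sh(p_1,p_2)=1$, there exist $a,b\in\Sh$ with $p_1a+p_2b=1$. Multiplying on the left by $V_0$ gives $V_0=V_0p_1a+V_0p_2b$. Both terms are left-divisible by $G$, so $G\mid_\ell V_0$.

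Next we need $G\mid_\ell U_0$. We know $G\mid_\ell U_0q$ and $G\mid_\ell V_0$. Write $V_0=GV_1$. Then $U_0q=V_0p_1=GV_1p_1$, and also $U_0q=GA$. Both $GV_1p_1$ and $GA$ are the same lclm, so this alone does not give $G\mid_\ell U_0$ directly. Instead, argue by degrees via minimality of the lclm. The element $V_1p_1 = G^{-1}\lclm$ must also be a left multiple of $q$, that is, $V_1p_1 = A = A'q$ for some $A'$. This is where the argument needs care; I would obtain it from the fact that in $\Sh$ the left ideal $\Sh p_1\cap\Sh q$ equals $\Sh\,\lclm$.

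The cleaner route is the module viewpoint. Consider the map $\Sh^2\to\Sh$, $(x,y)\mapsto xp_1-yq$. Its kernel is $\Sh(V_0,U_0)$, which is free of rank one because $\lclm$ generates the intersection. If $G\mid_\ell V_0$ with $V_0=GV_1$, then $GV_1p_1=U_0q$. For this to push down, one needs $V_1p_1\in\Sh q$. The standard fact here is that $U_0,V_0$ are left-coprime precisely because they generate this kernel: if $G$ left-divides both cofactors, the kernel generator would factor.

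So this is the main obstacle: upgrading "$G$ divides $V_0$" to "$G$ is a unit". I would try comparing degrees. $\deg U_0q=\deg V_0p_1$ is minimal among common left multiples of $p_1,q$. Let $G'=\gcld_\Sh(G, U_0)$; by Bézout it divides everything. Alternatively, replace $V_0$ by $V_1$: the element $V_1p_1$ is left-divisible by $q$ exactly when $G$ can be cancelled from $GV_1p_1=U_0q$ on the $q$ side. If this degree argument stalls, I would fall back on a direct degree count: show that $V_1p_1\in\Sh q$ by right Euclidean division of $V_1p_1$ by $q$, and prove the remainder vanishes because $G$ times the remainder lies in $\Sh q$ with smaller degree than the lclm.
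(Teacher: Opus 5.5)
There is a genuine gap, and it is in your very first line: the product is computed incorrectly. By \zcref{def:op_pairs}, $(p_1,p_2)\otimes(q_1,q_2)=((p_1)^{q_2}q_1,(q_2)^{p_1}p_2)$, and here $q_1=1$ and $q_2=q$, so
\[
(p_1,p_2)\otimes(1,q)=\bigl((p_1)^{q},\,(q)^{p_1}p_2\bigr).
\]
The numerator is the cofactor $(p_1)^q$ itself, not $(p_1)^q q$.

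With your formula, the statement you set out to prove is false. Since $(p_1)^q q=(q)^{p_1}p_1$, your pair is $(q)^{p_1}\cdot(p_1,p_2)$. Its $\gcld_\Sh$ is $(q)^{p_1}\gcld_\Sh(p_1,p_2)=(q)^{p_1}$ up to normalization, which has positive degree in $S_n$ in general. For instance, $p_1=p_2=1$ and $q=S_n$ give $(p_1)^q=1$ and $(q)^{p_1}=S_n$. Your pair is then $(S_n,S_n)$, whereas the actual product $(1,S_n)$ is irreducible. This is exactly why you could not get past $G\mid_\ell V_0$: in this example $G=V_0=S_n$ is a non-unit common left divisor. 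So no degree count or Euclidean-division argument (showing $V_1p_1\in\Sh q$) can close the gap.

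With the correct numerator, the ingredients you already have finish the proof, and it becomes the paper's argument. Let $G$ be a common left divisor of $(p_1)^q$ and $(q)^{p_1}p_2$. Then $G\mid_\ell(p_1)^q q=(q)^{p_1}p_1$. Your B\'ezout step, $(q)^{p_1}=(q)^{p_1}p_1a+(q)^{p_1}p_2b$, then gives $G\mid_\ell(q)^{p_1}$. The paper writes this as $G\mid_\ell\gcld_\Sh((q)^{p_1}p_1,(q)^{p_1}p_2)=(q)^{p_1}$.

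Hence $G$ left-divides both $(p_1)^q$ and $(q)^{p_1}$. These are left-coprime in $\Sh$: by \zcref{prop:mclm-rec} they are $d$ times left-coprime cofactors of $\lclm_\Sh(p_1,q)$, and $d$ is a unit of $\Sh$. Therefore $G$ has degree~$0$ in $S_n$, which is the claimed irreducibility.
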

\begin{proof}
By definition of $\otimes$ and irreducibility, it is sufficient
to
prove that 
\[G:=\gcld_\Sh(
\ppcm{p_1}{q},
\ppcm{q}{p_1}p_2)=1.\]
From
\[
  G\mid_\ell \ppcm{p_1}{q} \mid_\ell \ppcm{p_1}
  {q}q=\mclm_\RecZ(p_1,q)=\ppcm{q}{p_1}p_1,
\]
it follows that
\[
G\mid_\ell \gcld_\Sh(\ppcm{q}{p_1}p_1,\ppcm{q}{p_1}p_2)=
\ppcm{q}{p_1} \gcld_\Sh(p_1,p_2)
=\ppcm{q}{p_1}.
\]
Therefore
\[G\mid_\ell\gcld_\Sh\!\left(\ppcm{p_1}{q},\ppcm{q}{p_1}\right)=1,\]
where the last equality comes from the minimality of $\mclm_\RecZ$ and 
\zcref{prop:mclm-rec}.
\end{proof}

\begin{lemma}
Let $P\in\FrecZ$ be the fraction $(\Pnum,\Pden)$
with $\gcld_\Sh(\Pnum,\Pden)=S_n^\ell$ for some~$\ell\in\mathbb N$ and
let $q\in\K [x]$ with $(Q,S_n^d)=\HorrZ(q)$. Then the sum $P
\oplus (Q,S_n^d)$ has the same form as~$P$. 
\end{lemma}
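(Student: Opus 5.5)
The plan is to work entirely in $\Sh$, where gclds exist and Bézout identities hold, and to mimic the commutative argument: for $a/b+c/s^d$ with both fractions reduced, only the prime $s$ can divide both the new numerator and the lcm of the denominators. Write $\Pnum=S_n^\ell\Hnum$ and $\Pden=S_n^\ell\Hden$, with $\gcld_\Sh(\Hnum,\Hden)=1$. By \zcref{prop:horr-pol-irred}, $(Q,S_n^d)$ is irreducible. Let $U=(S_n^d)^{\Pden}$ and $V=(\Pden)^{S_n^d}$ be the cofactors, so that $M:=\mclm_\RecZ(\Pden,S_n^d)=U\Pden=VS_n^d$. Then \zcref{def:op_pairs} gives
\[
P\oplus(Q,S_n^d)=(N,M),\qquad N=U\Pnum+VQ.
\]
By \zcref{prop:mclm-rec}, $\gcld_\Sh(U,V)=1$: the cofactors differ from the left-coprime lclm cofactors of $\Sh$ only by the scalar $d(n)$. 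The goal is to show that $G:=\gcld_\Sh(N,M)$ is a power of $S_n$. Nonvanishing of $M$ is clear, so $(N,M)\neq(0,0)$.

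First I convert the two irreducibility hypotheses into right Bézout relations in $\Sh$. There exist $X,Y$ with $\Hnum X+\Hden Y=1$, and multiplying on the left by $S_n^\ell$ gives $\Pnum X+\Pden Y=S_n^\ell$. There also exist $X',Y'$ with $QX'+S_n^dY'=1$. Multiplying these on the left by $U$ and by $V$ respectively, and using $U\Pden=VS_n^d=M$ and $U\Pnum=N-VQ$, $VQ=N-U\Pnum$, yields
\[
US_n^\ell=(N-VQ)X+MY,\qquad V=(N-U\Pnum)X'+MY'.
\]
Since $G$ left-divides $N$ and $M$, it follows that
\[
G\mid_\ell\, US_n^\ell+VQX,\qquad G\mid_\ell\, V+U\Pnum X'=V+US_n^\ell\Hnum X'.
\]

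Next I eliminate to reach $U$ and $V$ separately, up to powers of $S_n$. From the second divisibility, $G$ left-divides $V+US_n^\ell\Hnum X'$. Substituting $VQX\equiv -US_n^\ell\Hnum X'QX$ (modulo the right ideal $G\Sh$) into the first divisibility gives $G\mid_\ell US_n^\ell(1-\Hnum X'QX)$, and similarly for $V$. What I want from this is the conclusion that $G$ left-divides $US_n^{\ell'}$ and $VS_n^{\ell'}$ for a suitable $\ell'$. The cleanest route is probably to avoid the expansion above and argue via right ideals: the right ideal generated by $N$ and $M$ contains $US_n^\ell\Sh+V\Sh$ modulo terms already in it, and one then shows it contains $S_n^{\ell}$ times the right ideal $U\Sh+V\Sh=\Sh$.

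Finally I conclude with the trick of \zcref{lemma:shift-gcd}: $S_n$ commutes with operators up to shifting $n$, so $\gcld_\Sh(US_n^k,VS_n^k)=\gcld_\Sh(U,V)S_n^k=S_n^k$. Hence $G\mid_\ell S_n^{k}$, so $G$ is a power of $S_n$. Writing $G=S_n^{\ell''}$, left-dividing $N$ and $M$ by it in $\RecZ$ (possible since $S_n^{j}A=A(n+j,S_n)S_n^j$) gives the required form $(S_n^{\ell''}\Hnum',S_n^{\ell''}\Hden')$ with $\gcld_\Sh(\Hnum',\Hden')=1$.

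The main obstacle is the elimination step: noncommutativity prevents a naive substitution, so I expect to phrase it carefully as a statement about the right ideal $N\Sh+M\Sh$ containing $US_n^k$ and $VS_n^k$. If that fails, the fallback is a direct argument: $G$ left-divides $M=VS_n^d$, and any non-$S_n$ part of $G$ must then left-divide $V$ up to shifts (and similarly $U$), contradicting $\gcld_\Sh(U,V)=1$.
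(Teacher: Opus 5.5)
Your setup is sound: $M=U\Pden=VS_n^d$, $N=U\Pnum+VQ$, $\gcld_\Sh(U,V)=1$, and your closing step via \zcref{lemma:shift-gcd} would finish the proof once you know that $G=\gcld_\Sh(N,M)$ left-divides $US_n^k$ and $VS_n^k$. But that is exactly the step that carries all the content, and you do not prove it.

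\textbf{Why the attempted steps fail.} Your substitution only yields $G\mid_\ell US_n^\ell(1-\Hnum X'QX)$. Since $1-\Hnum X'QX$ is not a unit, left divisibility of this right multiple tells you nothing about $US_n^\ell$ itself. The sentence about the right ideal ``containing $US_n^\ell\Sh+V\Sh$ modulo terms already in it'' is not an argument. Moreover, the target $S_n^\ell\Sh\subset N\Sh+M\Sh$ is false in general, because the exponent can grow. In the Chebyshev basis, $(-\tfrac12,S_n)$ has $\ell=0$, and adding the right Horner fraction of $x$, which is $(\tfrac12S_n^2+\tfrac12,S_n)$, gives $(\tfrac12S_n^2,S_n)$. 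The fallback is also flawed. It transplants the commutative prime-divisor argument, but in $\Sh$ a factor coprime to $U$ that left-divides $U\Pnum$ need not left-divide $\Pnum$. Even commutatively, nothing forces a non-$S_n$ common factor of $N$ and $M$ to divide $U$: the contradiction has to come from $\gcld_\Sh(\Pnum,\Pden)=S_n^\ell$, which your fallback never uses.

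\textbf{The missing idea.} You need to exploit the fact that the second denominator is a power of $S_n$. This forces the cofactor $U$ to be, up to a unit of $\Sh$, a power of $S_n$. Write $c(n)\Pden=S_n^mA$ with $S_n\nmid A$; then $\mclm_\RecZ(\Pden,S_n^d)=S_n^{(d-m)^+}c(n)\Pden=S_n^{(m-d)^+}A(n+d,S_n)S_n^d$. The paper then argues directly with gclds:
\begin{itemize}
\item $N-U\Pnum=VQ$ is a right multiple of $V$, so $\gcld_\Sh(N,V)=\gcld_\Sh(U\Pnum,V)$.
\item This gcld left-divides $\gcld_\Sh(U\Pnum,VS_n^d)=\gcld_\Sh(U\Pnum,U\Pden)=US_n^\ell$, which is a power of $S_n$ up to a unit.
\item Hence $\gcld_\Sh(N,V)$ is a power of $S_n$, and \zcref{lemma:shift-gcd} passes from $V$ to $M=VS_n^d$.
\end{itemize}
This route needs neither the Bézout relation for $(Q,S_n^d)$ nor $\gcld_\Sh(U,V)=1$.

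\textbf{Repairing your route.} Your right-ideal formulation can also be completed. Let $I=N\Sh+M\Sh$.
\begin{itemize}
\item The right ideal $\{W: VW\in I\}$ contains $S_n^d$, hence equals $S_n^e\Sh$ for some $e\le d$.
\item Since $NZ\in I$ for all $Z$, one gets $\{Z: U\Pnum Z\in I\}=\{Z: QZ\in S_n^e\Sh\}=S_n^e\Sh$. This last equality uses $\valSn Q=0$ when $d>0$, which comes from the irreducibility of $(Q,S_n^d)$.
\item Therefore the generator of $\{W: UW\in I\}$ left-divides both $\Pden S_n^e$ and $\Pnum S_n^e$, hence left-divides their gcld $S_n^{\ell+e}$.
\item This gives $US_n^{\ell+e}\in I$ and $VS_n^{\ell+e}\in I$, and your final step applies.
\end{itemize}
None of this appears in your proposal, so as written it has a genuine gap.
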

\begin{proof}
The computation of the sum begins with the cofactors of $\Pden$ and
$S_n^d$ in their $\mclm_\RecZ$.
First, there exists~$c(n)\in\K[n]$ such that $c(n)\Pden=S_n^mA$ with
$m\ge \ell$, $A\in\Rec$ and $S_n$ not dividing~$A$ in~$\Sh$. 
The cofactors then follow from
\[
  S_n^{(d-m)^+}c(n)\Pden=
S_n^{(m-d)^+}A(n+d,S_n)S_n^d
=\mclm_\RecZ(\Pden,S_n^d)
\]
and the sum of the lemma is
\[
  P \oplus (Q,S_n^d)=(S_n^{(d-m)^+}c(n)\Pnum+S_n^{(m-d)^+}
  A(n+d,S_n)Q, S_n^{(m-d)^+}A(n+d,S_n)S_n^d).
\]
The proof that the $\gcld_\Sh$ of this pair is a power
of~$S_n$ starts from
\[
  S_n^\ell=\gcld_\Sh(\Pnum,\Pden)=\gcld_\Sh(c(n)\Pnum,c(n)\Pden).
\]
Multiplying on the left by $S_n^{(d-m)^+}$ gives
\[
S_n^{\ell+(d-m)^+}=
\gcld(S_n^{(d-m)^+}c(n)\Pnum,S_n^{(m-d)^+}A(n+d,S_n)S_n^d).\]
Then the following $\gcld_\Sh$
are
also powers of~$S_n$:
\begin{multline*}
\gcld_\Sh(S_n^{(d-m)^+}c(n)\Pnum,S_n^{(m-d)^+}A(n+
d,S_n))=\\
\gcld_\Sh(S_n^{(d-m)^+}c(n)\Pnum+S_n^{(m-d)^+}A(n+d,S_n)
{Q},S_n^{
(m-d)^+}A(n+d,S_n))
\end{multline*}
and then, by \zcref{lemma:shift-gcd}, so is
\[\gcld_\Sh(S_n^{(d-m)^+}c(n)\Pnum+S_n^{(m-d)^+}
A(n+d,S_n){Q},S_n^{(m-d)^+}A(n+d,S_n)S_n^d),\]
as was to be proved.
\end{proof}

\subsection{Irreducible fractions}
\label{sec:lewafirst}

We prove the existence of an irreducible fraction in $\assocal$. By
\zcref{prop:assoc-equiv}, it is a quasi-generator of both~$\assocal$
and~$\assocan$. The
proof is effective and forms the basis of \zcref[S,noabbrev]
{algo:main} below.

\begin{proposition}\label{prop:irred-frac}
  Let~$L=p_r(x)\partial^r+\dots+p_1(x)\partial+p_0(x)$ be a linear
  differential operator in $\K[x]\langle\partial\rangle$.
   Let $(\psi_n)$ be a family of {classical} orthogonal
   polynomials. There exists an irreducible fraction in the module
   $\assocal$. 
\end{proposition}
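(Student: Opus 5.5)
The plan is to combine the two evaluations already studied. By \zcref{prop:horner_droite}, the right Horner fraction has the form $\HorrZ(L)=(S_n^\ell\Hnum,S_n^\ell\Hden)$ with $\gcld_\Sh(\Hnum,\Hden)=1$. By \zcref{prop:horner_gauche} and \zcref{coro:horner_elim_Sn}, the left Horner fraction $(\Hlnum,\Hlden)$ is nonzero and $S_n$ does not divide both of its components. Both fractions are associated to $L$ in degree $\deg_\partial L$, since each comes from a differential term of that $\partial$-degree mapped to $L$ by $\mathfrak D_{d/dx}$. Hence both lie in $\assocal$. If $\ell=0$ we are done, so assume $\ell\ge1$.

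The key step is to show that the pair $(\Hnum,\Hden)$, up to a factor of $\RecI$, is itself a $\RecI$-combination of the two fractions. By \zcref{prop:assoc-equiv}, $\assocal$ consists of proper fractions and has a quasi-generating fraction $G$, which generates the $\Sh$-module $\tilde M$ spanned by $\assocal$. Since $(\Hnum,\Hden)$ is irreducible in $\Sh$ and $S_n^\ell(\Hnum,\Hden)\in\tilde M$, properness and principality give $\tilde M=\Sh\cdot(\Hnum,\Hden)$. Indeed, $G=g(\Hnum,\Hden)$ up to a rational factor, because any generator must left-divide the irreducible pair.

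Next write $(\Hlnum,\Hlden)=U(\Hnum,\Hden)$ with $U\in\Sh$. Since $S_n$ does not divide both $\Hlnum$ and $\Hlden$, $U$ is not left-divisible by $S_n$. Then $U$ and $S_n^\ell$ are left-coprime in $\Sh$: any common left divisor of positive degree divides $S_n^\ell$, so it is a power of $S_n$ times a unit. There is a Bézout relation $AU+BS_n^\ell=1$ in $\Sh$, and after clearing denominators we get $c(n)\in\Za\cdot\K[n]$ with $cA,cB\in\RecI$. This yields
\[
cA(\Hlnum,\Hlden)+cB(S_n^\ell\Hnum,S_n^\ell\Hden)=c(n)(\Hnum,\Hden)\in\assocal ,
\]
and the right-hand side is irreducible because $\gcld_\Sh$ ignores the scalar $c(n)$. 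In practice $A,B$ come from the extended Euclidean algorithm on the right division by $S_n^\ell$, and one can choose them so that $c$ clears denominators. We can take $c$ polynomial, so its zeros at nonnegative integers are harmless for irreducibility, though they matter for minimality.

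The main obstacle is to make the left-coprimality of $U$ and $S_n^\ell$ rigorous in $\Sh$. One must check that a common left divisor of $U$ and $S_n^\ell$ is necessarily $S_n^k$ up to $\K(n)$. This holds because the left divisors of $S_n^\ell$ in $\Sh$ are exactly the monic $S_n^k$, which follows from a degree and valuation argument: a factorization $S_n^\ell=VW$ forces both factors to be monomials. One must also check that if $S_n\mid_\ell U$ then $S_n$ left-divides both $\Hlnum$ and $\Hlden$. That follows directly, and for the case where $S_n$ divides on the right as a two-sided factor we use \zcref{lemma:shift-gcd}.
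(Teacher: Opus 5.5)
Your argument is correct and follows the paper's strategy. $\Horl(L)$ is a left multiple of the irreducible core $(\Hnum,\Hden)$ of $\Horr(L)=S_n^\ell(\Hnum,\Hden)$, by a cofactor of $S_n$-valuation~$0$. A Bézout identity between that cofactor and $S_n^\ell$ then recombines the two evaluations into $c(n)(\Hnum,\Hden)\in\assocal$.

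The difference lies in how the cofactor is obtained. The paper stays in $\RecI$:
\begin{itemize}
\item it normalizes $(A,B)=q(n)(\Hnum,\Hden)$;
\item it applies \zcref{prop:almost-gcld} to write $\delta(n)\Horl(L)=(\hat GC,\hat GD)$ with $\hat G\in\RecI$ prime to $S_n$;
\item it identifies $(C,D)$ with $(A,B)$ through $\RecI$-equivalence of irreducible fractions, so the Bézout relation is taken directly in $\RecI$.
\end{itemize}
You instead pass to the $\Sh$-module $\tilde M$ generated by $\assocal$, read off $U\in\Sh$ with $\Horl(L)=U(\Hnum,\Hden)$ from the quasi-generator, and clear denominators only after the Bézout step. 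That last step is harmless, since a nonzero scalar does not change $\gcld_\Sh$; it is also what the last lines of \zcref{algo:main} do. Your route avoids \zcref{prop:almost-gcld} altogether and is slightly shorter.

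Two statements need tightening.

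First, from $S_n^\ell(\Hnum,\Hden)=aG$ you only get the inclusion $\tilde M\subseteq\Sh\cdot(\Hnum,\Hden)$, and your phrase ``any generator must left-divide the irreducible pair'' has the divisibility backwards. The correct argument: write $G=g(x',y')$ with $g$ the $\gcld_\Sh$ of its components. Comparing the right ideals generated by the components gives $S_n^\ell\Sh=ag\Sh$. Hence $ag=S_n^\ell e$ with $e\in\K(n)\setminus\{0\}$, so $(\Hnum,\Hden)=e(x',y')$ and $G\in\Sh\cdot(\Hnum,\Hden)$. The reverse inclusion is not available at that point. It is equivalent to the proposition itself, because right Horner alone does not exclude $g=S_n^k$ with $k\ge1$. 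Fortunately you only use the inclusion afterwards, and equality follows a posteriori.

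Second, a relation $AU+BS_n^\ell=1$ expresses right-coprimality ($\Sh U+\Sh S_n^\ell=\Sh$), not left-coprimality. It still holds: the right divisors of $S_n^\ell$ are also of the form $e(n)S_n^k$, and $S_n^k$ divides $U$ on either side iff $\valSn U\ge k$. Additivity of $\valSn$ together with \zcref{coro:horner_elim_Sn} excludes this for $k\ge1$. \zcref{lemma:shift-gcd} plays no role here.
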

\begin{proof}
By \zcref{prop:horner_droite}, the fraction $\Horr(L)\in\assocal$ 
is of the form $(S_n^\ell\Hnum,$  $ S_n^\ell\Hden)$ with $\Hnum,\Hden$ relatively
prime. Thus if $\ell=0$, the fraction is irreducible and gives the
result.

Otherwise, the pair $(\Hnum,\Hden)$ itself 
does not necessarily belong to~$\Frec$, but 
there exists $q(n)\in\K[n]_\Zn$ such that
\begin{equation}\label{eq:qAB}
q(n)(\Hnum,\Hden)=:(A,B)\in\Frec
\end{equation}
with $(A,B)$ irreducible and $q(n)$ can be taken of minimal degree.

By \zcref{prop:assoc-equiv}, the fraction $S_n^\ell(A,B)$  and
therefore also the fraction $ (A,B)$ are
$\RecI$-equivalent to $\Horl
(L)$.
Let $G$ be the $\gcld_\Sh$ of the members of $\Horl(L)$.
By \zcref{coro:horner_elim_Sn}, $G$~is relatively prime to~$S_n$.
By \zcref{prop:almost-gcld}, there exists $\delta(n)\in\K[n]_\Zn$ and
$\hat G,C,D$ in $\Rec$ such that 
\[
  \delta(n)\Horl(L)=(\hat GC,\hat GD)\in\assocal,
\]
 with $(C,D)$ an
 irreducible fraction. By \zcref{eq:commutewithG}, $\hat G$
also is relatively prime to~$S_n$. 
It follows from \zcref{prop:assoc-equiv} again that
 $(A,B)\equiv_{\RecI}(C,D)$ and since they are irreducible, they differ only by factors in $\K[n]_\Zn$. Up to taking left multiples of $\Horr(L)$ and $\Horl(L)$ by elements of $\K[n]_\Zn$, we can assume that they are equal.

As~$\hat G$ and~$S_n$ are relatively prime, there exist~$U$ and $V$ in $\Rec$ such that $U\hat G+VS_n^\ell=c(n)\in\K[n]_\Zn\setminus\{0\}$. It follows that
\[
  U\delta(n)\Horl(L)+Vq(n+\ell)\Horr(L)=c(n)(A,B)
\]
is an irreducible fraction that belongs to~$\assocal$.
\end{proof}

\subsection{Irreducible fractions by left Horner evaluation in
regular cases} 
\label{sec:faster}

An important property of \zcref{algo:LeftHorner} is that under a
certain coprimality condition, its output is irreducible. This
property and its proof are an extension of a result due to
Lewanowicz~\cite[Lemma 3.1]{Lewanowicz1991} for Jacobi polynomials.

\begin{proposition}
\label{prop:Lewanowicz}
With the notation of~\zcref{prop:horner_droite,prop:horner_gauche},
if $\gcd(p_r(x),\sigma (x))=1$, then the fraction 
\eqref{eq:paire_horner_gauche} obtained from a left Horner evaluation
is irreducible. 
\end{proposition}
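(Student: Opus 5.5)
The plan is to reduce irreducibility to the absence of a nontrivial common \emph{right} factor of transposed operators, and then to read such a factor as a statement about the differential operator~$L$ that contradicts $\gcd(p_r,\sigma)=1$. I would follow the strategy of Lewanowicz's Lemma~3.1 for Jacobi polynomials.

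First I would dispose of the easy parts. By \zcref{coro:horner_elim_Sn}, $G:=\gcld_\Sh(\Hlnum,\Hlden)$ is coprime to~$S_n$. Since $\Hlden=S_n^m\Ddenr$, it follows that $G$ left-divides $\Ddenr$ up to the usual commutation of $S_n^m$ through rational coefficients. In the Hermite case $\Dden=\tfrac1{2(n+1)}$ has order~$0$, so $G$ has order~$0$ and we are done. In the Laguerre case $\Ddenr=\prod(S_n-1)$. In the Jacobi and Gegenbauer cases $\Ddenr$ is a product of $r$ explicit order-two operators coming from $\cDiffden$. So it suffices to show that no nontrivial left factor of $\Ddenr$ also left-divides $\Hlnum$.

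The main tool I would use is the anti-automorphism $S_n\mapsto S_n^{-1}$, $n\mapsto n$ of the Laurent ring $\K(n)\langle S_n,S_n^{-1}\rangle$. It exchanges left and right divisibility. A nontrivial $G$ therefore produces a nonzero germ $(c_n)$ annihilated by both transposed operators $\Hlnum^{*}$ and $\Hlden^{*}$. I would interpret such germs as coefficient sequences of formal series $g=\sum_n c_n h_n^{-1}\psi_n$, in the dual sense of the pairing $\langle\cdot|\cdot\rangle_w$. Transposing the proofs of \zcref{lem:x} and \zcref{lem:Dcop} makes this dictionary precise. The transpose of $\Xnum$ (resp.\ $\Xden=S_n$) encodes multiplication by~$x$. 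The transpose of $\Dden$ encodes, through the structure relation \eqref{eq:structure}, the operation $g\mapsto\sigma g'$, that is, $\psi_{n+1}\mapsto\sigma\psi_{n+1}'$. The transpose of $\Dnum$ encodes the shift that plays the role of the formal adjoint of~$\partial$.

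Under this dictionary:
\begin{itemize}
\item $\Hlden^{*}c=0$ says that the series $g$ is killed by the $r$-fold iterate of the map $g\mapsto\sigma g'$, modulo the truncation by $S_n^m$, which only affects finitely many terms.
\item $\Hlnum^{*}c=0$ says, reading the left Horner form $\sum_k\partial^kq_k$ in reverse, that the formal adjoint $L^{*}=\sum_k q_k(x)(-\partial)^k$ applied to a suitable $\sigma$-weighted transform of $g$ vanishes.
\end{itemize}
Combining the two conditions, the lower-order terms $q_0,\dots,q_{r-1}$ get absorbed. One should then be left with an identity in which the leading coefficient $q_r=p_r$ multiplies a nonzero quantity that must be divisible by a power of~$\sigma$, or must vanish at the roots of~$\sigma$. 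This is impossible when $\gcd(p_r,\sigma)=1$, and that contradiction forces $G=1$.

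The hard step is the last one: making the dictionary between $\Hlden^{*},\Hlnum^{*}$ and differential operations on $g$ rigorous at the level of germs, and extracting the divisibility by~$\sigma$. If that translation is too delicate, I would fall back on a direct local argument. I would compare the singular behaviour of $\Hlnum$ and $\Ddenr$ at the finitely many values of $n$ where the leading or trailing coefficients of the $\Dden(n+i-1-j,S_n)$ factors degenerate, and use \zcref{lem:nonzeronum}-style valuation bookkeeping to show that $p_r$ coprime to $\sigma$ keeps the corresponding coefficients of $\Hlnum$ nonzero there. Either route should show that no factor of $\Ddenr$ divides $\Hlnum$ on the left.
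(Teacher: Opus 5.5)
Your preliminary reductions are sound. By \zcref{coro:horner_elim_Sn} the common left divisor $G$ is prime to $S_n$, so it left-divides a shift of $\Ddenr$. The Hermite case is immediate. The anti-automorphism $S_n\mapsto S_n^{-1}$ does turn a nontrivial $G$ into a nonzero germ $c$ annihilated by both transposed operators.

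But the argument stops exactly where $\gcd(p_r,\sigma)=1$ has to be used. ``The lower-order terms get absorbed'' and ``one should be left with an identity in which $p_r$ multiplies a quantity divisible by a power of $\sigma$'' are assertions with no mechanism behind them. They carry the entire content of the proposition, which fails without the hypothesis. In Howell's Laguerre example of \zcref{sec:irred-fracs}, where $p_r=x=\sigma$, the constant sequence is a common germ solution of both transposed operators.

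Your dictionary is also inaccurate. With $e_k\leftrightarrow\psi_k/h_k$, the transpose of $\Dden$ sends $e_n$ to $\sigma\psi_{n+1}'/(\lambda_{n+1}h_{n+1})$. That is $\sigma\partial$ preceded by an index shift and a division by the Sturm--Liouville eigenvalue, not $g\mapsto\sigma g'$. The relevant adjoint is the one for $\langle\cdot|\cdot\rangle_w$, namely $\partial^*=-w^{-1}\partial w$, not $\sum_kq_k(-\partial)^k$.

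What duality honestly gives, by pairing with $[\psi_n](f)$ for polynomial $f$, is the following. The finitely supported sequences $\Hlden^{T}c$ and $\Hlnum^{T}c$ yield polynomials $Q,P$ with $\langle Lf|Q\rangle_w=\langle f|P\rangle_w$ for all $f\in\K[x]$. At the same time, $f\mapsto\langle f^{(r)}|Q\rangle_w$ is represented by no polynomial, because $c$ is not finitely supported. The missing lemma is that $\gcd(p_r,\sigma)=1$ makes this impossible. Proving it requires a real endpoint analysis, carried out algebraically (for instance via Pearson's equation for the moment functional) since $\alpha,\beta$ are symbolic. None of this is in your outline.

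Your fallback cannot work either. A common left factor in $\Sh$ is a global object and is not detected at finitely many special values of $n$. For Laguerre, $\Ddenr=(S_n-1)^r$ has constant coefficients and so no degenerate values at all, yet for $r\ge2$ it has infinitely many left factors in $\Sh$. Valuation bookkeeping as in \zcref{lem:nonzeronum} only controls powers of $S_n$, which \zcref{coro:horner_elim_Sn} already settles.

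The paper feeds the hypothesis in algebraically. It takes the Bézout identity $1=pu+\sigma v$ in $\K[x]$ and multiplies it on the left by $\partial$, giving $\partial(pu)=\partial-(\sigma\partial+\sigma')v$. Both sides are then evaluated through the homomorphism $\mathfrak R^\psi_{d/dx}$. Since all fractions associated to one operator are $\Rec$-equivalent (\zcref{prop:assoc-equiv}), this yields an equality of numerators. From it comes a Bézout relation in $\Sh$ proving $\gcld_\Sh(P,S_n^{d_p-1}\Dden)=1$ (\zcref{lemma:partial-P}).

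The general case is an induction on $r$. The paper reduces irreducibility to showing that every fraction of $\assocan$ has a denominator right-divisible by $S_n^m\Ddenr$. It then uses the two decompositions $L=L_1\partial+p_0=\partial^rq_r+L_2$ to peel off the factors of $\Ddenr$. Your proposal lacks such a mechanism, or a proved dual counterpart of it.
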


We first prove the following result, which gives part of the case~$r=1$.
\begin{lemma}\label{lemma:partial-P}
Let $p\in\K[x] \setminus \{ 0 \}$ be relatively prime with~$\sigma$ and $
(P,S_n^{d_p})=\HorrN(p)$ with $d_p=\deg p$. Then the fraction
\[
  \mathfrak R^\psi_{d/dx}(\partial p)=(S_n,\Dden)\otimes (P,S_n^
  {d_p})
  = (S_n^{(1-d_p)^+}P,S_n^{(d_p-1)^+}\Dden)
\]
is irreducible.
\end{lemma}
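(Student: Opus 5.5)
We would split according to $d_p=\deg p$. If $d_p=0$, the fraction is $(pS_n,\Dden)$ with $p\in\K^\times$. Since $S_n$ has degree~$1$, a nontrivial $\gcld_\Sh$ would have to be $S_n$ itself, up to a unit of $\K(n)$. But $S_n$ does not left divide $\Dden$, because the constant coefficient of $\Dden$ in \zcref{table:pairs_rec_cop} is nonzero. So this case is immediate.

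If $d_p\ge1$, the fraction is $(P,S_n^{d_p-1}\Dden)$. By \zcref{prop:horr-pol-irred}, $(P,S_n^{d_p})$ is irreducible, so $S_n$ does not left divide~$P$. It therefore suffices to show that $\gcld_\Sh(P,S_n^{d_p-1}\Dden)$ has degree~$0$. The Hermite case ($\sigma=1$) is then immediate: $\Dden$ is a unit of $\K(n)$, so this gcld divides a power of $S_n$ and hence is~$1$.

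For the Jacobi-type and Laguerre families we would argue through adjoints (transposes). A common left factor $G$ of positive degree of $P$ and $S_n^{d_p-1}\Dden$ in $\Sh$ becomes a common right factor $G^*$ of their adjoints. This in turn produces a nonzero germ of sequences $(c_n)$ annihilated by both adjoints. The point is that the adjoint operators act naturally on the basis $(\psi_n(x))$. By the 3-term recurrence \eqref{eq:3term-rec}, $\Xnum$ (after the $h_n$ twists in \eqref{eq:xnumxden}) is the transpose of multiplication by $x$ on $\psi_n$. By the structure relation \eqref{eq:structure}, $\Dden$ is the transpose of $\psi_{n+1}\mapsto\sigma\psi_{n+1}'/\lambda_{n+1}$. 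So $P^*$ encodes multiplication by $p(x)$, and $(S_n^{d_p-1}\Dden)^*$ encodes the map $\psi\mapsto\sigma\psi'$ composed with index shifts.

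A common solution $(c_n)$ of both adjoints then translates, via the formal series $\Phi(x)=\sum c_n\psi_n(x)/h_n$ (or a dual functional on $\K[x]$), into a relation of the shape $p(x)\Phi=0$ together with $\sigma\,\partial$-compatibility on a finite-codimensional subspace. Concretely, the functional $\ell$ on $\K[x]$ defined by $\ell(\psi_n)=c_n$ for $n$ large would satisfy $\ell(p\,\cdot)=0$ and $\ell(\sigma(\cdot)')=0$ modulo finitely many polynomials. From $\ell(p\,q)=0$ for almost all~$q$, the functional is supported on the roots of~$p$: it is a finite combination of derivatives of evaluations at the zeros of~$p$. On the other hand, $\ell(\sigma q')=0$ for almost all~$q$ forces the support to lie in the zeros of~$\sigma$; indeed, at a point $\alpha$ with $\sigma(\alpha)\neq0$, the evaluation-type functionals cannot kill all of $\sigma q'$. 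Since $\gcd(p,\sigma)=1$, the supports are disjoint, so $\ell$ vanishes on a finite-codimensional subspace. Hence the germ of $(c_n)$ is~$0$, contradicting the positive degree of~$G$.

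The main obstacle is making this duality precise. One has to set it up at the level of germs, since $\Sh$ only sees germs, and to track the $h_n$, $\lambda_{n+1}$ normalisations and the shift $S_n^{d_p-1}$ so that the two adjoint relations genuinely correspond to multiplication by $p$ and to $q\mapsto\sigma q'$. If that becomes too delicate, the fallback is a direct argument in the spirit of \zcref{lem:nonzeronum}. One would compare leading and trailing coefficients in $S_n$ of $P$ and $\Dden$, and use the fact that the roots of the leading and constant coefficients of $\Dden$ and of $P$ relate respectively to the endpoints $\pm1$ (or $0$) of $\sigma$ and to values of~$p$ there. This would show that no common right factor of the adjoints exists when $p(\pm1)\neq0$ (resp.\ $p(0)\ne0$).
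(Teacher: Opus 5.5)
Your treatment of $d_p=0$ and of the Hermite case is fine, as is the reduction (for $d_p\ge1$) to showing that $\gcld_\Sh(P,S_n^{d_p-1}\Dden)$ has degree~$0$. The main case, however, has a genuine gap at the step ``from $\ell(pq)=0$ for almost all $q$, the functional is supported on the roots of $p$''. This is false. The condition holds only on a finite-codimensional subspace such as $\operatorname{span}\{\psi_n: n\ge N\}$, which is not an ideal, so nothing localizes.

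Take Chebyshev polynomials and $p=x$, which is coprime with $\sigma=1-x^2$. The functional defined by $\ell(T_n)=\sin(n\pi/2)$ satisfies $\ell(xT_n)=\tfrac12\bigl(\ell(T_{n+1})+\ell(T_{n-1})\bigr)=0$ for all $n\ge1$. Yet $\ell(x)=1$, and its germ is not that of any finite combination of derivatives of $\operatorname{ev}_0$: for odd $n$, each $T_n^{(k)}(0)$ is $\sin(n\pi/2)$ times a polynomial in $n$ divisible by $n$.

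In general, $P$ has order $2d_p$ and a nonzero constant coefficient, so its germ solutions form a $2d_p$-dimensional space. Functionals vanishing on $p\K[x]$ account for at most $d_p$ of these dimensions. The rest come from Cauchy--Stieltjes type functionals $q\mapsto\int_a^b q(t)w(t)(z-t)^{-1}\,dt$ at the roots $z$ of $p$ (principal values when $z\in(a,b)$; the $\ell$ above is of this kind), and these are not supported at $z$. Ruling out nonzero combinations of all of these that satisfy the $\sigma$-condition is the whole content of the lemma, and the disjoint-supports argument does not address it.

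Your concrete translation is also on the wrong side. The conditions $\ell(p\,\cdot)\equiv0$ and $\ell(\sigma(\cdot)')\equiv0$ say exactly that $(\ell(\psi_n)/h_n)$ is a common germ solution of $P$ and $S_n^{d_p-1}\Dden$ themselves, which detects a common \emph{right} factor. For the adjoints, the first condition survives up to a shift by $d_p$, since multiplication by $p$ has a symmetric matrix after the $h_n$ twist. The second does not. You noted the factor $1/\lambda_{n+1}$ in $\Dden$ and then dropped it; together with the integration by parts $\langle\sigma q'|r\rangle_w=-\langle q|\sigma r'+\tau r\rangle_w$, it turns the condition into: $\sigma(\Lambda^{-1}\Phi)'$ is a polynomial, where $\Phi=\sum_n c_{n-d_p}\psi_n/h_n$ and $\Lambda=\sigma\partial^2+\tau\partial$ is diagonal on the $\psi_n$.

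Your duality idea can then be finished without supports. Multiplication by polynomials, $\sigma\partial$ and $\Lambda$ act on formal $\psi$-series as banded operators, so the identity $\sigma\Lambda=(\sigma\partial)^2+(\tau-\sigma')\sigma\partial$ holds there. Apply it to $\Psi=\Lambda^{-1}\Phi$, for which $\sigma\Psi'$ is a polynomial; this shows that $\sigma\Phi$ is a polynomial. Since $p\Phi$ is also a polynomial, the Bézout identity $1=up+v\sigma$ forces $\Phi$ to be a polynomial, i.e., $(c_n)$ has zero germ.

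This is essentially the dual of the paper's proof. The paper lifts $1=pu+\sigma v$ to $\partial(pu)=\partial-(\sigma\partial+\sigma')v$ and uses the $\RecI$-equivalence of the images of both sides under $\mathfrak R^\psi_{d/dx}$ to get an equality of numerators. From this it reads off a Bézout relation $S_n^{d_p+d_u}=P(n+d_u,S_n)U-\Dden(n+d_p+d_u-1,S_n)\tilde A$ in $\Sh$. Your fallback via leading and trailing coefficients is not an argument, since those coefficients cannot exclude a common left factor in general.
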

\begin{proof} If $p \in \K$,  the result is obvious. For $p\in\K
[x]\setminus \K$, the proof is indirect. 
 Bézout's identity gives the existence of $u$ and $v \in \K [x]$ such
that $1 = pu + \sigma v$, with $d_u:=\deg u<\deg\sigma$ and $d_v:=\deg v<d_p$. First we note that if $u=0$, then $v\sigma=1$ implies that $\deg\sigma=0$, which happens only for
Hermite polynomials, in which case $\Dden$ has degree~0 and the result
holds by \zcref{coro:horner_elim_Sn}. Also, $v \neq 0$, otherwise $p \in \K$. 
From now on, those two cases are excluded and $d_u, d_v \ge0, d_p \geq 1$.

Multiplying the Bézout identity by $\partial$ on the left gives
$ \partial(p u)   = \partial - ( \sigma \partial +
\sigma') v$.
By \zcref{lemma:equivalent-pairs}, this implies that the images by
$\mathfrak R^\psi_{d/dx}$ of differential terms for both sides are
$\RecI$-equivalent. We use the terms as parenthesized above, 
where the polynomials $p,u,\sigma,\sigma',v$ are presented in a
right Horner form. If~$(U,S_n^{d_u})=\HorrN(u)$, the inner product in
the left-hand side is mapped to~$(P(n+d_u,S_n)U,S_n^
{d_p+d_u})$. For the right-hand side, up to $\RecI$-equivalence, the
inner term $\sigma\partial$ can be mapped to~$(\Theta_\text{num},S_n)$
from \zcref{eq:theta_rec} below. Adding~$\mathfrak R^\psi_{d/dx}
(\sigma')$
does not change the denominator since $\deg\sigma'\le1$ (see  
\zcref{table:classiques2}). Finally, multiplying on the right by
$\mathfrak R^\psi_{d/dx}(v)$ multiplies the denominator by $S_n^{d_v}$. In summary, we have obtained that for some $A\in\RecN$,
\[
  (S_n,\Dden)\otimes (P(n+d_u,S_n)U,S_n^
{d_p+d_u}) \equiv_{\RecI} (S_n,\Dden)\oplus (A,S_n^{d_v+1}).
\]
A direct computation on both sides gives
\begin{multline*}
(S_n^{1-d_p-d_u}P(n+d_u,S_n)U,S_n^{d_p+d_u-1}\Dden)
\equiv_{\RecI} \\
(S_n^{d_v+2}+\Dden(n+d_v+1,S_n)A,S_n^
{d_v+1}\Dden).
\end{multline*}
Let $\delta=d_v+1-(d_p+d_u-1)$ be the difference between
exponents of $S_n$ in the denominators of the left- and the right-hand
sides of this equivalence. 
 Recall that $d_p>d_v\ge0$, $\deg(pu)=\deg(v\sigma)$
and 
\[
  \delta=d_v+1-(d_p+d_u-1)=2-\deg\sigma\ge0.
\]
The $\Rec$-equivalence of fractions then implies that there is an 
\emph{equality} between numerators that can be written
\[
  S_n^{\delta}P(n+d_u,S_n)U = S_n^{2+d_v}+\Dden(n+d_v+1,S_n)A.
\]
Since $\Dden$ is relatively prime with $S_n$ and $\delta \leq 2 + d_v$, this implies that there is~$\tilde A\in\Sh$ such that $A=S_n^{\delta}\tilde A$ leading to 
\[
  S_n^{d_p+d_u} = P(n+d_u,S_n)U-\Dden(n+d_p+d_u-1,S_n)\tilde A.
\]
This Bézout identity shows that $\gcld_\Sh(P(n+d_u,S_n),\Dden(n+d_p+d_u-1,S_n))$ is a power of~$S_n$, which has to be~$S_n^0=1$ since $(P,S_n^{d_p})$ is irreducible. Thus
\begin{multline*}
1 = \gcld_\Sh(P(n+d_u,S_n),\Dden(n+d_p+d_u-1,S_n)) =
\gcld_\Sh(P,\Dden(n+d_p-1,S_n))\\
=\gcld_\Sh(P,\Dden(n+d_p-1,S_n)S_n^{d_p-1})=
\gcld_\Sh(P,S_n^{d_p-1}\Dden),
\end{multline*}
as was to be proved.
\end{proof}

\begin{proof}[Proof of \zcref{prop:Lewanowicz}]
By \zcref{prop:irred-frac,prop:assoc-equiv}, the
module $\assocal$ contains an
irreducible fraction and it is sufficient to prove that the
fraction in the proposition is quasi-generating in~$\assocan$.

The proof is by induction on the order~$r$. For $r=0$, the result is
given by \zcref{prop:horr-pol-irred}.  Now, we assume that the result
is satisfied for all orders from $0$ to $r-1$. Let $(\Anum,\Aden) \in 
\assocan$, i.e., 
\begin{equation}\label{eq:Anum-Aden}
\Anum\cdot[\psi_n](\psi_j)=\Aden\cdot[\psi_n]
(L\cdot\psi_j),\qquad\text{for all }j\ge0.
\end{equation}
Again by \zcref{prop:assoc-equiv}, it is then sufficient to show that
$\Aden$ is divisible by the denominator $S_n^{m(L)} \Ddenr$ of $\Horl 
(L)$, where we write $m(L)$ for the exponent~$m$ of \zcref{prop:horner_gauche}.

We write $L$ in two different ways:
\[L=L_1\partial+p_0=\partial^rq_r+L_2,\]
where~$L_1$ and~$L_2$ are linear differential operators of orders
$r_1=r-1$ and $r_2\le r-1$ and moreover, the exponents from 
\zcref{prop:horner_gauche} satisfy $m(L_2)\le m(L)$.
Applying the induction hypothesis to~$L_1$, shows that any
element of~$
\langle{A}_{\text{an}}(L_1)\rangle$ has a denominator that is a
left multiple of~$S_n^{m(L_1)}\Ddenrmu$.

First, observe that the denominator $\Aden$ can be written $S_n^
{v_0}\Aden'$ with $v_0\geq m(L)$ and $S_n
\nmid\Aden'$. Indeed, if $(\mathcal{A},\mathcal{B})$ is a
quasi-generating pair
of~$\assocal$ (its existence is granted by \zcref{prop:assoc-equiv}),
then there exists $c\in \K [n]_\Za$ and  $\mathcal{C} \in \Rec$ such
that $c(n) \Horl (L) = (
\mathcal{C} \mathcal{A}, \mathcal{C} \mathcal{B})$. 
\zcref[S]{coro:horner_elim_Sn} implies that $S_n \nmid \mathcal{C}$
and therefore~$S_n^{m(L)} \mid \mathcal{B}$.

Next, applying \zcref{prop:annihilators} twice,
\begin{gather*}
(\Anum,\Aden) \oplus \mathfrak R^\psi_{d/dx}(-p_0) \in   \Ann_\Frec
  \left([\psi_n](\psi_j), - [\psi_n] ({L_1}  \partial \cdot \psi_j)
  \right),\\
  \left( (\Anum,\Aden) \oplus \mathfrak R^\psi_{d/dx}(-p_0) \right)
  \otimes
  (\Dden,S_n) \in   \Ann_\Frec \left  ([\psi_n](\psi_j'), - 
  [\psi_n] \left ( L_1  \cdot \psi_j' \right ) \right ).
\end{gather*}
Since this holds for any~$j$ and the $\psi_j$ form a basis of the
polynomial ring, their derivatives $\psi_j'$ generate it and therefore
the fraction
on the left belongs to~$\langle A_{\text{an}}(L_1)\rangle$. 

Thus, by the induction hypothesis,
the denominator of that fraction, which can be
computed directly to be $S_n^{v_1}A'_\text{den}$ with~$v_1>v_0$,
is a left multiple of~$S_n^{m(L_1)}\Ddenrmu$.
This implies the existence of~$F\in\Sh$ such that $S_n\nmid F$
and
\begin{equation} \label{eq:A'den}
 \Aden = S_n^{v_0}   F \Ddenrmu.
\end{equation}

Next, we turn to the second rewriting of~$L$. Expanding 
\zcref{eq:Anum-Aden} and using $\Horl(\partial^{r-1})=(S_n^
{r-1},\Ddenrmu)$ from \zcref{prop:horner_gauche} gives that for all
$j\ge0$,
\begin{align*}
\Anum\cdot[\psi_n](\psi_j)&=
\Aden\cdot[\psi_n](\partial^{r-1}\partial q_r\cdot \psi_j)
+\Aden\cdot[\psi_n](L_2\cdot \psi_j)\\
&=S_n^{v_0}F\Ddenrmu\cdot[\psi_n](\partial^{r-1}\partial q_r\cdot
\psi_j)
+\Aden\cdot[\psi_n](L_2\cdot \psi_j)\\
&=S_n^{v_0}FS_n^{r-1}\cdot[\psi_n](\partial q_r\cdot\psi_j)+\Aden\cdot
[\psi_n]
(L_2\cdot \psi_j).
\end{align*}

Now, from 
\[\Horl(L_2)=(\Hlnum(L_2),S_n^{m(L_2)}\Ddenrt)\]
with $m(L_2)\le m(L)\le v_0$ and $r_2\le r-1$ and \zcref{eq:A'den}, it
follows that $\Aden$ is a left multiple of this denominator, i.e.,
there exists~$G$ such that
\[\Aden=GS_n^{m(L_2)}\Ddenrt.\]
As a consequence, for all~$j\ge0$,
\begin{gather*}
\Anum\cdot[\psi_n](\psi_j)=
S_n^{v_0}FS_n^{r-1}\cdot[\psi_n](\partial q_r\cdot\psi_j)+
GS_n^{m(L_2)}\Ddenrt\cdot[\psi_n](L_2\cdot \psi_j),\\
(\Anum-G\Hlnum(L_2))\cdot[\psi_n](\psi_j)=
S_n^{v_0} FS_n^{r-1}\cdot[\psi_n](\partial q_r\cdot\psi_j).
\end{gather*}
\zcref[S]{lemma:partial-P} then implies that $\Dden$ is a right
divisor of $FS_n^{r-1}$ and thus that $\Dden(n-r+1,S_n)$ is a right
divisor of~$F$, which concludes the proof that $\Ddenr$ is a right
divisor of $\Aden'$ and also that $S_n^{m(L)}\Ddenr$ is a right
divisor of $\Aden$.
\end{proof}

\subsection{Algorithm for irreducible fractions (Proof
of Theorem \ref{thm:horner})}
\label{sec:irred_gen_pair}

\begin{algorithm}[ht]
    \caption{Main algorithm} \label{algo:main}
          \Input{$L=\sum_{i=0}^r p_i(x)\partial^i \in \K[x]
 \langle \partial \rangle$;\quad $\sigma(x)$
          from \zcref{eq:Pearson};\\
     \qquad the pairs $X^\psi=(\Xnum,\Xden)$, $D^\psi=(\Dnum,\Dden)$
     from
    \zcref{eq:Phi_psi}.}
        \Output{An irreducible fraction $P\in\Frec$ as in
 \zcref{thm:horner}} 
        \lIf{$\gcd(p_r(x),\sigma(x))=1$}
        {\KwRet \zcref[noname,font=\textsf]{algo:LeftHorner}$(L,X,D)$}
        $(A,B)\gets$ \zcref[noname,font=\textsf]{algo:RightHorner}$
        (L,X,D)$\\
        $\ell \gets \min(\valSn A,\valSn B)$\\
        \lIf{$\ell=0$}{\KwRet{$(A,B)$}}
        \tcc{\zcref{eq:qAB} with
        $q(n)$ minimal:}
        $(A,B)\gets q(n)S_n^{-\ell}(A,B)$ \tcp{Exact division
        by~$S_n^\ell$}
        $(P,Q)\gets \Horl(\partial^r,X,D)$ \tcp{{Or use
        the explicit \eqref{lem:tech_paires} for $\Ddenr$}}
        $G\gets \tilde q(n)((S_n^{\valSn B}Q)\div B)$ \tcp{{Exact right
        division by $B$, with $\tilde q(n)$ minimal for $G\in\Rec$}}
        $UG+VS_n^\ell=1$ \tcp{{Extended left gcd in $\Sh$}}
        $c(n):=\operatorname{lcm}(\operatorname{denominators}(U,V))$\\
        \KwRet{$c(n)(A,B)$}
\end{algorithm}

Let~$L$ be a linear differential operator in~$\K[x]\langle
\partial\rangle$
. \zcref[S,noabbrev]{algo:main} follows the
steps of the proof of \zcref{prop:irred-frac}, with extra steps
to reduce the computational cost. First,
in the situation where \zcref{prop:Lewanowicz} applies, the
computation is performed by \zcref{algo:LeftHorner}, that returns an irreducible fraction, see \zcref{sec:faster}, without needing expensive computations of cofactors in its operations. 

Otherwise, the algorithm first computes a right Horner evaluation. By
\zcref{prop:horner_droite}, it is of the form~$(S_n^\ell
A,S_n^\ell B)$ with $(A,B)$ an irreducible fraction. If~$\ell=0$, the pair
is irreducible and returned. If not, the proof of \zcref{prop:irred-frac}
combines~$(A,B)$ with the fraction $\Horl(L)$. 
The algorithm avoids the computation of $\Horl(L)$ for efficiency
reasons. Instead, only $\Ddenr$ is computed. Since~$\hat G$ is
relatively prime to~$S_n$, \zcref{prop:horner_gauche} shows
that~$S_n^m\Ddenr$ is equal to $\hat G D=\hat GB$ for $m=\valSn B$, up
to an element of $\K[n]_\Zn$ on the left. Thus $\hat G$ can be computed by dividing $S_n^{m}\Ddenr$ on the right  by~$B$ and multiplying by a polynomial so that the coefficients belong to~$\K[n]_\Zn$. This allows us to compute $U,V,c(n)$ and return $c(n)(A,B)$. This is done in the last lines of Algorithm~\ref{algo:main} and completes the proof of correctness of the algorithm. \qed

\begin{remark}
In all our examples,
the end result is~$c(n)=1$ although we have not been able to find a
proof of this observation in general. 
\end{remark}

\section{Algorithms for singular cases}\label{sec:singular-case}

It may happen that \zcref{eq:hypH} from \zcref{thm:horner}
does not hold: the required sequences of generalized Fourier
coefficients do not
 exist. Then the fraction returned by \zcref[S]
{algo:main} may still have a numerator that cancels the
generalized Fourier coefficients, in which case another proof is
needed, or this recurrence can simply fail to hold and another way of
computing a recurrence is needed.

This section is devoted to two extensions of \zcref{thm:horner} to
those
cases whose problems originate in the behaviour of the
function~$f$ at the endpoints of the support of the measure used to
define the family of orthogonal polynomials. The solution consists in using
a
different derivation than~$d/dx$, that incorporates some of the
behavior at the singularities. In this section, we do not consider
Hermite
polynomials since their $\sigma$ function is $1$. 

\begin{example}\label{ex:first-sing}
The function $(1-x^2)^{-1/4}$ is annihilated by~$2
(1-x^2)\partial-x\in\K[x]\langle\partial\rangle$. Direct
application of the
algorithm gives 
\begin{Verbatim}
> FunctiontoRec((1-x^2)^(-1/4),ChebyshevT(n,x),c);
\end{Verbatim}
{\color{blue}
\begin{equation}\label{eq:coeffs-ex-5}
  (2n+3)c_{n+2}-(2n+1)c_n=0.
\end{equation}}
The actual values of the Chebyshev coefficients~$(c_n)$ can be computed
by standard properties of the Beta
integrals and turn out to be
\[
  c_n=\begin{cases} 
    0 &\text{if $n$ is odd,} \\
    \frac{\sqrt{2\pi}}{\Gamma(3/4)^2}&\text{if $n=0$,}\\
    \frac{2\Gamma\left(\frac{n}{2}+\frac{1}{4}\right)}{\sqrt{\pi}\Gamma\left(\frac{n}{2}+\frac{3}{4}\right)} & \text{otherwise.}
      \end{cases}
    \]
Thus, with the convention of 
\zcref{sec:note_on_chebyshev_series}, the recurrence is seen to be
satisfied by the coefficients. However, the first derivative of
this function does not have a Chebyshev expansion, so that 
\zcref{eq:hypH} does not hold. A proved derivation of this
recurrence
is given in \zcref{ex-5} below.
\end{example}

\begin{example}\label{ex:arccos}
The function $\arccos x$ shows that
convergence hypotheses are necessary. This function is annihilated by
$L=
(1-x^2)\partial^2-x\partial\in\K[x]\langle\partial\rangle$. In
the basis of Chebyshev
polynomials $T_n(x)$, \zcref{eq:hypH} does not hold,
since the derivative of $\arccos$ does not have Chebyshev
coefficients. Direct application of the algorithm gives the
fraction $(-n^2,1)$. This would suggest that the generalized
Fourier coefficients satisfy $n^2c_n=0$, i.e, $c_n=0$ for all~$n>0$,
but it is not the case. A correct recurrence is given in \zcref{ex:arccos2} below.
\end{example}

\subsection{Combining leading coefficient and derivative}
The following is an analogue of
\zcref{lem:Dcop}; it uses the polynomial $\sigma$ from 
\zcref{table:classiques2}. 
\begin{lemma}\label{lem:sigmadx}
  Let $f$ be such that both sequences $([\psi_n](f))_{n \in \nat}$ and $([\psi_n](\sigma f'))_{n \in \nat}$ exist.  
 Then, their Fourier coefficients are related by 
\begin{equation}\label{eq:theta_rec}
(\Thnum,\Thden)\in\Ann_\Frec([\psi_n]
(f),-[\psi_n](\sigma f')),
\end{equation}
where
\[
  (\Thnum,\Thden)=  (-\lambda_{n+1} \Dden-a_1\Xnum-a_0S_n,S_n),
\]
with $\lambda_n,\tau(x)=a_1x+a_0$ from \zcref{table:classiques2}.
\end{lemma}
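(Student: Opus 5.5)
The plan is to prove the identity $S_n\cdot[\psi_n](\sigma f')=\Thnum\cdot[\psi_n](f)$ for all $n\in\N$ directly at the level of inner products, in the spirit of \zcref{lem:Dcop}. The tool is the adjoint of $\sigma\partial$ with respect to $\langle\cdot|\cdot\rangle_w$, combined with the Pearson equation \eqref{eq:Pearson}. We do not assume that $f'$ itself has Fourier coefficients; only $\sigma f'$ does.

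First step: integration by parts against $\psi_{n+1}$. For $a<u\le v<b$,
\[
\int_u^v \sigma f'\psi_{n+1}w\,dx=\bigl[f\,\sigma w\,\psi_{n+1}\bigr]_u^v-\int_u^v f\,\bigl((\sigma w)'\psi_{n+1}+\sigma w\psi_{n+1}'\bigr)dx .
\]
By \eqref{eq:Pearson}, $(\sigma w)'=\tau w$, so the integrand on the right is $f\,(\tau\psi_{n+1}+\sigma\psi_{n+1}')\,w$. Each piece converges:
\begin{itemize}
\item $\tau\psi_{n+1}=a_1x\psi_{n+1}+a_0\psi_{n+1}$ is a combination of $\psi_{n+2},\psi_{n+1},\psi_n$ by \eqref{eq:3term-rec};
\item $\sigma\psi'_{n+1}$ is a combination of the same three polynomials by \eqref{eq:structure}.
\end{itemize}
Hence both integrals have finite limits as $u\to a$ and $v\to b$, because $[\psi_k](f)$ and $[\psi_k](\sigma f')$ exist. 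Consequently the boundary term has finite limits $A$ at $a$ and $B$ at $b$.

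Second step, which I expect to be the main obstacle: showing $A=B=0$. I would argue as in \zcref{lemma:int-fprime}. If $A\ne0$, then near $a$ we have $f\psi_{n+1}w\sim A/\sigma$. The endpoint $a$ is a zero of $\sigma$ (for Jacobi and Laguerre), or $a=-\infty$ with $\sigma$ polynomial. So $1/\sigma$ is not integrable near a zero of $\sigma$, and at infinity $f\psi_{n+1}w\sim A/\sigma$ with $\deg\sigma\le1$ is not integrable either. This contradicts the existence of $\int_a^b f\psi_{n+1}w$, i.e.\ of $[\psi_{n+1}](f)$. The care needed here is to treat infinite endpoints separately: Laguerre at $+\infty$ ($\sigma=x$), and Hermite, which is excluded in this section. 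With the boundary terms gone,
\[
\langle\sigma f'|\psi_{n+1}\rangle_w=-\langle f|\tau\psi_{n+1}\rangle_w-\langle f|\sigma\psi_{n+1}'\rangle_w .
\]

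Third step: translate into operators. Write $\langle f|\tau\psi_{n+1}\rangle_w=a_1\langle f|x\psi_{n+1}\rangle_w+a_0\langle f|\psi_{n+1}\rangle_w$.
\begin{itemize}
\item By the computation in the proof of \zcref{lem:x}, $a_1\langle f|x\psi_{n+1}\rangle_w=a_1h_{n+1}\Xnum\cdot[\psi_n](f)$.
\item The second part equals $a_0h_{n+1}S_n\cdot[\psi_n](f)$.
\item By \eqref{eq:structure} and \eqref{eq:dnumddencop}, $\langle f|\sigma\psi'_{n+1}\rangle_w=\cDiffden\cdot\langle f|\psi_n\rangle_w=\lambda_{n+1}h_{n+1}\Dden\cdot[\psi_n](f)$.
\end{itemize}
Dividing by $h_{n+1}$ gives
\[
S_n\cdot[\psi_n](\sigma f')=\bigl(-\lambda_{n+1}\Dden-a_1\Xnum-a_0S_n\bigr)\cdot[\psi_n](f),
\]
which is exactly \eqref{eq:theta_rec}.

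Finally, I would check that $\Thnum\in\Rec$, which holds since $\Dden,\Xnum\in\Rec$ and $\lambda_{n+1}$ is a polynomial. The case $n$ with $\psi_{n+1}$ nonconstant is the only one occurring, since the shift is by one.
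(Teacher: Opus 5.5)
Your proof is correct and follows essentially the same route as the paper. It integrates by parts against $\psi_{n+1}$, uses Pearson's equation to replace $(\sigma w)'$ by $\tau w$, kills the boundary term $[\sigma w\psi_{n+1}f]_a^b$ with the non-integrability argument of \zcref{lemma:int-fprime}, and rewrites $\langle f|\tau\psi_{n+1}\rangle_w$ and $\langle f|\sigma\psi_{n+1}'\rangle_w$ in terms of $\Xnum$, $S_n$ and $\Dden$. If anything you are more careful than the paper, which only cites ``the zero limit'' from that lemma: you justify that the endpoint limits exist and derive the contradiction from the integrability of $f\psi_{n+1}w$ (and the same argument also covers Hermite, since a nonzero constant limit of $f\psi_{n+1}w$ at $\pm\infty$ is not integrable).
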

\begin{table}
   \begin{center}
     \renewcommand{\arraystretch}{1.5}
\begin{tabularx}{.9\textwidth}{p{1.7cm} | c | Y     }
   Name                  & $\psi_n$      & $\Thnum$  \\\hline
Chebyshev &
    $T_n$   &
    \cellscript{\left(\tfrac{n}{2}+1\right)S_n^2-\tfrac{n}{2}} \\
    \hline 
Gegenbauer &
    $C_n^{(\lambda)}$ & 
    \cellscript{\tfrac{(n+1+2\lambda)(n+2+2\lambda)}{2(n+2+\lambda)} 
    S_n^2 - \tfrac{n(n+1)}{ 2 (n+\lambda)}}  \\
    \hline
Jacobi & 
    $P_n^{(\alpha, \beta)}$  &
    \cellscript{\tfrac{2(n + 2 + \alpha)(n + 2 + \beta)(n + 3 +
    \alpha + \beta)} {( 2n + 4 + \alpha + \beta)_2 } S_n^2\\ 
    + \tfrac{ 2(\alpha - \beta)(n + 1)(n + 2 + \alpha + \beta)}{(2n +
    2 + \alpha + \beta)(2n + 4 + \alpha + \beta)}  S_n - \tfrac{2n(n + 1)(n  + 1 + \alpha + \beta)}{(2n + 1 + \alpha + \beta)_2}} \\
    \hline
Laguerre  &
    $L_n^{(\alpha)}$ &
    \cellscript{- (n+1+\alpha) S_n +   n}         
\end{tabularx}
  \caption{Recurrence operators $\Thnum$ associated to
  classical orthogonal polynomials with non-constant $\sigma$ 
  ($\Thden=1$ for 
  Laguerre, $\Thden=S_n$ otherwise).}
  \label{table:thetap_rec_cop}
  \end{center}
\end{table}
\noindent The values of $(\Theta_{\text{num}},\Theta_{\text{den}})$ for
the classical orthogonal polynomials are given in 
\zcref{table:thetap_rec_cop}. They follow from \zcref{lem:sigmadx},
except in the case of
Laguerre polynomials, where the value is given by 
\zcref{lem:sigmadxlag}
below. In
all cases, it can be seen that $\Theta_{\text{num}}$ belongs
to~$\Rec$: 
in the case of Jacobi polynomials  when $\alpha + \beta + 1 = 0$,
the constant coefficient wrt~$S_n$ simplifies to~$- {n(n+1)}/
({2n+1})$. 
\begin{proof} Here again, part of the proof was given by Lewanowicz~%
\cite{Lewanowicz1995} and we write it for the sake of readability and,
again, to make the required convergence assumptions explicit.

 A direct computation shows that
\[\mathfrak R^\psi_{\delta}((a_1\times x)+a_0)=
(a_1\Xnum+a_0S_n,S_n),\]
independently of the derivation~$\delta$. This implies that
\[(a_1\Xnum+a_0S_n)\cdot[\psi_n](f)=[\psi_{n+1}](\tau f).\]

The proof then follows from the sequence of equalities
\begin{align*}
&\lambda_{n+1}\Dden\cdot [\psi_n](f)+[\psi_{n+1}](\sigma f')+[\psi_
{n+1}](\tau f)\\
&\quad =\frac1{h_{n+1}}\langle\sigma\psi_{n+1}'|f\rangle
+\frac 1{h_{n+1}}\langle\sigma f'|\psi_{n+1}\rangle+\frac1{h_
{n+1}}\langle\tau f|\psi_{n+1}\rangle\\
&\quad=\frac1{h_{n+1}}\int_a^b{(\sigma w) (\psi_{n+1}f)'+(\tau w)\psi_
{n+1}f\,dx}\\
&\quad=\frac1{h_{n+1}}\int_a^b{(\sigma w) (\psi_{n+1}f)'+(\sigma
w)'(\psi_
{n+1}f)\,dx}\\
&\quad=\frac1{h_{n+1}}[\sigma w \psi_{n+1}f]_a^b=0,
\end{align*}
which use, in this order, the definition of $\Dden$ and the structure
relation~\eqref{eq:structure}, the integral definition of Fourier
coefficients, Pearson's equation~\eqref{eq:Pearson}, and the zero
limit from the proof of \zcref{lemma:int-fprime}.
\end{proof}

A direct application of \zcref{lem:sigmadx} to the
Laguerre polynomials
 gives the fraction
\begin{equation}\label{eq:Laguerre1}
((-(n+2+\alpha)S_n^2 + (n +
1)S_n, S_n) \in \Ann_\Frec (  [L_n^{(\alpha)}](f), - [L_n^{(\alpha)}]
(x f') ),
\end{equation}
which is
not irreducible. 
A simpler form is given by the following.

\begin{lemma} \label{lem:sigmadxlag} 
  Let $f$ be such that both sequences $([L_n^{(\alpha)}](f))_{n \in
 \nat}$ and $([L_n^{(\alpha)}](x f'))_{n \in \nat}$ exist. 
   Then, \zcref{eq:theta_rec} also holds with
\[(\Thnum,\Thden)=(-(n+1+\alpha)S_n+n,1).\]
\end{lemma}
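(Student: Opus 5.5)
The plan is to reduce the statement to the non-irreducible fraction \eqref{eq:Laguerre1}, which \zcref{lem:sigmadx} already provides, and then to treat the single index it does not cover separately. Write $u_n=[L_n^{(\alpha)}](f)$ and $v_n=[L_n^{(\alpha)}](xf')$. The claim is that
\[
-(n+1+\alpha)u_{n+1}+nu_n=v_n,\qquad n\in\N.
\]

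\emph{Step 1 (indices $n\ge1$).} By \eqref{eq:Laguerre1},
\[
\bigl(-(n+2+\alpha)S_n^2+(n+1)S_n\bigr)\cdot u_n=S_n\cdot v_n,\qquad n\in\N.
\]
The operator on the left factors as $S_n\bigl(-(n+1+\alpha)S_n+n\bigr)$. Reading the identity at index $n$ therefore gives exactly the desired relation at index $n+1$. This settles every $n\ge1$, and only $n=0$ remains.

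\emph{Step 2 (index $n=0$).} Here the claim reads $v_0=-(1+\alpha)u_1$. I would verify it by a direct integration by parts, mimicking the proofs of \zcref{lemma:int-fprime} and \zcref{lem:sigmadx}. For $0<s\le t<\infty$,
\[
\int_s^t xf'(x)\,w(x)\,dx=\bigl[xw(x)f(x)\bigr]_s^t-\int_s^t f(x)\,(xw(x))'\,dx .
\]
By Pearson's equation \eqref{eq:Pearson}, $(xw)'=\tau w=(\alpha+1-x)w$, and $\alpha+1-x=L_1^{(\alpha)}(x)$. Hence the last integral tends to $h_1u_1$.

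\emph{Step 3 (boundary term).} Both integrals converge as $s\to0$ and $t\to\infty$, by the existence of $v_0$ and of $u_1$. So the limits $A=\lim_{s\to0}sw(s)f(s)$ and $B=\lim_{t\to\infty}tw(t)f(t)$ exist. If $A\neq0$, then $f(x)w(x)\sim A/x$ near $0$. This contradicts the existence of $u_0=\frac1{h_0}\int_0^\infty f w\,dx$. The same argument at $\infty$ gives $B=0$. This is the argument of \zcref{lemma:int-fprime} with $\sigma=x$.

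\emph{Step 4 (conclusion).} Combining the three steps, $h_0v_0=-h_1u_1$. From \zcref{table:classiques}, $h_1/h_0=\Gamma(\alpha+2)/\Gamma(\alpha+1)=\alpha+1$, so $v_0=-(\alpha+1)u_1$. This is the missing index.

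The only delicate point is Step 3: it has to rely solely on the existence of the Fourier coefficients, and not on any convergence of the expansion. Everything else is a routine factorization and a table lookup.
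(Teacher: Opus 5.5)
Your proof is correct, and it takes a different route from the paper's. The paper does no new analysis. From $xf'=(xf)'-f$ it gets that the coefficients of $(xf)'$ exist. It then applies \zcref{prop:rec_Fourier} to the term $(\partial\times x)-1$, which gives a second fraction $\bigl(-(n+2+\alpha)S_n^2+(2n+2+\alpha)S_n-n,\ S_n-1\bigr)$ in the same annihilator. Subtracting this fraction from \eqref{eq:Laguerre1} removes the left factor $S_n$ through a module operation, in the spirit of the Howell example and \zcref{prop:irred-frac}. Validity at every index, including $n=0$, is inherited from \zcref{prop:rec_Fourier}, that is, from \zcref{lem:x,lem:Dcop} applied to $f$ and $xf$. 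You instead observe that \eqref{eq:Laguerre1} is $S_n$ times the target fraction, so it already yields the relation for all $n\ge1$. You then recover the one index it loses, $v_0=-(\alpha+1)u_1$, by a single integration by parts. That step uses Pearson's equation, $\tau=L_1^{(\alpha)}$ and $h_1/h_0=\alpha+1$, with the boundary terms killed by the existence of $u_0$. Your argument is more elementary and shows exactly what information the factor $S_n$ discards. The paper's argument stays inside the fraction calculus and reuses the analytic lemmas already proved. It illustrates the paper's general mechanism of removing a common power of $S_n$ by combining two associated fractions.
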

\begin{proof} From $x f' = (x f)' - f$, it follows that $([\psi_n]((x f)'))_{n \in \nat}$ exists.
\zcref{prop:rec_Fourier} then applies to give
\[\mathfrak R^\psi_{d/dx}((\partial \times x)-1)\in\Ann_\Rec([L_n^{
(\alpha)}]
(f),  -  [L_n^{(\alpha)}]( xf' )).\]
A direct computation shows that this pair is
\[(   -(n+2+\alpha) S_n^2 +  (2n+2+\alpha)S_n - n, S_n - 1).\]
Subtracting this pair from that of \eqref{eq:Laguerre1} gives that
their difference, which is
the pair in the statement,
belongs to the module $\Ann_\Rec([L_n^{
(\alpha)}]
(f),  -  [L_n^{(\alpha)}]( xf' ))$.
\end{proof}

\subsection{Singularities at endpoints}\label{sub:singularities_at_endpoints}

\zcref[noabbrev]{lem:sigmadx} is a direct analogue of \zcref{lem:Dcop}
for the derivation $\sigma d/dx$. This allows us to obtain an analogue
of \zcref{prop:rec_Fourier} for linear differential operators using
this derivation. For
clarity, we use differential terms in a new variable~$\theta$ instead
of~$\partial$ and
the map~$\mathfrak D_{\sigma d/dx}$ maps~$\theta$ to~$\sigma\partial$.

Next, the morphism~$\mathfrak R^\psi_{\sigma d/dx}$
mapping~$\mathcal D$ to~$\Frec$ is 
defined like~$\mathfrak R^\psi_{d/dx}$ in 
\zcref{def:fourier-morphism}, except that the image
of~$\theta$
is
$ (\Theta_{\text{num}},\Theta_{\text{den}})$ from \zcref{table:thetap_rec_cop}.
This leads to an extension of \zcref{thm:horner}.

\begin{theorem}[Recurrences in
singular cases] \label{thm:main}
Let $(\psi_n)$ be a family of classical orthogonal polynomials with
$\sigma\neq1$ from \zcref{table:classiques2}.
Let $L = p_r(x)\partial^r+\dots+p_0(x)\in\K [x]\langle\partial\rangle$ be a linear differential operator with polynomial coefficients such that
\begin{equation}\zcsetup{reftype=hypothesis}
 \label{eq:hypH'1}\tag{H${}_1'$}
  \sigma^k\divides p_k   \textrm{ for }  k =0,\dots,r.
 \end{equation}
Then $L$ can be written $q_r(x)(\sigma\partial)^r+\dots+q_0(x)$ and $q_r,\dots,q_0 \in \K[x]$.
 Let $(\Pnum,\Pden)\in\Frec$ be  the
 fraction
 \[\Hortr (L) := \mathfrak R^\psi_{\sigma d/dx}\!\left((\dotsb(q_r
  (x)\theta +
  q_
  {r-1} (x)) \theta + \dotsb + q_1 (x)) \theta + q_0 (x) \right).\]
Then $(\Pnum,\Pden)$ is a
quasi-generating fraction of~$\langle A_{\text{al}}(L)\rangle$
and~$\langle
A_{\text{an}}(L)\rangle$.

If $f$ is a function such that
\begin{equation}\zcsetup{reftype=hypothesis}
 \label{eq:hypH'2}\tag{H${}_2'$}
   \text{the sequence }([\psi_n](\sigma^k f^{(k)}))_{n\in\mathbb N} 
   \text{ exists for }k=0,\dots,r,
 \end{equation}
then the sequence $([\psi_n](L\cdot f))_{n\in\N}$ exists and satisfies
\[\Pnum\cdot [\psi_n](f)=\Pden\cdot [\psi_n](L\cdot f).\]
In particular, if $f$ satisfies \zcref{eq:hypH'1,eq:hypH'2}
and
$L\cdot f=0$,
then its sequence of 
Fourier coefficients satisfies $\Pnum\cdot [\psi_n](f)=0$.
\end{theorem}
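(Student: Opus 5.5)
The plan has three parts: the rewriting of $L$, the recurrence under \zcref{eq:hypH'2}, and the quasi-generating property.

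\emph{Rewriting.} I would argue by induction on $r$. First check by induction on $k$ that
\[
(\sigma\partial)^k=\sigma^k\partial^k+\sum_{j<k}c_{k,j}(x)\partial^j,
\qquad \sigma^j\mid c_{k,j}.
\]
This holds because left multiplication by $\sigma\partial$ sends $c\,\partial^j$ to $\sigma c'\partial^j+\sigma c\,\partial^{j+1}$, and both terms keep the divisibility pattern. Under \zcref{eq:hypH'1}, set $q_r=p_r/\sigma^r$. Then $L-q_r(\sigma\partial)^r$ has order $<r$ and still satisfies \zcref{eq:hypH'1}, so the induction concludes.

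\emph{Recurrence.} I would imitate the proof of \zcref{prop:rec_Fourier}, with the case $\partial$ replaced by \zcref{lem:sigmadx} (or \zcref{lem:sigmadxlag} for Laguerre) for the case $\theta$. \zcref{prop:annihilators} handles $\oplus$ and $\otimes$ exactly as in the Taylor case. The only new point is existence: each time $\theta$ is applied in the right Horner term, to some intermediate function $g$, both $[\psi_n](g)$ and $[\psi_n](\sigma g')$ must exist. An induction on the Horner scheme shows that every intermediate $g$ is a $\K[x]$-combination of $\sigma^jf^{(j)}$ with $j$ below the current stage. Then $\sigma g'$ is a combination of $q'\sigma^{j+1}f^{(j)}$, $jq\sigma'\sigma^jf^{(j)}$ and $q\sigma^{j+1}f^{(j+1)}$. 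All of these have Fourier coefficients by \zcref{eq:hypH'2} together with \zcref{lem:x}, since multiplication by a polynomial preserves existence. Linear combination then gives existence for $L\cdot f$.

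\emph{Quasi-generation.} The term above has $\deg_\partial=r\le\deg_\partial L$ and maps to $L$ under $\mathfrak D_{\sigma d/dx}$, so it lies in $\langle A_{\text{al}}(L)\rangle$; I am reading \zcref{def:assocpair} with the image of $\theta$ given by \zcref{table:thetap_rec_cop}, as the remark after \zcref{prop:rec_Fourier} allows. By \zcref{prop:assoc-equiv}, it then suffices to prove irreducibility. Since $\Thden\in\{1,S_n\}$ and $\HorrN(q_i)$ has denominator $S_n^{\deg q_i}$ by \zcref{prop:horr-pol-irred}, every denominator in the Horner scheme is a power of $S_n$. Cofactors are then given by \zcref{lemma:tech-pairs-item-c}, just as in the proof of \zcref{prop:horr-pol-irred}. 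The $\gcld_\Sh$ of numerator and denominator is therefore a power $S_n^j$. Moreover $j\ge1$ would force the constant coefficient in $S_n$ of the numerator to vanish.

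The main obstacle is proving that this constant coefficient is nonzero, in the style of \zcref{lem:nonzeronum}. I would first write the numerator explicitly, as in \zcref{prop:horner_gauche}, as a sum over $k$ of terms built from $q_k$ and from $k$ factors of $\Thnum$, with suitable shifts. Then I would compare the degrees in $n$ of their constant coefficients. The constant coefficient of $\Thnum$ has degree $1$ in $n$ (for example $-n/2$, $-n(n+1)/(2(n+\lambda))$, $n$), while that of $\Xnum$ has degree $0$. Hence the contribution of index $k$ has degree $k$, plus a correction that depends on the shift by $\deg q_k$, and the extreme one cannot be cancelled by the others. If the shifts make this comparison delicate, I would fall back on tracking, at each Horner step, the lowest-valuation term only: $\otimes\Theta$ multiplies its constant coefficient by a nonzero rational function of degree $1$, and $\oplus$ adds a term of lower degree in $n$.
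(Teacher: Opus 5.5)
Your plan follows the paper's proof step for step: the inductive rewriting of $L$, the recurrence via the $\theta$-analogue of Proposition~\ref{prop:rec_Fourier}, and the reduction of quasi-generation to irreducibility by Proposition~\ref{prop:assoc-equiv}. It then uses that denominators are powers of $S_n$ and that the constant coefficient of the numerator is nonzero, by a degree count in $n$. Your existence argument is more explicit than the paper, which simply cites that proposition; you check that $(\sigma\partial)^jf$ and $\sigma g'$ are $\K[x]$-combinations of the $\sigma^if^{(i)}$.

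The one real gap is in the degree count, which is the whole content of the irreducibility.
\begin{itemize}
\item You assert that the constant coefficient of $\Xnum$ has degree $0$ in $n$. This is false for Laguerre, where it is $-(n+1)$. So there the constant coefficient of the numerator $Q_k$ of $\HorrN(q_k)$ has degree $\deg q_k$.
\item Your ``correction that depends on the shift by $\deg q_k$'' does not capture this, since a shift $n\mapsto n+c$ never changes degrees. In the other families there must be no correction at all. The surviving summands there have $\deg q_k=m-k$, so a correction of size $\deg q_k$ would give them all degree $m$ and the comparison would fail.
\item You must also separate the two cases, because $\Thden=1$ for Laguerre changes which summands have zero $S_n$-exponent. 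Writing the numerator out, as the paper does, the survivors are those with $\deg q_k+k=m$ when $\Thden=S_n$, with constant coefficients of degree exactly $k$. For Laguerre they are those with $\deg q_k=m$, of degree exactly $m+k$. In both cases these degrees are pairwise distinct, so the summand with largest $k$ cannot be cancelled.
\end{itemize}

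Your fallback also works, but ``$\oplus$ adds a term of lower degree'' needs an invariant you did not state. Let the current fraction be $(A,S_n^a)$ with constant coefficient $a_0\neq0$.
\begin{itemize}
\item For $\Thden=S_n$, carry along $\deg a_0\ge0$. After $\otimes\Theta$ the degree is at least $1$ and the exponent is $a+1$. The only genuine addition, when $\deg q_i=a+1$, is a term of degree $0$. When $\deg q_i>a+1$, the new constant coefficient is that of $Q_i$, of degree $0$.
\item For Laguerre you need $\deg a_0\ge a$. It holds for $\HorrN(q_r)$. The product $\otimes\Theta$ raises the degree by one and keeps $a$. In the only case where $\oplus(Q_i,S_n^{\deg q_i})$ actually adds, namely $\deg q_i=a$, the added term has degree exactly $a$. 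When $\deg q_i>a$, the new constant coefficient has degree $\deg q_i$, equal to the new exponent.
\end{itemize}
This same induction supplies the coprimality with $S_n$ that you need to invoke Lemma~\ref{lemma:tech-pairs-item-c} for the cofactors at each step.
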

\begin{proof}
The rewriting of~$L$ under \zcref{eq:hypH'1} is by induction as in
\zcref{lem:conversion}. That the pair $(\Pnum,\Pden)$ belongs
to~$\langle A_{\text{al}}(L)\rangle$
and~$\langle A_{\text{an}}(L)\rangle$ is a direct consequence of
\zcref{prop:rec_Fourier}, as is the conclusion of the \zcref
[noabbrev,noref,nocap]
{thm:main}. The only remaining statement to be proved is that $
(\Pnum,\Pden)$ is a 
quasi-generating fraction of these modules. By 
\zcref{prop:assoc-equiv}, it is sufficient to
prove
that this fraction is irreducible.

The proof distinguishes two cases depending on whether~$\Theta_
{\text{den}}=1$ or~$S_n$ (Laguerre polynomials, or the other classical
ones).
In the first case, a simple recurrence shows that  $\Hortr(L)$ 
 is $\Zn$-equivalent to 
\[
  \left (\sum_{k=0}^r  S_n^{m - \deg q_k} Q_k \Thnum^k, S_n^m \right )
\]
with $m =  \max_{k = 0, \ldots, r} \deg q_k$. In the second one, a
similar computation shows that $\Hortr(L)$ 
 is $\Zn$-equivalent to \[
  \left ( \sum_{k=0}^r  S_n^{m - \deg q_k - k} Q_k(n+k,S_n)
  \prod_{j=0}^{k-1} \Thnum(n+k - 1 - j,S_n), S_n^m \right )
\]
with $m = \max_{k = 0, \ldots, r} \deg q_k + k$. 

In both cases, in order to show irreducibility, it is sufficient to
show that the
constant coefficient of the numerator wrt~$S_n$ is
nonzero. Let~$k$ be such that the exponent of~$S_n$ in the sum is~0
(the other summands do not contribute to the constant term). In
view of \zcref{table:thetap_rec_cop},  a
direct computation shows that the degree of that summand with respect
to~$n$ is exactly $k+m$ for the Laguerre case and $k$ in the other
ones. This implies that the degree of the sum is that of the term 
with largest value of~$k$ and therefore is not~0. Hence, the constant
term of the numerator wrt~$S_n$ is nonzero and  $\Hortr(L)$ is
irreducible.
\end{proof}

Note that \zcref{eq:hypH'1,eq:hypH'2} make it
possible to
deal with some functions that are singular at $\pm1$ for Jacobi
polynomials or at $0$ for Laguerre polynomials, but whose singularity
is not ``too bad'': it is a regular singular point~\cite{Ince1956}.
\begin{example}\label{ex-5}
The function $(1-x^2)^{-1/4}$ is annihilated by~$2
(1-x^2)\partial-x\in\K[x]\langle\partial\rangle$. Recall from 
\zcref{ex:first-sing} that in the
Chebyshev basis $T_n
(x)$,
\zcref{eq:hypH} does not hold. However it is easy to see that
\zcref{eq:hypH'1,eq:hypH'2} do. Since
$\mathfrak R^{T_n}_{(1-x^2)d/dx}(\theta)$ and $\mathfrak R^{T_n}_
{(1-x^2)d/d}(x)$
have
the same denominator~$S_n$, application of the morphism reduces to
adding the numerators, giving
\[\Pnum=\left(n+\frac32\right)S_n^2-\frac{2n+1}2,\quad\Pden=S_n.\] 
The theorem then proves that the numerator provides the desired
recurrence for the Chebyshev coefficients.
\end{example}
\begin{example}\label{ex:arccos2} In the case of the function
$\arccos
x$, \zcref{eq:hypH'2} holds, but 
none of \zcref{eq:hypH,eq:hypH'1} do. However, left multiplying $L$ by
$(1-x^2)$
gives the operator $\theta^2+x\theta$, which is such that 
\zcref{eq:hypH'1,eq:hypH'2}
both hold. Then the theorem proves that the coefficients satisfy
\begin{Verbatim}
> DiffeqtoRec((1-x^2)*((1-x^2)*diff(y(x),x,x)-x*diff(y(x),x)),
    y(x),c(n),ChebyshevT(n,x));
\end{Verbatim}
{\color{blue}\begin{equation}\label{eq arccos}
(n+4)^2c_{n+4}-2(n+2)^2c_{n+2}+n^2c_n=0.
\end{equation}}
This can again be checked against the known coefficients (modulo
\zcref{sec:note_on_chebyshev_series} again):
\[c_n=\begin{cases} 
    \pi/2&\text{if $n=0$},\\
    0 &\text{if $n>0$ is even,} \\
    -\frac{4}{n^2\pi} & \text{otherwise.}
\end{cases}\]
\end{example}

\subsection{Singularity at one endpoint in Jacobi and special cases}
The reasoning above extends to the case of solutions of linear differential
equations that are singular at only one of the endpoints of the
support $(a,b)$ of the measure. The following is inspired by a result
of Lewanowicz~\cite{Lewanowicz1986}.

Exactly as before, we set $\tau_{\varepsilon} = 1 + \varepsilon x$
with $\varepsilon=\pm1$, we consider the term algebra
with symbols~$x,\theta_\varepsilon$. The morphism
$\mathfrak D_{\tau_\varepsilon d/dx}$ maps
$\theta_\varepsilon$ to $\tau_\varepsilon\partial$, while 
$\mathfrak R^\psi_{\tau_\varepsilon d/dx}$ maps
$\theta_\varepsilon$
to $(\Theta^{\psi_n,\epsilon}_{\text{num}},
\Theta^{\psi_n,\epsilon}_{\text{den}})$ from \zcref{lem:1+epsxdx}
below.
This leads to a right Horner scheme and the following analogue of 
\zcref{thm:horner,thm:main}, without the irreducibility of the
fraction.

\begin{proposition}\label{prop:main_jacobi}
 Let $(\psi_n)$ be the family of Jacobi, Gegenbauer or Chebyshev
 polynomials and let $\varepsilon\in\{-1,1\}$ and $\tau_{\varepsilon}
 = 1 + \varepsilon x$.  
 Let $L=p_0(x)+\dots+p_r(x)\partial^r\in\K
 [x]\langle\partial\rangle$ be a linear differential
 operator with polynomial coefficients such that
\begin{equation}\zcsetup{reftype=hypothesis}
\label{eq:hypH''1}
\tag{H${}_1''$} 
\tau_{\varepsilon}^k\divides p_k 
\quad \text{for}\quad  k =0,\dots,r.
\end{equation}
Then $L$ can be written $q_r(x)(\tau_\epsilon\partial)^r+\dots+q_0(x)$
with
$q_i\in\K[x]$. Let $(\Pnum,\Pden)\in\Frec$ be the fraction
\[(\Pnum,\Pden):=\mathfrak R^\psi_{\tau_\varepsilon d/dx}    
\!\left((\dotsb(q_r
  (x)\theta_\varepsilon +
  q_
  {r-1} (x)) \theta_\varepsilon + \dotsb + q_1 (x)) \theta_\varepsilon
  + q_0 (x) \right).
\]
If $f$ is a
 function such that
\begin{equation}\zcsetup{reftype=hypothesis}
\label{eq:hypH''2}
\tag{H${}_2''$} 
\text{the sequence }([\psi_n](\tau_{\varepsilon}^k  f^{(k)}))_
{n\in\nat}\text{ exists for } k =0,\dots,r,
\end{equation}
then the sequence $ ([\psi_n] (L\cdot f))_ {n\in\nat}$
exists and satisfies
\[\Pnum\cdot[\psi_n](f)=\Pden\cdot[\psi_n](L\cdot f).\]
In particular, if $f$ satisfies \zcref{eq:hypH''1,eq:hypH''2} and $L\cdot f=0$,
then its sequence of Fourier coefficients satisfies $\Pnum\cdot
[\psi_n](f)=0$.
\end{proposition}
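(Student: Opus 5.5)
The plan mirrors the proof of \zcref{thm:main}, with the derivation $\tau_\varepsilon\,d/dx$ in place of $\sigma\,d/dx$. It has three parts: the rewriting of $L$, a one-step relation for $\theta_\varepsilon$, and the induction of \zcref{prop:rec_Fourier}.

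\emph{Rewriting of $L$.} This is the induction of \zcref{lem:conversion}, run from the top order downward. Write $p_r=\tau_\varepsilon^r q_r$ and subtract $q_r(\tau_\varepsilon\partial)^r$. Expanding $(\tau_\varepsilon\partial)^k$ gives $\tau_\varepsilon^k\partial^k$ plus terms $c_{k,j}\tau_\varepsilon^{j}\partial^j$ with $j<k$ and constants $c_{k,j}$. This holds because $\tau_\varepsilon'=\varepsilon$ is constant, so commuting $\partial$ past a power $\tau_\varepsilon^j$ only lowers that power by one while removing one $\partial$. Hence the difference is an operator of order $<r$ that still satisfies \zcref{eq:hypH''1}, and we conclude by induction.

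\emph{One-step relation for $\theta_\varepsilon$.} I would prove the analogue of \zcref{lem:sigmadx}. Assume $[\psi_n](f)$ and $[\psi_n](\tau_\varepsilon f')$ exist. We want $(\Theta^{\psi,\varepsilon}_{\text{num}},\Theta^{\psi,\varepsilon}_{\text{den}})\in\Ann_\Frec([\psi_n](f),-[\psi_n](\tau_\varepsilon f'))$; this is presumably the content of \zcref{lem:1+epsxdx}, which I take as given, but I would check its proof.

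Write $\sigma=1-x^2=\tau_\varepsilon\tau_{-\varepsilon}$ and integrate $\langle\tau_\varepsilon f'|\psi_{n}\rangle_w$ by parts. This produces $\langle f|(\tau_\varepsilon\psi_n w)'/w\rangle$. By the Pearson equation~\eqref{eq:Pearson}, $(\tau_\varepsilon w)'/w=\tau/\tau_{-\varepsilon}+\varepsilon$. For Jacobi weights this is a polynomial only after a suitable shift of parameters, so the argument should be organised around $(1\pm x)$ times a Jacobi weight. Concretely, $w\tau_\varepsilon$ is again a Jacobi weight with one parameter raised by one. In that basis $\tau_\varepsilon\psi_n'$ and $\tau_\varepsilon\psi_n$ are finite combinations of $\psi_{n-1},\psi_n,\psi_{n+1}$, so the relation has a finite recurrence form.

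The main obstacle is the boundary term $[\tau_\varepsilon w\,\psi_n f]_a^b$. It vanishes at the endpoint where $\tau_\varepsilon w\to0$ to sufficient order. At the other endpoint we must argue as in \zcref{lemma:int-fprime}: existence of the limits follows from the existence of the two coefficient sequences, and a nonzero limit $A$ would give $f\psi_n w\sim A/\tau_\varepsilon$, contradicting integrability. Since $\tau_\varepsilon$ only vanishes at one endpoint, I would treat the endpoints separately. At the regular end I would use that $\tau_\varepsilon$ is bounded away from $0$, so $f'$ itself is locally controlled.

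\emph{Assembling.} Given this fraction, the remainder copies \zcref{prop:rec_Fourier} with $\delta=\tau_\varepsilon d/dx$, as in the Remark following it. Proceed by induction on the right Horner term, using \zcref{prop:annihilators} for $\oplus$ and $\otimes$ and \zcref{lem:x} for multiplication by $x$. The key bookkeeping step is that at stage $k$ the function being acted on by $\theta_\varepsilon$ is $\sum_{j<k}c_j(x)\tau_\varepsilon^jf^{(j)}$ with polynomial $c_j$. Its Fourier coefficients exist because multiplication by $x$ preserves existence and \zcref{eq:hypH''2} holds. Then $\tau_\varepsilon$ times its derivative is a polynomial combination of the $\tau_\varepsilon^{j}f^{(j)}$ for $j\le k$, whose coefficients also exist by \zcref{eq:hypH''2}. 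Linearity then gives existence of $[\psi_n](L\cdot f)$ and the identity $\Pnum\cdot[\psi_n](f)=\Pden\cdot[\psi_n](L\cdot f)$. Specialising to $L\cdot f=0$ yields the final claim.
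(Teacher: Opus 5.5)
\textbf{There is a genuine gap: the one-step relation for $\theta_\varepsilon$ is assumed, and your sketch of it does not work.} Your rewriting of $L$ is correct, and so is your assembly step: the induction of \zcref{prop:rec_Fourier} for the derivation $\tau_\varepsilon\,d/dx$, with (H${}_2''$) ensuring that every intermediate coefficient sequence exists. Both coincide with the paper. But the one-step relation is \zcref{lem:1+epsxdx}, which the paper names as the \emph{only} part of the proposition not already covered by \zcref{prop:rec_Fourier} and the remark after it. Taking it as given therefore leaves out the substance of the proof.

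Your sketch breaks down because $\tau_\varepsilon w$ does not vanish at the endpoint $x=\varepsilon$.
\begin{itemize}
\item Integrating $\langle\tau_\varepsilon f'|\psi_n\rangle_w$ by parts leaves the boundary term $\tau_\varepsilon w\psi_n f$ at $x=\varepsilon$. This does not vanish in general. For Chebyshev and $f=e^x$ it is even unbounded. So the ``regular'' end, where you planned to rely on local control of $f'$, is exactly where the argument fails.
\item The remaining integral involves $(\tau_\varepsilon w)'/w=(\tau+\varepsilon\tau_\varepsilon)/\tau_{-\varepsilon}$, not $\tau/\tau_{-\varepsilon}+\varepsilon$. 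This has a pole at $x=\varepsilon$ unless the Jacobi parameter attached to that endpoint is $0$. Hence the result is not a finite combination of the $[\psi_k](f)$, and for Chebyshev it diverges when $f(\varepsilon)\neq0$.
\item Your claim that $\tau_\varepsilon\psi_n'$ is a combination of $\psi_{n-1},\psi_n,\psi_{n+1}$ is false. For instance $(1+x)T_3'=3T_3+6T_2+6T_1+3T_0$; only $\sigma\psi_n'$ has such a short expansion, by \eqref{eq:structure}.
\item Switching to the basis orthogonal for $\tau_\varepsilon w$ does not help. It gives no relation between the coefficient sequences in the original basis, and it cannot remove the boundary problem.
\end{itemize}

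\textbf{Missing idea.} Test $\tau_\varepsilon f'$ only against polynomials carrying the complementary factor $\tau_{-\varepsilon}$. Then the integration by parts is done with the weight $\sigma w=\tau_\varepsilon\tau_{-\varepsilon}w$, which vanishes at both endpoints and satisfies Pearson's equation. For any polynomial $p$ one has $\langle\tau_\varepsilon f'|\tau_{-\varepsilon}p\rangle_w=\langle\sigma f'|p\rangle_w$. The boundary term $[\sigma w\,p f]_a^b$ is then killed exactly as in \zcref{lemma:int-fprime}.

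The paper takes $p=\psi_k'$. In that case $\langle\sigma f'|\psi_k'\rangle_w=\lambda_k\langle f|\psi_k\rangle_w$. This follows from the proof of \zcref{lemma:int-fprime}, which only needs $\int\sigma f'\psi_k'w$ to converge. That holds here because $\sigma\psi_k'=\tau_\varepsilon\cdot\tau_{-\varepsilon}\psi_k'$ with $\tau_{-\varepsilon}\psi_k'$ a polynomial.

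The paper then uses Rainville's identity \eqref{eq:rainville}. It says that the two-term combination $\Jden\cdot\psi_n$ of $\psi_n,\psi_{n+1}$ equals a two-term combination of $\tau_{-\varepsilon}\psi_n'$ and $\tau_{-\varepsilon}\psi_{n+1}'$. Pairing that identity with $\tau_\varepsilon f'$ gives the first-order fraction of \zcref{lem:1+epsxdx} relating $[\psi_n](f)$ and $[\psi_n](\tau_\varepsilon f')$. With this lemma established, your assembly step completes the proof.
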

At this stage, the only part of this result that is still unproved is
the following.

\begin{lemma}\label{lem:1+epsxdx}
With the same notation, if $f$ is such that both
sequences $([\psi_n](f))_{n \in
\nat}$ and $([\psi_n]((1+\varepsilon x) f'))_{n \in \nat}$ exist, then
\begin{equation}\label{eq:Jac1pe}
(P_{\text{num},\varepsilon},P_{\text{den},\varepsilon})
\in 
\Ann_\Frec([\psi_n(f)],-[\psi_n]((1+\varepsilon x)f')),
\end{equation}
where this fraction is: $(\Jnum,\Jden)$ in the Jacobi case, 
$(G_{\text{num},\varepsilon},G_{\text{den},\varepsilon})$ in the
Gegenbauer case and
$(T_{\text{num},\varepsilon},T_{\text{den},\varepsilon})$ in the
Chebyshev case, 
with
\begin{align*}
    \Jnum & =    \left (   (\alpha+\beta+n+1) S_n 
 + \left (\frac{1 + \varepsilon}{2} \alpha  -  \frac{1-\varepsilon}{2} \beta  + \varepsilon (n+1) \right ) \right ) \lambda_nh_n, \\
    \Jden & =  (\alpha+\beta+n+1) \left (    \varepsilon (n+1)
      S_n -  \left (\frac{1 + \varepsilon}{2} \alpha  +  
      \frac{1-\varepsilon}{2} \beta  + n + 1 \right )   \right )h_n,\\
 G_{\text{num},\varepsilon}&=
    (2 \lambda + n + 1)(\lambda + n)S_n + \varepsilon n (\lambda + n +
    1),\\
G_{\text{den},\varepsilon}&=  \varepsilon 
(\lambda + n)S_n +(\lambda + n + 2)(\lambda + n + 1),\\
T_{\text{num},\varepsilon}&=
\varepsilon
(n+1)S_n+ n,\\
T_{\text{den},\varepsilon}&=
- \varepsilon S_n + 1
\end{align*}
and $\lambda_n,h_n$ from \zcref{table:classiques,table:classiques2}. 
\end{lemma}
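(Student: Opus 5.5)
The plan follows the proof of \zcref{lem:sigmadx}. Replace $\sigma$ by $\tau_\varepsilon$, and replace the boundary term at both endpoints by a boundary term at a single endpoint.

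\textbf{Step 1: integrate by parts against $\tau_\varepsilon$.} For each $n$, compute
\[\langle \tau_\varepsilon f'|\psi_{m}\rangle_w=\int_{-1}^1 (1+\varepsilon x)w\,\psi_m\,f'\,dx.\]
Integrate by parts on $[u,v]$:
\[\int_u^v (1+\varepsilon x)w\psi_m f'\,dx=\bigl[(1+\varepsilon x)w\psi_m f\bigr]_u^v-\int_u^v ((1+\varepsilon x)w\psi_m)'f\,dx.\]
For the Jacobi weight, $((1+\varepsilon x)w)'/w$ is a rational function with a simple pole only at the endpoint $x=\varepsilon$. Take for instance $\varepsilon=1$, so $w=(1-x)^\alpha(1+x)^\beta$. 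Then
\[((1+x)w)' = w\,\bigl((\beta+1)(1-x)-\alpha(1+x)\bigr)/(1-x).\]
The identity therefore has to be rearranged. Write
\[((1+\varepsilon x)w\psi_m)'=w\,\frac{\rho(x)\psi_m+(1-x^2)\psi_m'}{1-\varepsilon x}\]
for an explicit polynomial $\rho$ of degree $1$. Apply this with $\psi_m$ replaced by a combination $\phi_n=\sum_k c_k(n)\psi_{n+k}$ (with $k\in\{0,1\}$, as suggested by the shape of $\Jden$) chosen so that $1-\varepsilon x$ divides $\rho\phi_n+\sigma\phi_n'$. Such combinations should come from the classical relations
\[(1-\varepsilon x)\bigl(P_n^{(\alpha,\beta)}\bigr)'\in\mathrm{span}\bigl(P_{n},P_{n+1}\bigr)\]
up to parameter shifts. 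Concretely, I would use the contiguity relations expressing $(1\mp x)P_n^{(\alpha+1,\beta)}$ and $\frac{d}{dx}P_n^{(\alpha,\beta)}$ in terms of $P^{(\alpha,\beta)}_{n},P^{(\alpha,\beta)}_{n+1}$.

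\textbf{Step 2: obtain the two-term relation.} After the division, the right-hand side becomes a combination $\sum_k e_k(n)\psi_{n+k}$ integrated against $f w$, again with at most two terms. This gives an identity
\[\Jden\cdot[\psi_n]((1+\varepsilon x)f')=\Jnum\cdot[\psi_n](f),\]
where the normalizations $h_n$, $\lambda_n$ appear from converting inner products into coefficients, exactly as in \zcref{lem:x,lem:Dcop}. The Gegenbauer and Chebyshev cases then follow by specialising $\alpha=\beta=\lambda-1/2$ and $\alpha=\beta=-1/2$, clearing $\K[n]_\Zn$ factors and rescaling $C_n^{(\lambda)}$ and $T_n$ against $P_n^{(\alpha,\beta)}$. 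I would nonetheless double-check the Chebyshev case directly with $T_n=\cos n\theta$ and the relation $(1-x^2)T_n'=\frac n2(T_{n-1}-T_{n+1})$.

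\textbf{Step 3: the boundary term (main obstacle).} The boundary term $[(1+\varepsilon x)w\,\phi_n f]_{-1}^{1}$ must vanish. At $x=-\varepsilon$ the factor $1+\varepsilon x$ vanishes, and the argument is that of \zcref{lemma:int-fprime}. The existence of both integrals gives a finite limit $A$. If $A\neq 0$, then $f\phi_n w\sim A/(1+\varepsilon x)$, which is not integrable, contradicting the existence of $[\psi_n](f)$. At $x=\varepsilon$, $w$ does not vanish a priori, so the same limit argument must be used. It requires the existence of the limit, which follows from the existence of $\int (1+\varepsilon x)w\phi_n f'$ and of $\int(\dots)f$. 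Nonintegrability of $1/(1-\varepsilon x)$ then forces the limit to be $0$, since $(1+\varepsilon x)$ is bounded away from zero near $x=\varepsilon$.

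I expect the real difficulty to be the algebraic bookkeeping in Step 1. Finding the right combination $\phi_n$ so that the division by $1-\varepsilon x$ is exact, and then verifying that the resulting coefficients match the stated $\Jnum,\Jden$ (including the degenerate case $\alpha+\beta+1=0$), is where the work lies. I would first verify it by a symbolic check on the first few $n$, to fix the normalizations.
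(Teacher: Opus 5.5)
\textbf{Gap at the endpoint $x=\varepsilon$.} Step~3 fails at $x=\varepsilon$, because the criterion you use in Step~1 to choose $\phi_n$ is not the right one. Near $x=\varepsilon$ the factor $1+\varepsilon x$ tends to $2$. So a nonzero limit $A$ of $(1+\varepsilon x)w\phi_n f$ only gives $w\phi_n f\to A/2$, which is integrable, and there is no contradiction unless $\phi_n$ itself carries the factor $1-\varepsilon x$.

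The condition you must impose is therefore $\phi_n(\varepsilon)=0$, not divisibility of $\rho\phi_n+\sigma\phi_n'$ by $1-\varepsilon x$. The two coincide only when $\rho(\varepsilon)\neq0$, i.e.\ when the exponent of $w$ at $\varepsilon$ is nonzero. For $\alpha=0$, $\varepsilon=1$ (Legendre, for instance) divisibility is automatic and $\phi_n=\psi_n$ passes your test. The boundary term $2w(1)\psi_n(1)f(1)$ then survives, and no homogeneous relation results.

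The repair is as follows. Since $\psi_k(\pm1)\neq0$, the condition $\phi_n(\varepsilon)=0$ fixes $\phi_n=(1-\varepsilon x)\chi_n$ up to a scalar, and then $(1+\varepsilon x)w\phi_n=\sigma w\chi_n$. The boundary expression is $\sigma w\chi_n f$ at \emph{both} ends, and the argument of \zcref{lemma:int-fprime} applies verbatim: a nonzero limit would force $w\chi_n f\sim A/\sigma$, contradicting the existence of $\int w\chi_n f$.

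Your Step~2 claim that only two terms survive also needs an argument. The function $(\sigma w\chi_n)'=w(\sigma\chi_n'+\tau\chi_n)$ has degree $n+1$ and is orthogonal to every polynomial $q$ of degree $<n$, since $\int(\sigma w\chi_n)'q=-\int w\phi_n(1+\varepsilon x)q'=0$.

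\textbf{Relation to the paper's proof.} Once repaired, your argument is the paper's proof unpacked. Rainville's identity \eqref{eq:rainville} says exactly that a suitable combination of $P_n,P_{n+1}$ (necessarily your $\phi_n$, up to a scalar) equals $1-\varepsilon x$ times a combination of $P_n',P_{n+1}'$. The paper then uses $(1+\varepsilon x)(1-\varepsilon x)=\sigma$ and applies \zcref{lemma:int-fprime} to each $\langle\sigma f'|P_k'\rangle_w$, obtaining $\lambda_k\langle P_k|f\rangle_w$. This gives the two-term structure and the explicit coefficients at once, with no symbolic search.

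Your direct integration by parts has one small merit: it visibly uses only the stated hypotheses. \zcref{lemma:int-fprime} is formulated assuming that $[\psi_n](f')$ exists; its proof adapts, because $\sigma f'\psi_k'=(1+\varepsilon x)f'\cdot(1-\varepsilon x)\psi_k'$.

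\textbf{The displayed constants.} Do not expect your symbolic check to confirm them.
\begin{itemize}
\item Jacobi: for Legendre, $\varepsilon=1$, $f=x$, $n=0$, one gets $\Jnum\cdot[\psi_n](f)=4/3$ but $\Jden\cdot[\psi_n]((1+x)f')=-4/3$. The last equality in the paper's chain drops a sign, since \zcref{lemma:int-fprime} gives $\langle\sigma f'|P_k'\rangle_w=+\lambda_k\langle P_k|f\rangle_w$. So the Jacobi fraction in \eqref{eq:Jac1pe} should be $(\Jnum,-\Jden)$.
\item Gegenbauer: the test $f=x$, $n=1$ shows the denominator should be $-\varepsilon(\lambda+n)S_n+(\lambda+n+1)$.
\item Chebyshev: the test $f=x^2$, $\varepsilon=-1$, $n=1$ shows the denominator should be $-S_n+\varepsilon$.
\end{itemize}
Finally, in the Chebyshev specialisation the factor $\alpha+\beta+n+1=n$ is not in $\Zn$, so it cannot be cleared and $n=0$ must be checked separately. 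Your planned direct verification would cover this.
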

\begin{proof} 
We detail the Jacobi case; the other ones are similar. With the
notation of the lemma, the following identity \cite [p. 262,
Eqs. (3),(6)]{Rainville1971} is easily
verified:
\begin{equation}\label{eq:rainville}
\Jnum\cdot \frac{1-\varepsilon x}{\lambda_n}{P_n^{(\alpha,\beta)}}'= - \Jden\cdot P_n^{(\alpha,\beta)}.
\end{equation}
It implies  
\begin{align*}
\Jden\cdot [P_n^{(\alpha,\beta)}]\! \left ( (1+\varepsilon x) f'
\right) 
&=\Jden\cdot \left\langle (1+\varepsilon x) f' \big | P_n^{(\alpha,\beta)}\right\rangle_{\!w}\\
&=\left\langle (1+\varepsilon x)  f' \big |\Jden\cdot
 P_n^{(\alpha,\beta)}\right\rangle_{\!w}\\
&= - \left\langle (1+\varepsilon x)  f' \big |\Jnum\cdot 
\frac{1-\varepsilon x}
{\lambda_n}{P_n^{(\alpha,\beta)}}'\right\rangle_{\!w}\\
&= -\Jnum\cdot\frac{1}{\lambda_n} \left\langle\sigma f' \big |{P_n^{(\alpha,\beta)}}'
\right\rangle_{\! w}\\
  &=\Jnum\cdot\left \langle P_n^{(\alpha,\beta)} \big | f\right\rangle_w
    = \Jnum\cdot [P_n^{(\alpha,\beta)}]f,
\end{align*}
where the last step follows from \zcref{lemma:int-fprime}.
\end{proof}

\section{Previous algorithms}\label{sec:previous-algo}

We now discuss relations between this work and alternative
algorithms by 
Area, Godoy, Ronveaux and Zarzo~%
\cite{RonveauxZarzoGodoy1995,GodoyRonveauxZarzoArea1997,ZarzoAreaGodoyRonveaux1997,AreaGodoyRonveauxZarzo1998,GodoyRonveauxZarzoArea1998}, Rebillard and Zakraj\v{s}ek~\cite{Rebillard1997,Rebillard1998,RebillardZakrajsek2007} and Lewanowicz~\cite{Lewanowicz2002}.

\subsection{Navima algorithm}

The Navima approach was used in the context of connection problems,
linearization problems for various types of orthogonal polynomial
families~%
\cite{RonveauxZarzoGodoy1995,GodoyRonveauxZarzoArea1998,GodoyRonveauxZarzoArea1997,ZarzoAreaGodoyRonveaux1997,AreaGodoyRonveauxZarzo1998,RonveauxHounkonnouBelmehdi1995,HounkonnouBelmehdiRonveaux2000}. The relevant situation in our context is the case of expansions in a family of classical orthogonal polynomials $(\psi_n)_{n\in\nat}$. These works target coefficients of \emph{polynomials} $P_k$, so that the existence of coefficients required by our \zcref{eq:hypH,eq:hypH'2,eq:hypH''2} automatically hold. The first step of the method is to construct a linear differential operator~$L_k$ that cancels~$P_k$ (this can be obtained using the classical closure properties of linear differential operators, starting from a small database for the classical families~\cite{Stanley1999,SalvyZimmermann1994}). Next, the algorithm uses the structure relation~\zcref{eq:structure} and the three-term recurrence~\zcref{eq:3term-rec} repeatedly until a recurrence is found for the coefficients.

In the setting presented here, the algorithm~%
\cite{RonveauxZarzoGodoy1995} is very closely related to the
discussion of \zcref{sub:singularities_at_endpoints}: it rewrites the
differential operator~$L_k$, after multiplying it by a power
of~$\sigma$ if necessary, as a polynomial in $
(x,\sigma\partial)$.
Next, one can apply \zcref{thm:main},
or
directly the explicit formulas in its proof. In practice, the powers
of~$\sigma$ that are needed in this method often lead to recurrences
of order larger than necessary~\cite{GodoyRonveauxZarzoArea1997}.

\subsection{Rebillard's algorithms}\label{sec:rebillard}
If $L$ is a linear differential operator and $f,g$ are functions
related by~$L\cdot f=g$, Rebillard's method uses this relation to
deduce
one of the form
\[
  \Pnum\cdot[\psi_n](f)=\Pden\cdot[\chi_n](g),
\]
for a well-chosen family~$(\chi_n)$~\cite{Rebillard1997,Rebillard1998,RebillardZakrajsek2007}. 

In its simplest variant~\cite{Rebillard1997,Rebillard1998}, the family~$(\chi_n)$ is
formed with the $r$th derivatives~$(\psi_{n}^{(r)})$ where~$r$ is
the order of~$L$. For a classical orthogonal family~$(\psi_n)$, the family
$(\psi_{n+r}^{(r)})$ is again an orthogonal family~\cite[p.~150]{Chihara1978}, which is used
to justify the method under conditions dictated by 
analysis~\cite[Prop.~10]{Rebillard1997}.

Algebraically, let $\smallint$ denote the anti-differentiation
operator, with commutation
$\partial \int = \textrm{Id}$ (but
$\int\partial\neq\textrm{Id}$ as can be seen by application to
constants). As starting points, the same fractions
$D=(S_n,\Dden)$ for differentiation and $X^\psi=(\Xnum,S_n)$ for
multiplication
as in \zcref{lem:Dcop,lem:x} are used. Integration by parts and
induction
give
\[
  {\smallint}^k x \partial^k = 
  x - k \smallint,\qquad  k \in \nat.
\]
Writing~$I_\psi$ for the fraction~$(D^\psi)^{-1}=(\Dden,S_n)$ that
corresponds to integration and~$X_r=X\oplus
(-rI_\psi)$,
the operator ${\smallint}^rL$ is then mapped to a fraction for the
coefficients as follows:
\[
  \sum_{i = 0}^r {\smallint}^r p_i (x) \partial^r {\smallint}^{r-i}\mapsto
   \sum_{i=0}^{r}{p_i(X_r)\Ipsi^{r-i}}.
\]
An algorithm then follows by Horner evaluation.
It is analogous to the process
described by Olver and Townsend~\cite{OlverTownsend2013}.

Another way of obtaining this fraction is to multiply the result of
the left Horner evaluation of~$L$ from \zcref{eq:paire_horner_gauche}
by~$D^{-r}$ on the left. 
Since $D^r=(S_n^r,\Ddenr)$, we have $(D^\psi)^{-r}=(\Ddenr,S_n^{r})$
and the denominator in \zcref{eq:paire_horner_gauche} simplifies in
the
product. It follows that when $\gcd(p_r,\sigma) = 1$, Rebillard's
method produces the same irreducible fraction as ours.
Otherwise, the result of these algorithms may have different orders
\begin{example} The Chebyshev expansion of the function $(1-x^2)^
{-1/4}$ has been dealt with in
\zcref{ex:first-sing,ex-5}, where the second order recurrence~%
\eqref{eq:coeffs-ex-5} was computed. The numerator returned by the
other algorithms has order~4:
\[(2n+1)c_n-4(n+2)c_{n+2}+(2n+7)c_{n+4}=0.\]
This operator $A=(2n+7)S_n^4-4(n+2)S_n^2+(2n+1)$ factors as $
(S_n^2-1)\Pnum$ with~$\Pnum$ as in Example~\ref{ex-5}.
\end{example}
In his work with Zakrajšek, other bases of expansions are explored and
a method to find a ``minimal expansion basis'' is given. In some
cases, it returns recurrences of smaller order~\cite[Example~5.1]
{RebillardZakrajsek2007}. This method is purely formal and therefore
applicable without care only for the expansions of polynomials.
\begin{example}\cite[Ex.~5.1]{RebillardZakrajsek2007}\label{ex:RZ}
For the linear differential equation
\[(x+1)^2y''(x)-(x+1)y'(x)+(x+7/4)y(x)=0,\]
the recurrence for the Chebyshev coefficients returned by our method
and others before has order~4.
With their method, Rebillard and Zakrajšek find a right divisor of it
of order only~3:
\begin{multline}\label{eq:rec-RZ}
2(2n + 1)c_{n+3} + (2n + 1)(4n^2 + 24n + 33)c_{n+2} \\
+ (2n + 5)
(4n^2 - 3)c_{n+1} + 2(2n + 5)c_n=0.
\end{multline}
However, the general solution of the differential
equation is 
\[c_1(x+1)J_1(2\sqrt{x+1})+c_2(x+1)Y_1(2\sqrt{x+1}),    
\]
where~$J_1$ and~$Y_1$ are Bessel functions of the first and second
kinds. If~$c_1=0$, this function behaves like~$c(x+1)^{1/2}$ at~$x=-1$
for a constant~$c$,
while it is analytic at~$x=1$. This implies that its Chebyshev
coefficients behave asymptotically like~$C(-1)^n/n^2$ for some
constant~$C$~\cite{Elliott1964},
but this is not compatible with \zcref{eq:rec-RZ}:
when~$c_n=(-1)^nn^\alpha$ with $\alpha\neq-4$, the
left-hand side of \eqref{eq:rec-RZ} is asymptotic to~$
8(4+\alpha)(-1)^nn^{-\alpha-2}$ and thus only~$\alpha=-4$
can correspond to a solution. Thus, the recurrence \eqref{eq:rec-RZ}
only holds for the Chebyshev coefficients of some of the
solutions of the differential
equation.
\end{example}

\subsection{Another algorithm by Lewanowicz}
\label{sec:lewa2}

Generalizing some of his previous works~%
\cite{Lewanowicz1991,Lewanowicz1995,Lewanowicz1996}, Lewanowicz~%
\cite{Lewanowicz2002} gave an algorithm that takes advantage of
factors of $\sigma$ occurring in the leading coefficient of a
differential equation. For instance, if a linear differential
operator is of the form~$L=\sigma(x)v(x)\partial^n+Q$ with $Q$ of
order less than~$n$, then it can be rewritten~$L=\partial^{n-2}
(\sigma(x)\partial^2+\tau(x)\partial)v(x)+\tilde Q$ with $\tilde Q$ of
order less than~$n$. The motivation is that the fraction associated
 to~$M:=(\sigma\partial^2+\tau\partial)$ is simply~$(-\lambda_n,1)$. Higher
powers of $\sigma$ can be accommodated by using higher powers of~$M$,
up to~$\lfloor n/2\rfloor$. If~$n=1$, one can still use
$U=\sigma\partial+\tau$, whose associated pair is~$
(\lambda_k\Dnum,\Dden)$.

In the case of Jacobi polynomials~$P_n^{(\alpha,\beta)}$, it is
also possible to exploit leading
coefficients of the form~$\sigma(x)^r(x-\epsilon)^sw(x)$ with
$\epsilon\in\{-1,1\}$, $w(-1)w(1)\neq0$ and $s>0$. For this, one 
makes use of the operators~$Z_\epsilon=(x-\epsilon)\partial$ and
$K_\epsilon=(Z_\epsilon+\nu_\epsilon)\partial$, with $\nu_\epsilon$
related to~$\alpha,\beta$ and whose pair has numerator~$S_n$ and a
denominator of order~2 with a right factor of order~1 that is a right
divisor of $\Dden$.

These considerations lead to completely explicit formulas for the
pairs associated to the product~$\partial^aK_\epsilon^bZ_\epsilon^cM^dU^e$ and an algorithm that finds which of these monomials to use as a function of the
leading coefficient and order and proceeds inductively by reduction of
orders. The whole method is Euclid-free, but there is no proof that
the resulting fraction is irreducible.

\subsection{Creative telescoping}\label{sub:creative_telexcoping}
The method of creative telescoping is a family of algorithms that
compute linear recurrences or differential equations satisfied by
definite sums or integrals. Initiated by 
Zeilberger~\cite{Zeilberger1991a}, this method was improved and
extended in several directions. A general purpose implementation
based on Koutschan's algorithm~\cite{Koutschan2010} is available in
Mathematica in his package 
\textsf{HolonomicFunctions}\footnote{\url{https://risc.jku.at/sw/holonomicfunctions/}} and
another one based on a
recent reduction-based
algorithm~\cite{BostanChyzakLairezSalvy2018}, called 
\textsf{CreativeTelescoping}\footnote{
\url{https://github.com/HBrochet/CreativeTelescoping}
}, can be used in Maple.

In order to deal with a definite integral like the one defining
the generalized Fourier coefficients~$a_n$ in \zcref{eq:integrals},
this methods starts from linear differential and recurrence operators
annihilating the integrand~$f\psi_nw/h_n$. These operators generate a
left ideal~$\mathcal I$ in a suitable ring of operators. Next, for
increasing
order~$p$, the
method looks for a shift operator~$T(n,S_n)$ of order~$p$ ($T$ is
known as a \emph{telescoper}) such that there exists another
operator~$C$ in all relevant variables ($C$ is
called the \emph{certificate}) and~$T-\partial C$ belongs to
the ideal~$\mathcal I$. This implies that $T\cdot f$ is a derivative.
Under favorable conditions, the integral of $T\cdot f$ equals $T\cdot
\int_a^b f$ and the integral of the derivative $\partial C\cdot f$
is~0. Then, the shift operator~$T$ cancels the sequence~$a_n$, i.e.,
it is a recurrence operator for the generalized Fourier coefficients. 

In many of the examples presented in this article, this
general-purpose method returns the same
recurrence as the one our method produces. However, the computation is
often very much faster with \zcref{algo:main} that takes
advantage of the specific structure of the integrand. For
instance, for
the connection coefficients of the Jacobi polynomials $P_n^{(a,b)}(x)$
in the basis $(P_m^{(c,d)}(x))$, a recurrence of order~2 is obtained
in less than 1~sec. by our implementation of \zcref{algo:main}.
It is found in 33~sec. by Koutschan's heuristic (using the function
\textsf{FindCreativeTelescoping} of his package 
\textsf{HolonomicFunctions}), in 6~min. by reduction-based
creative telescoping in 
\textsf{CreativeTelescoping} and in more than 2~hours using the
non-heuristic implementation of creative telescoping in 
\textsf{HolonomicFunctions}.

Another issue is that a direct use of the method of creative
telescoping without checking that the integral of the certificate
vanishes is bound to produce wrong results. For instance, for the
expansion of the product $C_i^{(1)}(x)C_j^{(1)}(x)$ of Gegenbauer
polynomials in the basis~$(C_n^{(1)}(x))$, our method produces a
linear recurrence of order~4, while creative telescoping produces~$u_
{n+2}-u_n=0$, which does not hold at~$n=i+j$ and~$n=|i-j|-2$. On
another connection problem, from the Laguerre polynomial $xL_k^{-1/2}
(x^2)$ to the Hermite polynomials~$(H_n(x))$, creative telescoping
returns~$u_n=0$, which is only correct for $n>2k+1$, while our method
produces a linear recurrence of order~4 that is valid for all~$n\ge0$.

Still, there are also a few examples where creative telescoping
produces a
recurrence that is valid for all~$n\ge0$ and of order smaller than the
one produced by \zcref{algo:main}. This happens for instance for
the expansion of $x\exp(x)$ in the basis of Chebyshev polynomials~$
(T_n(x))$, where the method presented here produces the recurrence
\[u_{n+4}+2(n+4)u_{n+3}+2nu_{n+1}-u_n=0,\]
while creative telescoping finds a recurrence of order only~2 in that
case:
\[(n^2+n+1)u_{n+2}+2(n+1)^3u_{n+1}-(n^2+3n+3)u_n=0.\]
For a discussion of why finding a minimal fraction does not
necessarily
produces a minimal recurrence, see \zcref{sec:conclusion}.

\section{Examples}\label{sec:examples}

We report on the results of a Maple implementation of our algorithm on
a selection of examples\footnote{A session with more examples from
which
those presented here are extracted can be found together with the
software at\\
\url{https://perso.ens-lyon.fr/bruno.salvy/software/the-GeneralizedFourierSeries-package/}}. Given a function as
input, that package first computes a differential equation it
satisfies using the package \texttt{gfun}~\cite{SalvyZimmermann1994}
that constructs the equation from a database using closure properties.
From there, the algorithm presented here deduces a recurrence. In some
cases, it is more convenient to enter a differential equation as input
directly and this is also possible. The functions of the package take
as input a name~$u$ for the sequence of coefficients $u_n=[\psi_n]f$.

\smallskip
\emph{
Except when stated explicitly, all the examples presented in this
section take less than
1~sec. with our implementation.}\footnote{The timings were obtained with Maple2025.2 on a
2018~Mac mini.}

\subsection{Relations between polynomials}
This is the situation that has been considered most frequently by
existing software. 

When expanding polynomials in a basis of classical orthogonal
polynomials, the convergence conditions of \zcref{thm:main} are
trivially
satisfied so that the output of our program is satisfied by the
coefficients. 

\subsubsection{Connection coefficients}\label{ssub:connection_coefficients}
For a summary of previous approaches to recurrence relations for the
coefficients of
expansions of a family of polynomials in another one, we refer
to~\cite{RonveauxZarzoAreaGodoy2002} where the authors also summarize
their Navima algorithm. We list here a few
representative examples obtained by our algorithm.

\medskip
\paragraph{\em A trivial recurrence}
Expanding the $k$th Hermite polynomial $H_k(x)$ in the basis $(H_n
(x))$
results in a recurrence of order~0:
\begin{Verbatim}
> FunctiontoRec(HermiteH(k,x),HermiteH(n,x),u)=0;
\end{Verbatim}
\[{\color{blue}
(n-k)u_n=0}
\]
The meaning of such a recurrence is that for all $n\neq k$, $u_n=0$,
as expected, while the coefficient $u_k$ is not constrained by the
recurrence (it would be an initial condition of it).

The same recurrence is obtained for the decomposition of the Hermite
polynomial~$H_{2n}(\sqrt x)$ in the Laguerre basis $(L_k^{(-1/2)}
(x))$, recovering the known relation~\cite[18.7.E17]{NIST:DLMF}
between these polynomials
(in that case the initial condition is recovered from their leading
coefficients). The relation between Bessel and Laguerre
polynomials~\cite[Eq. (2.5)]{Al-Salam1957} can also be found that way.

\medskip
\paragraph{\em Laguerre to Laguerre}
The expansion of the Laguerre polynomial $L_m^{(\alpha)}(x)$ in the
basis $L_\nu^{(\beta)}(x)$ has been given by Howell~\cite{Howell1937}.
It is easily recovered automatically:
\begin{Verbatim}
> FunctiontoRec(LaguerreL(m,alpha,x),LaguerreL(nu,beta,x),u)=0;
\end{Verbatim}
\[{\color{blue}
(-m + \nu)u_\nu + (\alpha - \beta + m - 1  - \nu)u_{\nu+1}=0
}\]
The initial condition at $m=\nu$ can be deduced from the leading
coefficients of $L_\nu^{(\alpha)}(x)$ and $L_\nu^{(\beta)}(x)$.

It is interesting to note that the method of Godoy \emph{et al.}~\cite{GodoyRonveauxZarzoArea1997}
yields a recurrence of order~2 rather than~1 for these coefficients.

\medskip
\paragraph{\em Jacobi to Jacobi}
The most general situation is the expansion of $P_n^{(a,b)}(x)$ in the
basis $(P_m^{(c,d)}(x))$, which leads to a recurrence of order~2 for
the coefficients that we do not reproduce here due to its length 
(see~\cite[Thm.~9.1.1]{Ismail2005}). An interesting special case is
the expansion in the basis $(P_n^{(a,c)}(x))$ where the recurrence has
order only~1:
\begin{Verbatim}
> FunctiontoRec(JacobiP(n,a,b,x),JacobiP(k,a,c,x),u)=0;
\end{Verbatim}
{\color{blue}
\begin{multline*}
(a +c +2 k +3) (k +a +c +1) (k -n ) (a +b +k +n +1) u_k\\
-(a+k +1)(a +c +2 k +1) (a +c +k +n +2) (b-c -k +n -1) u_{k+1} =
0
\end{multline*}
}
Again, the initial condition at $n=k$ is deduced from the leading
coefficients and results in the known formula for these coefficients
\cite[Cor.~1.2]{Ismail2005}.

\medskip
\paragraph{\em Associated Laguerre to Laguerre}
The associated Laguerre polynomials satisfy a linear differential
equation of order~4~\cite{Ronveaux1988} which can be given as input to
our program:
\begin{Verbatim}
> deq:=n*(n+2)*y(x)+(3*n+3*alpha+6*c-3*x)*diff(y(x),x)+
(-alpha^2+2*alpha*x+4*c*x+2*n*x-x^2+4)*diff(y(x),x$2)
+5*x*diff(y(x),x$3)+x^2*diff(y(x),x$4);
\end{Verbatim}
{\color{blue}
\begin{multline*}
n \left(n +2\right) y \! \left(x \right)+\left(3 n +3 \alpha +6 c -3 x
\right) \left(\frac{d}{d x}y \! \left(x \right)\right)\\
+\left(-\alpha^{2}+2 x \alpha +4 c x +2 n x -x^{2}+4\right) \left(
\frac{d^{2}}{d x^{2}}y \! \left(x \right)\right)\\
+5 x \left(\frac{d^{3}}{d x^{3}}y \! \left(x \right)\right)+x^{2} \left(\frac{d^{4}}{d x^{4}}y \! \left(x \right)\right)
\end{multline*}
}
\begin{Verbatim}
> DiffeqtoRec(deq,y(x),u(m),LaguerreL(m,alpha,x))=0;	
\end{Verbatim}
{\color{blue}
\begin{multline*}
\left(m +n +2\right) \left(m -n \right) u_m+\left(4 c
m -3 m^{2}+2 n m +n^{2}+6 c -9 m +5 n -6\right) u_{m+1}\\
-2\left (2 c -m +n -2\right) \left(\alpha +2+m \right) u_{m+2}=0
\end{multline*}
}
This is a case where our algorithm returns a recurrence of
smaller order than the one produced by the Navima method 
\cite{RonveauxZarzoGodoy1995}.

\subsubsection{Linearization coefficients}\label{ssub:linearization_coefficients}
For a given family of orthogonal polynomials ($P_n(x;\mathbf a)$)
with parameters $\mathbf a$, the
linearization coefficients are the coefficients~$c_{m,n,k}(\mathbf a)$
in the
decomposition 
\[P_m(x;\mathbf a)P_n(x;\mathbf a)=\sum_{k=0}^{m+n}{c_{m,n,k}
(\mathbf a)P_k(x;\mathbf a)},\]
see for instance \cite[Ch.~9]{Ismail2005}. 

\medskip
\paragraph{{\em Hermite Polynomials}}
For these polynomials, the coefficients have been given by
Feldheim~\cite{Feldheim1938}. They can be computed easily from the
recurrence:
\begin{Verbatim}
> FunctiontoRec(HermiteH(m,x)*HermiteH(n,x),HermiteH(nu,x),u)=0;
\end{Verbatim}
\[{\color{blue}
\left(m +n-\nu  \right) u_\nu-\left(\nu-n +2+m \right) \left(n +2-m
+\nu \right) u_{\nu+2} = 0
}\]
This is a 2-term recurrence, due to the fact that the Hermite
polynomials have the parity of their index. Thus only one initial
condition is needed; it can be obtained from the leading
coefficients of the polynomials.

\medskip
\paragraph{\em Laguerre Polynomials}
A direct use of our program gives a 3rd order recurrence.
However, there is a technique  introduced by Lewanowicz~[Thm. 2.6, Rem. 2.7]\cite{Lewanowicz1996} that can be applied to reduce it to order~2: start from a linear differential operator $L(x,\partial)$ that annihilates $L_i^{(\alpha)}(x)L_j^{(\alpha)}(x)$ and instead of applying \zcref{thm:main} to~$L(x,\partial)$, apply it to its derivative~$\partial L (x,\partial)$, provoking a simplification in the pair.
\begin{Verbatim}
> deq:=gfun:-holexprtodiffeq(
	LaguerreL(i,alpha,x)*LaguerreL(j,alpha,x),y(x),false):
> DiffeqtoRec(diff(deq,x),y(x),u(k),LaguerreL(k,alpha,x))=0;
\end{Verbatim}
{\color{blue}
\begin{multline*}
-2 \left(2+k \right) \left(k +1\right) \left(i +j -k \right) u_k\\
+\left(2+k \right) \left(i^{2}-2 i j +2 i k +j^{2}+2 j k -3 k^{2}+3 i
+3 j -7 k -4\right) u_{k +1}\\
-\left(k +2-j +i \right) \left(-k -2-j
+i \right) \left(\alpha +2+k \right) u_{k+2} = 0
\end{multline*}
}
The same method applies to Gegenbauer and Jacobi polynomials, where
the order is decreased from 4 to~2 (the Jacobi case takes 11~sec.).

\medskip
\paragraph{\em Generalized linearization problems}
Ronveaux \emph{et al.}~\cite{RonveauxHounkonnouBelmehdi1995}
have proved the existence of recurrences satisfied by the coefficients
of the decomposition of a product of two orthogonal polynomials not
necessarily of the same family in the basis given by a 3rd family.
These can be produced by our method, but the orders
tend to become large. 
For instance: 
\begin{itemize}
	\item[--] order 9 for $L^{(\alpha)}_i(x)L^{(\alpha)}_j(x)$ in the
	basis
	$ (H_k(x))$ and $H_i(x)H_j(x)$ in the basis $(L^{(\alpha)}_k(x))$; 
\item[--] order 12 
for $H_i(x)H_j(x)$ and for $L^{(\alpha)}_i(x)L^{(\alpha)}_j(x)$ in the
basis $(P^{(a,b)}_k(x))$ (in 1.6 and 2.8~sec.), 
for $P^{(a,b)}_i(x)P^{(a,b)}_j(x)$ in the bases $(H_k(x))$ and $(L^{(\alpha)}_k(x))$;
\item[--] order 20 for $H_i(x)P_j^{(a,b)}(x)$ in the basis $(H_k(x))$
and for $L_i^{(\alpha)}(x)P_j^{(a,b)}(x)$ in the basis $(L_k^{
(\alpha)}(x))$ (the latter one in 5~sec.).
\end{itemize}

\subsubsection{Other relations between polynomials}
Many relations between Hermite and Laguerre polynomials, changing the
argument $x$ into $x+y$ or $cx$, have been given by Feldheim~%
\cite{Feldheim1938,Feldheim1940} and rederived automatically by Godoy
\emph{et al.}~\cite{GodoyRonveauxZarzoArea1997}; they are also found
by our program, with recurrences of the same orders.

Luke~\cite[Ch.~8]{Luke1969} gives the coefficients of $
(1-x)^m$ in the basis of Jacobi polynomials. 
The function $(1-x)^m$ for $m\in\mathbb N$ satisfies the first order
equation $(1-x)y'+my=0$. Our program returns the following recurrence for the
Jacobi coefficients:
\begin{Verbatim}
> FunctiontoRec((1-x)^m,JacobiP(n,alpha,beta,x),u)=0;
\end{Verbatim}
{\color{blue}\begin{multline*}
\left(\alpha +\beta +2 n +3\right) \left(\alpha +\beta +n +1\right)
\left(m -n \right) u_n\\
+\left(\alpha +n +1\right)
\left(\alpha +\beta +2 n +1\right) \left(\alpha +\beta +m +n +2\right)
u_{n +1}=0.
\end{multline*}}
By reduction to the Beta integral, one obtains the initial condition
\[u_0=\frac{2^m\Gamma(\alpha+\beta+2)\Gamma(\alpha+m+1)}{\Gamma
(\alpha+\beta+m+2)\Gamma(\alpha+1)},\]
so that we recover the known formula for the coefficients~\cite[(30)
p.~277]{Luke1969},
\[(1-x)^m=\sum_{n=0}^m{\frac{2^{m}m! \left(-1\right)^{n} \left(\alpha
+\beta +2 n
+1\right) \Gamma \! \left(n +\alpha +\beta +1\right) \Gamma \! \left(m +\alpha +1\right)}{ 
\left(m -n\right)!\,\Gamma \! \left(m +n +\beta +\alpha +2\right)
\Gamma \! \left(\alpha +n +1\right)}P_n^{(\alpha,\beta)}(x)}.
\]
A similar result is obtained for~$x^m$. This time, our program returns the second-order linear
recurrence
\begin{Verbatim}
> FunctiontoRec(x^m,JacobiP(n,alpha,beta,x),u)=0;
\end{Verbatim}
\begin{small}
{\color{blue}\begin{multline*}
2 \left(2 n +4+\beta +\alpha \right) \left(\alpha +\beta +2 n
+5\right) \left(n +2+\alpha +\beta \right) \left(n +\alpha +\beta
+1\right) \left(m -n \right) u_n\\
+\left(\alpha +\beta +2 n +5\right) \left(n +2+\alpha +\beta \right)
\left(\alpha -\beta \right) \left(\alpha +\beta +2 m +2\right) \left
(\alpha +\beta +2 n +1\right) u_{n +1}\\
-2 \left(\alpha +\beta +2 n +1\right) \left(\beta +n +2\right) \left
(\alpha +n +2\right) \left(2 n +2+\alpha +\beta \right) \left(m +n
+3+\alpha +\beta \right) u_{n+2}
 = 0.
\end{multline*}}
\end{small}
Indeed, this can be checked to be satisfied by the known
coefficients~\cite[(28) p.~277]{Luke1969}
\[x^m=\sum_{n=0}^m{\frac{2^{n} m !\, \Gamma \! \left(n +\alpha +\beta
+1\right) }{\left(m -n \right)!\,
\Gamma \! \left(\alpha +\beta +2 n +1\right)}\,
{}_2F_{1}\left(\begin{matrix}n
-m ,\alpha +n +1\\2 n +2+\alpha +\beta
\end{matrix}\,\middle|\,2\right)P_n^ {(\alpha,\beta)} (x)}. \]
The initial conditions can again be reduced to the Beta integral and
the proof of the identity is then reduced to checking these first two
values.

A simplification occurs for the coefficients of $x^m$ in the Gegenbauer
basis $C_n^{(\lambda)}(x)$. Our program returns
a two-term recurrence
\begin{Verbatim}
> FunctiontoRec(x^m,GegenbauerC(n,lambda,x),u)=0;
\end{Verbatim}
{\color{blue}\[\left(m -n \right) \left(\lambda +n +2\right) u_n
-\left(\lambda +n \right) \left(m +2 \lambda +n +2\right) u_{n+2}=0.\]}
Again, the initial conditions are not the objective of this work, but
they can be obtained without too much work, leading to the following
formula (a variant of~\cite[(29) p.~277]{Luke1969})
\[x^{2m}=\sum_{n=0}^m{\frac{(-1)^m(2m)!\,(\lambda+2n)\Gamma(\lambda)}
{4^m(m-n)!\,\Gamma(n+m+\lambda+1)}C_ {2n}^ {(\lambda)} (x)}\]
and a similar one for $x^{2m+1}$.

\subsection{Generating functions}
Generating functions provide good tests for our algorithms: by their
nature, they have simple sequences of coefficients, and the extra
variable can be made small enough that convergence conditions are met.

A nontrivial example is provided by the function
\begin{Verbatim}
> F:=GAMMA(nu+1/2)*exp(x*t)*(t/2*sqrt(1-x^2))^(1/2-nu)
  *BesselJ(nu-1/2,t*sqrt(1-x^2));
\end{Verbatim}
{\color{blue}
\[F:=\Gamma \! \left(\nu +\frac{1}{2}\right) {\mathrm e}^{x t} {\left(\frac{t \sqrt{-x^{2}+1}}{2}\right)}^{\frac{1}{2}-\nu} J_{\nu -\frac{1}{2}}\! \left(t \sqrt{-x^{2}+1}\right)\]
}
Its coefficients in the Gegenbauer basis $(C_k^{(\nu)}
(x))$ satisfy a 4th-order recurrence
\begin{Verbatim}
> rec:=FunctiontoRec(F,GegenbauerC(k,nu,x),u)=0;
\end{Verbatim}
{\color{blue}
\begin{multline*}
rec:=-\left(\nu +k +3\right) t \left(k +2\right) \left(k +1\right)
u_k\\
+\left(\nu +k +3\right) \left(k +2\right) \left(k^{2}+2 k \nu -t^
{2}+k +2 \nu \right) u_{k +1}\\
-2 t \left(k^{2}+2 k \nu +2 \nu^
{2}+4 k +3 \nu +3\right) u_{k +2}\\
+\left(2 \nu +k +2\right) \left
(\nu +k +1\right) \left(k^{2}+2 k \nu -t^{2}+7 k +8 \nu +12\right) u_
{k +3}\\
+\left(\nu +k +1\right) t \left(2 \nu +k +2\right) \left(2 \nu
+k +3\right) u_{k +4}=0
\end{multline*}
}
While this is not of minimal order, Petkov\v sek's algorithm readily
finds the solution $t^k/\Gamma(2\nu+k)$ from which the exact
value~\cite[5.13.1.3]{PrudnikovBrychkovMarichev1986a} follows.

\subsection{Chebyshev expansions}
A few examples have been given before (\zcref{ex:exp,ex-5,ex:arccos}).

Other examples can be obtained as special cases of the Fourier and
Jacobi expansions below. We just show two more examples due to
Lewanowicz. (Recall again the convention of \zcref{sec:note_on_chebyshev_series}.)

\noindent \textit{The Elliptic $E$~function~\cite[Ex.~2.1]{Lewanowicz1976}}:
\begin{Verbatim}
> FunctiontoRec(EllipticE(x),ChebyshevT(m,x),u)=0;
\end{Verbatim}
{\color{blue}
\[
\left(m -1\right) \left(m +1\right) u_m+\left(-8-4
m \right) u_{m +2}-\left(m +5\right) \left(m +3\right) u_{m
+4}	= 0 \]
}
\textit{The Lommel $s_{\mu\nu}(x)$ function~\cite[Ex.~3.1]{Lewanowicz1976}}. It satisfies the inhomogeneous differential equation
\begin{Verbatim}
> deq:=x^2*diff(y(x), x, x) + x*(2*mu - 1)*diff(y(x), x) 
  + (a^2*x^2 + mu^2 - nu^2 - 2*mu + 1)*y(x) - a^2*x^2;
\end{Verbatim}
{\color{blue}
\[x^{2} \left(\frac{d^{2}}{d x^{2}}y \! \left(x \right)\right)+x \left(2 \mu -1\right) \left(\frac{d}{d x}y \! \left(x \right)\right)+\left(a^{2} x^{2}+\mu^{2}-\nu^{2}-2 \mu +1\right) y \! \left(x \right)-a^{2} x^{2}\]}
Since the inhomogeneous part has only finitely many nonzero Chebyshev
coefficients, it is sufficient to apply the algorithm to the
homogeneous part, which recovers Lewanowicz' 8th~order recurrence:
\begin{Verbatim}
> hom:=deq-eval(deq,y(x)=0):
> rec:=DiffeqtoRec(hom,y(x),u(m),ChebyshevT(m,x))=0;
\end{Verbatim}
{\color{blue}
\begin{multline*}
-a^{2} \left(m +5\right) u_m
+\left(-4 m^{3}-8 m^{2} \mu -4 m\,\mu^{2}+4 m \,\nu^{2}-2 a^
{2}\right.\\ 
\left.-28 m^
{2}-48 \mu  m -20 \mu^{2}+20 \nu^{2}-44 m -40 \mu -20\right) u_{m
+2}\\ 
+2 \left(m +4\right) \left(a^{2}-4 m^{2}+4 \mu^{2}-4 \nu^{2}-32m
-16 \mu -44\right) u_{m +4}\\ 
+\left(-4 m^{3}+8 m^{2} \mu -4 m\,\mu^{2}+4 m \,\nu^{2}+2 a^{2}-68 m^
{2}\right.\\ 
\left.+80 \mu  m -12 \mu^{2}+12 \nu^{2}-364 m +168 \mu -588\right) u_{m
+6}\\
-a^{2} \left(m +3\right) u_{m +8}=0
\end{multline*}
}

\subsection{Fourier expansions}

Koepf and Chiadjeu give a direct algorithm for what they call 
\emph{trigonometric holonomic functions}~\cite{KoepfChiadjeu2015},
that are solutions of linear differential equations with
trigonometric coefficients. This gives an algorithmic access to linear
recurrences for Fourier series in a large variety of examples.

There are cases where our method can also recover Fourier expansions thanks to the relation
with Chebyshev polynomials. An example is provided by the function~\cite [Ex. 4.9] {KoepfChiadjeu2015}
\begin{Verbatim}
> f:=cos(t)*ln(2+cos(t));
\end{Verbatim}
{\color{blue}
\[f:=\cos(t)\ln(2+\cos(t))\]
}
Expanding $g=f(\arccos x)$ in the basis $T_n(x)$ and then evaluating
at
$x=\cos t$ shows that the Chebyshev coefficients of $g$ are the
Fourier coefficients of $f$. Starting from the inhomogeneous
differential equation
\begin{Verbatim}
> deq:=gfun:-holexprtodiffeq(eval(f,t=arccos(x)),y(x),false);
\end{Verbatim}
{\color{blue}
\[\left(x^{2}+2 x \right) \left(\frac{d}{d x}y \! \left(x
\right)\right)
-(x+2)y \! \left(x \right)  -x^{2}\]
}%
\noindent%
and using the homogeneous part gives a simple 4th order
recurrence for
these coefficients:
\begin{Verbatim}
> hom:=deq-eval(deq,y(x)=0):
> DiffeqtoRec(hom,y(x),u(n),ChebyshevT(n,x))=0;
\end{Verbatim}
{\color{blue}
\[
\left(n -1\right) u_n
+4 n u_{n+1}+2\left( n +2\right) u_{n +2}
+4\left(n +4\right) u_{n +3}
+\left(n +5\right) u_{n +4}=0
\]
}
(Koepf and Chiadjeu's method find a more complicated recurrence, but
of order~2 only, which is minimal in this case.)
A basis of solutions of this recurrence can be found by Petkov\v sek's
algorithm and from there one can recover the correct solution for
$n$ sufficiently large (for small $n$, the inhomogeneous part of the
differential equation plays a role).

\subsection{Laguerre expansions}
In view of \zcref{table:pairs_rec_cop}, linear differential operators
with \emph{constant} coefficients lead to recurrences that do not
depend on the parameter~$\alpha$ of the Laguerre polynomials. Thus, for the
expansion of $\exp(cx)$ (for $c<1$), we obtain a recurrence that is
independent of~$\alpha$:
\begin{Verbatim}
> FunctiontoRec(exp(c*x),LaguerreL(n,alpha,x),u)=0;
\end{Verbatim}
{\color{blue}
\[cu_n+(1-c)u_{n+1}=0\]}
The actual value is easily computed from the integral representation
of the $\Gamma$ function.

A slightly more involved expansion can be found for 
\begin{Verbatim}
> F:=exp(x-x/lambda)/lambda^(n+alpha+1)*LaguerreL(n,alpha,x/lambda);
\end{Verbatim}
{\color{blue}
\[F:=\lambda^{-n-\alpha-1}e^{x-x/\lambda}L_n^{(\alpha)}(x/\lambda)\]
}
\begin{Verbatim}
> FunctiontoRec(F,LaguerreL(r,alpha,x),u)=0;
\end{Verbatim}
{\color{blue}
\[-(\lambda - 1)(1 + r)u_r + (n - r - 1)u_{r+1}=0\]
}
In view of the simplification that takes place on the exponentials,
together with orthogonality, it follows that the coefficients are~0
for~$r<n$. For $r=n$, it is sufficient to reason on the leading
coefficient of~$L_r^{(\alpha)}$, which gives the initial
condition~$u_r=1$. Thus, we recover~\cite[Eq.~(18)]{Howell1937}:
\[e^{x-x/\lambda}L_n^{(\alpha)}\!\left(\frac
x\lambda\right)=\lambda^{n+\alpha+1}\sum_ {r\ge n} \binom{r} {n} 
(1-\lambda)^ {r-n}L_r^ {(\alpha)} (x),\qquad\lambda>0.\]
Convergence of the integrals makes this meaningful for
$\lambda>0$ only. Howell~\cite{Howell1937}
also gives an unnecessary restriction $\lambda<2$ coming from his
derivation.

\subsection{Hermitian expansions}

There are several examples of expansions in the basis of Hermite
polynomials in~\cite[sec. 5.12 and 4.5]{PrudnikovBrychkovMarichev1986a}. 

A simple example is
\begin{Verbatim}
> F:=sin(2*a*x)/x;
\end{Verbatim}
{\color{blue}
\[F:=\frac{\sin(2ax)}x\]
}
Its coefficients in the Hermite basis satisfy a 4th~order recurrence:
\begin{Verbatim}
> FunctiontoRec(F,HermiteH(k,x),u)=0;
\end{Verbatim}
{\color{blue}
\[a^{2} u_k
+\left(2+k \right) \left(2 a^{2}+k+3\right)u_{k+2}
+2 \left(2+k \right) \left(3+k \right) \left(4+k\right) u_{k+4}=0\]
}
Since the function is even, the initial conditions $u_1=u_3=0$ are
easily found. With some help, Maple also finds
\[u_0=\sqrt\pi\operatorname{erf}(a),\quad u_2=\frac a2e^{-a^2}-
\frac{\sqrt\pi}4\operatorname{erf}(a).\]
With these, one recovers~\cite[5.12.3.1]{PrudnikovBrychkovMarichev1986a}
\[\frac{\sin2ax}x=\sum_{k=0}^\infty{\frac{(-1)^k}{(2k)!}
\,\gamma\!\left(k+\frac12,a^2\right)H_{2k}(x)},\]
where $\gamma$ is the incomplete gamma function.

\medskip

Another typical example is the function
\begin{Verbatim}
> f:=sqrt(x)*exp(-x^2*t^2/(1-t^2))*BesselI(-1/4,t*x^2/(1-t^2));
\end{Verbatim}
{\color{blue}
\[f:=\sqrt x\exp\!\left(-\frac{t^2x^2}{1-t^2}\right)I_{-1/4}\left(
\frac{tx^2}{1-t^2}\right)\]
}
which is in $L^2((0,\infty),e^{-x^2})$ for $t\in(0,1)$, together with
all its derivatives. Thus \zcref{thm:main} implies that its
coefficients in the Hermite basis satisfy
\begin{Verbatim}
> FunctiontoRec(f,HermiteH(k,x),u)=0
\end{Verbatim}
{\color{blue}
\[
t^2u_k-4(k+3)(k+4)u_{k+4}=0
\]}
This is a variant of~\cite[5.12.1.22]{PrudnikovBrychkovMarichev1986a}
that can also be obtained by our program, but would take more space
here.

\medskip

We conclude with an example that is not analytic: the sign function.
Starting from the first order differential equation~$xy'=0$ gives a
linear
recurrence of order~2:
\begin{Verbatim}
> DiffeqtoRec(x*diff(y(x),x),y(x),u(k),HermiteH(k,x))=0;
\end{Verbatim}
{\color{blue}
\[ku_k+2(k+2)(k+1)u_{k+2}=0\]}
Since the function is odd, the only nontrivial initial condition
is~$u_1$, easily found to be~$1/(2\sqrt\pi)$. With this, it is
immediate to recover~\cite[5.12.1.13]{PrudnikovBrychkovMarichev1986a}
\[\operatorname{sgn}x=\frac1{\sqrt\pi}\sum_{m\ge0}{\frac{(-1)^m4^{-m}}
{2(2m+1)m!}}H_{2m+1}(x).\]
For the convergence of this type of expansion in the complex plane,
see~\cite{Hille1939,Hille1940,Hille1980}.

\subsection{Bessel expansions}\label{example:bessel_expansions}
Al-Salam gave many properties of Bessel polynomials, in a notation
slightly different from that of the DLMF~%
\cite{OlverLozierBoisvertClark2010}, which we follow. We consider only
his Eq.~(7.5), giving the expansion of~$(-x/2)^s$ for $s\in\mathbb N$:
\begin{Verbatim}
> FunctiontoRec((-x/2)^s,Bessely(n,b+2,x),u)=0;
\end{Verbatim}
{\color{blue}
\[
\left(b +1+n \right) \left(b +2 n +3\right) \left(n -s \right)u_n
-\left(b +2 n +1\right) \left(n +1\right) \left(b +n +s +2\right) u_{n +1} = 0
\]}
The initial condition at~$n=s$ is deduced from the leading coefficient
of~$y_n(x;b+2)$, leading to
\[\left(-\frac x2\right)^s=
\sum_{n=0}^s{\frac{(-s)_n(b+2n+1)(b+n)!}{(b+n+s+1)!n!}y_n(x;b+2)}.
\]

\subsection{Jacobi expansions}
Examples of expansions in the basis of Jacobi polynomials can be found
in \cite[5.14]{PrudnikovBrychkovMarichev1986a}. 

For instance, the function
\begin{Verbatim}
> F:=(1+t)^(-alpha-beta-1)*hypergeom([(alpha+beta+1)/2,
  (alpha+beta+2)/2],[beta+1],(2*t*x+2*t)/(t+1)^2);
\end{Verbatim}
{\color{blue}
\[
F:=\left(1+t \right)^{-\alpha -\beta -1} {}_{2}F_{1}\! \left(\left.
\begin{matrix} \frac{\alpha}{2}+\frac{\beta}{2}+1,\frac{\alpha}{2}+
\frac{\beta}{2}+\frac{1}{2}\\ 
\beta +1\end{matrix}\right|\frac{2 t x +2 t}{\left(1+t
\right)^{2}}\right)
\]}
satisfies a linear differential equation of order~2 with respect
to~$x$, from which our code finds a recurrence of order~2 for the
Jacobi coefficients:
\begin{Verbatim}
> FunctiontoRec(F,JacobiP(k,alpha,beta,x),u)=0;
\end{Verbatim}
{\color{blue}
\begin{multline*}
- \left(k +\alpha +\beta+2 \right) 
 \left(k +\alpha +\beta +1\right) t u_k\\
+ \left(\beta +k +1\right)
 \left(2+k +\alpha +\beta\right) (t^{2}+1) u_{k+1}\\ 
- \left(\beta +k+2 \right)
 \left(\beta +k +1\right) t u_{k+2}=0
\end{multline*}
}
The initial conditions
\[u_0=1,\quad u_1=\frac{\alpha+\beta+1}{\beta+1}t\]
can be deduced from a variant of the integral \cite[2.21.1.24]{PrudnikovBrychkovMarichev1989}
\begin{multline*}\int_{-1}^{1}{
{}_{2}F_{1}\! \left(\left.
\begin{matrix}
\frac{\alpha}{2}+\frac{\beta}{2}+\frac{1}{2},
\frac{\alpha}{2}+\frac{\beta}{2}+1
\\ \beta +1
\end{matrix}\right| \frac{2 t
\left (x +1\right)}{\left(t +1\right)^{2}}\right)
\left(1-x \right)^{\alpha +k}\left(1+x\right)^{\beta +\ell}
\,dx} =\\
B\! \left(\alpha +k+1 , \beta +\ell +1\right) 2^{\alpha +\beta +k+\ell+1}
{}_{3}F_{2}\! \left(\left.
\begin{matrix}
\frac{\alpha}{2}+\frac{\beta}{2}+\frac{1}{2},
\frac{\alpha}{2}+\frac{\beta}{2}+1,
\beta +\ell +1
\\
\beta
+1,\alpha+\beta+k+\ell +2
\end{matrix}
\right|\frac{4 t}{\left(t
+1\right)^ {2}}\right).
\end{multline*}
From these initial conditions, using Petkov\v sek's algorithm on
the recurrence recovers the expansion~\cite[5.14.1.6]
{PrudnikovBrychkovMarichev1986a}
\[F=\sum_{k=0}^\infty{\frac{(\alpha+\beta+1)_k}{(\beta+1)_k}t^kP_k^
{(\alpha,\beta)}(x).}\]

\medskip

\noindent
Another recurrence of order~4 is obtained for $\sin(xz)$ on
the basis
$(P_k^{(\alpha,\alpha)}(x))$:
\begin{Verbatim}
> FunctiontoRec(F,JacobiP(k,alpha,alpha,x),u)=0;
\end{Verbatim}
{\color{blue}
\begin{multline*}
\left(2 \alpha +7+2 k \right)
  \left(k +3+2 \alpha \right) 
  \left(2\alpha +9+2 k \right)\\
  \left(k +4+2 \alpha \right)
  \left(k +2+2 \alpha \right)
  \left(k +2 \alpha +1\right) z^{2} u_k\\ 
+\left(\alpha +2+k  \right)
  \left(\alpha +1+k \right)
  \left(k +4+2 \alpha \right)
  \left(k +3+2  \alpha \right)
  \left(2 \alpha +9+2 k \right)\\
  \qquad\left(2 \alpha +2 k +1\right)
  \left(4 \alpha^{2}+8 \alpha  k +4 k^{2}-2 z^{2}+20 \alpha +20 k
  +21\right)u_{k +2}\\ 
+\left(\alpha +2+k \right)
  \left(2 \alpha +3+2 k \right)
  \left(\alpha +1+k \right)\\
  \left(2 \alpha +2 k +1\right)
  \left(\alpha +3+k \right)
  \left(\alpha +4+k \right) z^{2} u_{k +4}=0
\end{multline*}}
As that function is odd, the initial conditions
$u_0=u_2=0$ are clear and one obtains the other two in terms of
Bessel functions as
\[u_1=\frac{2^{\alpha +\frac{3}{2}} z^{-\alpha -\frac{1}{2}} \Gamma \!
\left(\frac{5}{2}+\alpha \right)}{\alpha +1} J_{\alpha +\frac{3}{2}}\! \left(z
\right),\qquad
u_3=-\frac{16 z^{-\alpha -\frac{1}{2}} 2^{\frac{1}{2}+\alpha} \Gamma
\! \left(\frac{9}{2}+\alpha \right) }{\left(3+\alpha \right) \left
(\alpha +2\right) \left(2 \alpha +5\right)}J_{\alpha+\frac{7}
{2}}\! \left(z \right).
\]
At this stage, it is not clear that algorithms are available to deduce
the general form \cite[5.14.5.1]{PrudnikovBrychkovMarichev1986a}
\[u_{2k+1}=
\left(-1\right)^{k}2^{\alpha+2k+\frac12}
z^{-\alpha -\frac{1}{2}}
\frac{
 \left(2\alpha+4 k +3\right)  \Gamma \! \left(\alpha +k+1 \right)
  \Gamma \! \left(\alpha +k+\frac{3}{2} \right)  }
 {\Gamma \! \left(\alpha +2 k+2 \right)} J_{2 k +\alpha + \frac{3}
 {2}}(z), \]
but at least, the formula can be proved by checking that it matches
the initial conditions and satisfies the recurrence.

\medskip

We conclude by mentioning a more difficult example~\cite[5.14.1.11]%
{PrudnikovBrychkovMarichev1986a}, the function
\begin{equation}\label{eq:gfJac}
{}_{2}F_{1}\! \left(\left.\begin{matrix}\gamma ,\alpha +\beta
-\gamma +1\\\alpha +1\end{matrix}\right| \frac{1-t+\rho}{2}\right)
{}_{2}F_{1}\! \left(\left.\begin{matrix}\gamma ,\alpha +\beta -\gamma
+1\\\beta +1\end{matrix}\right| \frac{1+t-\rho}{2}\right),
\end{equation}
with $\rho=\sqrt{t^{2}-2 x t +1}$. For this function, and even for special values of
$\alpha,\beta,\gamma,t$, our
implementation returns a recurrence of order~50 with coefficients of
degree~20 in $x$ (in about 10~min.),
while the solution is
simply
\[\frac{(\gamma)_k(\alpha+\beta-\gamma+1)_k}{(\alpha+1)_k
(\beta+1)_k}t^k,\]
solution to a recurrence of order~1. This is the worst example we have
encountered. One reason for this large order is that the algorithm
starts from a 
linear differential equation satisfied by the generating function~%
\eqref{eq:gfJac} that has order~10 and coefficients of degree up
to~32. Our algorithms return a recurrence that is satisfied by the
coefficients of all solutions of that differential equation. 
They do not take into account the somewhat symmetric form of the
generating function.

\section{Final comments}\label{sec:conclusion}
\subsection{Other expansions} 
The main tools used in this work are: recurrence rather than shift
operators; their fractions and Euclid-based algorithms for their sum
and products; the existence of a morphism from term algebras to
fractions of recurrence operators whose numerator annihilates a target
sequence. 

This article focuses on expansions on classical orthogonal
polynomials for solutions of linear differential equation and, to a
lesser extent, to Taylor expansions. There, we have used many ideas
coming from work of Lewanowicz, Paszkowski, Rebillard and others and
cast them in an algebraic framework where algorithms can be designed
and shown to produce irreducible fractions. Similar ideas have also
been applied to 
semi-classical orthogonal polynomials~\cite{LewanowiczWozny2004},
discrete orthogonal polynomials~\cite{AreaGodoyRonveauxZarzo1998},
hypergeometric polynomials~\cite{RebillardZakrajsek2007,RonveauxZarzoAreaGodoy2002},
$q$-analogues~\cite{Lewanowicz2003,LewanowiczGodoyAreaRonveauxZarzo2000}. 
We expect that the tools developed in this article also apply in these
cases.

\subsection{Minimality issues}
Our algorithm computes an irreducible fraction, which is therefore
minimal in some sense. However, this does not imply that its
numerator, which is the recurrence of interest, is also of minimal
order. This appears clearly for the Chebyshev coefficients of
$\arccos$ in \zcref{ex:arccos2}: the numerator is a recurrence of
order~4, whereas it is clear from the closed form of the coefficients
that they satisfy a right factor of that recurrence of order only~2. A
more extreme instance of that situation is provided by the product of
hypergeometric functions in \zcref{eq:gfJac}. Among the many questions
that deserve further study is how to capture minimal generators that
would
be associated not only to the given linear differential operator~$L$,
but also to all its left multiples (This would generalize Lewanowicz'
idea mentioned with the linearization coefficients of the Laguerre
polynomials in \zcref{ssub:linearization_coefficients}.) Another
question is how to use more
information than the linear differential equation, like the local
or asymptotic behavior of the solution of interest, so as to deal with
situations like that in \zcref{ex:RZ}. 
We plan to come back to these issues in future work.

\bibliographystyle{abbrv}
\bibliography{./biblio}

\end{document}